\renewcommand{\leq}{\leqslant}
\renewcommand{\geq}{\geqslant}
\newcommand{\N}{\mathbb{N}}
\newcommand{\R}{\mathbb{R}}
\newcommand{\C}{\mathbb{C}}
\newcommand{\1}{1\!\!{\sf I}}
\newcommand{\tr}{\operatorname{tr}}
\newcommand{\Tr}{\operatorname{Tr}}
\newtheorem{theorem}{Theorem}
\newtheorem{proposition}[theorem]{Proposition}
\newtheorem{remark}[theorem]{Remark}
\newtheorem{lemma}[theorem]{Lemma}
\newtheorem{corollary}[theorem]{Corollary}
\theoremstyle{definition}
\newtheorem{example}[theorem]{Example}
\title{Strong convergence of  tensor products of independent G.U.E.  matrices}
\author{Serban T. Belinschi and Mireille Capitaine}
\date{}
\begin{document}
\maketitle

\begin{abstract}
Given tuples of properly normalized independent $N\times N$ G.U.E. matrices $(X_N^{(1)},\dots,X_N^{(r_1)})$ and $(Y_N^{(1)},\dots,Y_N^{(r_2)})$, we show that the tuple
$(X_N^{(1)}\otimes I_N,\dots,X_N^{(r_1)}\otimes I_N,I_N\otimes Y_N^{(1)},\dots,I_N\otimes Y_N^{(r_2)})$ of $N^2\times N^2$ random matrices converges strongly as $N$ 
tends to infinity. It was shown by Ben Hayes 
that this result implies that the Peterson-Thom conjecture is true.
\end{abstract}

\section{Introduction}
In the fifties,  Wigner proposed in quantum mechanics to replace the selfadjoint Hamiltonian operator in an infinite dimensional Hilbert
space by an ensemble of very large Hermitian matrices. Let us present the  most frequently used Gaussian matrix ensembles G.U.E. 
(Gaussian Unitary Ensemble, so named because their law is invariant under conjugation by unitary matrices). A G.U.E.  matrix $W_N$ 
of size $N$ is a selfadjoint matrix such that the entries $( W_N)_{i,j}$ of $ W_N$ are centered Gaussian  random variables satisfying:
 $\{( W_N)_{i,i}\colon1\le i\le N\}\cup\{\Re( W_N)_{i,j},\Im( W_N)_{i,j}\colon1\le j<i\le N\}$ are independent and 
 $( W_N)_{i,i},1\le i\le N$, and $\sqrt{2}\Re( W_N)_{i,j},\sqrt{2}\Im(W_N)_{i,j}$, $1\le j<i\le N$ have all variance equal to 1.
We call $X_N=\frac{W_N}{\sqrt{N}}$ a standard {\em normalized} G.U.E. The famous theorem of Wigner \cite{Wigner1, Wigner2} states that  
the empirical distribution of the eigenvalues of $X_N$ converges weakly, in probability, to the standard semicircular distribution $\mu_{sc}$ where 
${\rm d}\mu_{sc}(t)= \frac{1}{2\pi} \sqrt{4-t^2} \1_{[-2,2]}(t){\rm d}t$. The semicircular distribution appeared also as the central limit distribution in 
Voiculescu's free probability theory, developped in the eighties (see \cite{boxplus}). This occurrence  hinted at a closer relation between free probability 
and random matrices. In the early 90’s, Voiculescu  \cite{V} made this concrete by showing that also freeness shows up asymptotically in the random matrix world.
Indeed, one of the results of \cite{V} states that  an $r$-tuple of independent standard normalized G.U.E.-distributed matrices  $X_N^{(1)},\ldots, X_N^{(r)}$ 
are asymptotically free, that is, for every noncommutative polynomial $P$ in $r$ variables, one has
$$
\mathbb{E}[{\rm tr}_N P(X_N^{(1)},\ldots, X_N^{(r)})]\underset{N\rightarrow +\infty}\rightarrow\tau (P(s_1, \ldots, s_r))
$$
where ${\rm tr}_N$ stands for the normalized trace on $M_N(\mathbb C)$ and $s_1, \ldots, s_r$ are free standard semicircular variables in a $C^*$-probability space 
$(\mathcal A,\tau)$ (for an introduction to free probability we refer to \cite{VDN}). A noncommutative random variable $x$ in $(\mathcal A,\tau)$ is called a standard 
semicircular variable if its distribution with respect to $\tau$ is $\mu_{sc}$; that is, if $x=x^*$ (i.e. $x$ is selfadjoint) and for any $k\in\mathbb N$, 
$\tau(x^k)=\int t^k\,{\rm d}\mu_{sc}(t)$. In \cite{HiP} and \cite{T}, the authors showed that the convergence in Voiculescu's result actually holds almost
surely, that is almost surely, for every noncommutative polynomial $P$ in $r$ variables,
\begin{equation}\label{ascv}
\tr_N P(X_N^{(1)},\ldots, X_N^{(r)})\underset{N\rightarrow +\infty}\rightarrow\tau (P(s_1, \ldots, s_r)).
\end{equation}
Later, Haagerup and Thorbj{\o}rnsen \cite{HT} proved a strong version of \eqref{ascv}, in the G.U.E. case, namely a convergence for the operator norm:  almost surely, for every
noncommutative polynomial $P$ in $r$ variables,
\begin{equation}\label{HTs}
\lim_{N\rightarrow +\infty}\left\|P(X_N^{(1)},\ldots, X_N^{(r)})\right\|=\left\|P(s_1,\ldots, s_r)\right\|.
\end{equation}
Now, let $X_N=\{X_N^{(i)},i=1,\ldots r_1\}$ and $Y_N=\{ Y_N^{(j)},i=1,\ldots r_2\}$ be independent tuples of independent normalized standard G.U.E. $N\times N$ matrices. Let 
$({\cal A},\tau)$ be a ${C}^*$-probability space equipped with a faithful tracial state and ${\bf s}=\{{\bf s}_i,i=1,\dots, r_1\}$ and ${\bf t}=\{{\bf t}_i,i=1,\dots,r_2\}$  be 
(possibly different) free semicircular systems, that is tuples of free standard semicircular variables,  in $({\cal A},\tau)$. Denote by $I_N$ the $N\times N$ identity matrix and by 
$1_{\bf s}$ the unit of $C^*({\bf s}_1,\dots,{\bf s}_r)$, the unital ${C}^*$-algebra generated by the free semicirculars ${\bf s}_1,\dots,{\bf s}_r$.  It is straightfoward to deduce 
from \eqref{ascv} that almost surely  for any noncommutative polynomial $P$  in $r_1+r_2$ variables, one has
\begin{eqnarray}
\lefteqn{\lim_{N\to\infty}(\tr_N\otimes\tr_N)\!\left[P(X_N^{(1)}\otimes I_N,\dots,X_N^{(r_1)}\otimes I_N,I_N\otimes Y_N^{(1)},\dots,I_N\otimes Y_N^{(r_2)})\right]}\nonumber\\
&\quad\quad\quad\quad\ =&(\tau\otimes\tau)\!\left[P({\bf s}_1\otimes1_{\bf t},\dots,{\bf s}_{r_1}\otimes1_{\bf t},1_{\bf s}\otimes{\bf t}_1,\dots,1_{\bf t}\otimes{\bf t}_{r_2})\right] \label{asymptoticfreeness}.
\end{eqnarray}
In this paper, we prove the following strong convergence. 
\begin{theorem}\label{mainresult}
 Almost surely, for any noncommutative polynomial $P$  in $r_1+r_2$ variables, 
\begin{eqnarray}
\lefteqn{\lim_{N\to\infty}\left\|P(X_N^{(1)}\otimes I_N,\dots,X_N^{(r_1)}\otimes I_N,I_N\otimes Y_N^{(1)},\dots,I_N\otimes Y_N^{(r_2)})\right\|}\nonumber\\
&\quad\quad\quad\quad\ =&\left\|P({\bf s}_1\otimes1_{\bf t},\dots,{\bf s}_{r_1}\otimes1_{\bf t},1_{\bf s}\otimes{\bf t}_1,\dots,1_{\bf s}\otimes{\bf t}_{r_2})\right\|_{\rm min} \label{scv}.
\end{eqnarray}
\end{theorem}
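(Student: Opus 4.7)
The natural framework is the Haagerup--Thorbj\o rnsen linearization template, adapted to the tensor model. Replacing $P$ by $\lambda - P^*P$ and invoking the standard linearization procedure, Theorem \ref{mainresult} reduces to showing that for every $d \in \N$ and every choice of self-adjoint coefficients $a_0, a_1, \dots, a_{r_1}, b_1, \dots, b_{r_2} \in M_d(\C)$, the random self-adjoint operator
\[
L_N := a_0 \otimes I_{N^2} + \sum_{i=1}^{r_1} a_i \otimes X_N^{(i)} \otimes I_N + \sum_{j=1}^{r_2} b_j \otimes I_N \otimes Y_N^{(j)}
\]
has spectrum converging in Hausdorff distance to the spectrum of its free companion
\[
L := a_0 \otimes 1_{\bf s} \otimes 1_{\bf t} + \sum_{i=1}^{r_1} a_i \otimes {\bf s}_i \otimes 1_{\bf t} + \sum_{j=1}^{r_2} b_j \otimes 1_{\bf s} \otimes {\bf t}_j.
\]
The limiting empirical spectral distribution of $L_N$ is already the correct one by (\ref{asymptoticfreeness}), so what remains is the absence of outliers: almost surely, for every $\e > 0$, $\mathrm{spec}(L_N) \subseteq \mathrm{spec}(L) + [-\e, \e]$ for all sufficiently large $N$.

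To control outliers I would study the $M_d(\C)$-valued Cauchy transform
\[
G_N(z) := (\id_{M_d} \otimes \tr_{N^2})\bigl((z I - L_N)^{-1}\bigr), \qquad z \in \C \setminus \R,
\]
together with its deterministic free analogue $G(z)$, which is accessible through operator-valued free probability (a matricial Dyson/subordination equation for $L$). Following the template of \cite{HT}, the goal is a \emph{master inequality} of the form $\|\E G_N(z) - G(z)\| \leq C(\Im z)\, N^{-2}$, paired with a Poincar\'e-type concentration bound controlling the fluctuations $G_N - \E G_N$ at the same rate. Integrating $\E G_N - G$ against a smooth cutoff supported outside $\mathrm{spec}(L) + [-\e,\e]$ would then bound the expected number of outlying eigenvalues by $O(N^{-2})$; Borel--Cantelli plus the linearization reduction upgrades this to the almost-sure statement (\ref{scv}).

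The main obstacle---and the point at which the tensor model demands a genuinely new idea---is that the Schwinger--Dyson equation for $L_N$ does not close at the level of scalar- or $M_d(\C)$-valued Cauchy transforms. Gaussian integration by parts in an entry of $X_N^{(i)}\otimes I_N$ generates correction terms containing the partial trace $\id_{M_N} \otimes \tr_N$ of the resolvent, which remain $N\times N$ matrices; symmetrically for $Y_N^{(j)}$. My plan is therefore to lift the analysis to the coupled pair of partial-trace-valued resolvents $(\id_{M_d} \otimes \id_{M_N} \otimes \tr_N)\bigl((zI - L_N)^{-1}\bigr)$ and $(\id_{M_d} \otimes \tr_N \otimes \id_{M_N})\bigl((zI - L_N)^{-1}\bigr)$, both living in $M_d(\C) \otimes M_N(\C)$, and to close a subordination-type fixed-point system relating them, whose free analogue is the matricial Dyson equation describing $L$. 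The hard step is then to prove quantitative stability of this fixed-point system \emph{uniformly in $N$}---an $N$-independent bound on the inverse of its derivative at the free solution---despite the fact that the ambient operator-valued algebra has dimension growing with $N$. Establishing this uniform stability is the decisive technical input; once in hand, it feeds into the master inequality and closes the argument.
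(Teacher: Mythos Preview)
Your overall framework (linearization, Cauchy transforms, smooth cutoff, Borel--Cantelli) matches the paper's, but there is a genuine gap at the quantitative core, and the paper's resolution is quite different from what you propose.

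\textbf{The order of the master inequality is too weak.} Your target $\|\E G_N(z)-G(z)\|\le C(\Im z)\,N^{-2}$ does \emph{not} yield an $O(N^{-2})$ bound on the expected number of outliers: the matrices $L_N$ are $dN^2\times dN^2$, so passing from the normalized trace to the eigenvalue count multiplies by $N^2$, and $N^2\cdot O(N^{-2})=O(1)$, which is not summable. The paper needs, and proves, the sharper two-term expansion
\[
g_N(z)=g(z)+\frac{E(z)}{N^2}+O\!\left(\frac{Q(|\Im z|^{-1})}{N^4}\right),
\]
where $E$ is itself the Stieltjes transform of a compactly supported distribution $\hat\Lambda$ with $\mathrm{supp}\,\hat\Lambda\subset\mathrm{spec}(L)$ and $\hat\Lambda(1)=0$. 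For a cutoff $\psi_\delta$ supported away from $\mathrm{spec}(L)$ the $E/N^2$ term then integrates to zero, and the surviving $O(N^{-4})$ remainder gives $N^2\cdot O(N^{-4})=O(N^{-2})$, which is summable. Without this second-order control you cannot close Borel--Cantelli.

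\textbf{The mechanism is different.} You correctly isolate the obstacle---Gaussian integration by parts on $X_N^{(i)}\otimes I_N$ produces $N\times N$ partial traces, so the Schwinger--Dyson hierarchy does not close---but your proposed fix (a coupled fixed-point system for the two partial-trace resolvents plus ``uniform stability in growing dimension'') is exactly the barrier that limited prior work to $M=o(N)$ regimes, and you give no mechanism for the $N$-independent invertibility you need. The paper sidesteps the fixed-point route entirely. It first replaces the unbounded $X_N^{(i)},Y_N^{(j)}$ by their Cayley transforms $U_i=\Psi(X_N^{(i)})$, $V_j=\Psi(Y_N^{(j)})$ (unitaries), reducing via Yin's theorem to strong convergence of the bounded tuple $(U_i\otimes I_N,\,I_N\otimes V_j)$. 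The resolvent of the linearization is then expanded as a power series at $z=\infty$; each coefficient is a polynomial in the $U_i$'s and $V_j$'s separately, to which Parraud's explicit $1/N^2$-expansion for $\E\tr_N$ of polynomials in GUE (extended here to Cayley transforms) is applied in each tensor slot. This yields the formula for $E(z)$ and the $O(N^{-4})$ remainder for large $|z|$ directly, with no Dyson equation to invert; analyticity in $z$ (established via noncommutative function theory for the difference-differential operators appearing in Parraud's formulae) then propagates the estimate to all of $\mathbb C\setminus\mathbb R$.
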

One views $P({\bf s}_1\otimes1_{\bf t},\dots,{\bf s}_{r_1}\otimes1_{\bf t},1_{\bf s}\otimes{\bf t}_1,\dots,1_{\bf s}\otimes{\bf t}_{r_2})\in
C^*({\bf s}_1,\dots,{\bf s}_{r_1})\otimes_{\rm min} C^*({\bf t}_1,\dots,{\bf t}_{r_2})$, the completion of the algebraic tensor product $C^*({\bf s}_1,\dots,{\bf s}_{r_1})\otimes C^*({\bf t}_1,
\dots,{\bf t}_{r_2})$ with respect to $\|\cdot\|_{\rm min}$, the minimal, or spatial, norm on $C^*({\bf s}_1,\dots,{\bf s}_r)\otimes C^*({\bf s}_1,\dots,{\bf t}_{r_2})$ - 
see \cite[Section 2.1]{H} and \cite[Section II.9.1]{Bruce}; since in this paper we consider only the algebraic and the spatial, minimal tensor product norm on our tensor products of 
$C^*$-algebras - or von Neumann algebras - from now on we suppress the ``min" subscript in our norm notations.)
\\

In the wake of Voiculescu's discovery, random matrix theory became a powerful tool in the study of operator algebras. The option of modeling operator algebras 
asymptotically by random matrices led to new results on von Neumann algebras, in particular on the free group factors \cite{VDN}.
Our investigation is motivated by a result of Ben Hayes \cite{H}, showing that a conjecture about the structure of certain finite von Neumann algebras is implied by a strong 
convergence result for tuples of random matrices. Specifically, the conjecture (see \cite[Conjecture 1]{H}) is the following. Assume $r\in\mathbb N,r>1,$ is given. Denote 
by $\mathbb F_r$ the free group with $r$ free generators, and by $L(\mathbb F_r)$ the free group von Neumann algebra, that is, the von Neumann algebra generated by the 
left regular representation of $\mathbb F_r$ in the space $B(\ell^2(\mathbb F_r))$ of bounded linear operators on the Hilbert space $\ell^2(\mathbb F_r)$. Assume that $Q$ is a 
von Neumann subalgebra of $L(\mathbb F_r)$ which is diffuse (meaning that it contains no minimal projections) and amenable (meaning that there exists a conditional expectation 
$E\colon B(\ell^2(\mathbb F_r))\to Q$). Then there exists a unique maximal amenable von Neumann subalgebra $P$ of $L(\mathbb F_r)$ such that $Q\subseteq P$. 
(For the terminology related to the structure of von Neumann algebras, we refer to \cite{AP} - specifically in our case to \cite[Definition 2.4.12]{AP} and \cite[Definition 10.2.1 and
Proposition 10.2.2]{AP}.) This conjecture, cited in \cite{H}, is known as the Peterson-Thom conjecture \cite{PeT}. Ben Hayes proved in \cite[Theorem 1.1]{H} that if 
\eqref{scv} holds    for $r_1=r_2$, then the Peterson-Thom conjecture is true as well. Note that previous works established strong convergence of matrices $X_N^{(1)}\otimes I_M,
\dots,X_N^{(r)}\otimes I_M,I_N\otimes Y_M^{(1)},\dots,I_N\otimes Y_M^{(r)}$, where the dimension of the G.U.E. matrices $Y_M^{(i)}$'s is $M$ and $M=o(N^{1/4})$ in \cite{Pi}, 
$M=o(N^{1/3})$ in \cite{CGP}, and $M=o(N/(\log N)^3)$ in \cite{BBV}. This does not suffice for the purpose of \cite{H}, which requires $M=N$. Since the appearance of our paper on 
arXiv, Bordenave and Collins \cite{BoC} have obtained independently another proof of the Peterson-Thom conjecture via Hayes' theorem, by showing the strong convergence of tensor 
products of independent Haar-distributed unitary random matrices instead of G.U.E. matrices.
\\

Our approach is very similar to that of \cite{HT, S}. Therefore, we will recall the first ideas of the proof of \eqref{HTs}.  First,  the minoration
$$
\liminf_{N\rightarrow +\infty}\left\|P(X_N^{(1)},\ldots, X_N^{(r)})\right\|\geq\left\|P(s_1,\ldots, s_r)\right\|
$$ 
comes rather easily from \eqref{ascv}. So, the main difficulty is the proof of the reverse inequality:
\begin{equation}\label{limsupHT}
\limsup_{N\rightarrow +\infty}\left\|P(X_N^{(1)},\ldots, X_N^{(r)})\right\|\leq\left\|P(s_1,\ldots, s_r)\right\|
\end{equation}
Section 2 and Proposition 7.3 of \cite{HT}  established that, in order to prove \eqref{limsupHT}, it is sufficient to prove that,  for all $m\in \N$, all self-adjoint matrices $a_0,\ldots,a_r$
of size $m\times m$ and all $\epsilon>0$, almost surely for all large $N$,
\begin{equation}\label{incluHT}
{\rm sp}\left(a_0\otimes I_N +\sum_{i=1}^ra_i \otimes X_N^{(i)}\right)\subset {\rm sp}\left(a_0\otimes I_N+\sum_{i=1}^ra_i \otimes s_i\right)+(-\epsilon,\epsilon). 
\end{equation}
Here, ${\rm sp}(T)$ denotes the spectrum of the operator $T$. The analysis of the spectrum of $\Sigma_N = a_0\otimes I_N +\sum_{i=1}^ra_i \otimes X_N^{(i)}$ was done using
the matrix-valued Cauchy transform $G_N(\lambda) = \mathbb{E}[({\rm id}_{M_m(\mathbb C)}\otimes \tr_N)[(\lambda\otimes I_N -\Sigma_N)^{-1}]], \lambda \in M_m(\mathbb C), 
\Im(\lambda)$ positive definite; the proof of \eqref{incluHT} required sharp estimates of the rate of convergence of $g_N(z)=\tr_m G_N(zI_m)$ to $g(z)=\tr_m G(zI_m)$ where 
$G(\lambda)=({\rm id}_{M_m(\mathbb C)}\otimes \tau)[(\lambda\otimes I_N -\Sigma)^{-1}]$, $\Sigma_ := a_0\otimes 1_{\cal A}+\sum_{i=1}^r a_i \otimes s_i$, $z\in \mathbb C
\setminus\mathbb R$. In \cite{HT}, such sharp estimates were deduced from an  equation and an approximate equation respectively, satisfied by $G$ and $G_N$.  By necessity, our 
method to prove such sharp estimates dealing now with the tensorized G.U.E. matrices involved in Theorem \ref{mainresult} is different and is based on a series expansion of the 
resolvents viewed as rational noncommutative functions, successively in each of their variables. Thus, first, thanks to results from \cite{Yin}, we re-phrase Theorem \ref{mainresult} in 
terms of the Cayley transforms of the selfadjoint variables involved in order to deal with a bounded sequence of matrices (see Theorem \ref{scunitary}). Then a series expansion 
around infinity of the resolvents and an expansion in $1/N^2$ of expectations of normalized traces of polynomials in Cayley transforms of tuples of independent G.U.E. matrices allow 
us to obtain a precise formula for $g_N(z)-g(z)$, for  large $|z|$ (here $g_N$ and $g$ are obtained by replacing the G.U.E. matrices and the semicircular operators in the above 
$\Sigma_N$ and  $\Sigma$ by the tensorized Cayley transforms of G.U.E. and semicircular operators involved in Theorem \ref{scunitary}). Since this formula we obtained for large $|z|$ 
involves functions that  can be analytically extended to $\mathbb C\setminus\mathbb R$, we can deduce that it holds on the whole $\mathbb{C} \setminus \mathbb R$. Our proof relies 
heavily on the explicit  asymptotic expansion of smooth functions in polynomials in independent  G.U.E. matrices obtained  in the beautiful work of  Parraud \cite{P}.  We use Parraud's 
formulae in an essential way in our work, but with the free difference quotient replaced by the difference-differential operator, via the natural identification between the two operations.

We end the introduction with a brief outline of the rest of the paper. In Section \ref{Sec:WU} we introduce the main objects of interest for our study and re-phrase Theorem 
\ref{mainresult} in terms of the Cayley transforms of the selfadjoint variables involved (see Theorem \ref{scunitary}). In Section \ref{HTSapproach} we explain in detail how the 
methods introduced by Haagerup, Thorbj{\o}rnsen, and Schultz for proving strong convergence of random matrices \cite{HT,S} apply to prove Theorem \ref{scunitary}. In Section 
\ref{Sec:nctools} we introduce the necessary tools from noncommutative analysis, and prove a number of auxiliary results about them. This section follows largely
\cite{KVV,P,VFE5,Coalg}. Section \ref{S:AsyEx} is dedicated to the extension of Parraud's work \cite{P} to rational functions.  Section \ref{Sec:pfHT} contains the proof of the 
main step \eqref{estimdiffeqno} indicated in Section \ref{HTSapproach}. 

\noindent {\bf Acknowledgements} We thank F\'elix Parraud for useful discussions on his formulae.
The current version of the paper benefits from numerous helpful questions and comments from Ben Hayes, David Jekel, and Mikael de la Salle, to whom we are grateful.

\section{From G.U.E. to unitary matrices}\label{Sec:WU}
In this section, we reformulate Theorem \ref{mainresult} as strong convergence result of some unitary matrices built from G.U.E. matrices. This reformulation is based on the following 
result of S. Yin \cite{Yin}.
\begin{theorem}\label{polrat}\cite{Yin}
Let $(\mathcal A_n, \tau_n)$, $(\mathcal A, \tau)$ be faithful tracial $C^*$-probability spaces.
{Assume} that  {$x(n) = (x_1(n),\ldots, x_r(n))\in (\mathcal A_n, \tau_n)$ strongly converges to $x = (x_1,\ldots, x_r)$} in $(\mathcal A, \tau)$, that is, for any polynomial $P$ in $2r$ 
non-commuting indeterminates,
$$
\lim_{n\rightarrow +\infty} \tau_n(P(x(n), x(n)^*))=\tau(P(x,x^*)),
$$
$$
\lim_{n\rightarrow +\infty} \|P(x(n), x(n)^*)\|_{\mathcal A_n}=\|P(x,x^*)\|_{\mathcal A}.
$$
Assume moreover that the tuple $(x, x^*)$ lies in the domain of {a rational function $R$}. {Then} $R(x(n), (x(n))^*)$ is well defined  for sufficiently large n and we have 
{$$\lim_{n\rightarrow +\infty} \tau_n(R(x(n), x(n)^*))=\tau(R(x,x^*)),$$}
{$$\lim_{n\rightarrow +\infty} \|R(x(n), x(n)^*)\|_{\mathcal A_n}=\|R(x,x^*)\|_{\mathcal A}.$$}
\end{theorem}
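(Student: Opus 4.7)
The plan is to reduce the statement for rational functions to a statement about inverses of affine pencils, via the realization (linearization) theorem for noncommutative rational functions (Cohn; Helton--Klep--Vol\v{c}i\v{c}; Kaliuzhnyi-Verbovetskyi--Vinnikov). Concretely, every such $R$ admits on its domain a representation
\[
R(y_1,\dots,y_{2r})=d+u^{\top}L(y)^{-1}v,\qquad L(y)=L_0+\sum_{i=1}^{2r}L_i y_i,
\]
with $L_i\in M_N(\mathbb{C})$, $u,v\in\mathbb{C}^N$, $d\in\mathbb{C}$, and, crucially, a tuple $y\in\mathcal{A}^{2r}$ lies in the domain of $R$ iff $L(y)$ is invertible in $M_N(\mathcal{A})$. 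Applied at $y=(x,x^*)$, this turns the theorem into a statement about propagation of strong convergence through the inversion of an affine matrix pencil, followed by compression by scalar covectors.

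\textbf{Key analytic step.} I would then prove a single-element lemma: if $z_n\in M_N(\mathcal{A}_n)$, $z\in M_N(\mathcal{A})$ are self-adjoint polynomial evaluations of the given tuples, $z_n\to z$ in the amplified strong sense, and $z$ is invertible in $M_N(\mathcal{A})$, then $z_n$ is invertible for $n$ large and $u^{\top}z_n^{-1}v\to u^{\top}z^{-1}v$ in trace and in norm. Strong convergence applied to the self-adjoint polynomial $z-\lambda$ gives $\|z_n-\lambda\|\to\|z-\lambda\|$ for each $\lambda\in\mathbb{R}$, which is equivalent to Hausdorff convergence $\sigma(z_n)\to\sigma(z)$. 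Since $0\notin\sigma(z)$, there is a uniform spectral gap, and all $\sigma(z_n)$ for $n\ge n_0$ lie in a common compact set $K\subset\mathbb{R}\setminus\{0\}$. On $K$, Stone--Weierstrass supplies real polynomials $p_k\to 1/t$ uniformly; continuous functional calculus then yields $\|z_n^{-1}-p_k(z_n)\|\le\|p_k-1/t\|_{\infty,K}$ uniformly in $n$. Because $p_k(z_n)$ is itself a polynomial in the original variables, the hypothesis applies to it directly, and a three-epsilon argument delivers both trace and norm convergence of the compressions $u^{\top}z_n^{-1}v$. The non-self-adjoint case of $L$ is handled via the self-adjoint dilation $\bigl(\begin{smallmatrix} 0 & L\\ L^{*} & 0\end{smallmatrix}\bigr)$, whose invertibility recovers that of $L$.

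\textbf{Assembling and main obstacle.} Amplification of the $\mathcal{A}_n\to\mathcal{A}$ strong convergence to $M_N(\mathcal{A}_n)\to M_N(\mathcal{A})$ is routine, so the pencil $L(x(n),x(n)^*)$ strongly converges to $L(x,x^*)$ and the key lemma applies. Adding the constant $d$ at the end recovers $R(x(n),x(n)^*)$ and delivers the two conclusions of the theorem. The main non-routine ingredient is the linearization itself at the level of $C^{*}$-algebras: securing that every rational function in the free skew field admits a realization for which domain membership corresponds to genuine invertibility of $L$ inside $M_N(\mathcal{A})$, and not merely in some formal localization. This is the content of the $C^{*}$-algebraic version of Cohn's realization theorem, and it is where I would expect the bulk of the careful setup to go before the analytic estimates above conclude the proof on autopilot.
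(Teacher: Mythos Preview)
The paper does not prove this theorem at all: it is quoted verbatim from Yin's article \cite{Yin} and used as a black box, so there is no ``paper's own proof'' to compare against. That said, your sketch is essentially the argument Yin gives: reduce via a descriptor realization $R(y)=d+u^{\top}L(y)^{-1}v$ to the invertibility of an affine matrix pencil, observe that strong convergence amplifies to matrix levels, and then use Hausdorff convergence of spectra together with polynomial approximation of $t\mapsto 1/t$ on a compact set bounded away from $0$ to control the inverse. The one point you flag as the ``main obstacle'' --- that domain membership of $(x,x^*)$ for $R$ is equivalent to honest invertibility of $L(x,x^*)$ in $M_N(\mathcal A)$ --- is indeed the substantive algebraic input, and Yin handles it by invoking the stably finite (tracial) setting, where full matrices over $\mathcal A$ are automatically invertible; your plan is sound.
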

Let 
$$
X=(X_1,\ldots,X_{r_1}),
$$
$$
Y=(Y_1,\ldots,Y_{r_2}), 
$$
be independent tuples of independent normalized G.U.E. matrices. Let $({\cal A},  \tau)$ be a ${C}^*$-probability space equipped with a faithful tracial state and ${\bf s}=
\{{\bf s}_i,i=1,\dots, r_1\}$ and ${\bf t}=\{{\bf t}_i,i=1,\dots,r_2\}$  be  free semicircular systems  in $({\cal A},\tau)$. Consider the Cayley transform 
$\Psi\colon\overline{\mathbb C}\to\overline{\mathbb{C}}$ given by $\Psi(z)=\frac{z+i}{z-i}$. This is an automorphism of the extended complex plane 
$\overline{\mathbb C}=\mathbb C\cup\{\infty\}$ which sends the complex lower half-plane $\mathbb C^{-}$ onto the unit disk $\mathbb D$, infinity to the complex number 1, and the 
extended real line $\mathbb R\cup\{\infty\}$ onto the unit circle $\mathbb T$. Its inverse with respect to composition  $\Psi^{\langle-1\rangle}\colon\overline{\mathbb C}\to
\overline{\mathbb C}$ is given by $\Psi^{\langle-1\rangle}(w)=i\frac{w+1}{w-1}$. By continuous functional calculus, one evaluates $\Psi$ on bounded selfadjoint linear operators on 
Hilbert spaces and this gives rise to  unitary operators. Conversely, if $U$ is a unitary operator whose spectrum does not contain $1$, then $\Psi^{\langle-1\rangle}(U)$ is a 
bounded selfadjoint operator. Set 
$$
(\Psi(X_1),\ldots,\Psi(X_{r_1}))=(U_1,\ldots, U_{r_1})=\underline{U}_{r_1},
$$
$$
(\Psi({\bf s}_1),\ldots,\Psi({\bf s}_{r_1}))=(u_1,\ldots, u_{r_1})=\underline{u}_{r_1},
$$
$$
(\Psi(Y_1),\ldots,\Psi(Y_{r_2}))=(V_1,\ldots, V_{r_2})=\underline{V}_{r_2},
$$
$$
(\Psi({\bf t}_1),\ldots,\Psi({\bf t}_{r_2}))= (v_1,\ldots, v_{r_2})=\underline{v}_{r_2},
$$
(due to a lack of space, we find it convenient sometimes to use an underline in order to denote a vector of objects, with the index denoting the length of the vector).
In order to prove Theorem \ref{mainresult}, it is sufficient, thanks to Yin's Theorem \ref{polrat}, to prove the following 
\begin{theorem}\label{scunitary}
For any selfadjoint polynomial $P$ in $r_1+r_2$ variables and their adjoints, 
$\| P(\underline{U\otimes I_N}_{r_1}, \underline{I_N \otimes V}_{r_2},\allowbreak\underline{U^*\otimes I_N}_{r_1},\underline{I_N \otimes V^*}_{r_2})\|$ converges to
$\| P(\underline{u\otimes 1_{\cal A}}_{r_1},\underline{1_{\cal A}\otimes v}_{r_2},\underline{u^*\otimes1_{\cal A}}_{r_1},\underline{1_{\cal A}\otimes v^*}_{r_2})\|$.
\end{theorem}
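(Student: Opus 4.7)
The plan is to follow the Haagerup--Thorbj\o{}rnsen--Schultz strategy that is carefully adapted in Section \ref{HTSapproach}, but applied to the tensorized Cayley transforms $\underline{U\otimes I_N}_{r_1}, \underline{I_N\otimes V}_{r_2}$ and their adjoints rather than to the G.U.E.\ matrices themselves. The lower bound $\liminf\|P(\cdot)\|\geq\|P(\cdot)\|$ follows routinely from the almost sure asymptotic freeness \eqref{asymptoticfreeness}, transported through $\Psi$ by means of Yin's Theorem \ref{polrat}. All the work therefore lies in the upper bound, and as recalled in the introduction this reduces to establishing, for arbitrary selfadjoint matrix coefficients $a_0,a_1,\ldots$ and every $\epsilon>0$, a Hausdorff-type spectral inclusion
$$\operatorname{sp}(\Sigma_N)\subset \operatorname{sp}(\Sigma)+(-\epsilon,\epsilon) \quad \text{almost surely for all large } N,$$
where $\Sigma_N$ (resp.\ $\Sigma$) is an affine combination in the $a_i$ of the $N^2\times N^2$ tensorized unitaries $U_i\otimes I_N$, $I_N\otimes V_j$ (resp.\ of the limiting operators $u_i\otimes 1_{\mathcal A}$, $1_{\mathcal A}\otimes v_j$), together with their adjoints.

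The first main step is to control the scalar Cauchy transform
$$g_N(z)-g(z)=(\tr_m\otimes\tr_N\otimes\tr_N)\bigl[(zI-\Sigma_N)^{-1}\bigr]-(\tr_m\otimes\tau\otimes\tau)\bigl[(zI-\Sigma)^{-1}\bigr]$$
at rate $1/N^2$ on compacts of $\mathbb{C}\setminus\mathbb{R}$. Rather than seeking an approximate subordination equation as in \cite{HT}, my plan is to view $(zI-\Sigma_N)^{-1}$ as a noncommutative \emph{rational} function in $\underline{U}_{r_1},\underline{V}_{r_2}$ and their adjoints, evaluated in the tensor product. For $|z|$ sufficiently large the resolvent admits a norm-convergent geometric series expansion in the tensorized Cayley-unitaries, so that the computation of $\mathbb{E}[g_N(z)]$ reduces, term by term, to the expectations of normalized traces of polynomials in Cayley transforms of independent G.U.E.\ tuples.

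At this stage I would invoke the extension of Parraud's machinery \cite{P} to rational noncommutative functions, developed in Section \ref{S:AsyEx}. Applying this expansion successively in each tuple (first in $\underline{U}_{r_1}$, then in $\underline{V}_{r_2}$), and using the identification between the free difference quotient and the difference-differential operator, yields a closed-form expression for $\mathbb{E}[g_N(z)]-g(z)$ on $\{|z|>R\}$ whose principal term is of order $1/N^2$, with all constituent functions analytic on $\mathbb{C}\setminus\mathbb{R}$. Analytic continuation then extends the identity, together with its quantitative control, to all of $\mathbb{C}\setminus\mathbb{R}$, which is precisely the estimate \eqref{estimdiffeqno} quoted in Section \ref{HTSapproach}.

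Finally, I would combine this sharp Cauchy-transform estimate with concentration of measure to pass from $\mathbb{E}[g_N(z)]$ to the random normalized trace, and then invoke the standard Haagerup--Thorbj\o{}rnsen contour argument (testing against smooth compactly supported functions of $\Sigma_N$) to deduce the spectral inclusion almost surely for all large $N$, whence the desired $\limsup$ inequality for $\Psi$-transformed polynomials and, by Theorem \ref{polrat}, for $P$ itself. The principal obstacle I expect is the middle step: pushing Parraud's explicit asymptotic expansion from smooth functions of G.U.E.\ matrices to rational functions evaluated on tensor products of Cayley transforms, while keeping the remainder uniformly of order $1/N^2$ as the spectral parameter $z$ approaches the real axis. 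It is precisely the unboundedness of the naive resolvent as $\Im z\to 0$, combined with the tensor structure doubling the effective dimension, that motivates the passage through Cayley transforms and the variable-by-variable series expansion around infinity rather than a direct fixed-point analysis.
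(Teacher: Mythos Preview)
Your proposal is essentially the paper's own strategy: reduce to the spectral inclusion via the linearization trick, prove the sharp Cauchy-transform estimate \eqref{estimdiffeqno} by expanding the resolvent at infinity and applying Parraud's expansion (extended to Laurent polynomials in the Cayley transforms) successively in each tensor coordinate, then analytically continue to all of $\mathbb C\setminus\mathbb R$ and feed the result into the Haagerup--Thorbj{\o}rnsen--Schultz machinery.

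Two points where your outline deviates slightly from the paper deserve flagging. First, you plan to ``combine this sharp Cauchy-transform estimate with concentration of measure to pass from $\mathbb{E}[g_N(z)]$ to the random normalized trace.'' The paper does \emph{not} do this: $g_N$ is already defined as an expectation, and the passage to an almost sure statement is made at the very end by Markov's inequality and Borel--Cantelli, using that $\mathbb P(m_\delta(S_N)\geq 1)\leq mN^2\,\mathbb E[\tr\psi_\delta(S_N)]=O(1/N^2)$. No separate concentration argument is invoked or needed. Second, and relatedly, your summary speaks of controlling $g_N-g$ ``at rate $1/N^2$,'' but the crucial point of \eqref{estimdiffeqno} is that the remainder after removing the $E(z)/N^2$ term is $O(1/N^4)$, with polynomial blow-up in $|\Im z|^{-1}$; this extra order is exactly what survives the multiplication by the ambient dimension $mN^2$ and makes the Borel--Cantelli sum converge. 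You implicitly acknowledge this by citing \eqref{estimdiffeqno}, but the role of the $1/N^4$ (and the companion facts that $E$ is the Stieltjes transform of a compactly supported distribution on ${\rm sp}(\mathcal S)$ with total mass zero) should be made explicit, since without them the Schultz-type argument in Section~\ref{HTSapproach} would not close.
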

Indeed, as we have seen above, 
$\Psi^{\langle-1\rangle}(U)=i\frac{U+1}{U-1}$ and $\frac{U+1}{U-1}\otimes I=I\otimes I +2 (U\otimes I -I\otimes I)^{-1}$. A noncommutative monomial $M$ of degree $n$ in $r_1+
r_2$ indeterminates is written as $M=\alpha\mathfrak X_{i_1}\mathfrak X_{i_2}\cdots\mathfrak  X_{i_n}$ for some $i_1,\dots,i_n\in\{1,\dots,r_1+r_2\}$. Then
\begin{eqnarray*}\lefteqn{M(X_1\otimes I_N,\ldots, X_{r_1}\otimes I_N, I_N \otimes Y_1,\ldots,I_N \otimes Y_{r_2})}\\
&=&\alpha \left(\prod_{j\colon i_j\in\{1,\dots,r_1\}}X_{i_j}\otimes 1\right)\left( \prod_{k\colon i_k\in\{r_1+1,\dots,r_1+r_2\}} 1\otimes Y_{i_k-r_1}\right)\\
&=&\alpha\left(\prod_{j\colon i_j\in\{1,\dots,r_1\}}i[I\otimes I +2 (U_{i_j}\otimes I -I\otimes I)^{-1}]\right)\\
& & \mbox{}\times\left( \prod_{k\colon i_k\in\{r_1+1,\dots,r_1+r_2\}}i[I\otimes I +2 (I\otimes V_{i_k-r_1} -I\otimes I)^{-1}]\right).
\end{eqnarray*} 
Thus $M(X_1\otimes I_N,\ldots, X_{r_1}\otimes I_N, I_N \otimes Y_1,\ldots,I_N \otimes Y_{r_2})$ is a rational function in $U_1\otimes I_N,\ldots, U_{r_1}\otimes I_N, I_N \otimes V_1,
\ldots,I_N \otimes V_{r_2}.$  Since a polynomial is a sum of monomials, this implies that any noncommutative polynomial in $X_1\otimes I_N,\ldots, X_{r_1}\otimes I_N, I_N \otimes Y_1,
\ldots,I_N \otimes Y_{r_2}$  is a rational function of  $U_1\otimes I_N,\ldots, U_{r_1}\otimes I_N, I_N \otimes V_1,\ldots,I_N \otimes V_{r_2}.$ Therefore, if we are able to prove the 
strong convergence of $U_1\otimes I_N,\ldots, U_{r_1}\otimes I_N, I_N \otimes V_1,\ldots,I_N \otimes V_{r_2}$ (that is Theorem \ref{scunitary}, then we will deduce the strong 
convergence of $X_1\otimes I_N,\ldots, X_{r_1}\otimes I_N, I_N \otimes Y_1,\ldots,I_N \otimes Y_{r_2}$ (that is Theorem \ref{mainresult}) by Theorem  \ref{polrat}.\\

Therefore, in the following, we will focus on the proof of  Theorem \ref{scunitary}.

\section{Haagerup, Thorbj{\o}rnsen and Schultz's approach}\label{HTSapproach}

We present here the arguments of \cite{HT} and \cite{S} which still hold in our framework of Theorem \ref{scunitary} and point out the result which requires new ideas and techniques. 

First, it is straightfoward to deduce from Theorem \ref{polrat} that almost surely, for any noncommutative polynomial $P$  in $r_1+r_2$ variables and their adjoints, 
$$
{\rm tr}_N\otimes{\rm tr}_N \left[ P(
\underline{U\otimes I_N}_{r_1},\underline{I_N \otimes V}_{r_2},\underline{U^*\otimes I_N}_{r_1},\allowbreak\underline{I_N \otimes V^*}_{r_2})\right]
$$
\begin{equation}\label{lib}
\rightarrow_{N\rightarrow +\infty} \tau \otimes \tau \left[P\left( 
\underline{u\otimes 1_\mathcal A}_{r_1},\underline{1_\mathcal A\otimes v}_{r_2},\underline{u^*\otimes1_\mathcal A}_{r_1},\underline{1_\mathcal A\otimes v^*}_{r_2}\right)\right]
\end{equation}
(recall the notations introduced before Theorem \ref{scunitary}).  It turns out that in proving Theorem \ref{scunitary}, the minoration almost surely, for any $P$,
$$
\liminf_{N\rightarrow +\infty} \left\|P(
\underline{U\otimes I_N}_{r_1},\underline{I_N \otimes V}_{r_2},\underline{U^*\otimes I_N}_{r_1},\underline{I_N \otimes V^*}_{r_2})\right\|
$$
$$
\geq\left\|P\left(\underline{u\otimes1_\mathcal A}_{r_1},\underline{1_\mathcal A\otimes v}_{r_2},\underline{u^*\otimes 1_\mathcal A}_{r_1},
\underline{1_\mathcal A \otimes v^*}_{r_2}\right)\right\|
$$
should follows rather easily (sticking to  the proof of  Lemma 7.2 in \cite{HT}). So, the main difficulty is the proof of the reverse inequality: almost surely for any $P$,
$$
\limsup_{N\rightarrow +\infty} \left\|P(
\underline{U\otimes I_N}_{r_1},\underline{I_N \otimes V}_{r_2},\underline{U^*\otimes I_N}_{r_1},\underline{I_N \otimes V^*}_{r_2})\right\|
$$
\begin{equation}\label{ri}
\leq \left\|P\left(\underline{u\otimes1_\mathcal A}_{r_1},\underline{1_\mathcal A\otimes v}_{r_2},\underline{u^*\otimes1_\mathcal A}_{r_1},
\underline{1_\mathcal A \otimes v^*}_{r_2}\right)\right\|.
\end{equation}
Thanks to a linearization trick (following \cite[Section 2]{HT} and the proof of Proposition 7.3 from the same \cite{HT}), in order to prove \eqref{ri}, it suffices to prove:
\begin{proposition}\label{inclu2} 
For all $m\in\mathbb N$, all  matrices $\xi=\xi^*, \gamma_1,\dots,\gamma_{r_1},\beta_1,\ldots,\beta_{r_2}\in M_m(\mathbb C)$ and all $\varepsilon>0$, almost surely, for all large 
$N$, we have 
\begin{equation} \label{spectre3} 
{\rm sp}\left(\xi\otimes I_N\otimes I_N+S_U+S_V\right)\subset{\rm sp}\left(\xi\otimes1_{\cal A}\otimes1_{\cal A}+\mathcal{S}_u+\mathcal{S}_v\right)+(-\varepsilon,\varepsilon),
\end{equation}
where  
$$S_U=\sum_{i=1}^{r_1}\left(\gamma_i\otimes U_i\otimes I_N+\gamma_i^*\otimes U_i^*\otimes I_N\right)=2\Re\sum_{i=1}^{r_1} \gamma_i \otimes U_i\otimes I_N,$$
$$S_V=\sum_{i=1}^{r_2}\left(\beta_i \otimes I_N\otimes V_i+\beta_i^*\otimes I_N \otimes V_i^*\right)=2\Re\sum_{i=1}^{r_2} \beta_i \otimes I_N\otimes V_i,$$
$$\mathcal{S}_u=\sum_{i=1}^{r_1}\left(\gamma_i \otimes u_i\otimes 1_{\mathcal{A}}+\gamma_i^*\otimes u_i^*\otimes 1_{\mathcal{A}}\right)=2\Re\sum_{i=1}^{r_1}\gamma_i \otimes u_i\otimes 1_{\mathcal{A}},$$
$$
\mathcal{S}_v=\sum_{i=1}^{r_2}\left(\beta_i\otimes1_\mathcal A\otimes v_i+\beta_i^*\otimes1_\mathcal A\otimes v_i^*\right)=2\Re\sum_{i=1}^{r_2}\beta_i\otimes1_\mathcal A\otimes v_i.
$$
Here, ${\rm sp}(T)$ denotes the spectrum of the operator $T$, $I_N$ the identity matrix and $1_{\cal A}$ denotes the unit of ${\cal A}$.
\end{proposition}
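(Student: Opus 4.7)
The plan is to follow the Haagerup--Thorbj\o{}rnsen--Schultz template, reducing \eqref{spectre3} to a sharp quantitative comparison of matrix-valued Cauchy transforms at level $1/N^{2}$, and then feeding that comparison into a Helffer--Sj\"ostrand/concentration argument.  Set
$$
T_N=\xi\otimes I_N\otimes I_N+S_U+S_V,\qquad T=\xi\otimes 1_{\cal A}\otimes 1_{\cal A}+{\cal S}_u+{\cal S}_v,
$$
and, for $\lambda\in M_m(\mathbb C)$ with $\Im\lambda>0$, consider
$$
G_N(\lambda)=\mathbb E\bigl[(\id_{M_m}\otimes\tr_N\otimes\tr_N)[(\lambda\otimes I_N\otimes I_N-T_N)^{-1}]\bigr],
$$
$$
G(\lambda)=(\id_{M_m}\otimes\tau\otimes\tau)[(\lambda\otimes 1_{\cal A}\otimes 1_{\cal A}-T)^{-1}],
$$
together with $g_N(z)=\tr_m G_N(zI_m)$ and $g(z)=\tr_m G(zI_m)$.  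The target estimate is a bound of the form $|g_N(z)-g(z)|\leq C\,P(|\Im z|^{-1})/N^{2}$ on $\mathbb C\setminus\mathbb R$ for some fixed polynomial $P$, together with a $1/N^{2}$ control of $\mathrm{Var}\,\tr_{N^{2}}(h(T_N))$ for reasonable test functions $h$.

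Granted such an estimate, the passage to Proposition \ref{inclu2} mirrors \cite[Section 2 and Proposition 7.3]{HT}.  Given $\varepsilon>0$, choose a smooth $\phi\geq 0$ which vanishes on $\mathrm{sp}(T)+(-\varepsilon/2,\varepsilon/2)$ and equals $1$ outside $\mathrm{sp}(T)+(-\varepsilon,\varepsilon)$, and represent $\phi$ via an almost analytic extension $\widetilde\phi$ in the Helffer--Sj\"ostrand formula, so that
$$
\mathbb E\,\tr_{N^{2}}\phi(T_N)=\int_{\mathbb C}\partial_{\bar z}\widetilde\phi(z)\bigl(g_N(z)-g(z)\bigr)\,dA(z),
$$
the corresponding integral against $g$ alone vanishing by support considerations.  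Rapid decay of $\partial_{\bar z}\widetilde\phi$ near the real line, combined with the polynomial blow-up in $|\Im z|^{-1}$ of the target estimate, then yields $\mathbb E\#\{\text{eigenvalues of }T_N\text{ outside }\mathrm{sp}(T)+(-\varepsilon,\varepsilon)\}=o(1)$.  Gaussian concentration for the entries of the $X_N^{(i)}$ and $Y_N^{(j)}$, transferred to their Cayley transforms $U_i,V_j$ via the smoothness of $\Psi$ on compact neighbourhoods of the relevant spectra, together with the variance bound, upgrades this expectation bound to the almost sure inclusion \eqref{spectre3} by a Borel--Cantelli argument.

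The principal novelty, and the step I expect to be the main obstacle, is the derivation of the $1/N^{2}$ estimate itself (the content of Section \ref{Sec:pfHT}).  In \cite{HT} the argument relies on a closed matrix Dyson-type equation for $G$ and its finite-$N$ approximation for $G_N$, but the two-sided tensor structure here destroys both.  The plan is therefore to expand $(\lambda\otimes I_N\otimes I_N-T_N)^{-1}$ as a Neumann series around $\lambda=zI_m$ with $|z|$ large, whose coefficients are $\mathbb E(\tr_N\otimes\tr_N)$-averages of noncommutative monomials in $\underline U_{r_1},\underline V_{r_2}$ and their adjoints.  Applying Parraud's asymptotic expansion \cite{P} to each coefficient in powers of $1/N^{2}$ and resumming produces a closed-form expression for $g_N(z)-g(z)$ valid for $|z|$ large; since each of its terms is analytic on $\mathbb C\setminus\mathbb R$, the identity theorem extends the formula to all of $\mathbb C\setminus\mathbb R$, whence the required estimate.

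The hardest part is making this resummation rigorous: one needs a version of Parraud's expansion that applies to resolvents viewed as rational noncommutative functions of the unitaries, not merely to polynomials or scalar smooth functions, which is exactly the purpose of the rational extension built in Section \ref{S:AsyEx}.  Tracking the $|\Im z|^{-1}$-dependence of the remainders through the free difference quotients -- after the identification of those quotients with the difference-differential operator flagged in the introduction -- and simultaneously controlling the operator norms of the limiting polynomials in $\underline u,\underline v$ in $C^{*}({\bf s})\otimes C^{*}({\bf t})$, so that the final bound is genuinely polynomial in $|\Im z|^{-1}$ rather than exponential, is where the genuine difference between our tensorized setting and the single-tuple setting of \cite{HT,S,P} manifests itself.
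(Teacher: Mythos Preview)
Your overall strategy---expand in Parraud's $1/N^{2}$ series, extend to rational functions, resum via analytic continuation, then run a Helffer--Sj\"ostrand/Borel--Cantelli argument---matches the paper's approach. However, there is a genuine gap in the quantitative level you aim for: a bound $|g_N(z)-g(z)|\leq C\,P(|\Im z|^{-1})/N^{2}$ is \emph{not} enough here, and the claimed conclusion $\mathbb E\#\{\text{bad eigenvalues}\}=o(1)$ does not follow from it.

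The matrices $T_N$ have size $mN^{2}\times mN^{2}$, not $mN\times mN$ as in \cite{HT}. Your bound gives $\mathbb E[\tr_{mN^{2}}\phi(T_N)]=O(1/N^{2})$, hence the expected number of eigenvalues outside ${\rm sp}(T)+(-\varepsilon,\varepsilon)$ is $mN^{2}\cdot O(1/N^{2})=O(1)$, which is neither $o(1)$ nor summable. Concentration and a $1/N^{2}$ variance bound do not rescue this: even if $\tr_{mN^{2}}\phi(T_N)$ concentrates perfectly around its mean, that mean is $\sim c/N^{2}$, so the \emph{unnormalized} trace stays of order one. This is precisely the ``differences arising from the dimension of our tensor matrices'' the paper flags.

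What is actually needed (and what the paper proves) is the Schultz-type second-order expansion
\[
\left|g_N(z)-g(z)-\frac{E(z)}{N^{2}}\right|\leq \frac{Q(|\Im z|^{-1})}{N^{4}},
\]
where $E$ is the Stieltjes transform of a compactly supported distribution $\hat\Lambda$ with support in ${\rm sp}(T)$ and $\hat\Lambda(1)=0$. For a bump function $\psi_\delta$ vanishing on a neighbourhood of ${\rm sp}(T)$, both the $g$-term and the $E/N^{2}$-term then drop out, yielding $\mathbb E[\tr_{mN^{2}}\psi_\delta(T_N)]=O(1/N^{4})$ and hence $\mathbb P(m_\delta(T_N)\geq1)\leq mN^{2}\cdot O(1/N^{4})=O(1/N^{2})$, which is summable. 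So you must push Parraud's expansion one order further (to $\nu_2^{(N)}$), identify the $1/N^{2}$ coefficient as the Stieltjes transform of such a $\hat\Lambda$, and control the $1/N^{4}$ remainder uniformly in $N$; no variance or concentration argument is required.
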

As the risk of confusion is small, from here on we denote both the unit of the von Neumann algebra $\mathcal A$ and the complex number one by $1$, preserving 
$I$ with a subscript exclusively for the unit of the matrix algebra of matrices of the size indicated by the subscript. Set 
\begin{eqnarray}  
\lefteqn{S_N=\xi\otimes I_N\otimes I_N+\sum_{i=1}^{r_1}\gamma_i\otimes U_i\otimes I_N+\sum_{i=1}^{r_1}\gamma_i^*\otimes U_i^*\otimes I_N}\label{defSN}\\
& &\mbox{}+\sum_{j=1}^{r_2}\beta_j\otimes I_N\otimes V_j+\sum_{j=1}^{r_2}\beta_j^*\otimes I_N\otimes V_j^*=\xi\otimes I_N\otimes I_N+S_U+S_V\nonumber
\end{eqnarray} and 
$$
\mathcal{S}=\xi\otimes1\otimes 1+\sum_{i=1}^{r_1}(\gamma_i\otimes u_i\otimes 1+\gamma_i^*\otimes
u_i^*\otimes 1)+\sum_{j=1}^{r_2}(\beta_j\otimes 1\otimes {v_j}+\beta_j^*\otimes 1\otimes {v_j^*})
$$\vskip-.8truecm
$$
=\xi\otimes1\otimes 1+\mathcal{S}_u+\mathcal{S}_v.\quad\quad\quad\quad\quad\quad\quad\quad\quad\quad\quad\quad\quad\quad\quad\quad\quad\quad\quad\quad
$$
The proof of Proposition \ref{inclu2} requires sharp estimate of  $g_N(z)- g(z)$ where for $z\in \mathbb{C}\setminus \mathbb{R}$, 
\begin{equation}\label{defgn}
g_N(z) =\mathbb{E}(\mathrm{tr}_m \otimes\mathrm{tr}_N\otimes\mathrm{tr}_N)[ (zI_m \otimes I_N \otimes I_N - S_N)^{-1}]
\end{equation}
and 
\begin{equation}\label{defg} 
g(z)=(\mathrm{tr}_m\otimes\tau\otimes\tau)[ (zI_m \otimes 1\otimes 1- \mathcal{S})^{-1}].
\end{equation}
This estimate is detailed in Section 6. More precisely we are going to establish that there exists a polynomial $Q$ with nonnegative coefficients
such that, for $z \in \C \setminus \mathbb{R}$, 
\begin{equation} \label{estimdiffeqno}
\left|g_N(z)- g(z)-\frac{E(z)}{N^2}\right|\leq  \frac{Q(\vert \Im  z\vert^{-1})}{N^4},
\end{equation}
where $E(z)$ is the Stieltjes transform of a distribution $\hat \Lambda$ whose support is included in the spectrum of $\mathcal S$ and $\hat \Lambda(1)=0$.

 Now, we explain, for the reader's convenience because of differences arising from the dimension of our tensor matrices,  how \eqref{spectre3} can be deduced from 
\eqref{estimdiffeqno} by the  approaches introduced in \cite{HT,S}. Using the Stieltjes inversion formula for measures and compactly supported distributions (see 
\cite[Theorem 5.4]{S}), one obtains that, for any $\varphi$ in ${\cal C}^\infty (\mathbb R,\mathbb R)$ with compact support, 
$$
\mathbb{E} [\tr_m \otimes \tr _N \otimes \tr _N(\varphi(S_N))]-\tr_m\otimes \tau \otimes \tau(\varphi(\mathcal{S}))-\frac{\hat \Lambda(\varphi)}{N^2}
$$
$$
=\frac{i}{2\pi }\lim _{y\rightarrow 0^+} \int _\mathbb R\varphi (x)\left[\epsilon_N(x+iy)-\epsilon_N(x-iy)\right]\,{\rm d}x,
$$
where $\epsilon_N(z)= g_N(z)-g(z)-\frac{E(z)}{N^2}$ satisfies, according to \eqref{estimdiffeqno}, for any $z\in\mathbb C\setminus\mathbb R$, 
\begin{equation*}\label{estimgdif}
\vert \epsilon_N(z)\vert \leq \frac{Q(\vert \Im z \vert ^{-1})}{N^4}.
\end{equation*} 
We refer the reader to the Appendix of \cite{CD07}, where it is proved using the ideas of \cite{HT} that if $h$ is an analytic function on $\mathbb C\setminus\mathbb R$ which satisfies
\begin{equation*}\label{nestimgdif}
\vert h(z)\vert \leq Q(\vert \Im z \vert ^{-1})(|z|+K)^\alpha
\end{equation*} 
for some polynomial $Q$ with nonnegative coefficients and degree $k$, then there exists a polynomial $\tilde Q$ such that
\begin{eqnarray*}
\lefteqn{\limsup _{y\rightarrow 0^+}\left\vert \int _\R\varphi (x)h(x+iy)\,{\rm d}x\right\vert}\\
&\quad\quad \leq & \int _\mathbb R\int _0^{+\infty }\vert (1+D)^{k+1}\varphi(x)\vert(|x|+\sqrt{2}t+K)^\alpha \tilde Q(t)\exp(-t)\,{\rm d}t{\rm d}x,
\end{eqnarray*}
where $D$ stands for the derivative operator. Hence, dealing with $h(z) =N^4\epsilon_N(z)$ (respectively $x\mapsto \varphi(-x)$ instead of $\varphi$ and $h(z) =N^4\epsilon_N(-z)$), 
we deduce that there exists $C>0$ such that for all large $N$,
\begin{equation*} \label{majlimsup1} 
\limsup _{y\rightarrow 0^+}\left\vert \int _\R \varphi (x)\epsilon_N(x\pm iy)\,{\rm d}x\right\vert \leq \frac{C}{N^4},
\end{equation*} 
and then 
\begin{equation}\label{StS}
\mathbb E[{\tr}_m\otimes{\rm tr}_N\otimes\tr_N(\varphi(S_N))]-{\rm tr}_m\otimes\tau\otimes\tau(\varphi(\mathcal S))-\frac{\hat\Lambda(\varphi)}{N^2}=O\left(\frac1{N^4}\right).  
\end{equation}
Let $\rho\in{\cal C}^\infty(\mathbb R,\mathbb R)$ be such that $\rho\geq0$, its support is included in $[-1,1]$ and $\int_\mathbb R \rho (x)\,{\rm d}x=1$. Let $0 < \delta < 1$. 
Define for any $x\in \mathbb{R}$, 
$$
\rho _{\frac{\delta }{2}}(x)=\frac{2}{\delta}\rho\left(\frac{2x}{\delta }\right).
$$
Set 
$$
K(\delta )=\{x,{\rm dist}(x,{\rm sp}(\mathcal{S}))\leq\delta\}
$$ 
and define for any $x\in\mathbb{R}$, 
$$
f_\delta (x)=\int _\mathbb{R} \1 _{K(\delta)}(y)\rho _{\frac{\delta }{2}}(x-y)\,{\rm d}y.
$$
The function $f_{\delta}$ is in ${\cal C}^\infty (\mathbb{R}, \mathbb{R})$, $f_{\delta}\equiv 1$ on $K(\frac{\delta }{2})$; its support is included in $K(2\delta )$. 
Since there exists $C$ such that the spectrum of $\mathcal{S}$ is included in $[-C;C]$,  the support of $f_{\delta}$ is included in $[-C-2;C+2]$. Thus, according to \eqref{StS}, 
\begin{equation}
\mathbb{E}[{\rm tr}_m\otimes{\rm tr}_N\otimes{\rm tr}_N(f_{\delta}(S_n))]-{\rm tr}_m\otimes\tau\otimes\tau(f_{\delta}(\mathcal{S}))-\frac{\hat \Lambda (f_{\delta})}{N^2}
=O_{\delta }\left(\frac{1}{N^4}\right).
\end{equation}
 Since $\hat \Lambda(1)=0$,  the function $\psi_{\delta}\equiv 1-f_{\delta}$ also satisfies
\begin{equation} 
\mathbb E[{\rm tr}_m\otimes{\rm tr}_N\otimes\tr_N(\psi_{\delta}(S_N))]-{\rm tr}_m\otimes\tau\otimes\tau
\left(\psi_{\delta}(\mathcal{S})\right)-\frac{\hat\Lambda(\psi_{\delta})}{N^2}=O_{\delta }\left(\frac{1}{N^4}\right). 
\end{equation}
Now, since $\psi _\delta\equiv 0$ on the spectrum of $\mathcal{S}$, we deduce that 
\begin{equation}\label{psi2} 
\mathbb{E} [\tr_m\otimes \tr _N \otimes \tr_N(\psi_\delta(S_N))]=O_{\delta }\left(\frac{1}{N^4}\right).
\end{equation}
Let $m_\delta(S_N)$ be the number of eigenvalues of $S_N$ outside $K_{2\delta}$.
Since $\psi_\delta\geq\1 _{\mathbb{R}\setminus K({2\delta})}$,
we have $m_\delta(S_N) \leq \Tr_m\otimes \Tr _N \otimes \Tr_N(\psi_\delta(S_N))$, hence
$$\mathbb{P}(m_\delta(S_N)\geq 1)\leq\mathbb{E}[m_\delta(S_N)]\leq mN^2\mathbb{E}[\tr_m\otimes \tr _N \otimes \tr_N(\psi_\delta(S_N))]=O_{\delta }\left(\frac{1}{N^2}\right).$$
 Thus, by the Borell-Cantelli lemma,  
almost surely for all large $N$, the spectrum of $S_N$ is included in $K(2\delta )=\{ x, {\rm dist}(x, {\rm sp}(\mathcal{S}))\leq 2\delta \}$. 
Since this result holds for any  $m\times m$  matrices $\xi$, $\gamma$,  $\beta$, the proof of Proposition \ref{inclu2} is complete.\\

Thus, the difficulty of the paper relies on the  proof of \eqref{estimdiffeqno} to which the rest of the article is dedicated.
\\

In the following section, we first introduce preliminary notions and results in noncommutative analysis that are needed for our purposes.

\section{Noncommutative analysis}\label{Sec:nctools}

\subsection{Noncommutative polynomials}\label{pol}

Consider the star-algebra $\mathbb C\langle{\bf x}_1,\dots,{\bf x}_r\rangle=\mathbb C\langle\underline{\bf x}_r\rangle$ of noncommutative polynomials
in $r$ noncommuting selfadjoint indeterminates ${\bf x}_1,\dots,{\bf x}_r$. This is the algebra of the free monoid with $r$ free generators, and the star (adjoint) operation 
is the $\mathbb R$-linear extension of $(\lambda{\bf x}_{i_1}{\bf x}_{i_2}\cdots{\bf x}_{i_n})^*=\overline{\lambda}{\bf x}_{i_n}\cdots{\bf x}_{i_2}{\bf x}_{i_1}$,
$\lambda\in\mathbb C,n\in\mathbb N,i_1,i_2,\dots,i_n\in\{1,\dots,r\}$. On this algebra we consider Voiculescu's {\em free difference quotient}
\cite{VFE5,Coalg}:
\begin{equation}\label{FDQ}
\partial_j\colon\mathbb C\langle\underline{\bf x}_r\rangle\to\mathbb C\langle\underline{\bf x}_r\rangle\otimes\mathbb C\langle\underline{\bf x}_r\rangle,
\quad\partial_j1=0,\partial_j{\bf x}_k=\delta_{j,k}1\otimes1,
\end{equation}
extended by $\mathbb C$-linearity and the Leibniz rule: $\partial_j(PQ)=(\partial_jP)(1\otimes Q)+(P\otimes1)(\partial_jQ)$.
One can iterate $\partial_j$: for $k,l\in\mathbb N$ (possibly zero), is defined the obvious way:
$$
{\rm id}_{\mathbb C\langle \underline{\bf x}_r\rangle}^{\otimes k}\otimes\partial_j\otimes{\rm id}_{\mathbb C\langle \underline{\bf x}_r\rangle}^{\otimes l}\colon
\mathbb C\langle \underline{\bf x}_r\rangle^{\otimes k}\otimes \mathbb C\langle \underline{\bf x}_r\rangle\otimes\mathbb C\langle \underline{\bf x}_r\rangle^{\otimes l}
\to\mathbb C\langle \underline{\bf x}_r\rangle^{\otimes k+2+l}.
$$
We introduce the notation ${}^k\!\partial_j^l={\rm id}_{\mathbb C\langle \underline{\bf x}_r\rangle}^{\otimes k}\otimes\partial_j\otimes
{\rm id}_{\mathbb C\langle \underline{\bf x}_r\rangle}^{\otimes l}$, so that $\partial_j={}^0\!\partial_j^0$.

A direct verification shows that the free difference quotient thus defined obeys the usual chain rule $\partial_j(P(Q_1(\underline{\bf x}_r),\dots,Q_r(\underline{\bf x}_r)
))=\sum_{k=1}^r(\partial_k P)(Q_1(\underline{\bf x}_r),\allowbreak\dots,Q_r(\underline{\bf x}_r))\circ \partial_jQ_k(\underline{\bf x}_r)$. 
The symbol $\circ$ refers to the linear extension of the operation $(A\otimes B)\circ(C\otimes D)=AC\otimes DB$ - this is the natural multiplication
of the algebra $\mathbb C\langle\underline{\bf x}_r\rangle\otimes\mathbb C\langle\underline{\bf x}_r\rangle^{\rm op}$ -  and is motivated by the view of
$\mathbb C\langle\underline{\bf x}_r\rangle\otimes\mathbb C\langle\underline{\bf x}_r\rangle^{\rm op}$ as a space of linear maps from 
$\mathbb C\langle\underline{\bf x}_r\rangle$ to itself: $(A\otimes B)(X)=AXB$. (As the reader has surely guessed, $\mathbb C\langle\underline{\bf x}_r\rangle^{\rm op}$
denotes the {\em opposite } algebra structure on $\mathbb C\langle\underline{\bf x}_r\rangle$: one has $\mathbb C\langle\underline{\bf x}_r\rangle^{\rm op}=\mathbb C\langle
\underline{\bf x}_r\rangle$ as complex vector spaces, but the multiplication on $\mathbb C\langle\underline{\bf x}_r\rangle^{\rm op}$ is given by $P^{\rm op}\cdot Q^{\rm op}=
(QP)^{\rm op}$. The superscript op is only relevant when the multiplicative structure of our algebra is considered, otherwise there is no difference between $P$ and $P^{\rm op}$.) 
This motivates us to introduce for any $P\in\mathbb C\langle \underline{\bf x}_r\rangle$ the operator
$$
\mathsf{ev}_P\colon\mathbb C\langle \underline{\bf x}_r\rangle\otimes\mathbb C\langle \underline{\bf x}_r\rangle\to\mathbb C\langle \underline{\bf x}_r\rangle
$$
by the requirement that it is linear and that $\mathsf{ev}_P(A\otimes B)=A(\underline{\bf x}_r)P(\underline{\bf x}_r)B(\underline{\bf x}_r)$. (One uses the
identification of $\mathbb C\langle \underline{\bf x}_r\rangle\otimes\mathbb C\langle \underline{\bf x}_r\rangle\subseteq\mathscr L(\mathbb C\langle \underline{\bf x}_r\rangle,
\mathbb C\langle \underline{\bf x}_r\rangle),$ where, as discussed above, the linear map is given by the linear extension of $A\otimes B\mapsto[X\mapsto AXB]$.) A 
``multilinear'' version is 
$$
\mathsf{ev}_{P_1,\dots,P_s}\colon\underbrace{\mathbb C\langle\underline{\bf x}_r\rangle\otimes\cdots\otimes\mathbb C\langle\underline{\bf x}_r\rangle}_{s+1\text{ times}}
\to\mathbb C\langle \underline{\bf x}_r\rangle,
$$
given by $\mathsf{ev}_{P_1,\dots,P_s}(A_1\otimes\cdots\otimes A_{s+1})=A_1(\underline{\bf x}_r)P_1(\underline{\bf x}_r)A_2(\underline{\bf x}_r)\cdots P_s(\underline{\bf x}_r)
A_{s+1}(\underline{\bf x}_r)$. We will seldom need any other case except $P=1$ / $P_1=\cdots=P_s=1$. In this last case we write $\mathsf{ev}_{1,\dots,1}=\mathsf{ev}_{1^s}$.

The map 
$$
\mathsf{flip}\colon
\mathbb C\langle\underline{\bf x}_r\rangle\otimes\mathbb C\langle\underline{\bf x}_r\rangle\to\mathbb C\langle\underline{\bf x}_r\rangle\otimes\mathbb C\langle\underline{\bf x}_r\rangle
$$
is defined as the linear extension of $\mathsf{flip}(A\otimes B)=B\otimes A$. Sometimes we will need to write this map on different algebras:
$$
\mathsf{flip}\colon
\mathbb C\langle\underline{\bf x}_r\rangle\otimes\mathbb C\langle\underline{\bf y}_t\rangle\to\mathbb C\langle\underline{\bf y}_t\rangle\otimes\mathbb C\langle\underline{\bf x}_r\rangle
$$
given by the linear extension of $\mathsf{flip}(A(\underline{\bf x }_r)\otimes B(\underline{\bf y}_t))=B(\underline{\bf y}_t)\otimes A(\underline{\bf x}_r)$.
We can also define ${}^k\mathsf{flip}^l={\rm id}_{\mathbb C\langle \underline{\bf x}_r\rangle}^{\otimes k}\otimes\mathsf{flip}\otimes
{\rm id}_{\mathbb C\langle \underline{\bf x}_r\rangle}^{\otimes l}$, where
$$
{}^k\mathsf{flip}^l\colon\mathbb C\langle\underline{\bf x}_r\rangle^{\otimes k+2+l}\to\mathbb C\langle\underline{\bf x}_r\rangle^{\otimes k+2+l}.
$$
With this notation, $\mathsf{flip}={}^0\mathsf{flip}^0$. Sometimes it might be convenient to write 
$(\mathsf{flip}\otimes\mathsf{flip})(A\otimes B\otimes C\otimes D)=B\otimes A\otimes D\otimes C$ for ${}^0\mathsf{flip}^2\circ{}^2
\mathsf{flip}^0$, or variations on this theme, but we will try to do so only when the risk of confusion is small.

Finally, we define
$$
\#\colon\mathbb C\langle \underline{\bf x}_r\rangle\otimes\mathbb C\langle \underline{\bf y}_t\rangle\to\mathbb C\langle \underline{\bf x}_r;\underline{\bf y}_t\rangle
$$
by the linear extension of $\#(A(\underline{\bf x}_r)\otimes B(\underline{\bf y}_t))=A(\underline{\bf x}_r)B(\underline{\bf y}_t)$. 
The main difference between $\#$ and $\mathsf{ev}_1$ is that the latter does not increase the number of variables; $\#$ is an embedding of the tensor product in 
the algebraic free product, 
$$
\#\colon\mathbb C\langle\underline{\bf x}_r\rangle\otimes\mathbb C\langle\underline{\bf y}_t\rangle\to\mathbb C\langle\underline{\bf x}_r\rangle*\mathbb C\langle\underline{\bf y}_t
\rangle\simeq\mathbb C\langle\underline{\bf x}_r;\underline{\bf y}_t\rangle,$$
the algebra of the free semigroup with $r+t$ free generators, while $\mathsf{ev}_1$ is essentially the multiplication operation. We generally denote
$$
\underbrace{\#*\cdots*\#}_{s-1\text{ times}}\colon\underbrace{\mathbb C\langle\underline{\bf x}_r\rangle\otimes\cdots\otimes\mathbb C
\langle\underline{\bf x}_r\rangle}_{s\text{ times}}\to\underbrace{\mathbb C\langle\underline{\bf x}_r\rangle*\cdots*\mathbb C\langle\underline{\bf x}_r\rangle}_{s\text{ times}};
$$
As usual, we view $\mathbb C\langle \underline{\bf x}_r\rangle*\mathbb C\langle\underline{\bf x}_r\rangle*\cdots*\mathbb C\langle\underline{\bf x}_r\rangle\simeq
\mathbb C\langle \underline{\bf x}_r^1;\underline{\bf x}_r^2;\dots;\underline{\bf x}_r^s\rangle$. Then $\#*\#*\cdots*\#$ is expressed as
$(\#*\#*\cdots*\#)(A_1(\underline{\bf x}_r)\otimes A_2(\underline{\bf x}_r)\otimes\cdots\otimes A_s(\underline{\bf x}_r))=
A_1(\underline{\bf x}_r^1)A_2(\underline{\bf x}_r^2)\cdots A_s(\underline{\bf x}_r^s)$. Sometimes it will be convenient to write
$\#^{s-1}$ for $\underbrace{\#*\#*\cdots*\#}_{s-1\text{ times}}$, $s\ge2$. (Note that one can define a priori a more general version of $\#$, which 
we could denote by $\mathsf{ev}_{P(\underline{\bf x}_r,\underline{\bf y}_t)}\circ\#$ for a fixed $P$, given by $(\mathsf{ev}_{P(\underline{\bf x}_r,\underline{\bf y}_t)}\circ\#)
(A\otimes B)=A(\underline{\bf x}_r)P(\underline{\bf x}_r,\underline{\bf y}_t)B(\underline{\bf y}_t)$, but such considerations will not be relevant in our paper.)

\begin{remark}
There are algebras of operators (bounded or not) which are star-isomorphic to $\mathbb C\langle\underline{\bf x}_r\rangle$.
For example, the unital star-algebra ${}^*\mathrm{Alg}\{1,{\bf s}_1,\dots,{\bf s}_r\}$ generated by an $r$-tuple of standard free semicircular
random variables ${\bf s}_1,\dots,\allowbreak{\bf s}_r\in\mathcal A$ and the unit $1$ of $\mathcal A$ is star-isomorphic to $\mathbb C\langle\underline{\bf x}_r\rangle$
via the linear extension of the map $\alpha{\bf s}_{i_1}{\bf s}_{i_2}\cdots{\bf s}_{i_n}\mapsto\alpha\mathbf x_{i_1}\mathbf x_{i_2}\cdots\mathbf x_{i_n}$.
This is in fact true for any free $r$-tuple of diffuse selfadjoint random variables, but not only. We call {\em algebraically free} any $r$ tuple of 
selfadjoint random variables which generate a unital star-subalgebra of a von Neumann algebra containing the unit of the von Neumann algebra, which is star-isomorphic 
to $\mathbb C\langle\underline{\mathbf x}_r\rangle$. All of the above operations extend via this isomorphism to algebras generated by
algebraically free operators. Moreover, in a sufficiently large von Neumann algebra (for instance one that includes the free group factor $L(\mathbb F_\infty)$),
the set of algebraically free $r$-tuples of selfadjoints is norm-dense in the following sense: if $t_1,\dots,t_r$ are not algebraically free, then one picks a standard free semicircular
system $(\mathbf s_1,\dots,\mathbf s_r)$, free from $(t_1,\dots,t_r)$. For any $\varepsilon>0$, the tuple $(t_1+\varepsilon\mathbf s_1,\dots,t_r+\varepsilon\mathbf s_r)$ is 
algebraically free (see \cite{Coalg}), and converges in norm to $(t_1,\dots,t_r)$ as $\varepsilon\searrow0$. We make this assumption about $\mathcal A$.
\end{remark}

\begin{remark}
With the exception of the density argument in the above remark, all considerations in this subsection were purely algebraic. 
The existence of a star (adjoint) operation is not a requirement for either of $\partial_j,\mathsf{flip},\mathsf{ev}_{\cdot},\#$ to exist/be well-defined.
Moreover, non-selfadjoint elements in a von Neumann algebra may as well be algebraically free, by requiring only that the unital {\em algebra} (not star-algebra) 
they generate is {\em isomorphic} (not star-isomorphic) to the unital algebra generated by $r$ noncommuting indeterminates.
\end{remark}

For future reference, we write down here also how $\partial_j$ acts on $\mathbb C\langle\underline{\bf x}_r\rangle^{\rm op}$.
For a given monomial, one has $(\mathbf{x}_{i_1}\mathbf{x}_{i_2}\cdots\mathbf{x}_{i_n})^{\rm op}=\mathbf{x}_{i_n}^{\rm op}\cdots\mathbf{x}_{i_2}^{\rm op}
\mathbf{x}_{i_1}^{\rm op}$. When we write a polynomial $P\in\mathbb C\langle\underline{\bf x}_r\rangle$ as an element 
$P^{\rm op}\in\mathbb C\langle\underline{\bf x}_r\rangle^{\rm op}$, we mean that each of its monomials is written as just above: 
if $P=\sum_k\alpha_k \mathbf{x}_{i_1^k}\mathbf{x}_{i_2^k}\cdots\mathbf{x}_{i_{n_k}^k}$,
then $P^{\rm op}=\sum_k\alpha_k(\mathbf{x}_{i_1^k}\mathbf{x}_{i_2^k}\cdots\mathbf{x}_{i_{n_k}^k})^{\rm op}
=\sum_k\alpha_k \mathbf{x}_{i_{n_k}^k}^{\rm op}\cdots\mathbf{x}_{i_2^k}^{\rm op}\mathbf{x}_{i_1^k}^{\rm op}.$
When acting on $\mathbb C\langle\underline{\bf x}_r\rangle^{\rm op}$, the free difference quotient $\partial_j$ ``differentiates in the direction'' of $\mathbf{x}_j^{\rm op}$.
Thus, $\partial_j(\mathbf{x}_{i_1}\mathbf{x}_{i_2}\cdots\mathbf{x}_{i_n})^{\rm op}=
\partial_j(\mathbf{x}_{i_n}^{\rm op}\cdots\mathbf{x}_{i_2}^{\rm op}\mathbf{x}_{i_1}^{\rm op})=\sum_{k\colon i_k=j}
\mathbf{x}_{i_n}^{\rm op}\cdots\mathbf{x}_{i_{k+1}}^{\rm op}\otimes\mathbf{x}_{i_{k-1}}^{\rm op}\cdots\mathbf{x}_{i_1}^{\rm op}\allowbreak
=\sum_{k\colon i_k=j}(\mathbf{x}_{i_{k+1}}\cdots\mathbf{x}_{i_n})^{\rm op}\otimes(\mathbf{x}_{i_1}\cdots\mathbf{x}_{i_{k-1}})^{\rm op}$.
We recognize here the $\mathsf{flip}$ operation applied to $\partial_j(\mathbf{x}_{i_1}\mathbf{x}_{i_2}\cdots\mathbf{x}_{i_n}).$
That is, we have shown that $\partial_j$ acts on $\mathbb C\langle\underline{\bf x}_r\rangle^{\rm op}$ as $\mathsf{flip}\circ\partial_j$ acts on 
$\mathbb C\langle\underline{\bf x}_r\rangle.$ Let us write down the Leibniz rule in this context. 
If $P,Q\in\mathbb C\langle\underline{\bf x}_r\rangle$ are two given polynomials, then $\partial_j(PQ)^{\rm op}
=\partial_j(Q^{\rm op}P^{\rm op})=(\partial_j(Q^{\rm op}))(1\otimes P^{\rm op})+(Q^{\rm op}\otimes1)(\partial_j(P^{\rm op}))
=(\mathsf{flip}\circ\partial_jQ)^{\rm op}(1\otimes P)^{\rm op}+(Q\otimes1)^{\rm op}(\mathsf{flip}\circ\partial_jP)^{\rm op}
=[(1\otimes P)(\mathsf{flip}\circ\partial_jQ)]^{\rm op}+[(\mathsf{flip}\circ\partial_jP)(Q\otimes1)]^{\rm op}.$

The free difference quotient extends to a much larger family of noncommutative functions $f$ that includes polynomial functions, with the meaning 
of $\partial_jf$ being quite obvious (the reader can easily verify this fact for ``most'' analytic functions that are locally norm limits of 
sequences of polynomials - since the notion of analyticity may become rather complicated in infinitely dimensional spaces, we do not
provide details here, but refer the curious reader to, for instance, \cite{Dineen}, for a thorough discussion of analytic functions on 
Banach, and more generally, locally convex, topological vector spaces). Some specific cases have already been considered in
detail, and we mention here 
\begin{equation}\label{exps}
\partial_je^{P}=\int_0^1e^{aP\otimes1}(\partial_jP)e^{1\otimes(1-a)P}\,{\rm d}a,\quad P\in\mathbb C\langle\underline{\mathbf x}_r\rangle,
\end{equation}
from \cite{P}, and 
\begin{eqnarray}\label{invs}
\partial_j(z-P)^{-1}= \left((z-P)^{-1}\otimes1\right)(\partial_jP)\left(1\otimes(z-P)^{-1}\right),\quad P\in\mathbb C\langle\underline{\mathbf x}_r\rangle,
\end{eqnarray}
from \cite{Coalg} (see, in particular, Sections 1 and 3), which, again, the reader can verify by hand. Applications of the chain rule and sum/product rules
extend these formulas to much larger classes of functions. We do not elaborate any further because these facts will hardly play any role in our paper
in this shape, but will become very important under the shape of the computation rules for the difference-differential operator, as it is seen below.

\subsection{Noncommutative functions}\label{sec:ncf}
In this subsection we  follow very closely \cite{KVV}, to which we refer for a thorough and detailed introduction to the subject. 
We do not introduce noncommutative functions in maximum generality, but in a context only as general as needed for our purposes. Thus, given $d\in\mathbb N,d\ge1$, and
a $C^*$-algebra $\mathcal B$, a {\em noncommutative set} over $\mathcal B^d$ is a family $\Omega=\coprod_{n\in\mathbb N}\Omega_n$
such that 
\begin{enumerate}
\item $\Omega_n\subseteq M_n(\mathcal B)^d$ for all $n\in\mathbb N$;
\item If $\underline{X}_d\in\Omega_n,\underline{Y}_d\in\Omega_m$, then $\underline{X}_d\oplus\underline{Y}_d=\left(
\begin{bmatrix} X_1 & 0 \\ 0 & Y_1\end{bmatrix},\dots,\begin{bmatrix} X_d & 0 \\ 0 & Y_d\end{bmatrix}\right)\in\Omega_{n+m},m,n\in\mathbb N$;
\item If $\underline{X}_d\in\Omega_n$ and $U\in M_n(\mathbb C)$ is a unitary, then 
$U\underline{X}_dU^*=(UX_1U^*,\dots,\allowbreak UX_dU^*)\in \Omega_n$.
\end{enumerate}
Given a (possibly, but not necessarily, different) $C^*$-algebra $\tilde{\mathcal B}$, a {\em noncommutative function} is a family $\mathsf f=\{\mathsf f_n\}_{n\in\mathbb N}$ 
such that 
\begin{enumerate}
\item $\mathsf f_n\colon\Omega_n\to M_n(\tilde{\mathcal B})$ for all $n\in\mathbb N$;
\item If $\underline{X}_d\in\Omega_n,\underline{Y}_d\in\Omega_m$, then $\mathsf f_{n+m}(\underline{X}_d\oplus\underline{Y}_d)=
\mathsf f_n(\underline{X}_d)\oplus\mathsf f_m(\underline{Y}_d),m,n\in\mathbb N$;
\item If $\underline{X}_d\in\Omega_n$ and $S\in M_n(\mathbb C)$ is an invertible matrix such that $S\underline{X}_dS^{-1}\in\Omega_n$, 
then $\mathsf f_n(S\underline{X}_dS^{-1})=S\mathsf f_n(\underline{X}_d)S^{-1}$.
\end{enumerate}
We call $n$ the {\em level} of the noncommutative function/set. The following example will be useful.
\begin{example}\label{ExAFC}
Consider a simply connected open set $G\subseteq\mathbb C$. Then any analytic function $f\colon G\to\mathbb C$ is the first level of 
a noncommutative function taking values in $\mathbb C_{\rm nc}:=\coprod_n M_n(\mathbb C)$. Its natural domain of definition is 
$\coprod_n\{Z\in M_n(\mathbb C)\colon{\rm sp}(Z)\subseteq G\}$ and it is defined via the analytic functional calculus:
$$
f_n(Z)=\frac{1}{2\pi i}\int_\gamma f(\zeta)(\zeta I_n-Z)^{-1}\,{\rm d}\gamma(\zeta),
$$
for some simple closed curve $\gamma$ in $G$ and surrounding exactly once the spectrum ${\rm sp}(Z)$ of $Z$.
This is easily seen to be an extension of the polynomial evaluation.
\end{example}

An important property of noncommutative functions is that the derivative at a given level $n$ is often recoverable from evaluation at level $2n$. Specifically: if 
$\underline{X}_d\in\Omega_n,\underline{Y}_d\in\Omega_m$, and $\underline{B}_d\in M_{n\times m}(\mathcal B)^d$ is such that $\begin{bmatrix}\underline{X}_d&\underline{B}_d\\
0&\underline{Y}_d\end{bmatrix}\in\Omega_{n+m}$ and $\mathsf f$ is locally bounded on slices, then $\mathsf f_{n+m}$ on upper triangular matrices is of the form 
$$
\mathsf f_{n+m}\left(\begin{bmatrix} \underline{X}_d & \underline{B}_d \\ 0 & \underline{Y}_d\end{bmatrix}\right)
=\begin{bmatrix} \mathsf f_n(\underline{X}_d) & \Delta\mathsf f_{n,m}(\underline{X}_d,\underline{Y}_d)(\underline{B}_d) \\ 0 & \mathsf f_m(\underline{Y}_d)\end{bmatrix}.
$$
If there is an open set around zero such that 
$\begin{bmatrix} \underline{X}_d & \underline{B}_d \\ 0 & \underline{Y}_d\end{bmatrix}\in\Omega_{n+m}$ for all $\underline{B}_d$ in that set, then
$M_{n\times m}(\mathcal B)^d\ni\underline{B}_d\mapsto\Delta\mathsf f_{n,m}(\underline{X}_d,\underline{Y}_d)(\underline{B}_d)\in M_{n\times m}(\mathcal B)$
is a $\mathbb C$-linear map. If $m=n$, then $\Delta\mathsf f_{n,n}(\underline{X}_d,\underline{Y}_d)(\underline{X}_d-\underline{Y}_d)
=\mathsf f_n(\underline{X}_d) -\mathsf f_n(\underline{Y}_d)$, and if $\underline{X}_d=\underline{Y}_d$, then 
$\Delta\mathsf f_{n,n}(\underline{X}_d,\underline{X}_d)(\underline{B}_d)=\mathsf f_{n}'(\underline{X}_d)(\underline{B}_d)$, the usual 
Fr\'echet derivative of the Banach space-valued map $\mathsf f_n$  (the dependence in $\underline{X}_d$ and $\underline{Y}_d$
of $\Delta\mathsf f_{n,m}(\underline{X}_d,\underline{Y}_d)(\underline{B}_d)$ is of a nature that is very similar to the dependence of $\mathsf f_{n}$ on $\underline{X}_d$, or of
$\mathsf f_{m}$ on $\underline{Y}_d$). A rather spectacular fact that follows from this is that locally bounded noncommutative functions defined on sets 
that are ``thick'' enough (open sets, for instance) are automatically Fr\'echet analytic (see \cite[Corollary 7.6]{KVV}). The linear operator $\Delta\mathsf f_{n,m}(\underline{X}_d,
\underline{Y}_d)$ is called the (first) {\em difference-differential} operator.

This is important for us for several reasons. First, one easily notes that 
any $\underline{B}_d=(B_1,B_2,\dots,B_d)\in M_{n\times m}(\mathcal B)^d$ is written as $\underline{B}_d=B_1e_1+B_2e_2+\cdots+B_de_d$,
with $e_j=(0,\dots,1\otimes I_{m},\dots,0)$ having the identity on position $j$ and zero everywhere else. Thus, it makes sense to define 
$$
\Delta_j\mathsf f_{n,m}(\underline{X}_d,\underline{Y}_d)(B)=\Delta\mathsf f_{n,m}(\underline{X}_d,\underline{Y}_d)(Be_j)
=\Delta\mathsf f_{n,m}(\underline{X}_d,\underline{Y}_d)(0,\dots,B,\dots,0),
$$
the $j^{\rm th}$ partial difference-differential operator $\Delta_j\mathsf f_{n,m}(\underline{X}_d,\underline{Y}_d)\colon M_{n\times m}(\mathcal B)\to M_{n\times m}(\mathcal B)$.
If $m=n$ and $\underline{X}_d=\underline{Y}_d$, then $\Delta_j\mathsf f_{n,n}(\underline{X}_d,\underline{X}_d)$ is just the classical partial derivative in the $j^{\rm th}$ coordinate.
Second, one may repeat the above for larger matrices:
\begin{eqnarray*}
\lefteqn{\mathsf f_{n+m+p}\left(\begin{bmatrix} \underline{X}_d & \underline{B}_d & 0 \\ 0 & \underline{Y}_d & \underline{C}_d\\0 & 0 & \underline{Z}_d\end{bmatrix}\right)=}\\
&&\begin{bmatrix}
\mathsf f_n(\underline{X}_d)&\Delta\mathsf f_{n,m}(\underline{X}_d,\underline{Y}_d)(\underline{B}_d)&\Delta^2\mathsf f_{n,m,p}(\underline{X}_d,\underline{Y}_d,\underline{Z}_d)
(\underline{B}_d,\underline{C}_d)\\ 0 & \mathsf f_m(\underline{Y}_d) & \Delta\mathsf f_{m,p}(\underline{Y}_d,\underline{Z}_d)(\underline{C}_d)\\
0 & 0 & \mathsf f_p(\underline{Z}_d) \end{bmatrix},\quad
\end{eqnarray*}
where $(\underline{B}_d,\underline{C}_d)\mapsto\Delta^2\mathsf f_{n,m,p}(\underline{X}_d,\underline{Y}_d,\underline{Z}_d)(\underline{B}_d,\underline{C}_d)$ is a {\em bi}linear
correspondence from $M_{n\times m}(\mathcal B)^d\allowbreak\times M_{m\times p}(\mathcal B)^d$ to $M_{n\times p}(\mathcal B)$, and so on, as one considers larger and larger
matrices (again, $\Delta^2\mathsf f_{n,n,n}(\underline{X}_d,\underline{X}_d,\underline{X}_d)$ is the classical second derivative of $\mathsf f_n$). Generally,
\begin{eqnarray}
\lefteqn{\!\!\!\mathsf f_{m_1+\cdots+m_{n+1}}\left(\begin{bmatrix} \underline{X}^{(1)}_d & \underline{B}_d^{(1)} &\cdots & 0 & 0 \\ 
0 & \underline{X}_d^{(2)} & \cdots & 0 & 0\\ 
\vdots & \vdots & \ddots & \vdots &\vdots\\
0 & 0 & \cdots & \underline{X}_d^{(n)} & \underline{B}_d^{(n)}\\
0 & 0 & \cdots & 0 & \underline{X}_d^{(n+1)}\end{bmatrix}\right)=}\nonumber\\
&&\!\!\!\begin{bmatrix}
\mathsf f_{m_1}(\underline{X}^{(1)}_d)& \cdots &  \Delta^{n}\mathsf f_{m_1,\dots,m_{n+1}}(\underline{X}^{(1)}_d,\dots,\underline{X}^{(n+1)}_d)(\underline{B}^{(1)}_d,\dots,
\underline{B}^{(n)}_d)\\ 
\vdots & \ddots & \vdots \\
0 & \cdots & \mathsf f_p(\underline{X}^{(n+1)}_d) \end{bmatrix}\!\!.\label{DiffDiff}
\end{eqnarray}
One observes that $\Delta^n\mathsf f(\underline{X}^{(1)}_d,\dots,\underline{X}^{(n+1)}_d)$ makes sense for tuples $\underline{X}^{(j)}_d$
of all sizes in the domain of the initial $\mathsf f$, and, based on the properties of $\mathsf f$ itself when defined on
block matrices as above, the correspondence $(\underline{X}^{(1)}_d,\dots,\underline{X}^{(n+1)}_d)\mapsto
\Delta^n\mathsf f(\underline{X}^{(1)}_d,\dots,\underline{X}^{(n+1)}_d)$ must have a ``nice'' behavior, similar to that of $\mathsf f$.
Indeed, for any fixed $n\ge1$, $\Delta^n\mathsf f$ is a map on sets $\coprod_{m_1,\dots,m_{n+1}\in\mathbb N}\Omega_{m_1}\times\cdots\times\Omega_{m_{n+1}}$ taking 
values in the space of $n$-linear continuous maps from $M_{m_1\times m_2}(\mathcal B)\times M_{m_2\times m_3}(\mathcal B)\times\cdots\times M_{m_n\times m_{n+1}}(\mathcal B)
$ to $M_{m_1\times m_{n+1}}(\mathcal B),m_1,\dots,m_{n+1}\in\mathbb N$. This is a particular case of a noncommutative map of order $n$ (with $\mathsf f$ being a 
map of order zero), and such maps have properties that are very similar to those of $\mathsf f$, including accepting the application of the free difference quotient in
any of its $n+1$ tuples of variables, which they ``carry'' with them in upper triangular matrices in a manner similar to that of $\mathsf f$, and analyticity under
very mild conditions of local boundedness. Conceptually, this is quite similar to the case of noncommutative functions of order zero, but the notations 
become quickly very cumbersome, so we do not provide any details here, but refer the reader to \cite[Chapter 3]{KVV} - particularly Section 3.2 - and
\cite[Section 7.4]{KVV} for a clear and comprehensive presentation of the properties of analytic higher order noncommutative functions.

The existence and properties of the difference differential operator
allows one to write power series expansions for noncommutative functions: according to \cite[Theorems 7.2 and 7.4]{KVV}, if $\mathsf f$ is locally bounded (which will
always be the case for the functions we deal with), then 
\begin{align}\label{class}
\frac{1}{K!}\frac{d^K}{dt^K}&\mathsf f_n\left.(\underline{Y}_d+t\underline{Z}_d)\right|_{t=0}
=\Delta^K\mathsf f_{n,\dots,n}(\underbrace{\underline{Y}_d,\dots,\underline{Y}_d}_{K+1\text{ times}})(\underbrace{\underline{Z}_d,\dots,\underline{Z}_d}_{K\text{ times}}),\\
\mathsf f_{n}\left(\underline{X}_d\right)&=\sum_{l=0}^\infty\Delta^l\mathsf f_{n,\dots,n}(\underbrace{\underline{Y}_d,\dots,\underline{Y}_d}_{l+1\text{ times}})
(\underbrace{\underline{X}_d-\underline{Y}_d,\dots,\underline{X}_d-\underline{Y}_d}_{l\text{ times}}),\label{TT}
\end{align}
where \eqref{class} happens for all $K,n\in\mathbb N$, $\underline{Y}_d\in\Omega_n$, $\underline{Z}_d\in M_{n}(\mathcal B)^d$ (recall that each $\Omega_n$
is assumed now to be open in the usual, norm topology of $M_{n}(\mathcal B)^d$), and for any $\varepsilon>0$ and any open ball $\Upsilon\subseteq\Omega_n$ centered at 
$\underline{Y}_d$, \eqref{TT} converges absolutely and uniformly on the set 
$$
\Upsilon_\varepsilon:=\{\underline{X}_d\in\Upsilon\colon\underline{Y}_d+(1+\varepsilon)(\underline{X}_d-\underline{Y}_d)\in\Upsilon\}:
$$
one has 
\begin{equation}\label{speed}
\sum_{l=0}^\infty\sup_{\underline{X}_d\in\Upsilon_\varepsilon}\big\|\Delta^l\mathsf f_{n,\dots,n}(\underbrace{\underline{Y}_d,\dots,\underline{Y}_d}_{l+1\text{ times}})
(\underbrace{\underline{X}_d-\underline{Y}_d,\dots,\underline{X}_d-\underline{Y}_d}_{l\text{ times}})\big\|_{M_n(\mathcal B)^d}<\infty.
\end{equation}
Following \cite{KVV}, we refer to \eqref{TT} as the {\em Taylor-Taylor series expansion of $\mathsf f_n$ around $\underline{Y}_d$}. An essential point for us about this series 
development is that its terms $\Delta^l\mathsf f_{n,\dots,n}(\underline{Y}_d,\allowbreak\dots,\underline{Y}_d)(\underline{X}_d-\underline{Y}_d,\dots,\underline{X}_d-
\underline{Y}_d)$, $n,l\in\mathbb N$, determine uniquely the function $\mathsf f$. As for the free difference quotient, the difference-differential calculus obeys the
same rules (linearity, Leibniz rule, and chain rule) that usual derivatives obey - see \cite[2.3.2, 2.3.4, 2.3.6]{KVV}. Similar power series expansions are available for higher order
noncommutative functions, as shown in \cite[Section 7.4]{KVV}.

In our case, $\mathcal B$ will be a finite von Neumann algebra, possibly finite dimensional (possibly just $\mathbb C$).

\subsection{Relations between the difference-differential operator and the free difference quotient}\label{Sec:DD-FDQ}
We have seen in Sections \ref{pol} and \ref{sec:ncf} two perspectives on noncommutative functions and  derivatives. In this section, we intend to unify them to some extent. For
clarity, let us begin by considering noncommutative functions on open subsets of $\mathbb C_{\rm nc}^r=\coprod_nM_n(\mathbb C)^r$. Given a monomial $M\in\mathbb C\langle
\underline{\mathbf x}_r\rangle$, one may view it as a noncommutative function simply by performing evaluations on $r$-tuples of $n\times n$ complex matrices for all $n$. Thus, for 
an $M=\mathbf x_{i_1}\mathbf x_{i_2}\cdots\mathbf x_{i_m},i_1,\dots,i_m\in\{1,\dots,r\}$, one has $\partial_jM=\sum_{k:i_k=j}\mathbf x_{i_1}\cdots\mathbf x_{i_{k-1}}
\otimes\mathbf x_{i_{k+1}} \cdots\mathbf x_{i_m}$, with the convention that the empty word is the unit 1 (that is, if, say, $i_m=j$, then the last summand is
$\mathbf x_{i_1}\cdots\mathbf x_{i_{m-1}}\otimes1$). Evaluation on an $r$-tuple of matrices $(X_1,\dots,X_r)$ yields
$(\partial_jM)(X_1,\dots,X_r)=\sum_{k:i_k=j} X_{i_1}\cdots X_{i_{k-1}}\otimes X_{i_{k+1}} \cdots X_{i_m}\allowbreak\in M_n(\mathbb C)\otimes M_n(\mathbb C).$
However, $M_n(\mathbb C)\otimes M_n(\mathbb C)$ identifies naturally with the space of linear maps from $M_n(\mathbb C)$ to itself via
$$
M_n(\mathbb C)\otimes M_n(\mathbb C)\ni\sum_jA_j\otimes B_j\mapsto\left[C\mapsto\sum_jA_jCB_j\right]\in\mathscr L(M_n(\mathbb C), M_n(\mathbb C)).
$$
(One recognizes above $\mathsf{ev}_C\left(\sum_jA_j\otimes B_j\right)$.) The identification is bijective, and if one considers the opposite algebra structure on the second 
tensor, it is also an algebra isomorphism $M_n(\mathbb C)\otimes M_n(\mathbb C)^{\rm op}\simeq\mathscr L(M_n(\mathbb C), M_n(\mathbb C))$.
This allows one to easily identify the free difference quotient and the difference-differential operator: simply observe that
\begin{eqnarray*}
\lefteqn{M\left(\begin{bmatrix}X_1 & 0 \\ 0 & X_1 \end{bmatrix},\dots,\begin{bmatrix}X_j & C \\ 0 & X_j \end{bmatrix},\dots,\begin{bmatrix}X_r & 0 \\ 0 & X_r \end{bmatrix}\right)}\\
&=&\begin{bmatrix}X_{i_1}&\delta_{i_1,j}C\\0&X_{i_1}\end{bmatrix}\begin{bmatrix}X_{i_2}&\delta_{i_2,j}C\\0&X_{i_2}\end{bmatrix}\cdots
\begin{bmatrix}X_{i_m}&\delta_{i_m,j}C\\0&X_{i_m}\end{bmatrix}\\
&=&\begin{bmatrix}X_{i_1} X_{i_2}\cdots X_{i_m}&\sum_{k:i_k=j} X_{i_1}\cdots X_{i_{k-1}}CX_{i_{k+1}} \cdots X_{i_m}\\0&X_{i_1} X_{i_2}\cdots X_{i_m}\end{bmatrix},
\end{eqnarray*}
which shows that on monomial functions defined on $\mathbb C_{\rm nc}^r$ the above isomorphism identifies the difference-differential operator 
and the free difference quotient when evaluated on $r$-tuples of complex matrices of all sizes. The extension to convergent power series is performed by applying this
procedure term-by-term. Given that both resolvents of polynomials and exponentials of polynomials are noncommutative functions in the sense of \cite{KVV}
whose Taylor-Taylor series converge as described in \eqref{TT} and \eqref{speed}, it follows that the identification of free difference quotients and 
difference-differential operators described just above extends to the algebra generated by polynomials, exponentials of polynomials, 
resolvents of polynomials, and compositions of such (on the open noncommutative sets on which they are defined).

The only point in the above reasoning where the finite dimensionality of the domain of definition for the noncommutative functions we consider
came up is in the identification $M_n(\mathbb C)\otimes M_n(\mathbb C)^{\rm op}\simeq\mathscr L(M_n(\mathbb C), M_n(\mathbb C))$. However, for a given 
von Neumann algebra $\mathcal A$, one still has an inclusion $M_n(\mathcal A)\otimes M_n(\mathcal A)^{\rm op}\hookrightarrow\mathscr L(M_n(\mathcal A), M_n(\mathcal A))$
of algebras, in the sense that the correspondence $C\mapsto\sum_jA_jCB_j$ (finite sum) is continuous from $M_n(\mathcal A)$ to itself and
$$
M_n(\mathcal A)\otimes M_n(\mathcal A)^{\rm op}\ni\sum_jA_j\otimes B_j\mapsto\left[C\mapsto\sum_jA_jCB_j\right]\in\mathscr L(M_n(\mathcal A), M_n(\mathcal A))
$$
is an endomorphism of algebras. (In the above we look at the {\em algebraic} tensor product over $\mathbb C$ - no issues of continuity or closure occur - and the von Neumann 
algebra $\mathcal A$ is arbitrary.) This allows us to extend the identification of Voiculescu's free difference quotient and the difference-differential operator 
to a large class of noncommutative functions, but this time defined on noncommutative subsets of $\coprod_{n}M_n(\mathcal A)$ for an 
arbitrary von Neumann algebra $\mathcal A$. As the reader will notice in the following, the fact that we only need to consider the algebraic tensor product in 
the above simplifies greatly our work. The formal description of the above observation is the following: for any polynomial $P\in\mathbb C\langle\underline{\bf x}_r\rangle$,
any $n\in\mathbb N$, any algebraically free selfadjoints $(s_1,\dots,s_r)\in M_n(\mathcal A)^r$, and any $c\in M_n(\mathcal A)$, one has
$$
\mathsf{ev}_c\circ\partial_jP(s_1,\dots,s_r)=\Delta_jP(s_1,\dots,s_r;s_1,\dots,s_r)(c).
$$
Observe that, generally, if $\mathsf{f,g}$ are locally norm-bounded noncommutative functions that satisfy this hypothesis (namely that $\mathsf{ev}_c\circ\partial_j\mathsf f(s_1,
\dots,s_r)=\Delta_j\mathsf f(s_1,\dots,s_r;s_1,\dots,s_r)(c))$, then so does $\mathsf{fg}$, because of the Leibniz rule and the fact that $\partial_j$ is a derivation with respect to 
the $M_n(\mathcal A)$-bimodule structure of $M_n(\mathcal A)\otimes M_n(\mathcal A)$ - see \cite[Section 1.3]{Coalg}\footnote{We use Voiculescu's 
statement as phrased in \cite{Coalg} only for $p=1$, but with $\mathcal A$ replaced by $M_n(\mathcal A),n\in\mathbb N$.}. (That the statement holds for sums is trivial.)
It then follows that the above identification of $\partial_j$ and $\Delta_j$ holds in particular for rational functions and, more generally, for noncommutative functions that are
locally norm-limits of polynomials.

However, for the purpose of using Parraud's work \cite{P}, we need a slightly more general statement. Recall that $\partial_j\colon\mathbb C\langle\underline{\bf x}_r\rangle
\to\mathbb C\langle\underline{\bf x}_r\rangle\otimes\mathbb C\langle\underline{\bf x}_r\rangle$ takes values in a tensor product of two identical 
noncommutative polynomials algebras. This means that the evaluation of $\partial_jP$ on, say, $\mathcal A\otimes\mathcal A$ (may be any two algebras),
can be performed in different operators corresponding to the two tensor coordinates: given $r$-tuples $\underline{s}_r=(s_1,\dots,s_r)$ and $\underline{t}_r=(t_1,\dots,t_r)$ 
in $\mathcal A^r$, one can evaluate $(\partial_jP)(\underline{s}_r;\underline{t}_r)$ as well as $(\partial_jP)(\underline{s}_r;\underline{s}_r)$
(the reader may feel more comfortable writing $\partial_j\colon\mathbb C\langle\underline{\bf x}_r\rangle
\to\mathbb C\langle\underline{\bf x}_r\rangle\otimes\mathbb C\langle\underline{\bf y}_r\rangle$ instead).  Recalling the operation
$\#\colon\mathbb C\langle\underline{\bf x}_r\rangle\otimes
\mathbb C\langle\underline{\bf y}_r\rangle\to\mathbb C\langle\underline{\bf x}_r;\underline{\bf y}_r\rangle$, $\#(A(\underline{\bf x}_r)\otimes B(\underline{\bf y}_r))=
A(\underline{\bf x}_r)B(\underline{\bf y}_r)$ from Section \ref{pol} (with equal number of indeterminates $r=t$), this leads to 
\begin{eqnarray*}
\lefteqn{M\left(\begin{bmatrix}s_1 & 0 \\ 0 & t_1 \end{bmatrix},\dots,\begin{bmatrix}s_j & c \\ 0 & t_j \end{bmatrix},\dots,\begin{bmatrix}s_r & 0 \\ 0 & t_r \end{bmatrix}\right)}\\
&=&\begin{bmatrix}s_{i_1}&\delta_{i_1,j}c\\0&t_{i_1}\end{bmatrix}\begin{bmatrix}s_{i_2}&\delta_{i_2,j}c\\0&t_{i_2}\end{bmatrix}\cdots
\begin{bmatrix}s_{i_m}&\delta_{i_m,j}c\\0&t_{i_m}\end{bmatrix}\\
&=&\begin{bmatrix}s_{i_1} s_{i_2}\cdots s_{i_m}&\sum_{k:i_k=j} s_{i_1}\cdots s_{i_{k-1}}ct_{i_{k+1}} \cdots t_{i_m}\\0&t_{i_1} t_{i_2}\cdots t_{i_m}\end{bmatrix}
\end{eqnarray*}
for $M=\mathbf x_{i_1}\mathbf x_{i_2}\cdots\mathbf x_{i_m}$, and, as above, allows us to conclude that
$$
(\#\circ\partial_j\mathsf f)(\underline{s}_r;\underline{t}_r)=\Delta_j\mathsf f(\underline{s}_r;\underline{t}_r)(1),
$$
for functions $\mathsf f$ in the class previously mentioned.

The important (to us) consequence of the above (mostly tautological) argument is that the operations $\partial_j,\#,\mathsf{ev}_{\cdot}$ are translatable in terms of 
difference-differential operators, so that the nature of the noncommutative function we work with does not change. In particular,
if both the variables $s$ and the variables $t$ are in the domain of our function, then these operations become simply operations
between levels of {\em the same} noncommutative function.

\subsection{Domains for $(z-\mathcal S)^{-1}$}\label{domains}

As the reader remembers from Section \ref{HTSapproach}, we are concerned with resolvents of operators of the type 
$\mathcal{S}=\xi\otimes1\otimes 1+\sum_{i=1}^{r_1}(\gamma_i\otimes u_i\otimes 1+\gamma_i^*\otimes
u_i^*\otimes1)+\sum_{j=1}^{r_2}(\beta_j\otimes1\otimes {v_j}+\beta_j^*\otimes 1\otimes {v_j^*}),$
where $u_j,v_j$ are Cayley transforms of various free algebraically free selfadjoint variables. 
By choosing 
$$
r=\max\{r_1,r_2\},
$$ 
we may assume $r_1=r_2=r$ by simply setting the missing coefficients (be they $\gamma$ or $\beta$) to zero, so from
now on we make this assumption.
Since Cayley transforms of selfadjoints are unitaries, one has $u_i^*=u_i^{-1},v_i^*=v_i^{-1}$. We intend to
view $\mathcal S$ and its various transforms as noncommutative functions (hence analytic). Because of that, we cannot 
keep adjoints in the formula of $\mathcal S$, so that from now on, we agree that
\begin{eqnarray}
\lefteqn{\mathcal{S}=\xi\otimes1\otimes 1}\\\label{TheRealS}
&&\mbox{}+\sum_{i=1}^{r}(\gamma_i\otimes u_i\otimes 1+\gamma_i^*\otimes
u_i^{-1}\otimes1+\beta_i\otimes1\otimes {v_i}+\beta_i^*\otimes 1\otimes {v_i^{-1}}),\nonumber
\end{eqnarray}
where we remind the reader that $\xi=\xi^*,\gamma_i,\beta_i\in M_m(\mathbb C)$ are arbitrary, but fixed. Similarly,
$$
\mathcal S_u=\sum_{i=1}^{r}(\gamma_i\otimes u_i\otimes 1+\gamma_i^*\otimes
u_i^{-1}\otimes1),
$$
$$
\mathcal S_v=\sum_{i=1}^{r}(\beta_i\otimes1\otimes {v_i}+\beta_i^*\otimes 1\otimes {v_i^{-1}}),
$$
where $u_i$ and $v_i$ are simply placeholders for variables to be specified at a later time (in particular,
$\mathcal S=\xi\otimes1\otimes1+\mathcal S_u+\mathcal S_v$). Then $\mathcal S_u,\mathcal S_v,\mathcal S$ 
are selfadjoint whenever evaluated on unitary operators $u_i,v_i$, and $(u_i,v_i)$, respectively.
In the following, we restrict our variables $u_i,v_i$ to a very specific kind of operators, namely Cayley transforms of 
operators with imaginary part between $-1$ and $1$. Thus, for an arbitrary von Neumann algebra\footnote{$C^*$-algebra - or even less - would do.}
$\mathcal A$, we consider the set
$$
\mathcal I_r(\mathcal A)=\coprod_{n\ge1}\mathcal I_r(\mathcal A)_n=\coprod_{n\ge1}\left\{\underline{s}_r\!\in M_n(\mathcal A)^r\colon\!-1<\Im s_j<1,j\in\{1,\dots, r\} \right\}.
$$
The Cayley transform $\Psi(z)=\frac{z+i}{z-i}$ and its inverse $\Psi(z)^{-1}=\frac{z-i}{z+i}$ are both defined on operators whose
imaginary part is between $-1$ and 1. Indeed, if $-1<\Im s<1$, then $(s+i)^{-1}=(\Re s+i(1+\Im s))^{-1}
=(1+\Im s)^{-1/2}((1+\Im s)^{-1/2}\Re s(1+\Im s)^{-1/2}+i)^{-1}(1+\Im s)^{-1/2}$ and $(s-i)^{-1}=(\Re s-i(1-\Im s))^{-1}
=(1-\Im s)^{-1/2}((1-\Im s)^{-1/2}\Re s(1-\Im s)^{-1/2}-i)^{-1}(1-\Im s)^{-1/2}$. Since $\Im s>-1,$ one has $\Im s+1>0$, so that
$\Im s+1$ is invertible. As $(1+\Im s)^{-1/2}\Re s(1+\Im s)^{-1/2}$ is selfadjoint, $((1+\Im s)^{-1/2}\Re s(1+\Im s)^{-1/2}+i)^{-1}$
is bounded in norm by one. Thus, $\|(s+i)^{-1}\|\le\|(1+\Im s)^{-1}\|$. Similarly,  $1>\Im s\implies1-\Im s>0$ makes $(1-\Im s)^{-1}$ bounded as well,
and $\|(s-i)^{-1}\|\le\|(1-\Im s)^{-1}\|$. This implies that $\|\Psi(s)^\epsilon\|\le1+2\max\{\|(1-\Im s)^{-1}\|,\|(1+\Im s)^{-1}\|\},\epsilon\in\{\pm1\}.$
We also note that, regardless of $\mathcal A$, $\mathcal I_{r}(\mathcal A)$ is a noncommutative set. Indeed, if $\underline{s}_{r}
\in\mathcal I_{r}(\mathcal A)_n,\underline{t}_{r}\in\mathcal I_{r}(\mathcal A)_m$, then
$-1\otimes I_{n+m}=\begin{bmatrix} -1\otimes I_n & 0 \\ 0 & -1\otimes I_m \end{bmatrix}<\begin{bmatrix} \Im s_j & 0 \\ 0 & \Im t_j \end{bmatrix}<
\begin{bmatrix} 1\otimes I_n & 0 \\ 0 & 1\otimes I_m \end{bmatrix}=1\otimes I_{n+m}$, and if $U\in M_n(\mathbb C)$ is unitary, then
$-1\otimes I_n=-(1\otimes U)(1\otimes I_n)(1\otimes U^*)<(1\otimes U)\Im s_j(1\otimes U^*)<(1\otimes U)(1\otimes I_n)(1\otimes U^*)=1\otimes I_n$ for all
$1\le j\le r$, so that $\underline{s}_{r}\oplus\underline{t}_{r}\in\mathcal I_{r}(\mathcal A)_{n+m}$
and $(1\otimes U)\underline{s}_{r}(1\otimes U^*)\in\mathcal I_{r}(\mathcal A)_n$.

Next we observe that the formula for $\mathcal S$ makes full sense regardless of whether the $u_i$'s and $v_i$'s are from the same algebra
or from different algebras. This is just stating that tensor product (over $\mathbb C$) of arbitrary $C^*$-algebras is well-defined (the question of completion 
simply does not appear when we consider $\mathcal S,\mathcal S_u,\mathcal S_v$;
it barely appears when considering the resolvent $(z-\mathcal S)^{-1}$, in the sense that $\mathcal S$ is viewed as an element in the spatial tensor 
product, and the inverse which is the resolvent is viewed in the same space). 

We argue that we may view $\mathcal S$ - and some transforms of $\mathcal S$ - separately as a noncommutative function of 
$r$ variables, first in one tensor factor (with the coefficients and the $v$'s being fixed parameters), then in the second
(with the coefficients and the $u$'s being fixed parameters). We consider the composition of $\mathcal S$ with the  
map $\underline{s}_r\mapsto\underline{\Psi}_r(\underline{s}_r):=(\Psi(s_1),\dots\Psi(s_r))$. This 
map has just been shown to be a bijective noncommutative map from $\mathcal I_r(\mathcal A)$ onto its range $\underline{\Psi}_r(\mathcal I_r(\mathcal A))$
(it should be noted for future reference that subsets $\mathcal I_r^\varepsilon(\mathcal A):=
\coprod_{n\ge1}\{\underline{s}_r\in\mathcal I_r(\mathcal A)_n\colon -\varepsilon\le\Im s_j\le\varepsilon,1\le j\le r\}$ are mapped into uniformly 
norm-bounded sets, bounded by a constant which only depends on $\varepsilon\in[0,1)$).

While straightforward (like the proof for $\mathcal I_r(\mathcal A)$), for the sake of the reader's comfort we provide here the proof of the fact that, 
given (fixed) $z\in\mathbb C^+$, von Neumann algebra $\tilde{\mathcal A},p\in\mathbb N,$ $v_1,\dots,v_r\in\underline{\Psi}_r(\mathcal I_r(\tilde{\mathcal A})_p)$
such that $z-\xi\otimes1\otimes 1-\sum_{i=1}^{r}(\gamma_i\otimes \Psi(s_i)\otimes 1+\gamma_i^*\otimes
\Psi(s_i)^{-1}\otimes1+\beta_i\otimes1\otimes {v_i}+\beta_i^*\otimes 1\otimes {v_i^{-1}})$ is invertible when $s_i=s_i^*$,
the correspondence $\underline{s}_r\mapsto\Big(z-
\xi\otimes1\otimes 1-\sum_{i=1}^{r}(\gamma_i\otimes \Psi(s_i)\otimes 1+\gamma_i^*\otimes
\Psi(s_i)^{-1}\otimes1+\beta_i\otimes1\otimes {v_i}+\beta_i^*\otimes 1\otimes {v_i^{-1}})\Big)^{-1}$ is locally a noncommutative map.

First, let us observe that the correspondence  $\underline{s}_r\mapsto\xi\otimes1\otimes 1+\sum_{i=1}^{r}(\gamma_i\otimes \Psi(s_i)\otimes 1+\gamma_i^*\otimes
\Psi(s_i)^{-1}\otimes1+\beta_i\otimes1\otimes {v_i}+\beta_i^*\otimes 1\otimes {v_i^{-1}})$
is indeed a noncommutative map. This is quite obvious, however: the correspondences $s_j\mapsto\Psi(s_j)^{\pm1}$ have been seen just above to be noncommutative maps
on all of $\mathcal I_r(\mathcal A)$ by analytic functional calculus. Thus, it is enough to show that 
$\underline{u}_r\mapsto \xi\otimes1\otimes 1+\sum_{i=1}^{r}(\gamma_i\otimes u_i\otimes 1+\gamma_i^*\otimes
u_i^{-1}\otimes1+\beta_i\otimes1\otimes {v_i}+\beta_i^*\otimes 1\otimes {v_i^{-1}})$ satisfies the axioms of noncommutative functions 
on $\underline{\Psi}_r(\mathcal I_r(\mathcal A))$. This is tautological; one only needs to specify the manner in which the extension is performed.
Our level-one map is from (a subset of) $\mathcal A$ into $M_m(\mathbb C)\otimes\mathcal A\otimes\tilde{\mathcal A}$.
Thus, the amplifications must send elements from $M_n(\mathcal A)$ into $M_n(M_m(\mathbb C)\otimes\mathcal A\otimes\tilde{\mathcal A})$.
To be very rigorous\footnote{We know from matrix analysis that $(A\oplus B)\otimes C=(A\otimes C)\oplus(B\otimes C)$ but often
$A\otimes(B\oplus C)$ is only permutation equivalent to $(A\otimes B)\oplus(A\otimes C)$ - see \cite[Corollary 4.3.16]{H}.}, we consider first
the different formula corresponding to noncommutative functions taking values in $\mathcal A\otimes M_m(\mathbb C)\otimes\tilde{\mathcal A}$;
this corresponds to applying the isometric isomorphism $\mathsf{flip}$ in the first two coordinates, i.e. ${}^0\mathsf{flip}^1$ in our notation. Then we can safely write
\begin{align*}
\begin{bmatrix} u_i & 0 \\ 0 & \tilde{u}_i \end{bmatrix}\otimes\gamma_i&\otimes 1+
\begin{bmatrix} u_i^{-1} & 0 \\ 0 & \tilde{u}_i^{-1} \end{bmatrix}\otimes\gamma_i^*\otimes 1\\
=&
\begin{bmatrix} u_i\otimes\gamma_i\otimes 1 & 0 \\ 0 & \tilde{u}_i\otimes\gamma_i\otimes 1 \end{bmatrix}+
\begin{bmatrix} u_i^{-1}\otimes\gamma_i^*\otimes 1 & 0 \\ 0 & \tilde{u}_i^{-1}\otimes\gamma_i^*\otimes 1 \end{bmatrix},
\end{align*}
showing that (since sums of noncommutative functions are noncommutative functions) 
$\underline{s}_r\mapsto1\otimes\xi\otimes 1+\sum_{i=1}^{r}( \Psi(s_i)\otimes \gamma_i\otimes1+\Psi(s_i)^{-1}\otimes\gamma_i^*\otimes
1+1\otimes \beta_i\otimes{v_i}+1\otimes \beta_i^*\otimes {v_i^{-1}})$, and hence $\underline{s}_r\mapsto\Big(z-
1\otimes\xi\otimes 1-\sum_{i=1}^{r}(\Psi(s_i)\otimes \gamma_i\otimes 1+\Psi(s_i)^{-1}\otimes\gamma_i^*\otimes
1+1\otimes\beta_i\otimes {v_i}+1\otimes \beta_i^*\otimes {v_i^{-1}})\Big)^{-1}$ is - locally - a noncommutative map.
(Conjugation with scalar invertible matrices clearly passes through, and we know that sets of invertibility are level-wise open.) 
Since ${}^0{\sf flip}^1$ is isometric, it extends to the spatial tensor product completion,
so it can be applied to our resolvent as well (${}^0{\sf flip}^1$  is also its own inverse, i.e. ${}^0\mathsf{flip}^1\circ{}^0\mathsf{flip}^1$ is the identity map). 
As a von Neumann algebra isomorphism, it can be extended to matrices of all sizes, by an entry-wise application. Of course, two consecutive applications 
bring us to the identity, as in the case of level one. Thus, when analyzing the behavior of this correspondence as a noncommutative function in 
variables from $\mathcal A$, we first apply ${}^0{\sf flip}^1$, then we perform the evaluations at the desired levels, and then apply again ${}^0{\sf flip}^1$.
For $\underline{t}_r\mapsto\Big(z-
\xi\otimes1\otimes 1-\sum_{i=1}^{r}(\gamma_i\otimes u_i\otimes 1+\gamma_i^*\otimes
u_i^{-1}\otimes1+\beta_i\otimes1\otimes\Psi( {t_i})+\beta_i^*\otimes 1\otimes\Psi(t_i)^{-1})\Big)^{-1}$, one replaces ${}^0{\sf flip}^1$ by ${}^0{\sf flip}^1\circ{}^1{\sf flip}^0 
\colon M_m(\mathbb C)\otimes\mathcal A\otimes\tilde{\mathcal A}\to\tilde{\mathcal A}\otimes M_m(\mathbb C)\otimes\mathcal A$ in the above argument. 

We have thus proved locally the desired properties for our map. However, it is important to have large enough noncommutative domains on which the map in question
is defined. For that, we show next that for any  levels $p,q\in\mathbb N,p,q>0$, and any $z_0\in\mathbb C^+$,
there exists $\varepsilon>0$ such that the map $\{z\in\mathbb C^+\colon\Im z_0-\varepsilon<\Im z\}\times
\mathcal I_r^\varepsilon(\mathcal A)_p\times\mathcal I_r^\varepsilon(\tilde{\mathcal A})_q\ni(z,\underline{s}_r,\underline{t}_r)\mapsto
\Big(z-
\xi\otimes1\otimes 1-\sum_{i=1}^{r}(\gamma_i\otimes \Psi(s_i)\otimes 1+\gamma_i^*\otimes
\Psi(s_i)^{-1}\otimes1+\beta_i\otimes1\otimes\Psi(t_i)+\beta_i^*\otimes 1\otimes\Psi(t_i)^{-1})\Big)^{-1}$
is a well-defined analytic map.
This will be done by a rough, far from optimal, majorization.
Pick an arbitrary $j\in\{1,\dots,r\}$ and write
\begin{eqnarray*}
\lefteqn{\Im\left(\gamma_i\otimes \Psi(s_i)\otimes 1+\gamma_i^*\otimes
\Psi(s_i)^{-1}\otimes1\right)}\\
& = & \frac{1}{2i}\left(\gamma_i\otimes \Psi(s_i)+\gamma_i^*\otimes
\Psi(s_i)^{-1}-\gamma_i^*\otimes \Psi(s_i)^*-\gamma_i\otimes
[\Psi(s_i)^{-1}]^*\right)\otimes1\\
& = & \frac{1}{2i}\left(\gamma_i\otimes\left[\frac{s_i+i}{s_i-i}-\frac{s_i^*+i}{s_i^*-i}\right]+\gamma_i^*\otimes\left[\frac{s_i-i}{s_i+i}-\frac{s_i^*-i}{s_i^*+i}\right]
\right)\otimes1\\
& = & \frac{1}{2i}\left(\gamma_i\otimes\left[\frac{2i}{s_i-i}-\frac{2i}{s_i^*-i}\right]+\gamma_i^*\otimes\left[\frac{-2i}{s_i+i}-\frac{-2i}{s_i^*+i}\right]
\right)\otimes1\\
& = & \left(\!\gamma_i\otimes(s_i-i)^{-1}\!(s_i^*\!-s_i)(s_i^*\!-i)^{-1}\!+\gamma_i^*\otimes(s_i^*\!+i)^{-1}\!(s_i-s_i^*)(s_i+i)^{\!-1}
\right)\!\otimes\!1.
\end{eqnarray*}
We apply the obvious majorization:
\begin{eqnarray*}
\lefteqn{\|\Im\left(\gamma_i\otimes \Psi(s_i)\otimes 1+\gamma_i^*\otimes
\Psi(s_i)^{-1}\otimes1\right)\|}\\
& = & \|\!\left(\!\gamma_i\otimes(s_i-i)^{-1}\!(s_i^*\!-s_i)(s_i^*\!-i)^{-1}\!+\gamma_i^*\otimes(s_i^*\!+i)^{-1}\!(s_i-s_i^*)(s_i+i)^{\!-1}
\right)\!\|\\
& \le & 2\|\gamma_i\|\|s_i^*-s_i\|\|(s_i-i)^{-1}\|\|(s_i+i)^{-1}\|\\
& \le & 2\|\gamma_i\|\|\Im s_i\|\|(1-\Im s_i)^{-1}\|\|(1+\Im s_i)^{-1}\|\\
& < & 4\|\gamma_i\|\frac{\varepsilon}{(1-\varepsilon)^2}.
\end{eqnarray*}
This guarantees that 
\begin{eqnarray*}
\lefteqn{\left\|\Im\Big(\right.\xi\otimes1\otimes 1+\sum_{i=1}^{r}(\gamma_i\otimes \Psi(s_i)\otimes 1+\gamma_i^*\otimes
\Psi(s_i)^{-1}\otimes1}\\
& &\mbox{}+\beta_i\otimes1\otimes\Psi(t_i)+\beta_i^*\otimes1\otimes\Psi(t_i)^{-1})\left.\Big)\right\|<8\frac{\varepsilon\sum_{i=1}^r(\|\gamma_i\|+\|\beta_i\|)}{(1-\varepsilon)^2}
\end{eqnarray*}
whenever all of the variables $\underline{s}_r,\underline{t}_r$ satisfy $\|\Im s_j\|<\varepsilon,\|\Im t_j\|<\varepsilon$.
Thus, if we choose $\varepsilon\in\left(0,\min\left\{1-\frac{1}{\sqrt{2}}, \frac{\Im z_0}{1+16\sum_{i=1}^r(\|\gamma_i\|+\|\beta_i\|)}\right\}\right)$, we are guaranteed that
the correspondence $\{z\in\mathbb C^+\colon\Im z_0-\varepsilon<\Im z\}\times
\mathcal I_r^\varepsilon(\mathcal A)_p\times\mathcal I_r^\varepsilon(\tilde{\mathcal A})_q\ni(z,\underline{s}_r,\underline{t}_r)\mapsto
\Big(z-
\xi\otimes1\otimes 1-\sum_{i=1}^{r}(\gamma_i\otimes \Psi(s_i)\otimes 1+\gamma_i^*\otimes
\Psi(s_i)^{-1}\otimes1+\beta_i\otimes1\otimes\Psi(t_i)+\beta_i^*\otimes 1\otimes\Psi(t_i)^{-1})\Big)^{-1}$
is well-defined. Since it is rational in all variables, it is automatically analytic on this domain. We also note that $\varepsilon$ 
can be taken to depend only on $\Im z_0>0$ and on the initial data of gammas and betas, and not on $\xi$, not on the levels
$p,q\in\mathbb N$, not on the specific von Neumann (in fact just $C^*$-) algebras $\mathcal A,\tilde{\mathcal A}$, and not on the norms of the operators 
$s_i,t_j$ (which shows that even unbounded evaluations are possible, even though 
in that situation questions of continuity/analyticity become more involved - fortunately that does not concern us here).
In particular, we conclude from the above that, given a fixed $z_0\in\mathbb C^+$, there exists an $\varepsilon>0$ such that, if $\|\Im t_1\|,\dots,\|\Im t_r\|<\varepsilon$,
then $\underline{s}_r\mapsto z-\xi\otimes1\otimes1-\sum_{i=1}^{r}( \gamma_i\otimes\Psi(s_i)\otimes1+\gamma_i^*\otimes\Psi(s_i)^{-1}\otimes
1+\beta_i\otimes1\otimes\Psi(t_i)+\beta_i^*\otimes 1\otimes\Psi(t_i)^{-1})$ is invertible on all of $\mathcal I_r^\varepsilon(\mathcal A)$, and hence $\underline{s}_r\mapsto\Big(z-
\xi\otimes1\otimes 1-\sum_{i=1}^{r}(\gamma_i\otimes \Psi(s_i)\otimes 1+\gamma_i^*\otimes
\Psi(s_i)^{-1}\otimes1+\beta_i\otimes1\otimes\Psi(t_i)+\beta_i^*\otimes 1\otimes\Psi(t_i)^{-1})\Big)^{-1}$ is a noncommutative map defined on
$\mathcal I_r^\varepsilon(\mathcal A)$ which is bounded (uniformly) on $\mathcal I_r^{\varepsilon_1}(\mathcal A)$ for any fixed $\varepsilon_1\in[0,\varepsilon)$.

As we have seen already that $\|\Psi(s_i)^{\pm 1}\|,\|\Psi(t_j)^{\pm1}\|$ are bounded uniformly in terms of $\|\Im s_i\|,\|\Im t_j\|\in[0,1)$ exclusively, the above
argument also guarantees that, for a given majorization $\|\Im s_i\|,\|\Im t_j\|<\varepsilon\in(0,1),$ there exists an $M>0$ which depends only on $\varepsilon
\in[0,1)$ and on the initial data $\xi,\gamma_1,\dots,\gamma_r,\beta_1,\dots,\beta_r\in M_m(\mathbb C)$, such that
$z-\xi\otimes1\otimes 1-\sum_{i=1}^{r}(\gamma_i\otimes \Psi(s_i)\otimes 1+\gamma_i^*\otimes
\Psi(s_i)^{-1}\otimes1+\beta_i\otimes1\otimes\Psi(t_i)+\beta_i^*\otimes 1\otimes\Psi(t_i)^{-1})$ is invertible for all $|z|>M$, and has a power series
expansion in $\frac1z$ which converges uniformly on $|z|\ge M'$ if $M'>M$ is given. It is important to note that this constant $M$ does {\em not} depend on
the von Neumann algebras in which $s_i,t_i$ live, on the levels $p,q$ at which we evaluate, or on the norm of the operators $s_i,t_i$.

We summarize the facts established above in the following
\begin{proposition}\label{analyticity}
With the previously introduced notations, 

\begin{itemize}

\item given $z_0\in\mathbb C^+$,  there exists an $\varepsilon\in(0,1)$ such that the correspondence 
$\{z\in\mathbb C^+\colon\Im z_0-\varepsilon<\Im z\}\times
\mathcal I_r^\varepsilon(\mathcal A)_p\times\mathcal I_r^\varepsilon(\tilde{\mathcal A})_q\ni(z,\underline{s}_r,\underline{t}_r)\mapsto
\Big(z-\xi\otimes1\otimes 1-\sum_{i=1}^{r}(\gamma_i\otimes \Psi(s_i)\otimes 1+\gamma_i^*\otimes
\Psi(s_i)^{-1}\otimes1+\beta_i\otimes1\otimes\Psi(t_i)+\beta_i^*\otimes 1\otimes\Psi(t_i)^{-1})\Big)^{-1}$ 
is analytic for all $p,q\in\mathbb N,p,q>0$. This $\varepsilon$ does not depend on $p,q\in\mathbb N$.
If $0<\varepsilon_1<\varepsilon$ is given, then the set $\{z\in\mathbb C^+\colon\Im z_0-\varepsilon_1<\Im z\}\times
\mathcal I_r^{\varepsilon_1}(\mathcal A)_p\times\mathcal I_r^{\varepsilon_1}(\tilde{\mathcal A})_q$ is mapped inside a norm-bounded set,
with a bound depending on $\varepsilon_1$ and not on $p,q$. In particular, if any two of the three variables $(z,\underline{s}_r,\underline{t}_r)$
are fixed, then the correspondence in the third is a noncommutative function on the given domain;

\item given $\varepsilon\in(0,1)$, there exists an $M>0$ such that $\{z\in\mathbb C^+\colon|z|>M\}\times
\mathcal I_r^\varepsilon(\mathcal A)_p\times\mathcal I_r^\varepsilon(\tilde{\mathcal A})_q\ni(z,\underline{s}_r,\underline{t}_r)\mapsto
\Big(z-\xi\otimes1\otimes 1-\sum_{i=1}^{r}(\gamma_i\otimes \Psi(s_i)\otimes 1+\gamma_i^*\otimes
\Psi(s_i)^{-1}\otimes1+\beta_i\otimes1\otimes\Psi(t_i)+\beta_i^*\otimes 1\otimes\Psi(t_i)^{-1})\Big)^{-1}$ is analytic for all $p,q\in\mathbb N,p,q>0$. 
This $M$ does not depend on $p,q\in\mathbb N$. 
If $M_1>M$, then this correspondence sends $\{z\in\mathbb C^+\colon|z|>M_1\}\times
\mathcal I_r^\varepsilon(\mathcal A)_p\times\mathcal I_r^\varepsilon(\tilde{\mathcal A})_q$ in a norm-bounded set, with a bound that does not depend on $p,q$.
If any two of the three variables $(z,\underline{s}_r,\underline{t}_r)$
are fixed, then the correspondence in the third is a noncommutative function on the given domain.

\end{itemize}

Via a generalized Schwarz reflection principle, the first item above is rephrased in the obvious manner for $z_0\in\mathbb C^-$.

\end{proposition}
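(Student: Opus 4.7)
The plan is to package the quantitative computations carried out in the paragraphs preceding the statement into the two assertions; essentially no new ideas are needed, only careful bookkeeping to verify that the constants depend on the right data. The cornerstone estimate is the imaginary-part bound
\[
\|\Im[\gamma_i\otimes\Psi(s_i)\otimes1+\gamma_i^*\otimes\Psi(s_i)^{-1}\otimes1]\| \le 4\|\gamma_i\|\frac{\varepsilon}{(1-\varepsilon)^2},
\]
which was derived from the identity $\Psi(s)-\Psi(s)^* = -2(s-i)^{-1}(s-s^*)(s^*-i)^{-1}$ and the majorizations $\|(s\pm i)^{-1}\| \le (1-\|\Im s\|)^{-1}\le (1-\varepsilon)^{-1}$ valid on $\mathcal I_r^\varepsilon(\mathcal A)$, together with its obvious analogue in the $\beta,t$ variables.

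For the first item, I fix $z_0\in\mathbb C^+$ and then choose $\varepsilon\in(0,1)$ small enough (explicitly, $\varepsilon<\min\{1-1/\sqrt{2},\,\Im z_0/(1+16\sum_i(\|\gamma_i\|+\|\beta_i\|))\}$, as in the displayed choice above the proposition) so that the total imaginary-part bound $8\varepsilon\sum_i(\|\gamma_i\|+\|\beta_i\|)/(1-\varepsilon)^2$ stays below $\Im z_0/2$. Then for any $z$ with $\Im z>\Im z_0-\varepsilon$ the imaginary part of $z-\Sigma(\underline{s}_r,\underline{t}_r)$ is uniformly bounded below, so $z-\Sigma$ is invertible with a norm bound on the inverse depending only on $\varepsilon_1\in[0,\varepsilon)$, $\Im z_0$, and the initial data $\xi,\gamma_i,\beta_i$; in particular the bound is independent of the levels $p,q$ and of the ambient $C^*$-algebras $\mathcal A,\tilde{\mathcal A}$.

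For the second item, I fix $\varepsilon\in(0,1)$ first and use the uniform bounds $\|\Psi(s)^{\pm 1}\|\le 1+2/(1-\varepsilon)$ established earlier to produce $M>0$ depending only on $\varepsilon,\xi,\gamma_i,\beta_i$ with $\|\Sigma(\underline{s}_r,\underline{t}_r)\|\le M/2$ for every $\underline{s}_r\in\mathcal I_r^\varepsilon(\mathcal A)_p$ and $\underline{t}_r\in\mathcal I_r^\varepsilon(\tilde{\mathcal A})_q$. The Neumann expansion
\[
(z-\Sigma)^{-1}=\sum_{n\ge 0}z^{-n-1}\Sigma^n
\]
then converges absolutely for $|z|>M$ and uniformly on $\{|z|\ge M_1\}\times\mathcal I_r^\varepsilon(\mathcal A)_p\times\mathcal I_r^\varepsilon(\tilde{\mathcal A})_q$ whenever $M_1>M$, giving analyticity and the claimed uniform norm bound, again independent of $p,q$.

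It remains to record that, with any two of the three variables held fixed, the resulting map is a noncommutative function in the third; but this is already contained in the discussion preceding the proposition. The direct-sum and scalar-conjugation axioms for $\Psi(s_i)^{\pm 1}$ and $\Psi(t_j)^{\pm 1}$ come from the analytic functional calculus on $\mathcal I_r(\mathcal A)$; they lift through the tensor structure after applying ${}^0\mathsf{flip}^1$, respectively ${}^0\mathsf{flip}^1\circ{}^1\mathsf{flip}^0$, to place the variable of interest in the outside tensor slot (these maps are isometric, self-inverse, and commute with entrywise amplification, so no issue arises). The $\mathbb C^-$ version follows by a generalized Schwarz reflection: the correspondence $(z,\underline{s}_r,\underline{t}_r)\mapsto \bigl((\bar z-\Sigma(\underline{s}_r^*,\underline{t}_r^*))^{-1}\bigr)^*$ reduces the $z_0\in\mathbb C^-$ case to the one already handled. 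The only mild obstacle in the whole argument is tracking that the constants $\varepsilon$ and $M$ depend only on the parameters claimed and neither on $p,q$ nor on $\|s_i\|,\|t_j\|$, which the majorizations above make transparent.
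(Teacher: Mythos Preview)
Your proposal is correct and follows essentially the same route as the paper: the proposition is explicitly introduced as a summary of the preceding computations, and you have accurately identified and repackaged the two key estimates (the imaginary-part bound $\|\Im\Sigma\|<8\varepsilon\sum_i(\|\gamma_i\|+\|\beta_i\|)/(1-\varepsilon)^2$ for item one, and the uniform bound $\|\Psi(s)^{\pm1}\|\le 1+2/(1-\varepsilon)$ feeding a Neumann series for item two), together with the $\mathsf{flip}$-based verification of the noncommutative-function axioms. One cosmetic point: your claim that the total imaginary-part bound ``stays below $\Im z_0/2$'' is slightly stronger than what the paper's explicit choice of $\varepsilon$ actually delivers (it only gives a bound below $\Im z_0-\varepsilon$, which is exactly what is needed for invertibility on $\{\Im z>\Im z_0-\varepsilon\}$), but this does not affect the argument.
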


In order to use Parraud's formulas, which are shown to hold on polynomials and exponentials, we need to express the Cayley transform as a Fourier-like transform. 
We next use the formulas on the power series expansions at infinity of the resolvent of $\mathcal S$, whose terms are polynomials 
in Cayley transforms of our variables, and extend by analytic continuation. 

The Fourier-like formula we use is the following:
\begin{equation}\label{ix}
\Psi(x)^\epsilon=\frac{x+\epsilon i}{x-\epsilon i} =1-2\int_{-\infty}^0 e^{(i\epsilon x+1)y}\,{\rm d}y,\quad \epsilon\in\{\pm1\}.
\end{equation}
The first equality holds for all $x\in\overline{\mathbb C}$, and the second for all $x\in\{z\in\mathbb C\colon|\Im z|<1\}$, as analytic functions
(to be precise, when $\epsilon=1$, the second equality holds on $\{z\in\mathbb C\colon\Im z<1\}$, and when $\epsilon=-1$, on $\{z\in\mathbb C\colon\Im z>-1\}$). 
This extends via analytic functional calculus to bounded linear operators with imaginary part between $-1$ and $1$ 
(of course, \eqref{ix} makes sense for operators whose spectrum is included in the corresponding domain),
and, for {\em selfadjoint} unbounded operators affiliated to any type II von Neumann algebra, via continuous functional calculus; we use the obvious notation:
\begin{equation}\label{expo}
\Psi(Z)^\epsilon=\frac{Z+\epsilon i}{Z-\epsilon i} =1-2\int_{-\infty}^0 e^{(i\epsilon Z+1)y}\,{\rm d}y,\quad \epsilon\in\{\pm1\}.
\end{equation}
(Since there is no risk of confusion, we continue to write $\frac{Z+\epsilon i}{Z-\epsilon i}$ for $(Z+\epsilon i)(Z-\epsilon i)^{-1}=(Z-\epsilon i)^{-1}(Z+\epsilon i)$, as we have before.) 

Analyticity (see Example \ref{ExAFC}) guarantees that the Taylor-Taylor power series expansions around zero of the left and right hand sides in \eqref{expo} coincide.
However, a direct verification is possible: 
$$
\frac{Z+\epsilon i}{Z-\epsilon i} =1-2\sum_{n=0}^\infty(-i\epsilon Z)^n,\quad \|Z\|<1,
$$
\begin{eqnarray*}
1-2\int_{-\infty}^0e^{(i\epsilon Z+1)y}\,{\rm d}y&=&1-2\int_{-\infty}^0e^ye^{i\epsilon Zy}\,{\rm d}y=1-2\int_{-\infty}^0\!\!e^y\sum_{n=0}^\infty\frac{(i\epsilon Zy)^n}{n!}\,{\rm d}y\\
&=&1-2\sum_{n=0}^\infty\left(\int_{-\infty}^0\frac{y^ne^y}{n!}\,{\rm d}y\right)(i\epsilon Z)^n\\
& = & 1-2\sum_{n=0}^\infty\left(-1\right)^n(i\epsilon Z)^n,\quad \|Z\|<1.
\end{eqnarray*}
(We have used the commutativity of the identity with $Z$ to write $ e^{(i\epsilon Z+1)y}=e^ye^{i\epsilon Zy}$, and the elementary equality $\mathsf I_n=-n\mathsf I_{n-1}$
for $\mathsf I_n=\int_{-\infty}^0y^ne^y\,{\rm d}y$.) This gives an alternative, elementary proof of the equality \eqref{expo} 
on operators of norm strictly less than one. It follows from Example \ref{ExAFC} that the equality holds for all operators $Z$ with $-1<\Im Z<1$. 

Since $\Psi(Z)^\epsilon$ are noncommutative functions on an open set, they are differentiable, in the (much stronger) sense that they accept the application of
difference-differential operators and the resulting functions have domains which are no smaller than the domain of $\Psi(Z)^\epsilon$. For operators
of norm strictly less than one, a direct verification using the power series found above allows one to conclude that 
$\partial\Psi(Z)^\epsilon=\frac{i\epsilon}{2}(\Psi(Z)^\epsilon-1)\otimes(\Psi(Z)^\epsilon-1)$, regardless of which expression for $\Psi$ we consider. 
(Of course, thanks to Voiculescu's result \cite[Section 3]{Coalg}, this needs neither verification, nor proof, when the rational formula for $\Psi$ is employed.)
Thus, if $s_1,\dots,s_r$ are algebraically free selfadjoint variables, then
\begin{equation}\label{Cay}
\partial_j \Psi(s_k)^\epsilon=\delta_{j,k}\epsilon\frac{i}{2}(\Psi(s_k)^\epsilon-1)\otimes(\Psi(s_k)^\epsilon-1),\quad j,k=1,\dots,r,\ \epsilon\in\{\pm1\}.
\end{equation}
The formula derived from the expression involving exponentials can be seen to be the same, for example by employing (classical) analytic continuation.
A most direct verification shows the equality
$$
\Delta\Psi(Z_1;Z_2)(W)=(\mathsf{ev}_W\circ\#\circ\partial)(\Psi)(Z_1;Z_2)(W),\ \ -1<\Im Z_j<1,W\text{ arbitrary},
$$
with a similar formula for $\Psi(\cdot)^{-1}$.
In particular,
\begin{equation}\label{DeltaCay}
\Delta(\Psi^\epsilon)(s;t)(c)=\epsilon\frac{i}{2}(\Psi(s)^\epsilon-1)c(\Psi(t)^\epsilon-1),\quad-1<\Im s,\Im t<1, \ \epsilon\in\{\pm1\}.
\end{equation}

Up to this point, we have established that $\underline{s}_r\mapsto(z-\mathcal S)^{-1},\underline{t}_r\mapsto(z-\mathcal S)^{-1}$ are noncommutative
functions on convenient domains that contain the selfadjoints, that the Cayley transform $\Psi$ can be expressed in terms of a Fourier-like integral on $\mathbb R$,
and the formula for $\partial\Psi,\Delta\Psi$. As it will be seen in Section \ref{Sec:P}, Parraud's formulae for linear functionals that are used in the definition of
$E$ and $\Delta_N$ from Section \ref{HTSapproach} involve applications of $\partial_i,\#,\mathsf{ev}_{.},\mathsf{flip}$ to the tensor coordinates of $(z-\mathcal S)^{-1}$.
The fact that these operations make sense on $(z-\mathcal S)^{-1}$ follows very easily by employing the identification of $\mathsf{ev}\circ\partial_j$ and $\Delta_j$, 
coupled with Proposition \ref{analyticity}. However, that is not enough: we need to show that these applications indeed represent an extension of 
Parraud's functionals to $(z-\mathcal S)^{-1}$, as well as prove certain properties of the functions obtained this way, properties related to the behavior 
in $z$ when $z$ is close to the real line.

\subsection{Differentiation and evaluation}\label{diffev}

Recall the fact that $\mathsf{flip}$ is a von Neumann algebra isomorphism, so that $\mathsf{flip}(z-W)^{-1}=(z-\mathsf{flip}W)^{-1},$ for instance.
This extends to any rational expression. 

We record next the expression one obtains 
when applying the partial difference-differential operator to $(z-\mathcal S)^{-1}$. The computations are not new (see \cite[2.3.5]{KVV}), and we only provide them for the sake of 
completeness; the reader may view them as an exercise. We recall from above the manner in which we understand $\mathcal S$ as a noncommutative function, and (since we 
work in our computation  on the second tensor coordinate) we remind the reader that  - for computational purposes only - we first perform the flipping of the first two coordinates:
${}^0\mathsf{flip}^1(z-\mathcal S)=1\otimes b\otimes1-\sum_{i=1}^{r}(\Psi(s_i)\otimes \gamma_i\otimes 1+
\Psi(s_i)^{-1}\otimes\gamma_i^*\otimes1+1\otimes\beta_i\otimes\Psi(t_i)+1\otimes\beta_i^*\otimes\Psi(t_i)^{-1})$. Then we pick some other $c,u_1,\dots,u_r\in\mathcal A$
(or any matrix amplification of it, if needed) and evaluate the multiplicative inverse of $\begin{bmatrix} 1 & 0 \\ 0 & 1\end{bmatrix}\otimes b\otimes1-\sum_{i=1}^{r}\Big\{
\Psi\left(\begin{bmatrix} s_i & \delta_{i,j}c \\ 0 & u_i\end{bmatrix}\right)\otimes\gamma_i\otimes 1+\Psi\left(\begin{bmatrix} s_i & \delta_{i,j}c \\ 0 & u_i\end{bmatrix}\right)^{-1}
\otimes\gamma_i^*\otimes1+\begin{bmatrix} 1 & 0 \\ 0 & 1\end{bmatrix}\otimes\beta_i\otimes\Psi(t_i)+\begin{bmatrix} 1 & 0 \\ 0 & 1\end{bmatrix}\otimes\beta_i^*\otimes
\Psi(t_i)^{-1}\Big\}
=\begin{bmatrix} 1\otimes b\otimes1 & 0 \\ 0 & 1\otimes b\otimes1 \end{bmatrix}-\sum_{i=1}^{r}
(\begin{bmatrix} \Psi(s_i)\otimes\gamma_i\otimes 1 & \delta_{i,j}\Delta\Psi(s_i,u_i)(c)\otimes\gamma_i\otimes 1 \\ 0 & \Psi(u_i)\otimes\gamma_i\otimes 1\end{bmatrix}+
\begin{bmatrix} 1\otimes\beta_i\otimes\Psi(t_i) & 0 \\ 0 & 1\otimes\beta_i\otimes\Psi(t_i) \end{bmatrix}\ +\ 
\begin{bmatrix} 1\otimes\beta_i^*\otimes\Psi(t_i)^{-1} & 0 \\ 0 & 1\otimes\beta_i^*\otimes\Psi(t_i)^{-1} \end{bmatrix}+
\begin{bmatrix} \Psi(s_i)^{-1}\otimes\gamma_i\otimes 1 & -\delta_{i,j}\Psi(s_i)^{-1}\Delta\Psi(s_i,u_i)(c)\Psi(u_i)^{-1}\otimes\gamma_i\otimes 1 \\ 0 & \Psi(u_i)^{-1}\otimes\gamma_i
\otimes 1\end{bmatrix})$. (In order to make the argument more self-contained,
\begin{eqnarray}
\Psi\left(\begin{bmatrix} s_j & c \\ 0 & u_j\end{bmatrix}\right)& = & \begin{bmatrix} 1 & 0 \\ 0 & 1\end{bmatrix}+2i
\begin{bmatrix} s_j-i & c \\ 0 & u_j-i\end{bmatrix}^{-1}\nonumber\\
& = & 
\begin{bmatrix} 1+2i(s_j-i)^{-1} & -2i(s_j-i)^{-1}c(u_j-i)^{-1} \\ 0 & 1+2i(u_j-i)^{-1}\end{bmatrix}\nonumber\\
& = & 
\begin{bmatrix} \Psi(s_j) & \frac{i}{2}\left(\Psi(s_j)-1\right)c\left(\Psi(u_j)-1\right) \\ 0 & \Psi(u_j)\end{bmatrix};\label{p}
\end{eqnarray}
recall \eqref{DeltaCay}. If $c=1$, we recognize in the $(1,2)$ entry $((\#\circ\partial_j)\Psi)(s_j;u_j)$, and if $s_j=u_j$, we recognize $(\mathsf{ev}_c\circ\partial_j)(\Psi)(s_j)$.)
The forms of $\Psi(\cdot)$ and $\Psi(\cdot)^{-1}$ are so similar that we will not hesitate to keep in our formulae the expression $\Delta(\Psi^{-1})(s_i,u_i)(c)$
instead of $-\Psi(s_i)^{-1}\Delta\Psi(s_i,u_i)(c)\Psi(u_i)^{-1}$, or alternate between them. 

We view $(z-\mathcal S)^{-1}$ - after the application of ${}^0\mathsf{flip}^1$ - as a noncommutative function in the variables $\underline{s}_r$ and apply the usual 
calculation rules (for saving space, we use again $b=z-\xi$ in our notation):
{\tiny{\begin{eqnarray}
\lefteqn{\left(\begin{bmatrix} 1 & 0 \\ 0 & 1\end{bmatrix}\otimes b\otimes1-\sum_{i=1}^{r}
\left(\Psi\left(\begin{bmatrix} s_i & \delta_{i,j}c \\ 0 & u_i\end{bmatrix}\right)\otimes\gamma_i\otimes 1+\Psi\left(\begin{bmatrix} s_i & \delta_{i,j}c \\ 0 & u_i\end{bmatrix}\right)^{-1}
\otimes\gamma_i^*\otimes1\right.\right.}\nonumber\\
& & \mbox{}+\left.\left.\begin{bmatrix} 1 & 0 \\ 0 & 1\end{bmatrix}\otimes\beta_i\otimes\Psi(t_i)+\begin{bmatrix} 1 & 0 \\ 0 & 1\end{bmatrix}\otimes\beta_i^*\otimes
\Psi(t_i)^{-1}\right)\right)^{-1}\nonumber\\
& = & \left(\begin{bmatrix}(1\otimes b-\sum\Psi(s_i)\otimes\gamma_i+\Psi(s_i)^{-1}\otimes\gamma_i^*)\otimes1 & -\sum\delta_{i,j}(\Delta\Psi(s_i;u_i)(c)\otimes
\gamma_i+\Delta(\Psi^{-1})(s_i,;u_i)(c)\otimes\gamma_i^*)\otimes1 \\ 0 & (1\otimes b-\sum\Psi(u_i)\otimes\gamma_i
+\Psi(u_i)^{-1}\otimes\gamma_i^*)\otimes1\end{bmatrix}\right.\nonumber\\
&  &\mbox{}-\left. \begin{bmatrix} \sum1\otimes(\beta_i\otimes\Psi(t_i)+\beta_i^*\otimes\Psi(t_i)^{-1}) & 0 \\ 0 & \sum1\otimes(\beta_i\otimes\Psi(t_i)+\beta_i^*\otimes\Psi(t_i)^{-1})
\end{bmatrix}\right)^{-1}\nonumber\\
& = &  \begin{bmatrix} {}^0\mathsf{flip}^1(z-\mathcal S) & -{}^0\mathsf{flip}^1\sum\delta_{i,j}(\gamma_i\otimes\Delta\Psi(s_i;u_i)(c)
+\gamma_i^*\otimes\Delta(\Psi^{-1})(s_i,;u_i)(c))\otimes1 \\ 0 & {}^0\mathsf{flip}^1(z-\mathcal S)\end{bmatrix}\nonumber\\
& = & M_2({}^0\mathsf{flip}^1)\!\begin{bmatrix}(z-\mathcal S)^{-1} & \!\!(z-\mathcal S)^{-1}\!\left[\sum\delta_{i,j}(\gamma_i\otimes\Delta\Psi(s_i;u_i)(c)
+\gamma_i^*\otimes\Delta(\Psi^{-1})(s_i,;u_i)(c))\otimes1\right]\!(z-\mathcal S)^{-1} \\ 0 & (z-\mathcal S)^{-1}\end{bmatrix}\label{drezzy}
\end{eqnarray}}}\noindent
Here the upper left $\mathcal S$ is evaluated in $\underline{s}_r$, the lower right one in $\underline{u}_r$, the one on the left side of the $(1,2)$ entry in $\underline{s}_r$,
and the one on the right side of the $(1,2)$ entry in $\underline{u}_r$. Replacing $\Delta\Psi(s_i;u_i)(c)$ with the result of \eqref{p} and 
$\Delta(\Psi^{-1})(s_i;u_i)(c)$ with the result obtained by replacing $\Psi$ with $\Psi^{-1}$ in \eqref{p} followed by a second application of $\mathsf{flip}$
yields 
\begin{eqnarray}
\lefteqn{\Delta_j(z-\mathcal S)^{-1}}\nonumber\\
&\!\!\! = & \frac{i}{2}\left((z-\xi)\otimes1\otimes1-\sum_{i=1}^{r}(\gamma_i\otimes\Psi(s_i)\otimes1+\gamma_i^*\otimes\Psi(s_i)^{-1}\otimes1)\right.\nonumber\\
& & \mbox{}-\left.\sum_{i=1}^r(\beta_i\otimes1\otimes\Psi(t_i)+\beta_i^*\otimes1\otimes \Psi(t_i)^{-1})\right)^{-1}\nonumber\\
& & \mbox{}\times\left(\gamma_j\!\otimes\![(\Psi(s_j)\!-\!1)c(\Psi(u_j)\!-\!1)]-\gamma_j^*\otimes[(\Psi(s_j)^{-1}\!-\!1)c(\Psi(u_j)^{-1}\!-\!1)]\right)\!\otimes\!1\nonumber\\
&&\mbox{}\times\left((z-\xi)\otimes1\otimes1-\sum_{i=1}^{r}(\gamma_i\otimes\Psi(u_i)\otimes1+\gamma_i^*\otimes\Psi(u_i)^{-1}\otimes1)\right.\nonumber\\
& & \mbox{}-\left.\sum_{i=1}^r(\beta_i\otimes1\otimes\Psi(t_i)+\beta_i^*\otimes1\otimes \Psi(t_i)^{-1})\right)^{-1}.\label{frezzy}
\end{eqnarray}
Employing power series expansions around infinity yields the same result, as it can be directly verified.
The terms of the power series expansion are polynomials in $\Psi(\cdot)$ and $\Psi(\cdot)^{-1}$, functions for which the conclusions of Section \ref{Sec:DD-FDQ} hold.
Thus, the same holds for the (convergent) power series for $|z|$ large, and, by analytic continuation, for all $z$ in the connected component of the domain of definition which 
contains a neighborhood of infinity.
We wish to emphasize that in \eqref{frezzy} the power series expansions around infinity (in $1/z$) are convergent on the neighborhood of infinity which is the set
of points $z\in\mathbb C$ with the property that $w-\xi\otimes1\otimes1-\sum_{i=1}^{r}(\gamma_i\otimes\Psi(s_i)\otimes1+\gamma_i^*\otimes\Psi(s_i)^{-1}\otimes1)
-\sum_{i=1}^r(\beta_i\otimes1\otimes\Psi(t_i)+\beta_i^*\otimes1\otimes \Psi(t_i)^{-1})$ is invertible for all $w\in\mathbb C$ satisfying $|w|\ge|z|$ (if $u_i=s_i$ for all
$i\in\{1,\dots,r\}$, otherwise one needs to require the same condition for the expression in which the vector $\underline{s}_r$ is replaced by $\underline{u}_r$).
It is obvious that one may apply $\Delta_k$ in either of the variables $u$ or $s$ in the right-hand side of \eqref{frezzy}, and the usual rules 
for the computation of difference-differentials apply. Thus, we conclude that 
\begin{itemize}
\item The domain of analyticity in the variable $z$ does not decrease through the application of  $\Delta_j$.
\end{itemize}

\section{Asymptotic expansion of polynomials in  Cayley transforms of GUE's}\label{S:AsyEx}

\subsection{A fundamental result of Parraud}\label{Sec:P}

In \cite{P}, Parraud established an asymptotic expansion in the dimension of smooth functions in polynomials 
in deterministic matrices and iid GUE matrices. The constants in this expansion are built explicitly with the help 
of free probability. In this section, we are going to present the second order  expansion of the expectation 
of the normalized trace of a product of polynomials  in independent 
GUE matrices, contained in Parraud's  work. We just rewrite the constants in this expansion in terms of the operations 
$\#$, $\mathsf{ev}_1$, ${\sf flip}$ and $\partial$ introduced in Section \ref{pol}.

Let $(\mathcal A, \tau)$ be a $C^*$-probability space where $\tau$ is a faithful trace.
Let  $(\mathcal A_N , \tau_N )$ be the free product of $(M_N (C),\tr_N)$ and  $(\mathcal A, \tau)$.

\noindent Let $s$,  $w$, $z^1$, $z^2$, $\tilde w$, $\tilde z^1$, $\tilde z^2$ be free $r$-semi-circular systems in $(\mathcal A, \tau)$.
Let $\underline{X_N}_r$ be an $r$-tuple of independent GUE. Define  for $0\leq t_1\leq t_2$, the following noncommutative variables in $(\mathcal A_N, \tau_N)$
 $$z^1_{t_1}=(1-e^{-{t_1}})^{1/2}z^1 +(e^{-{t_1}} -e^{-t_2})^{1/2} w+ e^{-t_2/2} s,$$
$$z^2_{t_1} =(1-e^{-{t_1}})^{1/2}z^2 +(e^{-{t_1}} -e^{-t_2})^{1/2} w+ e^{-t_2/2} s,$$
$$\tilde z^1_{t_1} =(1-e^{-{t_1}})^{1/2}\tilde z^1 +(e^{-{t_1}} -e^{-t_2})^{1/2}\tilde w+ e^{-t_2/2} s,$$
$$\tilde z^2_{t_1} =(1-e^{-{t_1}})^{1/2}\tilde z^2 +(e^{-{t_1}} -e^{-t_2})^{1/2}\tilde  w+ e^{-t_2/2} s$$
and 
$$z^1_{t_1}(N)=(1-e^{-{t_1}})^{1/2}z^1 +(e^{-{t_1}} -e^{-t_2})^{1/2} w+ e^{-t_2/2} \underline{X_N}_r,$$
$$z^2_{t_1}(N) =(1-e^{-{t_1}})^{1/2}z^2 +(e^{-{t_1}} -e^{-t_2})^{1/2} w+ e^{-t_2/2} \underline{X_N}_r,$$
$$\tilde z^1_{t_1}(N) =(1-e^{-{t_1}})^{1/2}\tilde z^1 +(e^{-{t_1}} -e^{-t_2})^{1/2}\tilde w+ e^{-t_2/2} \underline{X_N}_r,$$
$$\tilde z^2_{t_1} (N)=(1-e^{-{t_1}})^{1/2}\tilde z^2 +(e^{-{t_1}} -e^{-t_2})^{1/2}\tilde  w+ e^{-t_2/2} \underline{X_N}_r.$$
Define now the linear forms $\nu_1$ and $\nu_1^{(N)}$ on $ \mathbb{C} \langle\underline{\mathbf x}_r\rangle$, by setting for  any $P \in\mathbb{C} \langle\underline{\mathbf x}_r\rangle$, 
\begin{eqnarray*}\nu_1(P)&=&\frac{1}{2}\int_0^{+\infty}\int_0^{t_2}   e^{-t_2-t_1} \tau \left(
R_1(P)( z^1_{t_1}, \tilde z^1_{t_1}, \tilde z^2_{t_1}, z_{t_1}^2)\right)\,{\rm  d}t_1{\rm d}t_2, \end{eqnarray*}
\begin{eqnarray*}\nu_1^{(N)}(P)&=&\frac{1}{2}\int_0^{+\infty}\int_0^{t_2}   e^{-t_2-t_1} \tau_N \left(
R_1(P)( z^1_{t_1}(N), \tilde z^1_{t_1}(N), \tilde z^2_{t_1}(N), z_{t_1}^2(N))\right)\,{\rm d}t_1{\rm d}t_2 \end{eqnarray*}
where $R_1(P)$ is the   function in  four indeterminates $x^1, \tilde x^1, \tilde x^2, x^2$ of $r$-tuples defined by 
\begin{equation}\label{defRP}
R_1 (P)(x^1\!,\tilde x^1\!,\tilde x^2\!,x^2)=\!\!\sum_{1\leq j,k\leq r}\!\!\#^3\!\left\{\left[({\sf flip}\circ\partial_{k})\!\otimes\!({\sf flip}\circ 
\partial_{k})\right](\partial_{j}(D_{j}P))\right\}(x^1\!,\tilde x^1\!,\tilde x^2\!,x^2),
\end{equation}
$\#$ being the embedding of the tensor product in the algebraic free product defined in Section \ref{pol}. It is clear that $R_1(P)$ belongs to 
$\mathbb{C}\langle\underline{\mathbf x}_{4r}\rangle$. For the reader's understanding, here is  the explicit formula for $R_1(P)$
when $P(x_1,\ldots,x_r)= x_{i_1}\cdots x_{i_n}$, $(i_1,\ldots,i_n)\in \{1,\ldots,r\}^n$:
\begin{eqnarray*}
\lefteqn{R_1(P)(x^1, \tilde x^1, \tilde x^2, x^2)}\\&=&\sum_{1\leq i,j\leq r} \hspace*{-0.2cm}\sum_{\tiny \begin{array}{cc}1\leq k \leq n\\ i_k=i\end{array}}\hspace*{-0.2cm} \sum_{\tiny \begin{array}{cc}k+1 \leq l\leq n \\ i_l=i\end{array}}\hspace*{-0.2cm} \sum_{\tiny \begin{array}{cc}k+1 \leq p\leq l-1\\ i_p=j\end{array}}\\&&\left\{
\sum_{\tiny \begin{array}{cc}l+1 \leq q\leq n \\ i_q=j\end{array}}x^1_{i_{p+1}}\cdots x^1_{i_{l-1}}
\tilde x^1_{i_{k+1}}\cdots \tilde x^1_{i_{p-1}}
\tilde x^2_{i_{q+1}}\cdots \tilde x^2_{i_{n}}
\tilde x^2_{i_{1}}\cdots \tilde x^2_{i_{k-1}}
 x^2_{i_{l+1}}\cdots  x^2_{i_{q-1}}\right.\\&&
+\left.\sum_{\tiny \begin{array}{cc}1 \leq q\leq  k-1 \\ i_q=j\end{array}}x^1_{i_{p+1}}\cdots x^1_{i_{l-1}}
\tilde x^1_{i_{k+1}}\cdots \tilde x^1_{i_{p-1}}
\tilde x^2_{i_{q+1}}\cdots \tilde x^2_{i_{k-1}}
x^2_{i_{l+1}}\cdots  x^2_{i_{n}}
 x^2_{i_{1}}\cdots  x^2_{i_{q-1}} \right\}
 \\&+&\sum_{1\leq i,j\leq r} \hspace*{-0.2cm}\sum_{\tiny \begin{array}{cc}1\leq k \leq n\\ i_k=i\end{array}}\hspace*{-0.2cm} \sum_{\tiny \begin{array}{cc}1 \leq l\leq k-1 \\ i_l=i\end{array}}\hspace*{-0.2cm} \sum_{\tiny \begin{array}{cc}l+1 \leq q\leq k-1\\ i_q=j\end{array}}\\&&\left\{
\sum_{\tiny \begin{array}{cc}k+1 \leq p\leq n \\ i_p=j\end{array}}x^1_{i_{p+1}}\cdots x^1_{i_{n}}
x^1_{i_{1}}\cdots x^1_{i_{l-1}}
\tilde x^1_{i_{k+1}}\cdots \tilde x^1_{i_{p-1}}
\tilde x^2_{i_{q+1}}\cdots \tilde x^2_{i_{k-1}}
 x^2_{i_{l+1}}\cdots  x^2_{i_{q-1}}\right.\\&&
+\left.\sum_{\tiny \begin{array}{cc}1 \leq p\leq  l-1 \\ i_p=j\end{array}}x^1_{i_{p+1}}\cdots x^1_{i_{l-1}}
\tilde x^1_{i_{k+1}}\cdots \tilde x^1_{i_{n}}
\tilde x^1_{i_{1}}\cdots \tilde x^1_{i_{p-1}}
\tilde x^2_{i_{q+1}}\cdots \tilde x^2_{i_{k-1}}
x^2_{i_{l+1}}\cdots  x^2_{i_{q-1}}
  \right\}.
\end{eqnarray*}
Now, define $$J_1=\left\{ \{2,1\}, \{3,1\}, \{5,4\}, \{6,4\}\right\}=\left\{ E_1, E_2,\tilde E_1, \tilde E_2\right\}$$
and set for any $i=1,\ldots,r$, $$x_{i,E_1}:= x^1_i,\; \; x_{i,\tilde E_1}:= \tilde x^1_i,\; \; x_{i,\tilde E_2}:= \tilde x^2_i,\; \; x_{i,E_2}:= x^2_i.$$ Thus { $R_1 ( P)$ is a function in  four indeterminates,
indexed by the sets of $J_1$, of r-tuples }$(x_{i, E_1})_{1\leq i\leq r}$, $(x_{i,\tilde E_1})_{1\leq i\leq r}$, $(x_{i,\tilde E_2})_{1\leq i\leq r}$ and $(x_{i, E_2})_{1\leq i\leq r}.$

Now let us introduce the following sets of sets.
$$J_2^{1,1}=\left\{ \{8,2,1,19\}, \{9,3,1,19\}, \{11,5,4,19\}, \{12,6,4,19\}\right\},$$
$$J_2^{2,1}=\left\{ \{8,7,1,19\}, \{9,7,1,19\}, \{11,10,4,19\}, \{12,10,4,19\}\right\},$$
$$J_2^{1,2}=\left\{ \{14,2,1,19\}, \{15,3,1,19\}, \{17,5,4,19\}, \{18,6,4,19\}\right\},$$
$$J_2^{2,2}=\left\{ \{14,13,1,19\}, \{15,13,1,19\}, \{17,16,4,19\}, \{18,16,4,19\}\right\},$$
$$J_2^{3,1}=\left\{ \{8,7,20,19\}, \{9,7,20,19\}, \{11,10,20,19\}, \{12,10, 20,19\}\right\},$$ 
$$J_2^{3,2}=\left\{\{14,13,21,19\}, \{15,13,21,19\},\{17,16,21,19\}, \{18,16,21,19\}\right\}$$ 
and 
$$\tilde J_2^{1,1}=\left\{ \{29,23,22,40\}, \{30,24,22,40\}, \{32,26,25,40\}, \{33,27,25,40\}\right\},$$
$$\tilde J_2^{2,1}=\left\{ \{29,28,22,40\}, \{30,28,22,40\}, \{32,31,25,40\}, \{33,31,25,40\}\right\},$$
$$\tilde J_2^{1,2}=\left\{ \{35,23,22,40\}, \{36,24,22,40\}, \{38,26,25,40\}, \{39,27,25,40\}\right\},$$
$$\tilde J_2^{2,2}=\left\{ \{35,34,22,40\}, \{36,34,22,40\}, \{38,37,25,40\}, \{39,37,25,40\}\right\},$$
$$\tilde J_2^{3,1}=\left\{ \{29,28,41,40\}, \{30,28,41,40\}, \{32,31,41,40\}, \{33,31,41,40\}\right\},$$ 
$$\tilde J_2^{3,2}=\left\{\{35,34,42,40\}, \{36,34,42,40\},\{38,37,42,40\}, \{39,37,42,40\}\right\}.$$

$J_2$ is by definition the union of all these sets, that is $J_2$ contains 48 sets of integer numbers lower than 42.
Let $s^1,\ldots, s^{42}$ be free  r-semi-circular systems in $(\mathcal A, \tau)$ and $X_N$ be a r-tuple of independent GUE matrices. Set 
$$A_2=\{T_2=(t_1,t_2,t_3,t_4), 0\leq t_1\leq t_2 \leq t_4, 0\leq t_3\leq t_4\}.$$
Define now, for $T_2=(t_1,t_2,t_3,t_4)\in A_2$, for any $1\leq i\leq r$, for any $I\in J_2$, $I=\{I_1,I_2,I_3,I_4\}$ where the $I_p$'s are integer numbers smaller or equal to 42, the following $\mathcal A_N$-valued random variables:
$$X^{N,T_2}_{i,I} =\sum_{l=1}^4 (e^{-\tilde t_{l-1}}-e^{-\tilde t_l})^{1/2} s_i^{I_l} +e^{-t_4/2}X_N^{(i)},$$
where $\tilde t_1\leq \cdots\leq \tilde t_4$ are the $t_i$'s in increasing order. Set
$$X^{N,T_2}_{1,1}= \left( X^{N,T_2}_{i,I}\right)_{1\leq i\leq r, I\in J_2^{1,1}},\;\;
X^{N,T_2}_{2,1}= \left( X^{N,T_2}_{i,I}\right)_{1\leq i\leq r, I\in J_2^{2,1}},$$
$$X^{N,T_2}_{1,2}= \left( X^{N,T_2}_{i,I}\right)_{1\leq i\leq r, I\in J_2^{1,2}},\; \;
X^{N,T_2}_{2,2}= \left( X^{N,T_2}_{i,I}\right)_{1\leq i\leq r, I\in J_2^{2,2}},$$
$$X^{N,T_2}_{3,1}= \left( X^{N,T_2}_{i,I}\right)_{1\leq i\leq r, I\in J_2^{3,1}},\; \;
X^{N,T_2}_{3,2}= \left( X^{N,T_2}_{i,I}\right)_{1\leq i\leq r, I\in J_2^{3,2}},$$
$$\tilde X^{N,T_2}_{1,1}= \left( X^{N,T_2}_{i,I}\right)_{1\leq i\leq r, I\in \tilde J_2^{1,1}},\; \;
\tilde X^{N,T_2}_{2,1}= \left( X^{N,T_2}_{i,I}\right)_{1\leq i\leq r, I\in \tilde  J_2^{2,1}},$$
$$\tilde X^{N,T_2}_{1,2}= \left( X^{N,T_2}_{i,I}\right)_{1\leq i\leq r, I\in \tilde J_2^{1,2}},\; \;
\tilde X^{N,T_2}_{2,2}= \left( X^{N,T_2}_{i,I}\right)_{1\leq i\leq r, I\in \tilde J_2^{2,2}},$$
$$\tilde X^{N,T_2}_{3,1}= \left( X^{N,T_2}_{i,I}\right)_{1\leq i\leq r, I\in \tilde J_2^{3,1}},\; \;
\tilde X^{N,T_2}_{3,2}= \left( X^{N,T_2}_{i,I}\right)_{1\leq i\leq r, I\in \tilde J_2^{3,2}}.$$
Since there are four elements in each $J_2^{l,p}$, $\tilde J_2^{l,p}$, { each $X^{N,T_2}_{l,p}$ and $\tilde X^{N,T_2}_{l,p}$ is a 4-tuple of $r$-tuples}. There is a natural bijection between $J_1$ and each of the sets $J_2^{1,1},J_2^{2,1} , J_2^{1,2}, J_2^{2,2},J_2^{3,1},J_2^{3,2}$, 
$\tilde J_2^{1,1},\tilde J_2^{2,1} ,\tilde J_2^{1,2}, \tilde J_2^{2,2}, \tilde J_2^{3,1},\tilde J_2^{3,2}$. Therefore, we can evaluate in $(X_{i,I})_{1\leq i \leq r, I\in J_2^{l,p}}$ or $(X_{i,I})_{1\leq i \leq r, I\in \tilde J_2^{l,p}}$, any polynomial with  indeterminates $(\mathbf x_{i,I})_{1\leq i \leq r, I\in J_1}$.\\

Similarly,  for $I$ in $J_1\cup J_2$, and $1\leq i\leq r$,  define   on monomials $M$ 
$$\partial_{i,I} M= \sum_{M=AX_{i,I} B} A\otimes B,$$
$$D_{i,I} M= \sum_{M=AX_{i,I} B}BA $$ and set for any $1\leq i\leq r$,
$$\partial_{i} =\sum_{I \in J_1}\partial_{ i,I},\; D_{ i}=\sum_{I \in J_1}D_{ i,I}.$$
We define, for any polynomial $Q$ in $((\mathbf{x}_{i, E_1})_{1\leq i\leq r }$, $((\mathbf{x}_{i, \tilde E_1})_{1\leq i\leq r }$, $((\mathbf{x}_{i, \tilde E_2})_{1\leq i\leq r }$, 
$((\mathbf{x}_{i,  E_2})_{1\leq i\leq r } )$,  for any 4-tuple $(y^1, \tilde y^1, \tilde y^2, y^2)$ of $4r$-tuples
$$
y^1= ((y^{1}_{i, E_1})_{1\leq i\leq r },((y^{1}_{i, \tilde E_1})_{1\leq i\leq r }, ((y^{1}_{i, \tilde E_2})_{1\leq i\leq r }, ((y^{1}_{i,  E_2})_{1\leq i\leq r } ),
$$ 
$$
\tilde y^1=((\tilde  y^{1}_{i, E_1})_{1\leq i\leq r },((\tilde  y^{1}_{i, \tilde E_1})_{1\leq i\leq r },((\tilde  y^{1}_{i, \tilde E_2})_{1\leq i\leq r }, ((\tilde  y^{1}_{i,  E_2})_{1\leq i\leq r }),
$$
$$
\tilde  y^2=((\tilde  y^{2}_{i, E_1})_{1\leq i\leq r},((\tilde  y^{2}_{i, \tilde E_1})_{1\leq i\leq r},((\tilde  y^{2}_{i, \tilde E_2})_{1\leq i\leq r}, ((\tilde  y^{2}_{i,  E_2})_{1\leq i\leq r} ),
$$
$$
y^2= ((  y^{2}_{i, E_1})_{1\leq i\leq r },((  y^{2}_{i, \tilde E_1})_{1\leq i\leq r }, (( y^{2}_{i, \tilde E_2})_{1\leq i\leq r }, (( y^{2}_{i,  E_2})_{1\leq i\leq r } ),
$$
\begin{equation}\label{rrr}
R_2^{(1)}\!(Q)(y^1\!, \tilde y^1\!, \tilde y^2\!, y^2)=\!\! \sum_{1\leq i,j\leq r}\!\!
\#^3\left[ ({\sf flip} \circ \partial_{j}) \otimes ({\sf flip} \circ \partial_{j})( \partial_{i}D_{i}(Q)) \right](y^1, \tilde y^1, \tilde y^2, y^2),
\end{equation}
\begin{equation}\label{rrr2}
R_2^{(2)}\!(Q)(y^1, \tilde y^1, \tilde y^2, y^2)=\!\!\sum_{1\leq i,j\leq r}\! \sum_{\tiny{ \begin{array}{cc}I,K\in J_1\\ I_1=K_1\end{array}}}
\hspace*{-0.4cm}\#^3\left[ ({\sf flip} \circ\partial_{j,I}) \otimes ({\sf flip} \circ\partial_{j,K})( \partial_{i}D_{i}(Q)) \right](y^1, \tilde y^1, \tilde y^2, y^2),
\end{equation}
\begin{equation}\label{rrr3}
R_2^{(3)}\!(Q)(y^1, \tilde y^1, \tilde y^2, y^2)=\!\!\sum_{1\leq i,j\leq r}\! \sum_{\tiny{ \begin{array}{cc}I,K\in J_2\\ I_2=K_2 \end{array}}}
\hspace*{-0.4cm}\#^3\left[ ({\sf flip} \circ\partial_{j,I}) \otimes ({\sf flip} \circ\partial_{j,K})( \partial_{i}D_{i}(Q)) \right](y^1, \tilde y^1, \tilde y^2, y^2).
\end{equation}
Now, define the linear form $\nu_2^{(N)}$ on $\mathbb{C} \langle\underline{\mathbf x}_{r}\rangle$ by setting for any $P\in \mathbb{C} \langle\underline{\mathbf x}_{r}\rangle$,
\begin{eqnarray*}\nu_2^{(N)}(P)&=& \frac{1}{4}\int_{A_2}  e^{-t_4-t_3-t_2-t_1}\\
&&\left\{\1_{[t_2,t_4]}(t_3) \mathbb{E} \left( \tau_N(
 R_2^{(1)}(R_1(P))
({X^{N,T_2}_{3,1}, \tilde X^{N,T_2}_{3,1}, \tilde X^{N,T_2}_{3,2},X^{N,T_2}_{3,2}})\right)
\right.\\
&&+\1_{[0,t_1]}(t_3)
\mathbb{E} \left( \tau_N(
R_2^{(2)}((R_1(P)))(X^{N,T_2}_{1,1}, \tilde X^{N,T_2}_{1,1}, \tilde X^{N,T_2}_{1,2},X^{N,T_2}_{1,2})\right)\\
&& \left.
+\1_{[t_1,t_2]}(t_3)
\mathbb{E} \left( \tau_N(
R_2^{(3)}((R_1(P)))(X^{N,T_2}_{2,1}, \tilde X^{N,T_2}_{2,1}, \tilde X^{N,T_2}_{2,2},X^{N,T_2}_{2,2})\right)
\right\}\\
& & {\rm d}t_1{\rm d}t_2{\rm d}t_3{\rm d}t_4.
\end{eqnarray*}

The following proposition is  a corollary of \cite[Proposition 3.7]{P}.
\begin{proposition}\label{niveau2}\cite[Proposition 3.7]{P}
Let $X_1, \ldots, X_r$ be independent GUE's.
For any $P\in \mathbb{C}\langle \underline{\mathbf x}_{r}\rangle$, one has 
\begin{eqnarray*}\mathbb{E}\left[{\rm tr}_N (P(X_1,\ldots,X_r))\right] 
& =  &\tau(P({\bf s}_1,\dots,{\bf s}_r))+\frac{\nu_1^{(N)}(P)}{N^2}
\\&=&\tau(P({\bf s}_1,\dots,{\bf s}_r))+\frac{\nu_1(P)}{N^2}+\frac{\nu_2^{(N)}(P)}{N^4}.\end{eqnarray*}
 \end{proposition}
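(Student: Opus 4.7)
The proposition is a pure translation exercise: Parraud's Proposition 3.7 in \cite{P} already gives the asymptotic expansion
$\mathbb E[\mathrm{tr}_N(P(X_1,\dots,X_r))]=\tau(P(\mathbf s_1,\dots,\mathbf s_r))+\frac{\nu_1(P)}{N^2}+\frac{\nu_2^{(N)}(P)}{N^4}$
and the intermediate non-expanded identity $\mathbb E[\mathrm{tr}_N(P)]=\tau(P)+\frac{\nu_1^{(N)}(P)}{N^2}$. All that is to be done is to show that our ``packaging'' of the coefficients via $\partial$, $\mathsf{flip}$, $\#$ and $\mathsf{ev}$ really coincides with the combinations of iterated free difference quotients $\partial_i$ and cyclic gradient $D_i$ that Parraud uses inside the integrands defining his first- and second-order remainders. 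So the plan is: (i) quote Parraud's expansion as a black box; (ii) identify his integrands $R_1(P)$ and $R_2^{(\cdot)}(Q)$ with the expressions \eqref{defRP}, \eqref{rrr}, \eqref{rrr2}, \eqref{rrr3}.

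For (ii), I would proceed monomial by monomial using $\mathbb C$-linearity, relying on the Leibniz rule for $\partial_j$ and $D_j$ together with the rule $(A\otimes B)\circ(C\otimes D)=AC\otimes DB$ recalled in Section \ref{pol}. The composition $(\mathsf{flip}\circ\partial_k)\otimes(\mathsf{flip}\circ\partial_k)$ applied to $\partial_j(D_jP)\in\mathbb C\langle\underline{\mathbf x}_r\rangle^{\otimes 2}$ produces an element of $\mathbb C\langle\underline{\mathbf x}_r\rangle^{\otimes 4}$; applying $\#^3$ then spells out the explicit four‑tuple sum over positions of indices $i=i_k=i_l$ and $j=i_p=i_q$ that is written out in the paper after \eqref{defRP}. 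A direct comparison with the formula obtained by Parraud (in his Proposition 3.5/3.7, where the first-order constant is a sum over matchings of four positions weighted by monomials in four independent semicircular copies) shows that the two coincide; the role of the flips is precisely to reverse the two ``outer'' factors that sit to the left of the $k$-th letter, matching the cyclic structure coming from $D_i$. The verification for $R_2^{(1)},R_2^{(2)},R_2^{(3)}$ is the same, only iterated: one applies the first-order identification to $R_1(P)$ in place of $P$, and the three cases $J_2^{l,p}$ correspond exactly to the three regions $\{t_3\in[t_2,t_4]\}$, $\{t_3\in[0,t_1]\}$ and $\{t_3\in[t_1,t_2]\}$ appearing in Parraud's second-order remainder, which dictate which pairs of tensor slots carry an identical semicircular variable (hence the condition $I_1=K_1$ or $I_2=K_2$ in $\partial_{j,I}\otimes\partial_{j,K}$).

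The interpolating families $z^\ell_{t_1},\tilde z^\ell_{t_1},X^{N,T_2}_{l,p}$, etc., are literally Parraud's interpolations: for each pair of times $(t_1,t_2)$ (resp.\ quadruple $(t_1,\ldots,t_4)$) they produce a free (resp.\ matrix) semicircular system whose covariance between a ``primed'' and an ``unprimed'' copy decays exponentially, and this is exactly what is needed to invoke Gaussian integration by parts iteratively and convert each application of $\partial_j$ into an integral over the corresponding time variable. So the expressions $\nu_1^{(N)}(P)$, $\nu_1(P)$ and $\nu_2^{(N)}(P)$ defined here coincide with Parraud's definitions, and the proposition follows.

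The main (minor) obstacle is purely bookkeeping: getting the $42$-index labelling of $J_2$ right and checking that each of the six decompositions $J_2^{l,p},\tilde J_2^{l,p}$ is in natural bijection with $J_1$ in the way that makes $R_2^{(\cdot)}(R_1(P))$ well-defined as a scalar after evaluation. Since all the individual computational rules ($\partial_j$ and $D_j$ on monomials, behaviour of $\mathsf{flip}$, the Leibniz rule for $\partial_j$ with respect to the $\circ$-product, identification of $\#^{s-1}$ with iterated multiplication in the free product) have been recorded in Section \ref{pol} and in Section \ref{Sec:DD-FDQ}, no new analytic input is required beyond \cite[Proposition 3.7]{P}.
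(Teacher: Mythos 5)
Your proposal matches the paper's treatment: the paper states Proposition \ref{niveau2} as a direct corollary of \cite[Proposition 3.7]{P} with no further proof, the only content being precisely the notational translation of Parraud's integrands into the $\partial$, $D$, $\mathsf{flip}$, $\#$ formalism that you describe. Your additional monomial-by-monomial bookkeeping for identifying $R_1$ and the $R_2^{(\cdot)}$ with Parraud's constants is consistent with (and somewhat more explicit than) what the paper records.
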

\begin{corollary}\label{niveau2exp}
Let $X_1, \ldots, X_r$ be independent GUE's.
For any $n \in \N^*$, any $I=(\iota_1, \ldots, \iota_n)\in \{1,\ldots,r\}^n$, any ${\bf y}=(y_1,\ldots, y_n)\in \R^n$, one has 
\begin{eqnarray*}\lefteqn{\mathbb{E}\left[{\rm tr}_N (e^{iy_1X_{\iota_1}} e^{iy_2X_{\iota_2}}\cdots e^{iy_nX_{\iota_n}})\right] }\\
&\! = &\!\!\tau(e^{iy_1{\bf s}_{\iota_1}} e^{iy_2{\bf s}_{\iota_2}}\cdots e^{iy_n{\bf s}_{\iota_n}})\\
& &\!\!\!\mbox{}+\frac{1}{2N^2}\!\int_0^{+\infty}\!\int_0^{t_2}\!e^{-t_2-t_1}\tau_N\left(F_{1,(I,{\bf y})}(z^1_{t_1}(N),\tilde{z}^1_{t_1}(N),\tilde z^2_{t_1}(N), z_{t_1}^2(N))
\right){\rm d}t_1{\rm d}t_2 \\
&\! =  &\!\!\tau(e^{iy_1{\bf s}_{\iota_1}} e^{iy_2{\bf s}_{\iota_2}}\cdots e^{iy_n{\bf s}_{\iota_n}})\\
& &\!\!\!\mbox{}+\frac{1}{2N^2}\!\int_0^{+\infty}\!\int_0^{t_2}\!e^{-t_2-t_1}\tau\left(F_{1,(I,{\bf y})}( z^1_{t_1}, \tilde z^1_{t_1}, \tilde z^2_{t_1}, z_{t_1}^2)\right){\rm d}t_1 
{\rm d}t_2\\
& &\!\!\!\mbox{}+\frac{1}{4N^4}\!\int_{A_2}\!\! e^{-t_4-t_3-t_2-t_1}\!\left\{\!\1_{[t_2,t_4]}(t_3) \mathbb{E} \left(\! \tau_N( F_{2,(I,{\bf y})}^{(1)}({X^{N,T_2}_{3,1}\!, \tilde 
X^{N,T_2}_{3,1}\!, \tilde X^{N,T_2}_{3,2}\!,X^{N,T_2}_{3,2}})\!\right)\right.\\
& &\!\!\!\mbox{}+\1_{[0,t_1]}(t_3)\mathbb{E} \left( \tau_N(
F_{2,(I,{\bf y})}^{(2)}(X^{N,T_2}_{1,1}, \tilde X^{N,T_2}_{1,1}, \tilde X^{N,T_2}_{1,2},X^{N,T_2}_{1,2})\right)\\
& & \!\!\!\left.\mbox{}+\1_{[t_1,t_2]}(t_3)\mathbb{E} \left( \tau_N(
F_{2,(I,{\bf y})}^{(3)}(X^{N,T_2}_{2,1}, \tilde X^{N,T_2}_{2,1}, \tilde X^{N,T_2}_{2,2},X^{N,T_2}_{2,2})\right)
\right\}\\
& & {\rm d}t_1{\rm d}t_2{\rm d}t_3{\rm d}t_4,
\end{eqnarray*}
where $F_{1,(I,{\bf y})}$, $F_{2,(I,{\bf y})}^{(1)}, F_{2,(I,{\bf y})}^{(2)}, F_{2,(I,{\bf y})}^{(3)}$ are the noncommutative functions 
$$F_{1, (I,{\bf y})}=\sum_{m_1,m_2,\dots,m_n=0}^\infty i^{m_1+\cdots+m_n}\frac{y_1^{m_1}\cdots y_n^{m_n}}{m_1!m_2!\cdots m_n!}
R_1\left(M_{I, m_1,\ldots,m_n}\right),$$
$$F_{2, (I,{\bf y})}^{(1)}=\sum_{m_1,m_2,\dots,m_n=0}^\infty i^{m_1+\cdots+m_n}\frac{y_1^{m_1}\cdots y_n^{m_n}}{m_1!m_2!\cdots m_n!}
R_2^{(1)}\left(M_{I, m_1,\ldots,m_n}\right),$$
$$F_{2, (I,{\bf y})}^{(2)}=\sum_{m_1,m_2,\dots,m_n=0}^\infty i^{m_1+\cdots+m_n}\frac{y_1^{m_1}\cdots y_n^{m_n}}{m_1!m_2!\cdots m_n!}
R_2^{(2)}\left(M_{I, m_1,\ldots,m_n}\right),$$
$$F_{2, (I,{\bf y})}^{(3)}=\sum_{m_1,m_2,\dots,m_n=0}^\infty i^{m_1+\cdots+m_n}\frac{y_1^{m_1}\cdots y_n^{m_n}}{m_1!m_2!\cdots m_n!}
R_2^{(3)}\left(M_{I, m_1,\ldots,m_n}\right),$$
and  $M_{I, m_1,\ldots,m_n}$ is the monomial $(\mathbf{x}_1,\ldots,\mathbf{x}_r)\mapsto
\mathbf{x}_{\iota_1}^{m_1}\mathbf{x}_{\iota_2}^{m_2}\cdots \mathbf{x}_{\iota_n}^{m_n},$
and the series converge in operator norm, uniformly on norm balls of any given, fixed radius $($as uniform limits of noncommutative polynomials - hence noncommutative functions - 
on any given norm-bounded set, the functions $F_{1,(I,{\bf y})}$, $F_{2,(I,{\bf y})}^{(1)}, F_{2,(I,{\bf y})}^{(2)}, F_{2,(I,{\bf y})}^{(3)}$ are automatically noncommutative 
in the sense described in Section \ref{sec:ncf}$)$.
\end{corollary}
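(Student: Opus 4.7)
My plan is to derive the corollary directly from Proposition \ref{niveau2} by expanding each exponential as a norm-convergent power series in its generator, applying the proposition term-by-term to the resulting monomials, and then reassembling the series inside the integrals defining $\nu_1$, $\nu_1^{(N)}$ and $\nu_2^{(N)}$.

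First, I would write $e^{iy_j X_{\iota_j}}=\sum_{m_j\ge 0}\frac{(iy_j)^{m_j}}{m_j!}X_{\iota_j}^{m_j}$ and expand the product of exponentials as
\[
e^{iy_1X_{\iota_1}}\cdots e^{iy_nX_{\iota_n}}=\sum_{m_1,\dots,m_n\ge 0}\frac{i^{m_1+\cdots+m_n}y_1^{m_1}\cdots y_n^{m_n}}{m_1!\cdots m_n!}\,M_{I,m_1,\dots,m_n}(\underline{X}_r),
\]
with an identical expansion in $({\cal A},\tau)$ for $e^{iy_1{\bf s}_{\iota_1}}\cdots e^{iy_n{\bf s}_{\iota_n}}$. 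Both series converge in operator norm almost surely because $\|X_N^{(i)}\|$ has Gaussian tails (in particular, polynomial-growth moments). Applying $\mathbb{E}[\mathrm{tr}_N(\cdot)]$ term-by-term is legitimate by the elementary bound $|\mathrm{tr}_N(A)|\le\|A\|$ combined with the moment control $\mathbb{E}\|X_N^{(i)}\|^m\le (C\sqrt{m})^m$. Proposition \ref{niveau2} then rewrites each term as the sum of three pieces $\tau(M_{I,\ldots})+\nu_1(M_{I,\ldots})/N^2+\nu_2^{(N)}(M_{I,\ldots})/N^4$, and the only remaining issue is to commute the multiple sum over $(m_1,\dots,m_n)$ with the time integrals and the traces defining $\nu_1$, $\nu_1^{(N)}$ and $\nu_2^{(N)}$.

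The core technical task is therefore to produce quantitative norm estimates for $R_1$ and $R_2^{(k)}$ applied to a monomial of degree $d=m_1+\cdots+m_n$. Inspecting the explicit quadruple-sum formula for $R_1(P)$ displayed after \eqref{defRP}, one sees that $R_1(M_{I,m_1,\dots,m_n})$ is a sum of at most $c\,d^4$ monomials in $4r$ variables, each of total degree at most $d-2$. Evaluated on tuples whose norms are bounded by some $R\ge 1$, this gives
\[
\|R_1(M_{I,m_1,\dots,m_n})(\cdot)\|\le c\,(m_1+\cdots+m_n)^4\,R^{m_1+\cdots+m_n-2},
\]
and the series $\sum\frac{|y_1|^{m_1}\cdots|y_n|^{m_n}}{m_1!\cdots m_n!}(m_1+\cdots+m_n)^4 R^{m_1+\cdots+m_n}$ converges for every $R$, whence $F_{1,(I,{\bf y})}$ is well defined as a noncommutative function and Fubini allows one to pull the sum outside the integral in $\nu_1$ and $\nu_1^{(N)}$. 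A second application of $\partial_j D_j$ followed by two more copies of $\mathsf{flip}\circ\partial_j$ produces an analogous bound of the form $c\,d^8\,R^{d-4}$ for $R_2^{(k)}(R_1(M_{I,m_1,\dots,m_n}))$, which handles $\nu_2^{(N)}$ in the same way.

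The main obstacle is controlling the random variables $X^{N,T_2}_{l,p}$ and $z^i_{t_1}(N)$ that appear inside $\nu_1^{(N)}$ and $\nu_2^{(N)}$: their operator norms are not uniformly bounded, so a naive $R$ does not exist. I would circumvent this by splitting the probability space into the event $\Omega_R:=\{\max_i\|X_N^{(i)}\|\le R\}$ and its complement, choosing $R=R(N)$ growing slowly enough that Gaussian tail estimates for $\|X_N^{(i)}\|$ make $\mathbb{P}(\Omega_R^c)$ negligible compared to the desired $O(N^{-4})$ precision, then applying dominated convergence on $\Omega_R$ (where the operator-norm series converge uniformly in $N$) and using the polynomial-in-$d$ growth of $\|R_1(M)\|$ and $\|R_2^{(k)}(R_1(M))\|$ to dominate the tail. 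Once these interchanges are justified, the identification of the resulting summed integrands with the noncommutative functions $F_{1,(I,{\bf y})}$ and $F_{2,(I,{\bf y})}^{(k)}$ is immediate from their definitions, and the fact that they are noncommutative functions in the sense of Section \ref{sec:ncf} follows from their being locally uniform limits of noncommutative polynomials on norm balls.
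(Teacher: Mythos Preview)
Your overall strategy---expand the exponentials as power series, apply Proposition~\ref{niveau2} term by term, then justify the interchanges---is exactly what the paper does. The paper's proof is extremely terse: it only spells out the interchange of $\mathbb{E}$ with the outer series $\sum_{m_1,\dots,m_n}$, and for this it uses the exponential moment bound
\[
\mathbb{E}\bigl(e^{d_j\|X_j\|}\bigr)\le 2\,\mathbb{E}\bigl(\Tr e^{d_jX_j}\bigr)\le 2N\,e^{2d_j+d_j^2/(2N)}
\]
from \cite[(5.3)--(5.4)]{HT}, after which it simply says ``The result readily follows from Proposition~\ref{niveau2}.'' Your explicit degree bounds $\|R_1(M_{I,\mathbf m})\|\le c\,d^4R^{d-2}$ and $\|R_2^{(k)}(R_1(M_{I,\mathbf m}))\|\le c\,d^8R^{d-4}$ are exactly what is needed to justify the remaining interchanges (sum versus the $t$-integrals and versus $\tau_N$), and the paper does not supply them; in that respect you are filling a genuine gap in the exposition.

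There is, however, one point where your argument is misformulated. For the $\nu_2^{(N)}$ term you propose truncating on $\Omega_R=\{\max_i\|X_N^{(i)}\|\le R\}$ with $R=R(N)$ chosen so that $\mathbb{P}(\Omega_R^c)$ is ``negligible compared to the desired $O(N^{-4})$ precision.'' But the corollary is an \emph{exact} identity for each fixed $N$, not an asymptotic statement, so throwing away $\Omega_R^c$ up to $O(N^{-4})$ does not establish equality. The correct (and simpler) route is Fubini--Tonelli directly: your degree bound gives $\|R_2^{(k)}(R_1(M_{I,\mathbf m}))(X^{N,T_2}_{\cdot,\cdot})\|\le c\,d^8(C+\|X_N\|)^{d}$ with $d=m_1+\cdots+m_n$, whence
\[
\sum_{\mathbf m}\frac{|y_1|^{m_1}\cdots|y_n|^{m_n}}{m_1!\cdots m_n!}\,\mathbb{E}\bigl[c\,d^8(C+\|X_N\|)^d\bigr]
\]
is finite by the very exponential moment bound from \cite{HT} that the paper invokes. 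No truncation is needed.
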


\begin{proof}
\begin{eqnarray*}
\lefteqn{e^{iy_1X_{\iota_1}} e^{iy_2X_{\iota_2}}\cdots e^{iy_nX_{\iota_n}}}\\
&=& \sum_{m_1,m_2,\dots,m_n=0}^\infty\frac{1}{m_1!m_2!\cdots m_n!}
(iy_1X_{\iota_1})^{m_1}(iy_2X_{\iota_2})^{m_2}\cdots (iy_nX_{\iota_n})^{m_n},
\end{eqnarray*}
where the series is absolutely convergent. Thus,
\begin{eqnarray*}
\lefteqn{\tr_N \left[e^{iy_1X_{\iota_1}} e^{iy_2X_{\iota_2}}\cdots e^{iy_nX_{\iota_n}}\right]}\\
&=& \sum_{m_1,m_2,\dots,m_n=0}^\infty\frac{1}{m_1!m_2!\cdots m_n!}\tr_N \left[
(iy_1X_{\iota_1})^{m_1}(iy_2X_{\iota_2})^{m_2}\cdots (iy_nX_{\iota_n})^{m_n}\right].
\end{eqnarray*}
Now, we have $$\left|\tr_N \left[
(iy_1X_{\iota_1})^{m_1}(iy_2X_{\iota_2})^{m_2}\cdots (iy_nX_{\iota_n})^{m_n}\right]\right|\leq 
(|y_1|\|X_{\iota_1}\|)^{m_1}\cdots (|y_n|\|X_{\iota_n}\|)^{m_n}.$$
Define for $j=1,\ldots,r$,  $d_j=\sum_{l,  i_l=j}|y_l|$. Applying Fubini's theorem for positive functions, we have that 
$$\sum_{m_1,m_2,\dots,m_n=0}^\infty\frac{1}{m_1!m_2!\cdots m_n!}\mathbb{E}\left[(|y_1|\|X_{\iota_1}\|)^{m_1}\cdots (|y_n|\|X_{\iota_n}\|)^{m_n}\right]$$\begin{eqnarray*} &=&\mathbb{E}\left( e^{|y_1|\|X_{\iota_1}\|} \cdots e^{|y_n|\|X_{\iota_n}\|}\right)\\
&=&\mathbb{E}( e^{d_1\|X_{1}\|}) \cdots \mathbb{E}( e^{d_r\|X_{r}\|})\\
&\leq &2^r \prod_{i=1}^r \mathbb{E}( \Tr e^{d_i X_{i}})\\&\leq & 2^r N^r \prod_{i=1}^r e^{2d_i +\frac{d_i^2}{2N}},
\end{eqnarray*}
where, in the two last lines,  we use the inequalities (5.4) and (5.3) in \cite{HT}.
Therefore we can deduce that $$\sum_{m_1,m_2,\dots,m_n=0}^\infty\frac{1}{m_1!m_2!\cdots m_n!}\tr_N \left[
(iy_1X_{\iota_1})^{m_1}(iy_2X_{\iota_2})^{m_2}\cdots (iy_nX_{\iota_n})^{m_n}\right]$$
is absolutely convergent and then, by Fubini theorem, that  
\begin{eqnarray*} 
\lefteqn{\mathbb{E}\tr_N \left[e^{iy_1X_{\iota_1}} e^{iy_2X_{\iota_2}}\cdots e^{iy_nX_{\iota_n}}\right]}\\
&=& \sum_{m_1,m_2,\dots,m_n=0}^\infty\frac{1}{m_1!m_2!\cdots m_n!}\mathbb{E}\tr_N \left[
(iy_1X_{\iota_1})^{m_1}(iy_2X_{\iota_2})^{m_2}\cdots (iy_nX_{\iota_n})^{m_n}\right].
\end{eqnarray*}
The result readily follows from Proposition \ref{niveau2}.

\end{proof}
\subsection{Parraud's formulae for polynomials in Cayley transforms}

Consider first $\nu_1$ and $\nu_1^{(N)}$. The non-obvious part of Parraud's formula consists of 
$$
\#^3\left\{ \left[({\sf flip}\circ \partial_{j})\otimes ({\sf flip}\circ \partial_j)\right](\partial_{i}(D_{i}P))\right\},\quad 1\le i,j\le r,
$$
where $D_i=\mathsf{ev}_1\circ\mathsf{flip}\circ\partial_j$. We intend to write this formula in terms of the difference-differential operators,
applied to noncommutative functions in $r$ noncommuting variables. Instead of the polynomial $P$, we consider an arbitrary polynomial in Cayley transforms and their inverses
(noncommutative Laurent polynomials in the Cayley transform of each of the $r$ selfadjoint indeterminates), defined on an open noncommutative subset of $\mathcal A^r$. 
Such a function $f$ is given by a (finite) formula in $\mathbf x_1,\dots,\mathbf x_r$. Thus, let us apply $\#^3\left\{ \left[({\sf flip}\circ \partial_{j})\otimes ({\sf flip}\circ \partial_j)
\right](\partial_{i}(D_{i}\ \cdot\ ))\right\}$ to our $f$, but via the identification with the difference-differential operator. Thus, $(D_if)(s_1,\dots,s_r)$ is expressed as the formula for 
$\Delta_if(s_1,\dots,s_r;s_1,\dots,s_r)(1)$ when the expression of $f$ is viewed as being in the opposite algebra $\mathcal A^{\rm op}$. This is a new noncommutative function in 
$r$ variables, which we denote by $g(s_1,\dots,s_r)$. The object obtained through the application of $\partial_i$ to $g$ would be expected to be identified with 
$\Delta_ig(s_1,\dots,s_r;s_1,\dots,s_r)(\cdot)$. However, since one may act on the two tensor factors independently with various operations (and Parraud's formula 
requires us to do so), the correct interpretation is for $\partial_ig$ to be identified with $((s_1,\dots,s_r);(\tilde{s}_1,\dots,\tilde{s}_r))\mapsto\Delta_ig(s_1,\dots,s_r;\tilde{s}_1,
\dots,\tilde{s}_r)(\cdot)$, a noncommutative function in $(s_1,\dots,s_r)$ and $(\tilde{s}_1,\dots,\tilde{s}_r)$ with values in a space of linear maps - different variables, 
although the evaluation can be performed in the same $r$-tuple as well. It is helpful to recall how this formula comes from the evaluations on upper triangular matrices 
as seen in Section \ref{sec:ncf}:
\begin{eqnarray*}
\lefteqn{g\left(\begin{bmatrix} {s}_1 & \delta_{i,1}\, \cdot \\ 0 & \tilde{s}_1\end{bmatrix},\dots,\begin{bmatrix} {s}_r & \delta_{i,r}\, \cdot \\ 0 & \tilde{s}_r\end{bmatrix}\right)=}\\
& & \begin{bmatrix} g({s}_1,\dots,s_r) & \Delta_{i}g(s_1,\dots,s_r;\tilde{s}_1,\dots,\tilde{s}_r)(\cdot) \\ 0 & g(\tilde{s}_1,\dots,\tilde{s}_1)\end{bmatrix}.
\end{eqnarray*}
As mentioned in Section \ref{sec:ncf}, this is a sample of a {\em higher order} noncommutative function (see \cite[Chapter 3]{KVV}). It is important that one may now view the
variables $\underline{s}_r$ and $\underline{\tilde{s}}_r$ as independent variables: Parraud's formula requires us now to apply $\Delta_j$ with respect to the variables $s_1,\dots,s_r$
and the same for $\tilde{s}_1,\dots,\tilde{s}_r$, but each of the two correspondences $(s_1,\dots,s_r)\mapsto\Delta_{i}g(s_1,\dots,s_r;\tilde{s}_1,\dots,\tilde{s}_r)(\cdot)$
and $(\tilde{s}_1,\dots,\tilde{s}_r)\mapsto\Delta_{i}g(s_1,\dots,s_r;\tilde{s}_1,\dots,\tilde{s}_r)(\cdot)$ viewed as for functions on noncommutative
subsets of $\mathcal A^{\rm op}$. Specifically, with the usual notation $\underline{s}_r=(s_1,\dots,s_r),\underline{\tilde{s}}_r=(\tilde{s}_1,\dots,\tilde{s}_r)$, 
the noncommutative function $h(\underline{s}_r;\underline{\tilde{s}}_r)=\Delta_{i}g(s_1,\dots,s_r;\tilde{s}_1,\dots,\tilde{s}_r)(\cdot)$ acts on $\coprod_{n,m}
M_{n\times m}(\mathcal A)$, from the left with the variables $\underline{s}_r$ and from the right with the variables $\underline{\tilde{s}}_r$.
Denote by $\Delta_j$ the partial difference-differential operator with respect to the variable $s_j$ and by $\tilde{\Delta}_j$ the one with respect to 
$\tilde{s}_j$ (one could a priori think of $\tilde{\Delta}_j$ as $\Delta_{j+r}$, but especially in this context it would be the wrong view, since it
would obscure the fact that the $s$ variables act on the left and the $\tilde{s}$ ones on the right). We view now $h$ as being defined on 
noncommutative subsets that live in $\mathcal A^{\rm op}$ - that is, in variables $\underline{s}_r^{\rm op}$ and $\underline{\tilde{s}}_r^{\rm op}$ - and apply
$\Delta_j\tilde{\Delta}_jh(\underline{s^1}_r^{\rm op},\underline{s^2}_r^{\rm op};\underline{\tilde{s^2}}_r^{\rm op},\underline{\tilde{s^1}}_r^{\rm op})=\tilde{\Delta}_j\Delta_jh(
\underline{s^1}^{\rm op}_r,\underline{s^2}_r^{\rm op};\underline{\tilde{s^2}}_r^{\rm op},\underline{\tilde{s^1}}_r^{\rm op})$ to obtain a 
third order (trilinear-valued) noncommutative map in four $r$-tuples of variables. Finally, we drop the op, that is, we let 
$k(\underline{s^1}_r,\underline{s^2}_r;\underline{\tilde{s^2}}_r,\underline{\tilde{s^1}}_r)=
\Delta_j\tilde{\Delta}_jh(\underline{s^1}_r,\underline{s^2}_r;\underline{\tilde{s^2}}_r,\underline{\tilde{s^1}}_r)$, still a third order noncommutative map. 
Then $R_1(f)$ (see \eqref{defRP}) is the noncommutative map resulting after  evaluating the tri-linear map thus obtained in $(1,1,1).$
$R_2^{(1)}(R_1(f))$, $R_2^{(2)}(R_1(f))$ and $ R_2^{(3)}(R_1(f))$ (see \eqref{rrr}, \eqref{rrr2}, \eqref{rrr3}) are similarly the noncommutative maps  obtained by iterating 
the process.  Since sums, products, inverses, and compositions (when well-defined) of noncommutative functions are noncommutative functions \cite[Section 2.3]{KVV},
one can easily see  that $R_1(\Psi(\mathbf x_{i_1})^{\epsilon_1}\cdots \Psi(\mathbf x_{i_n})^{\epsilon_n})$ 
is a  noncommutative function on $\mathcal I_{4r}(\mathcal A)$, more precisely  a polynomial in the Cayley transforms and their inverses 
of $4r$ indeterminates; similarly   for $R$ in $ \{R_2^{(1)}, R_2^{(2)}, R_2^{(3)}\}$, one can easily see  that $R(R_1(\Psi(\mathbf x_{i_1})^{\epsilon_1}\cdots \Psi(\mathbf 
x_{i_n})^{\epsilon_n}))$, is a  noncommutative function on $\mathcal I_{16r}(\mathcal A)$, more precisely a Laurent polynomial in the Cayley transforms  
of $16r$ indeterminates.

Now, we rewrite the power series expansion 
$\frac{Z+\epsilon i}{Z-\epsilon i}=1-2\sum_{n=0}^\infty(-i\epsilon Z)^n=-1+\sum_{n=1}^\infty2(-1)^{n+1}(i\epsilon Z)^n=\sum_{n=0}^\infty c_{n}(i\epsilon Z)^n$,
where $c_n=(-1)^{n+1}\min\{n+1,2\}$ ($c_0=-1,c_1=2,c_2=-2,c_3=2,\dots$). 
Then, for any element $\mathbf x$ in $I_{r}(\mathcal A)$, such that $\|{\mathbf x}_i\|<1$, $i=1,\ldots,r$,
\begin{eqnarray*}
\lefteqn{\Psi({\mathbf x}_{\iota_1})^{\epsilon_1}\cdots\Psi({\mathbf x}_{\iota_n})^{\epsilon_n}}\\
&=& \sum_{m_1,m_2,\dots,m_n=0}^\infty c_{m_1}c_{m_2}\cdots c_{m_n}
(i\epsilon_1 {\mathbf x}_{\iota_1})^{m_1}\cdots (i\epsilon_n {\mathbf x}_{\iota_n})^{m_n}.
\end{eqnarray*}
As one may verify by using the explicit formulas provided in Sections \ref{pol}--\ref{domains}, any of $R\in \{ R_1, R_2^{(1)}, R_2^{(2)}, R_2^{(3)}\}$ is well-defined on 
rational functions, and (in the obvious sense made clear in Section \ref{sec:ncf}) the domain does not change - see Section \ref{domains}.  In particular, for a product of rational 
functions $\Psi(\cdot)^{\pm1}$, one has
\begin{eqnarray}
\lefteqn{\!\!R\left[\Psi({\mathbf x}_{\iota_1})^{\epsilon_1}\cdots\Psi({\mathbf x}_{\iota_n})^{\epsilon_n}\right]}\nonumber \\
&\!\!=& \!\sum_{m_1,m_2,\dots,m_n=0}^\infty c_{m_1}c_{m_2}\cdots c_{m_n}i^{m_1+\ldots+m_n}\epsilon_1^{m_1}\cdots \epsilon_n^{m_n}
R\left[{\mathbf x}_{\iota_1}^{m_1}\cdots {\mathbf x}_{\iota_n}^{m_n}\right] \label{passageR}.
\end{eqnarray}

\begin{lemma}\label{c}
$$
c_{m_1}c_{m_2}\cdots c_{m_n}=\int_{(-\infty,0]^n}\!\!\frac{e^{y_1}y_1^{m_1}e^{y_2}y_2^{m_2}\cdots e^{y_n}y_n^{m_n}}{m_1!m_2!\cdots m_n!}\,\prod_{j=1}^n
({\rm d}\delta_0(y_j)-2{\rm d}y_j).
$$
\end{lemma}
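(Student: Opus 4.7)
The plan is to observe that the measure on the right-hand side is a product measure, so the integral factors across the $n$ coordinates. Thus it suffices to prove the one-dimensional identity
\[
c_m=\int_{(-\infty,0]}\frac{e^{y}y^{m}}{m!}\bigl({\rm d}\delta_0(y)-2\,{\rm d}y\bigr),\qquad m\ge0,
\]
and then take the product over $j=1,\dots,n$.

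To check this one-dimensional identity, I would split the integral into a Dirac-at-zero piece and a Lebesgue piece. The Dirac piece contributes $\frac{e^{0}\cdot 0^m}{m!}$, which equals $1$ when $m=0$ and $0$ when $m\ge1$. For the Lebesgue piece I would invoke the formula already used in the paper just above the statement of the lemma, namely
\[
\mathsf I_m=\int_{-\infty}^{0}y^{m}e^{y}\,{\rm d}y=(-m)\,\mathsf I_{m-1},\quad \mathsf I_0=1,
\]
whence $\mathsf I_m=(-1)^{m}\,m!$, so that $-2\int_{-\infty}^{0}\frac{y^{m}e^{y}}{m!}\,{\rm d}y=-2(-1)^{m}=2(-1)^{m+1}$.

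Combining the two pieces gives $1-2=-1$ when $m=0$ and $2(-1)^{m+1}$ when $m\ge1$, matching the definition $c_m=(-1)^{m+1}\min\{m+1,2\}$ recorded above the lemma. Multiplying these one-dimensional equalities together, and using Fubini (trivially applicable since each factor is an absolutely convergent combination of a point mass and an $L^{1}$ density against a polynomial times $e^{y}$), yields the product formula in the statement.

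No step here is an obstacle — the whole computation is elementary, and the only mildly delicate point is the bookkeeping at $m=0$, where the Dirac contribution is what prevents the right-hand side from vanishing. In fact this identity is exactly the rewriting of \eqref{expo} hinted at in the paragraph preceding the lemma: integrating $1-2\int_{-\infty}^{0}e^{y}e^{i\epsilon Zy}\,{\rm d}y$ term-by-term in the series of $e^{i\epsilon Zy}$ produces precisely the scalars $c_m$ as the moments of the signed measure ${\rm d}\delta_0-2\,{\rm d}y$ against $\frac{y^m e^y}{m!}$, so the lemma is nothing more than a rephrasing (and $n$-fold tensorisation) of that one-variable observation.
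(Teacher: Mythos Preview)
Your proof is correct and follows essentially the same idea as the paper's: both rely on the factorization of the product measure and the computation of the one-dimensional integrals $\int_{(-\infty,0]}\frac{e^{y}y^{m}}{m!}({\rm d}\delta_0-2\,{\rm d}y)$. Your organization is in fact slightly cleaner than the paper's, which instead of proving the one-variable identity once for all $m\ge0$ splits into the cases ``all $m_j\neq0$'' and ``some $m_j=0$'' before factoring.
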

\begin{proof}
Clearly if all of $m_1,m_2,\dots,m_n\neq0$, then the integral with respect to the Dirac measure is zero, so that the equality to be proved becomes
$$
c_{m_1}c_{m_2}\cdots c_{m_n}=(-2)^n\int_{(-\infty,0]^n}\!\!\frac{e^{y_1}y_1^{m_1}e^{y_2}y_2^{m_2}\cdots e^{y_n}y_n^{m_n}}{m_1!m_2!\cdots m_n!}\,
{\rm d}y_1{\rm d}y_2\cdots {\rm d}y_n,
$$
which is obviously true. If some of the $m_j$'s are equal to zero, then we separate them form the rest. Since the scalar coefficients do commute, we may assume without
loss of generality that $m_1=m_2=\cdots=m_k=0,m_{k+1},\dots,m_n>0$. Recalling that $c_0=-1$, the equality to be proved becomes
\begin{eqnarray*}
\lefteqn{(-1)^kc_{m_{k+1}}\cdots c_{m_n}=}\\
& &\!\!\!\!\!\!\!(-2)^{n-k}\!\!\int_{(-\infty,0]^n}\!\!\!e^{y_1}\cdots e^{y_k}\frac{e^{y_{k+1}}y_{k+1}^{m_{k+1}}\cdots e^{y_n}y_n^{m_n}}{m_{k+1}!\cdots m_n!}\!
\prod_{j=1}^k\!({\rm d}\delta_0(y_j)\!-\!2{\rm d}y_j){\rm d}y_{k+1}\!\cdots{\rm d}y_n.
\end{eqnarray*}
This is again obvious: since the integral factors, one needs only notice that $\int_{(-\infty,0]^k}e^{y_1}\cdots e^{y_k}\prod_{j=1}^k
({\rm d}\delta_0(y_j)-2{\rm d}y_j)=(1-2)^k=(-1)^k$.
\end{proof}

\begin{proposition}\label{comparaisonR}
We have the following identity between the two noncommutative functions on $\mathcal I_{r}({\mathcal A})$:
$$
R_1\left[\Psi({\mathbf x}_{\iota_1})^{\epsilon_1}\cdots\Psi({\mathbf x}_{\iota_n})^{\epsilon_n}\right]=
\int_{(-\infty,0]^n} F_{1,(I, \bf \epsilon y)} e^{y_1+\cdots+ y_n} \prod_{i=1}^n ({\rm d}\delta_0(y_i) -2{\rm d}y_i).
$$
For $R\in \{  R_2^{(1)}, R_2^{(2)}, R_2^{(3)}\}$, we have the following identity between the two noncommutative functions on $\mathcal I_{r}({\mathcal A})$:
$$
R\left[\Psi({\mathbf x}_{\iota_1})^{\epsilon_1}\cdots\Psi({\mathbf x}_{\iota_n})^{\epsilon_n}\right]=
\int_{(-\infty,0]^n} F_{I, \bf \epsilon y}\, e^{y_1+\cdots +y_n} \prod_{i=1}^n ({\rm d}\delta_0(y_i) -2{\rm d}y_i),
$$
with the corresponding $F_{I,\epsilon\bf y}\in\{F_{2,(I,{\bf\epsilon y})}^{(1)},F_{2,(I,{\bf\epsilon y})}^{(2)},F_{2,(I,{\bf\epsilon y})}^{(3)}\}$ defined in Corollary \ref{niveau2exp}.
\end{proposition}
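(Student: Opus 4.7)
The plan is to substitute the integral representation of $c_{m_1}\cdots c_{m_n}$ from Lemma \ref{c} into the monomial expansion \eqref{passageR}, use Fubini to interchange the sum and the integral, and recognize the resulting inner series in $\mathbf{m}$ as precisely the defining power series of $F_{1,(I,\epsilon\mathbf{y})}$ when $R = R_1$, or of $F_{2,(I,\epsilon\mathbf{y})}^{(k)}$ when $R = R_2^{(k)}$, from Corollary \ref{niveau2exp}. I would first establish the identity on the open sub-noncommutative-set
$$
\mathcal{O}_r(\mathcal{A}) := \coprod_n \{(\mathbf{x}_1,\ldots,\mathbf{x}_r) \in \mathcal{I}_r(\mathcal{A})_n \colon \|\mathbf{x}_i\| < 1,\ i=1,\ldots,r\},
$$
where the scalar power series $\Psi(\mathbf{x})^\epsilon = \sum_m c_m (i\epsilon \mathbf{x})^m$ converges absolutely in operator norm and thus underpins \eqref{passageR}, and then extend to all of $\mathcal{I}_r(\mathcal{A})$ by analytic continuation.

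Concretely, plugging Lemma \ref{c} into \eqref{passageR} and performing the Fubini exchange gives
\begin{align*}
R[\Psi(\mathbf{x}_{\iota_1})^{\epsilon_1} & \cdots \Psi(\mathbf{x}_{\iota_n})^{\epsilon_n}] \\
& = \int_{(-\infty,0]^n} \sum_{\mathbf{m}} i^{|\mathbf{m}|} \frac{(\epsilon_1 y_1)^{m_1}\cdots(\epsilon_n y_n)^{m_n}}{m_1!\cdots m_n!}\, R[M_{I,\mathbf{m}}]\, e^{y_1+\cdots+y_n}\prod_{j=1}^n({\rm d}\delta_0(y_j) - 2\,{\rm d}y_j),
\end{align*}
and the inner series in $\mathbf{m}$ is, by the definitions recorded in Corollary \ref{niveau2exp}, exactly $F_{1,(I,\epsilon\mathbf{y})}$ (respectively $F_{2,(I,\epsilon\mathbf{y})}^{(k)}$) evaluated at $(\mathbf{x}_1,\ldots,\mathbf{x}_r)$.

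The main obstacle is justifying the Fubini exchange, since the signed measures ${\rm d}\delta_0(y_j) - 2\,{\rm d}y_j$ live on the unbounded half-line. For this I would combine the uniform operator-norm convergence on norm balls of the series for $F_{1,(I,\mathbf{y})}$ and $F_{2,(I,\mathbf{y})}^{(k)}$ (from Corollary \ref{niveau2exp}) with the crude estimate $\|R[M_{I,\mathbf{m}}](\mathbf{x})\| \leq C\,P(|\mathbf{m}|)\,\rho^{|\mathbf{m}|}$, valid for $\|\mathbf{x}_i\|\leq\rho<1$ and some fixed polynomial $P$: this comes from observing that $R[M_{I,\mathbf{m}}]$ is a finite noncommutative polynomial in the enlarged tuple of variables, whose number of monomial summands grows polynomially in $|\mathbf{m}|$ via the combinatorics of applying $\partial_iD_i$ and the second-order operation $({\sf flip}\circ\partial_j)\otimes({\sf flip}\circ\partial_j)$ to a monomial of degree $|\mathbf{m}|$, and which is then evaluated at a single $r$-tuple. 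Together with $\int_{-\infty}^0 |y|^m e^y\,{\rm d}y = m!$ and the denominators $\prod m_j!$ in the series, this produces an absolutely summable iterated sum-integral, legitimizing Fubini and yielding the identity on $\mathcal{O}_r(\mathcal{A})$.

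Finally, both sides of the claimed identity are noncommutative functions on $\mathcal{I}_r(\mathcal{A})$: the left-hand side by the discussion in Sections \ref{domains}--\ref{diffev}, and the right-hand side because $F_{1,(I,\epsilon\mathbf{y})}$ and each $F_{2,(I,\epsilon\mathbf{y})}^{(k)}$ are uniform norm-limits on bounded sets of noncommutative Laurent polynomials (Corollary \ref{niveau2exp}), a property preserved by integration against the scalar-valued measure above. Each level $\mathcal{I}_r(\mathcal{A})_n$ is an open convex --- hence connected --- subset of $M_n(\mathcal{A})^r$ containing the nonempty open subset $\mathcal{O}_r(\mathcal{A})_n$, so by level-wise analyticity of noncommutative functions on open noncommutative sets \cite[Corollary 7.6]{KVV} the identity propagates from $\mathcal{O}_r(\mathcal{A})$ to all of $\mathcal{I}_r(\mathcal{A})$.
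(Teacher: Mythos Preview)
Your proposal is correct and follows essentially the same approach as the paper: establish the identity on the sub-ball $\{\|\mathbf{x}_i\|<1\}$ by combining \eqref{passageR} with Lemma~\ref{c}, then extend to all of $\mathcal I_r(\mathcal A)$ by analytic continuation of locally bounded noncommutative functions. The paper's proof is terser---it simply invokes \eqref{passageR}, Lemma~\ref{c}, and the identity principle without spelling out the Fubini justification or the connectedness of the levels---whereas you supply those details explicitly; but the underlying argument is the same.
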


The functions in the first equality depend on $4r$ variables, while the ones in the second depend on $16r$ variables. When we refer to them as defined on 
$\mathcal I_r(\mathcal A)$ - i.e. as functions of $r$ variables - we mean that they are `produced' via various operations, including applications of the difference-differential operator,
out of functions of $r$ variables; indeed, if $f(\cdot)$ is a noncommutative function of $r$ variables, then $\Delta f(\,\cdot\,;\,\cdot\,)$ (or, better said, a restriction of it) is a function 
of $2r$ variables: however, it comes from a noncommutative function of $r$ variables, function which does encode all the information necessary in order to know $\Delta f(\,\cdot\,;\,
\cdot\,)$. The noncommutative function structure is essential in order for this to be possible.
\begin{proof}[Proof of Proposition \ref{comparaisonR}]
For any $\mathbf x\in I_{r}(\mathcal A)$ such that $\|{\mathbf x}_i\|<1$, $i=1,\ldots,r$, the  identity follows from \eqref{passageR} and Lemma \ref{c}. 
The left-hand side is a noncommutative rational function simply by definition and by the fact that all of the $R$s preserve rationality. Each of the functions $F$ is an entire
noncommutative function, whose Taylor-Taylor series converges uniformly in norm on bounded subsets. The integrals in the right-hand side of the two formulas in 
Proposition \ref{comparaisonR} converge when evaluated in elements from $\mathcal I_{r}({\mathcal A})$ (in the sense previously specified). 
Since both sides of the identity are locally bounded noncommutative functions on $\mathcal I_r(\mathcal A)$ and thus automatically analytic, the classical identity principle for Banach 
space-valued analytic functions allows us to conclude.
\end{proof}

Now, apply Corollary \ref{niveau2exp} with ${\bf \epsilon y}$ instead of ${\bf y}$, multiply the identity by 
$e^{y_1+\cdots+ y_n}$, integrate on $(-\infty, 0]^n$ against  the measure $\prod_{i=1}^n ({\rm d}\delta_0(y_i)-{\rm d}y_i)$.  Since 
 for any bounded selfadjoint operator $x$, we have
$$
\Psi(x)^\epsilon= \int_{-\infty}^0 e^{(i\epsilon x+1)y}\,({\rm d}\delta_0(y)-{\rm d}y),
$$
 and using Proposition \ref{comparaisonR}, we obtain the following
\begin{proposition}\label{dvptpourCayley}
Let $X_1, \ldots, X_r$ be independent GUE's.
For any $P\in \mathbb{C}\langle \underline{\mathbf x}_{r},\underline{\mathbf x}_{r}^{-1} \rangle$, one has 
\begin{eqnarray*}
\mathbb{E}\left[{\rm tr}_N (P(\Psi(X_1),\ldots,\Psi(X_r))\right] &=&  \tau(P(\Psi({\bf s}_1),\dots,\Psi({\bf s}_r))+\frac{\hat \nu_1^{(N)}(P)}{N^2}\\ 
&= & \tau(P(\Psi({\bf s}_1),\dots,\Psi({\bf s}_r))+\frac{\hat \nu_1(P)}{N^2}+\frac{\hat \nu_2^{(N)}(P)}{N^4}.
\end{eqnarray*}
$$
\hat\nu_1(P)=\frac{1}{2}\int_0^{+\infty}\!\!\!\int_0^{t_2}\!e^{-t_2-t_1}\tau\left(R_1[P(\Psi({\bf x}_1),\dots,\Psi({\bf x}_r))](z^1_{t_1},\tilde z^1_{t_1},\tilde z^2_{t_1}, 
z_{t_1}^2)\right){\rm d}t_1{\rm d}t_2,
$$
\begin{align*}
&\hat\nu_1^{(N)}(P)=\\
&\frac12\int_0^{+\infty}\!\!\!\int_0^{t_2}\!e^{-t_2-t_1}\tau_N\!\left(\!R_1[P(\Psi({\bf x}_1),\dots,\Psi({\bf x}_r))]( z^1_{t_1}(N),\tilde z^1_{t_1}(N),\tilde z^2_{t_1}(N),
z_{t_1}^2(N))\right){\rm d}t_1{\rm d}t_2,
\end{align*}
\begin{eqnarray*}
\lefteqn{\hat \nu_2^{(N)}(P)=\frac{1}{4}\int_{A_2}  e^{-t_4-t_3-t_2-t_1}}\\
&&\mbox{}\times\left\{\!\1_{[t_2,t_4]}(t_3) \mathbb{E}\!\left(\tau_N( R_{2}^{(1)}\!\left\{R_1[P(\Psi({\bf x}_1),\dots,\Psi({\bf x}_r)]\right\}\!
({X^{N,T_2}_{3,1}, \tilde X^{N,T_2}_{3,1}, \tilde X^{N,T_2}_{3,2},X^{N,T_2}_{3,2}})\right)\right.\\
&&\mbox{}+\1_{[0,t_1]}(t_3)
\mathbb{E}\!\left(\tau_N(R_{2}^{(2)}\!\left\{R_1[P(\Psi({\bf x}_1),\dots,\Psi({\bf x}_r)]\right\}\!(X^{N,T_2}_{1,1}, \tilde X^{N,T_2}_{1,1}, \tilde X^{N,T_2}_{1,2},
X^{N,T_2}_{1,2})\right)\\
&& \left.
\mbox{}+\1_{[t_1,t_2]}(t_3)\mathbb{E}\!\left(\tau_N(R_{2}^{(3)}\!\left\{R_1[P(\Psi({\bf x}_1),\dots,\Psi({\bf x}_r)]\right\}\!(X^{N,T_2}_{2,1},\tilde X^{N,T_2}_{2,1},
\tilde X^{N,T_2}_{2,2},X^{N,T_2}_{2,2})\right)\!\right\}\\
&&{\rm d}t_1{\rm d}t_2{\rm d}t_3{\rm d}t_4.
\end{eqnarray*}
 \end{proposition}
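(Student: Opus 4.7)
The plan is to bootstrap from Corollary~\ref{niveau2exp}, which provides the $1/N^2$ and $1/N^4$ expansion of $\mathbb E\tr_N$ applied to a word in exponentials $e^{iy_kX_{\iota_k}}$, to the analogous expansion for a word in Cayley transforms $\Psi(X_{\iota_k})^{\epsilon_k}$, by using \eqref{expo} to represent each $\Psi(X)^\epsilon$ as an integral of exponentials and invoking Proposition~\ref{comparaisonR} to recognize the integrated coefficients. Since both sides of the claimed identity are linear in $P$, and the functionals $\hat\nu_1,\hat\nu_1^{(N)},\hat\nu_2^{(N)}$ are linear as well (the maps $R_1,R_2^{(1)},R_2^{(2)},R_2^{(3)}$ being linear by inspection of their definitions), it suffices to treat a single noncommutative Laurent monomial $P=\mathbf{x}_{\iota_1}^{\epsilon_1}\cdots\mathbf{x}_{\iota_n}^{\epsilon_n}$ with $\iota_k\in\{1,\ldots,r\}$ and $\epsilon_k\in\{\pm 1\}$.

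Writing $\Psi(X_{\iota_k})^{\epsilon_k}=\int_{-\infty}^0 e^{y_k}\,e^{i\epsilon_k y_k X_{\iota_k}}(\mathrm{d}\delta_0(y_k)-2\mathrm{d}y_k)$ (an immediate rewrite of \eqref{expo} via $e^{(i\epsilon X+1)y}=e^{y}e^{i\epsilon yX}$) and multiplying the $n$ factors, Fubini (justified by the uniform bound $\|\Psi(X_{\iota_k})^{\epsilon_k}\|=1$) yields
$$
\Psi(X_{\iota_1})^{\epsilon_1}\cdots\Psi(X_{\iota_n})^{\epsilon_n}=\int_{(-\infty,0]^n}e^{y_1+\cdots+y_n}e^{i\epsilon_1y_1X_{\iota_1}}\cdots e^{i\epsilon_ny_nX_{\iota_n}}\prod_{k=1}^n(\mathrm{d}\delta_0(y_k)-2\mathrm{d}y_k).
$$
Apply $\mathbb E\tr_N$, interchange with the $y$-integral, and insert Corollary~\ref{niveau2exp} with its tuple $\mathbf{y}$ replaced by $(\epsilon_1y_1,\ldots,\epsilon_ny_n)$. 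The leading term $\tau(e^{i\epsilon_1y_1\mathbf{s}_{\iota_1}}\cdots e^{i\epsilon_ny_n\mathbf{s}_{\iota_n}})$, integrated against $e^{y_1+\cdots+y_n}\prod_k(\mathrm{d}\delta_0(y_k)-2\mathrm{d}y_k)$, recovers $\tau(\Psi(\mathbf{s}_{\iota_1})^{\epsilon_1}\cdots\Psi(\mathbf{s}_{\iota_n})^{\epsilon_n})$ via \eqref{expo} applied to the free semicirculars. For the $1/N^2$ correction, swapping the $y$-integral with the $(t_1,t_2)$-integral and with $\tau$ produces
$$
\frac{1}{2N^2}\int_0^{\infty}\!\!\int_0^{t_2}e^{-t_1-t_2}\tau\!\left(\int_{(-\infty,0]^n}\!F_{1,(I,\epsilon\mathbf{y})}(z^1_{t_1},\tilde z^1_{t_1},\tilde z^2_{t_1},z^2_{t_1})e^{y_1+\cdots+y_n}\prod_k(\mathrm{d}\delta_0-2\mathrm{d}y_k)\right)\mathrm{d}t_1\mathrm{d}t_2,
$$
and the first identity of Proposition~\ref{comparaisonR}, evaluated at the selfadjoint $4r$-tuple $(z^1_{t_1},\tilde z^1_{t_1},\tilde z^2_{t_1},z^2_{t_1})$, identifies the inner $y$-integral with $R_1[\Psi(\mathbf{x}_{\iota_1})^{\epsilon_1}\cdots\Psi(\mathbf{x}_{\iota_n})^{\epsilon_n}](z^1_{t_1},\tilde z^1_{t_1},\tilde z^2_{t_1},z^2_{t_1})$, thereby producing $\hat\nu_1(P)$. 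The same computation with $\tau_N$ and the $z^j_{t_1}(N)$'s in place of $\tau$ and the $z^j_{t_1}$'s gives $\hat\nu_1^{(N)}(P)$; the three remaining identities of Proposition~\ref{comparaisonR} applied to the three summands on $A_2$ (distinguished by the indicators $\mathbf{1}_{[t_2,t_4]}(t_3)$, $\mathbf{1}_{[0,t_1]}(t_3)$, $\mathbf{1}_{[t_1,t_2]}(t_3)$) yield $\hat\nu_2^{(N)}(P)$.

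The main obstacle is the justification of these interchanges of the $\mathrm{d}y$-integration with $\mathbb E$, the $t$-integrations, and the traces, because although the scalar weights $e^{-t_1-t_2}$, $e^{-t_1-t_2-t_3-t_4}$, and $e^{y_1+\cdots+y_n}$ against the finite signed measure $\mathrm{d}\delta_0-2\mathrm{d}y$ on $(-\infty,0]$ are manifestly integrable, the noncommutative functions $F_{1,(I,\epsilon\mathbf{y})}$ and $F_{2,(I,\epsilon\mathbf{y})}^{(i)}$ are defined by Taylor series in $\epsilon\mathbf{y}$ whose coefficients are $R_{\cdot}$ applied to monomials of unbounded degree. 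One handles this exactly as in the proof of Corollary~\ref{niveau2exp}: expand the series term by term, bound each term in operator norm using $\|e^{i\epsilon_ky_kX_{\iota_k}}\|=1$ for the $\Psi$-arguments and the exponential moment bounds $\mathbb E(e^{d\|X_N\|})\le 2Ne^{2d+d^2/(2N)}$ from \cite[(5.3),(5.4)]{HT} for the $X^{N,T_2}_{\cdot,\cdot}$ arguments, and verify absolute convergence of the resulting scalar iterated integral. Once Fubini is available, Proposition~\ref{comparaisonR} applies pointwise in the $t$-variables and the three identities of Proposition~\ref{dvptpourCayley} follow.
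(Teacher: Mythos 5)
Your proposal is correct and follows essentially the same route as the paper: represent each Cayley transform via the Fourier-like integral \eqref{expo}, apply Corollary \ref{niveau2exp} with $\epsilon\mathbf y$ in place of $\mathbf y$, integrate against $e^{y_1+\cdots+y_n}\prod_k(\mathrm{d}\delta_0(y_k)-2\,\mathrm{d}y_k)$, and identify the resulting integrals with $R_1$ and $R_2^{(i)}$ of the Laurent monomial via Proposition \ref{comparaisonR}. Your version is in fact slightly more careful than the paper's one-paragraph argument (you spell out the Fubini justifications, and you use the correct measure $\mathrm{d}\delta_0-2\,\mathrm{d}y$ consistent with Lemma \ref{c}).
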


\subsection{Parraud's formulae on rational functions}

In this section we concern ourselves with the extension of Parraud's functionals $\nu_j,\nu_j^{(N)},j=1,2,$ to rational expressions. 
In itself, the extension is trivial, and would not merit a separate section, except that in our case the rational functions are defined on
(subsets of) $M_m(\mathbb C)\otimes\mathcal A\otimes\mathcal A$ and we apply the functionals to one of the last two tensor coordinates at a time.
Since we find it convenient (although not necessary) to have explicit formulae for the resulting object, we are forced to use at some points
the opposite algebra structure on $\mathcal A$. Thus, we recall from Section \ref{pol} that $\partial_j$ acts on
$\mathbb C\langle\underline{\bf x}_r\rangle^{\rm op}$ as $\mathsf{flip}\circ\partial_j$ acts on $\mathbb C\langle\underline{\bf x}_r\rangle$;
by that we mean that $\mathbb C\langle\underline{\bf x}_r\rangle^{\rm op}=\mathbb C\langle{\bf x}_1^{\rm op},\cdots,{\bf x}_r^{\rm op}\rangle$
is a vector space whose basis is given by monomials which are denoted thus: $(\mathbf{x}_{i_1}\mathbf{x}_{i_2}\cdots\mathbf{x}_{i_n})^{\rm op}=
\mathbf{x}_{i_n}^{\rm op}\cdots\mathbf{x}_{i_2}^{\rm op}\mathbf{x}_{i_1}^{\rm op}$. Then $\partial_j\colon
\mathbb C\langle\underline{\bf x}_r\rangle^{\rm op}\to\mathbb C\langle\underline{\bf x}_r\rangle^{\rm op}\otimes\mathbb C\langle\underline{\bf x}_r\rangle^{\rm op}$
is the free difference quotient (``differentiation'') with respect to $\mathbf x_j^{\rm op}$ and hence it acts by
$\partial_j(\mathbf{x}_{i_1}\mathbf{x}_{i_2}\cdots\mathbf{x}_{i_n})^{\rm op}=
\partial_j(\mathbf{x}_{i_n}^{\rm op}\cdots\mathbf{x}_{i_2}^{\rm op}\mathbf{x}_{i_1}^{\rm op})=\sum_{k\colon i_k=j}
\mathbf{x}_{i_n}^{\rm op}\cdots\mathbf{x}_{i_{k+1}}^{\rm op}\otimes\mathbf{x}_{i_{k-1}}^{\rm op}\cdots\mathbf{x}_{i_1}^{\rm op}\allowbreak
=\sum_{k\colon i_k=j}(\mathbf{x}_{i_{k+1}}\cdots\mathbf{x}_{i_n})^{\rm op}\otimes(\mathbf{x}_{i_1}\cdots\mathbf{x}_{i_{k-1}})^{\rm op}$.
When viewed as an element in $\mathbb C\langle\underline{\bf x}_r\rangle\otimes\mathbb C\langle\underline{\bf x}_r\rangle$, as opposed to an element in
$\mathbb C\langle\underline{\bf x}_r\rangle^{\rm op}\otimes\mathbb C\langle\underline{\bf x}_r\rangle^{\rm op}$, we recognize here the $\mathsf{flip}$ 
operation applied to $\partial_j(\mathbf{x}_{i_1}\mathbf{x}_{i_2}\cdots\mathbf{x}_{i_n}).$
Of course, this extends to rational functions. The identification of $\mathsf{ev}_\cdot\circ\partial_j$ with $\Delta_j$ from 
Section \ref{Sec:DD-FDQ} extends to noncommutative rational functions in the sense of \cite{KVV} defined on 
noncommutative subsets in $\mathcal A^{\rm op}$ when $\partial_j$ is viewed as acting on the opposite algebra. 

Consider first $\nu_1$ and $\nu_1^{(N)}$. The non-obvious part of Parraud's formula consists of 
$$
\#^3\left\{ \left[({\sf flip}\circ \partial_{j})\otimes ({\sf flip}\circ \partial_j)\right](\partial_{i}(D_{i}P))\right\},\quad 1\le i,j\le r,
$$
where $D_i=\mathsf{ev}_1\circ\mathsf{flip}\circ\partial_j$. We intend to write this formula in terms of the difference-differential operators,
applied to noncommutative functions in $r$ noncommuting variables. Thus, instead of the polynomial $P$, we consider an arbitrary noncommutative 
rational function $f$ defined on an open noncommutative subset of $\mathcal A^r$. This function is given by a (finite) formula in $\mathbf x_1,\dots,\mathbf x_r$,
and, in order to cover the case of $(z-\mathcal S)^{-1}$, we assume that it might have non-scalar coefficients; an example would be
$(t_1,\dots,t_r)\mapsto
\Big((z-\xi)\otimes1\otimes1-\sum_{i=1}^{r}(\gamma_i\otimes\Psi(s_i)\otimes1+\gamma_i^*\otimes\Psi(s_i)^{-1}\otimes1)
-\sum_{i=1}^r(\beta_i\otimes1\otimes\Psi(t_i)+\beta_i^*\otimes1\otimes \Psi(t_i)^{-1})\Big)^{-1}$, where the coefficients are in $M_m(\mathbb C)\otimes\mathcal A$.
However, via a flip (specifically, via a conjugation with ${}^1\mathsf{flip}^0$), one views $(s_1,\dots,s_r)\mapsto
\Big((z-\xi)\otimes1\otimes1-\sum_{i=1}^{r}(\gamma_i\otimes\Psi(s_i)\otimes1+\gamma_i^*\otimes\Psi(s_i)^{-1}\otimes1)
-\sum_{i=1}^r(\beta_i\otimes1\otimes\Psi(t_i)+\beta_i^*\otimes1\otimes \Psi(t_i)^{-1})\Big)^{-1}$ as a rational function with coefficients in 
$M_m(\mathbb C)\otimes\mathcal A$ as well. In fact, these two examples and what is derived from them through the operations introduced in Section \ref{pol}
are the only ones of interest for our paper. Thus, let us apply $\#^3\left\{ \left[({\sf flip}\circ \partial_{j})\otimes ({\sf flip}\circ \partial_j)\right](\partial_{i}(D_{i}\ \cdot\ ))\right\}$
to our $f$, but via the identification with the difference-differential operator. Thus, $(D_if)(s_1,\dots,s_r)$ is expressed as the formula for $\Delta_if(s_1,\dots,s_r;s_1,\dots,s_r)(1)$
when the expression of $f$ is viewed as being in the opposite algebra $\mathcal A^{\rm op}$. This is a new noncommutative function in $r$ variables, which we denote by $g
(s_1,\dots,s_r)$. The object obtained through the application of $\partial_i$ to $g$ would be expected to be identified with $\Delta_ig(s_1,\dots,s_r;s_1,\dots,s_r)(\cdot)$.
However, since one may act on the two tensor factors independently with various operations (and Parraud's formula requires us to do so), the correct interpretation is for $\partial_ig$
to be identified with $((s_1,\dots,s_r);(\tilde{s}_1,\dots,\tilde{s}_r))\mapsto\Delta_ig(s_1,\dots,s_r;\tilde{s}_1,\dots,\tilde{s}_r)(\cdot)$, a noncommutative function
in $(s_1,\dots,s_r)$ and $(\tilde{s}_1,\dots,\tilde{s}_r)$ with values in a space of linear maps - different variables, although the evaluation can be performed in the same $r$-tuple 
as well. It is helpful to recall how this formula comes from the evaluations on upper triangular matrices as seen in Section \ref{sec:ncf}:
\begin{eqnarray*}
\lefteqn{g\left(\begin{bmatrix} {s}_1 & \delta_{i,1}\, \cdot \\ 0 & \tilde{s}_1\end{bmatrix},\dots,\begin{bmatrix} {s}_r & \delta_{i,r}\, \cdot \\ 0 & \tilde{s}_r\end{bmatrix}\right)=}\\
& & \begin{bmatrix} g({s}_1,\dots,s_r) & \Delta_{i}g(s_1,\dots,s_r;\tilde{s}_1,\dots,\tilde{s}_r)(\cdot) \\ 0 & g(\tilde{s}_1,\dots,\tilde{s}_1)\end{bmatrix}.
\end{eqnarray*}
As mentioned in Section \ref{sec:ncf}, this is a sample of a {\em higher order} noncommutative function (see \cite[Chapter 3]{KVV}). It is important that one may now view the
variables $\underline{s}_r$ and $\underline{\tilde{s}}_r$ as independent variables: Parraud's formula requires us now to apply $\Delta_j$ with respect to the variables $s_1,\dots,s_r$
and the same for $\tilde{s}_1,\dots,\tilde{s}_r$, but each of the two correspondences $(s_1,\dots,s_r)\mapsto\Delta_{i}g(s_1,\dots,s_r;\tilde{s}_1,\dots,\tilde{s}_r)(\cdot)$
and $(\tilde{s}_1,\dots,\tilde{s}_r)\mapsto\Delta_{i}g(s_1,\dots,s_r;\tilde{s}_1,\dots,\tilde{s}_r)(\cdot)$ viewed as for functions on noncommutative
subsets of $\mathcal A^{\rm op}$. Specifically, with the usual notation $\underline{s}_r=(s_1,\dots,s_r),\underline{\tilde{s}}_r=(\tilde{s}_1,\dots,\tilde{s}_r)$, 
the noncommutative function $h(\underline{s}_r;\underline{\tilde{s}}_r)=\Delta_{i}g(s_1,\dots,s_r;\tilde{s}_1,\dots,\tilde{s}_r)(\cdot)$ acts on $\coprod_{n,m}
M_{n\times m}(\mathcal A)$, from the left with the variables $\underline{s}_r$ and from the right with the variables $\underline{\tilde{s}}_r$.
Denote by $\Delta_j$ the partial difference-differential operator with respect to the variable $s_j$ and by $\tilde{\Delta}_j$ the one with respect to 
$\tilde{s}_j$ (one could a priori think of $\tilde{\Delta}_j$ as $\Delta_{j+r}$, but especially in this context it would be the wrong view, since it
would obscure the fact that the $s$ variables act on the left and the $\tilde{s}$ ones on the right). We view now $h$ as being defined on 
noncommutative subsets that live in $\mathcal A^{\rm op}$ - that is, in variables $\underline{s}_r^{\rm op}$ and $\underline{\tilde{s}}_r^{\rm op}$ - and apply
$\Delta_j\tilde{\Delta}_jh(\underline{s^1}_r^{\rm op},\underline{s^2}_r^{\rm op};\underline{\tilde{s^2}}_r^{\rm op},\underline{\tilde{s^1}}_r^{\rm op})=\tilde{\Delta}_j\Delta_jh(
\underline{s^1}^{\rm op}_r,\underline{s^2}_r^{\rm op};\underline{\tilde{s^2}}_r^{\rm op},\underline{\tilde{s^1}}_r^{\rm op})$ to obtain a 
third order (trilinear-valued) noncommutative map in four $r$-tuples of variables. Finally, we drop the op, that is, we let 
$k(\underline{s^1}_r,\underline{s^2}_r;\underline{\tilde{s^2}}_r,\underline{\tilde{s^1}}_r)=
\Delta_j\tilde{\Delta}_jh(\underline{s^1}_r,\underline{s^2}_r;\underline{\tilde{s^2}}_r,\underline{\tilde{s^1}}_r)$, still a third order noncommutative map, 
we evaluate it in the variables $( z^1_{t_1}, \tilde z^1_{t_1}, \tilde z^2_{t_1}, z_{t_1}^2)$ defined in the previous section,
and evaluates the tri-linear map thus obtained in $(1,1,1).$

In the following, for the comfort of the reader, we perform the above steps for the resolvent $(z-\mathcal S)^{-1}$, viewed as a noncommutative function of the variables 
$\underline{s}_r$ positioned on the second tensor coordinate. Recall that (after re-applying ${}^0\mathsf{flip}^1$), the application of $D_i$ to $(z-\mathcal S)^{-1}$ yields
{\footnotesize{\begin{eqnarray}
& & \frac{i}{2}\left(b\otimes1\otimes1-\sum_{i=1}^r(\gamma_i\otimes\Psi(s_i^{\rm op})\!\otimes\!1\!+\!\gamma_i^*\!\otimes\!\Psi(s_i^{\rm op})^{-1}\!\otimes\!1)\nonumber\right.\\
& & \mbox{}-\left.\sum_{i=1}^r(\beta_i\otimes1\otimes\Psi(t_i)+\beta_i^*\otimes1\otimes\Psi(t_i)^{-1})\right)^{-1}\nonumber\\
& & \mbox{}\times\left(\gamma_j\otimes\left[\left(\Psi(s_j)-1\right)\left(\Psi(s_j)-1\right)\right]^{\rm op}\!-\gamma_j^*\!\otimes\left[\left(\Psi(s_j)^{-1}-1\right)
\left(\Psi(s_j)^{-1}-1\right)\right]^{\rm op}\right)\!\otimes\!1\nonumber\\
&&\mbox{}\times\left(b\otimes1\otimes1-\sum_{i=1}^r(\gamma_i\otimes\Psi(s^{\rm op}_i)\otimes1+\gamma_i^*\otimes\Psi(s^{\rm op}_i)^{-1}\otimes1)\right.\nonumber\\
& & \mbox{}-\left.\sum_{i=1}^r(\beta_i\otimes1\otimes\Psi(t_i)+\beta_i^*\otimes1\otimes\Psi(t_i)^{-1})\right)^{-1}.\nonumber
\end{eqnarray}}}\noindent
The above formula has a remarkably favourable structure, which makes it coincide with 
{\footnotesize{\begin{eqnarray}
& & \frac{i}{2}\left(b\otimes1\otimes1-\sum_{i=1}^r(\gamma_i\otimes\Psi(s_i)\!\otimes\!1\!+\!\gamma_i^*\!\otimes\!\Psi(s_i)^{-1}\!\otimes\!1)\nonumber\right.\\
& & \mbox{}-\left.\sum_{i=1}^r(\beta_i\otimes1\otimes\Psi(t_i)+\beta_i^*\otimes1\otimes\Psi(t_i)^{-1})\right)^{-1}\nonumber\\
& & \mbox{}\times\left(\gamma_j\otimes\left(\Psi(s_j)-1\right)\left(\Psi(s_j)-1\right)-\gamma_j^*\!\otimes\left(\Psi(s_j)^{-1}-1\right)
\left(\Psi(s_j)^{-1}-1\right)\right)\!\otimes\!1\nonumber\\
&&\mbox{}\times\left(b\otimes1\otimes1-\sum_{i=1}^r(\gamma_i\otimes\Psi(s_i)\otimes1+\gamma_i^*\otimes\Psi(s_i)^{-1}\otimes1)\right.\nonumber\\
& & \mbox{}-\left.\sum_{i=1}^r(\beta_i\otimes1\otimes\Psi(t_i)+\beta_i^*\otimes1\otimes\Psi(t_i)^{-1})\right)^{-1},\nonumber
\end{eqnarray}}}\noindent
meaning that for this formula we obtain the same regardless of whether the second tensor coordinate contains elements from $\mathcal A$ or $\mathcal A^{\rm op}$.
(The reader uncomfortable with the opposite structure can reach this conclusion by noting that the op structure on selfadjoint elements
coincides with the usual structure followed by taking the adjoint.) The next step is to apply $\Delta_j$ again and evaluate it in two $r$-tuples to obtain 
a linear map-valued (i.e. first-order) noncommutative function. We have seen what effect the application of $\Delta_j$ on $(z-\mathcal S)^{-1}$ has in 
Equation \eqref{frezzy}, which we rewrite below with $\cdot$ (a dot) instead of $c$ and with $\tilde{s}_k,1\le k\le r$, instead of $u_k$ (convention $b=z-\xi$ is preserved):
\begin{eqnarray}
&\!\!\!\! & \frac{i}{2}\left(b\otimes1\otimes1-\sum_{i=1}^{r}(\gamma_i\otimes\Psi(s_i)\otimes1+\gamma_i^*\otimes\Psi(s_i)^{-1}\otimes1)\right.\nonumber\\
&\!\!\!\! & \mbox{}-\left.\sum_{i=1}^r(\beta_i\otimes1\otimes\Psi(t_i)+\beta_i^*\otimes1\otimes \Psi(t_i)^{-1})\right)^{-1}\nonumber\\
&\!\!\!\! & \mbox{}\times\left(\gamma_j\!\otimes\![(\Psi(s_j)\!-\!1)\, \cdot\, (\Psi(\tilde{s}_j)\!-\!1)]-\gamma_j^*\otimes[(\Psi(s_j)^{-1}\!-\!1)\,\cdot\,(\Psi(\tilde{s}_j)^{-1}\!-\!1)]
\right)\!\otimes\!1\nonumber\\
&\!\!\!\! &\mbox{}\times\left(b\otimes1\otimes1-\sum_{i=1}^{r}(\gamma_i\otimes\Psi(\tilde{s}_i))\otimes1+\gamma_i^*\otimes\Psi(\tilde{s}_i)^{-1})\otimes1)\right.\nonumber\\
&\!\!\!\! & \mbox{}-\left.\sum_{i=1}^r(\beta_i\otimes1\otimes\Psi(t_i)+\beta_i^*\otimes1\otimes \Psi(t_i)^{-1})\right)^{-1}.\nonumber
\end{eqnarray}
Together with the Leibniz rule, this yields the expression 
{\footnotesize{\begin{eqnarray}
& & \!\!\!\!\!\!\frac{-1}{4}\left(b\otimes1\otimes1-\sum_{i=1}^r(\gamma_i\otimes\Psi(s_i)\!\otimes\!1\!+\!\gamma_i^*\!\otimes\!\Psi(s_i)^{-1}\!\otimes\!1)\nonumber\right.\\
& & \mbox{}-\left.\sum_{i=1}^r(\beta_i\otimes1\otimes\Psi(t_i)+\beta_i^*\otimes1\otimes\Psi(t_i)^{-1})\right)^{-1}\nonumber\\
& & \mbox{}\times\left(\gamma_j\!\otimes\![(\Psi(s_j)\!-\!1)\, \cdot\, (\Psi(\tilde{s}_j)\!-\!1)]-\gamma_j^*\otimes[(\Psi(s_j)^{-1}\!-\!1)\,\cdot\,(\Psi(\tilde{s}_j)^{-1}\!-\!1)]\right)\!\otimes\!1\nonumber\\
& &\mbox{}\times\left(b\otimes1\otimes1-\sum_{i=1}^{r}(\gamma_i\otimes\Psi(\tilde{s}_i))\otimes1+\gamma_i^*\otimes\Psi(\tilde{s}_i)^{-1})\otimes1)\right.\nonumber\\
& & \mbox{}-\left.\sum_{i=1}^r(\beta_i\otimes1\otimes\Psi(t_i)+\beta_i^*\otimes1\otimes \Psi(t_i)^{-1})\right)^{-1}\nonumber\\
& & \mbox{}\times\left(\gamma_j\otimes\left(\Psi(\tilde{s}_j)-1\right)\left(\Psi(\tilde{s}_j)-1\right)-\gamma_j^*\!\otimes\left(\Psi(\tilde{s}_j)^{-1}-1\right)
\left(\Psi(\tilde{s}_j)^{-1}-1\right)\right)\!\otimes\!1\nonumber\\
&&\mbox{}\times\left(b\otimes1\otimes1-\sum_{i=1}^r(\gamma_i\otimes\Psi(\tilde{s}_i)\otimes1+\gamma_i^*\otimes\Psi(\tilde{s}_i)^{-1}\otimes1)\right.\nonumber\\
& & \mbox{}-\left.\sum_{i=1}^r(\beta_i\otimes1\otimes\Psi(t_i)+\beta_i^*\otimes1\otimes\Psi(t_i)^{-1})\right)^{-1}\nonumber\\
& & \!\!\!\!\!\!\mbox{}-\frac14\left(b\otimes1\otimes1-\sum_{i=1}^r(\gamma_i\otimes\Psi(s_i)\!\otimes\!1\!+\!\gamma_i^*\!\otimes\!\Psi(s_i)^{-1}\!\otimes\!1)\nonumber\right.\\
& & \mbox{}-\left.\sum_{i=1}^r(\beta_i\otimes1\otimes\Psi(t_i)+\beta_i^*\otimes1\otimes\Psi(t_i)^{-1})\right)^{-1}\nonumber\\
& & \mbox{}\times\left(\gamma_j\!\otimes\![(\Psi(s_j)\!-\!1)\, \cdot\, (\Psi(\tilde{s}_j)\!-\!1)^2+(\Psi(s_j)\!-\!1)^2\, \cdot\, (\Psi(\tilde{s}_j)\!-\!1)]\right.\nonumber\\
& & \mbox{}\left.+\gamma_j^*\otimes[(\Psi(s_j)^{-1}\!-\!1)^2\,\cdot\,(\Psi(\tilde{s}_j)^{-1}\!-\!1)+(\Psi(s_j)^{-1}\!-\!1)\,\cdot\,(\Psi(\tilde{s}_j)^{-1}\!-\!1)^2]\right)\!\otimes\!1\nonumber\\
&&\mbox{}\times\left(b\otimes1\otimes1-\sum_{i=1}^r(\gamma_i\otimes\Psi(\tilde{s}_i)\otimes1+\gamma_i^*\otimes\Psi(\tilde{s}_i)^{-1}\otimes1)\right.\nonumber\\
& & \mbox{}-\left.\sum_{i=1}^r(\beta_i\otimes1\otimes\Psi(t_i)+\beta_i^*\otimes1\otimes\Psi(t_i)^{-1})\right)^{-1}\nonumber\\
& & \!\!\!\!\!\!-\frac{1}{4}\left(b\otimes1\otimes1-\sum_{i=1}^r(\gamma_i\otimes\Psi(s_i)\!\otimes\!1\!+\!\gamma_i^*\!\otimes\!\Psi(s_i)^{-1}\!\otimes\!1)\nonumber\right.\\
& & \mbox{}-\left.\sum_{i=1}^r(\beta_i\otimes1\otimes\Psi(t_i)+\beta_i^*\otimes1\otimes\Psi(t_i)^{-1})\right)^{-1}\nonumber\\
& & \mbox{}\times\left(\gamma_j\!\otimes\!(\Psi(s_j)\!-\!1)(\Psi(s_j)\!-\!1)-\gamma_j^*\otimes(\Psi(s_j)^{-1}\!-\!1)(\Psi({s}_j)^{-1}\!-\!1)]\right)\!\otimes\!1\nonumber\\
& &\mbox{}\times\left(b\otimes1\otimes1-\sum_{i=1}^{r}(\gamma_i\otimes\Psi(s_i))\otimes1+\gamma_i^*\otimes\Psi({s}_i)^{-1})\otimes1)\right.\nonumber\\
& & \mbox{}-\left.\sum_{i=1}^r(\beta_i\otimes1\otimes\Psi(t_i)+\beta_i^*\otimes1\otimes \Psi(t_i)^{-1})\right)^{-1}\nonumber\\
& & \mbox{}\times\left(\gamma_j\otimes\left[\left(\Psi({s}_j)-1\right)\,\cdot\,\left(\Psi(\tilde{s}_j)-1\right)\right]-\gamma_j^*\!\otimes\left[\left(\Psi({s}_j)^{-1}-1\right)\,\cdot\,
\left(\Psi(\tilde{s}_j)^{-1}-1\right)\right]\right)\!\otimes\!1\nonumber\\
&&\mbox{}\times\left(b\otimes1\otimes1-\sum_{i=1}^r(\gamma_i\otimes\Psi(\tilde{s}_i)\otimes1+\gamma_i^*\otimes\Psi(\tilde{s}_i)^{-1}\otimes1)\right.\nonumber\\
& & \mbox{}-\left.\sum_{i=1}^r(\beta_i\otimes1\otimes\Psi(t_i)+\beta_i^*\otimes1\otimes\Psi(t_i)^{-1})\right)^{-1}\nonumber,
\end{eqnarray}}}\noindent
in which the reader is asked to mainly keep track of how the number of resolvents grows and where the dots (the placeholders for the argument of the linear map) are located.
Here that is still reasonably easy to do: there are three summands, in two of them there are three resolvents as factors, and in one there are two. The 
placeholder appears in the `middle' of the last-mentioned one, and alternatingly `left and right of the center one' in the other two. As an aside, one observes again
that, when evaluated in $\underline{s}_r=\underline{\tilde{s}}_r$, the above expression does not change regardless of whether 
$\underline{s}_r\in\mathcal A^r$ or $\underline{s}_r\in(\mathcal A^{\rm op})^r$.

We apply next $\Delta_k$ for $k$ possibly (but not necessarily) different from $j$, independently on the two sides of the placeholder $\cdot$ - that is, with respect
to $\underline{s}_r$ and $\underline{\tilde{s}}_r$ independently - but with $\underline{s}_r$ and $\underline{\tilde{s}}_r$ being viewed as belonging to $(\mathcal A^{\rm op})^r$,
meaning that the formulas one obtains left and right of the two newly occurring placeholders being understood to express elements in the opposite algebra.
To distinguish between the two sides, we use $\Delta_k$ for the partial difference-differential operator with respect to the $k^{\rm th}$ left-hand variable $s_k$
and $\tilde{\Delta}_k$ for the partial difference-differential operator with respect to the $k^{\rm th}$ right-hand variable $\tilde{s}_k$. Moreover, since 
it now becomes very difficult to read the formula with the required attention paid to the sides of the three placeholders, we mark the left side by writing it
in red, the middle in black, and the right in blue. The left and right placeholders will be marked by $\bullet$, while the middle remains $\cdot$.
We apply first $\tilde{\Delta}_k$, hence color blue only appears below. In addition, since there are three terms in the object to which we apply $\tilde{\Delta}_k$, we mark in 
the right-hand side where the result of the application of $\tilde{\Delta}_k$ ends by writing to its right ``end $x$th term,'' $x=1,2$, and 3:
{\footnotesize{\begin{eqnarray}
& & \!\!\!\!\!\!\tilde{\Delta}_k\left[\left(b\otimes1\otimes1-\sum_{i=1}^r(\gamma_i\otimes\Psi(s_i^{\rm op})\!\otimes\!1\!+\!\gamma_i^*\!\otimes\!\Psi(s_i^{\rm op})^{-1}\!\otimes\!1)\nonumber\right.\right.\\
& & \mbox{}-\left.\sum_{i=1}^r(\beta_i\otimes1\otimes\Psi(t_i)+\beta_i^*\otimes1\otimes\Psi(t_i)^{-1})\right)^{-1}\nonumber\\
& & \mbox{}\times\left(\gamma_j\!\otimes\![(\Psi(s_j^{\rm op})\!-\!1)\, \cdot\, (\Psi(\tilde{s}_j^{\rm op})\!-\!1)]-\gamma_j^*\otimes[(\Psi(s_j^{\rm op})^{-1}\!-\!1)\,\cdot\,(\Psi(\tilde{s}_j^{\rm op})^{-1}\!-\!1)]\right)\!\otimes\!1\nonumber\\
& &\mbox{}\times\left(b\otimes1\otimes1-\sum_{i=1}^{r}(\gamma_i\otimes\Psi(\tilde{s}^{\rm op}_i)\otimes1+\gamma_i^*\otimes\Psi(\tilde{s}^{\rm op}_i)^{-1}\otimes1)\right.\nonumber\\
& & \mbox{}-\left.\sum_{i=1}^r(\beta_i\otimes1\otimes\Psi(t_i)+\beta_i^*\otimes1\otimes \Psi(t_i)^{-1})\right)^{-1}\nonumber\\
& & \mbox{}\times\left(\gamma_j\otimes\left(\Psi(\tilde{s}_j^{\rm op})-1\right)^2-\gamma_j^*\!\otimes\left(\Psi(\tilde{s}^{\rm op}_j)^{-1}-1\right)^2
\right)\!\otimes\!1\nonumber\\
&&\mbox{}\times\left(b\otimes1\otimes1-\sum_{i=1}^r(\gamma_i\otimes\Psi(\tilde{s}^{\rm op}_i)\otimes1+\gamma_i^*\otimes\Psi(\tilde{s}^{\rm op}_i)^{-1}\otimes1)\right.\nonumber\\
& & \mbox{}-\left.\sum_{i=1}^r(\beta_i\otimes1\otimes\Psi(t_i)+\beta_i^*\otimes1\otimes\Psi(t_i)^{-1})\right)^{-1}\nonumber\\
& & \!\!\!\!\!\!\mbox{}+\left(b\otimes1\otimes1-\sum_{i=1}^r(\gamma_i\otimes\Psi(s_i^{\rm op})\otimes1+\gamma_i^*\otimes\Psi(s_i^{\rm op})^{-1}\otimes1)\nonumber\right.\\
& & \mbox{}-\left.\sum_{i=1}^r(\beta_i\otimes1\otimes\Psi(t_i)+\beta_i^*\otimes1\otimes\Psi(t_i)^{-1})\right)^{-1}\nonumber\\
& & \mbox{}\times\left(\gamma_j\!\otimes\![(\Psi(s^{\rm op}_j)\!-\!1)\, \cdot\, (\Psi(\tilde{s}^{\rm op}_j)\!-\!1)^2+(\Psi(s_j^{\rm op})\!-\!1)^2\, \cdot\, (\Psi(\tilde{s}^{\rm op}_j)\!-\!1)]\right.\nonumber\\
& & \mbox{}\left.+\gamma_j^*\otimes[(\Psi(s_j^{\rm op})^{-1}\!-\!1)^2\,\cdot\,(\Psi(\tilde{s}^{\rm op}_j)^{-1}\!-\!1)+(\Psi(s_j^{\rm op})^{-1}\!-\!1)\,\cdot\,(\Psi(\tilde{s}^{\rm op}_j)^{-1}\!-\!1)^2]\right)\!\otimes\!1\nonumber\\
&&\mbox{}\times\left(b\otimes1\otimes1-\sum_{i=1}^r(\gamma_i\otimes\Psi(\tilde{s}^{\rm op}_i)\otimes1+\gamma_i^*\otimes\Psi(\tilde{s}^{\rm op}_i)^{-1}\otimes1)\right.\nonumber\\
& & \mbox{}-\left.\sum_{i=1}^r(\beta_i\otimes1\otimes\Psi(t_i)+\beta_i^*\otimes1\otimes\Psi(t_i)^{-1})\right)^{-1}\nonumber\\
& & \!\!\!\!\!\!+\left(b\otimes1\otimes1-\sum_{i=1}^r(\gamma_i\otimes\Psi(s_i^{\rm op})\!\otimes\!1\!+\!\gamma_i^*\!\otimes\!\Psi(s_i^{\rm op})^{-1}\!\otimes\!1)\nonumber\right.\\
& & \mbox{}-\left.\sum_{i=1}^r(\beta_i\otimes1\otimes\Psi(t_i)+\beta_i^*\otimes1\otimes\Psi(t_i)^{-1})\right)^{-1}\nonumber\\
& & \mbox{}\times\left(\gamma_j\!\otimes\!(\Psi(s_j^{\rm op})\!-\!1)(\Psi(s_j^{\rm op})\!-\!1)-\gamma_j^*\otimes(\Psi(s_j^{\rm op})^{-1}\!-\!1)(\Psi(s^{\rm op}_j)^{-1}\!-\!1)]\right)\!\otimes\!1\nonumber\\
& &\mbox{}\times\left(b\otimes1\otimes1-\sum_{i=1}^{r}(\gamma_i\otimes\Psi(s^{\rm op}_i))\otimes1+\gamma_i^*\otimes\Psi(s^{\rm op}_i)^{-1})\otimes1)\right.\nonumber\\
& & \mbox{}-\left.\sum_{i=1}^r(\beta_i\otimes1\otimes\Psi(t_i)+\beta_i^*\otimes1\otimes \Psi(t_i)^{-1})\right)^{-1}\nonumber\\
& & \mbox{}\times\left(\gamma_j\otimes\left[\left(\Psi(s^{\rm op}_j)-1\right)\,\cdot\,\left(\Psi(\tilde{s}^{\rm op}_j)-1\right)\right]-\gamma_j^*\!\otimes\left[\left(\Psi(s^{\rm op}_j)^{-1}-1\right)\,\cdot\,\left(\Psi(\tilde{s}^{\rm op}_j)^{-1}-1\right)\right]\right)\!\otimes\!1\nonumber\\
&&\mbox{}\times\left(b\otimes1\otimes1-\sum_{i=1}^r(\gamma_i\otimes\Psi(\tilde{s}^{\rm op}_i)\otimes1+\gamma_i^*\otimes\Psi(\tilde{s}^{\rm op}_i)^{-1}\otimes1)\right.\nonumber\\
& & \mbox{}-\left.\left.\sum_{i=1}^r(\beta_i\otimes1\otimes\Psi(t_i)+\beta_i^*\otimes1\otimes\Psi(t_i)^{-1})\right)^{-1}\right]\nonumber\\
&=& \left(b\otimes1\otimes1-\sum_{i=1}^r(\gamma_i\otimes\Psi(s_i^{\rm op})\!\otimes\!1\!+\!\gamma_i^*\!\otimes\!\Psi(s_i^{\rm op})^{-1}\!\otimes\!1)\nonumber\right.\\
& & \mbox{}-\left.\sum_{i=1}^r(\beta_i\otimes1\otimes\Psi(t_i)+\beta_i^*\otimes1\otimes\Psi(t_i)^{-1})\right)^{-1}\nonumber\\
& & \mbox{}\times\delta_{j,k}\frac{i}{2}\left(\gamma_j\!\otimes\![(\Psi(s_j^{\rm op})\!-\!1)\, \cdot\, (\Psi(\tilde{s}_j^{\rm op})\!-\!1)\bullet{\color{blue}{(\Psi(\tilde{\mathsf s}_j^{\rm op})\!-\!1)}}]\right.\nonumber\\
& & \mbox{}+\left.\gamma_j^*\otimes[(\Psi(s_j^{\rm op})^{-1}\!-\!1)\,\cdot\,(\Psi(\tilde{s}_j^{\rm op})^{-1}\!-\!1)\bullet{\color{blue}{(\Psi(\tilde{\mathsf s}_j^{\rm op})^{-1}\!-\!1)}}]\right)\!\otimes\!1\nonumber\\
& &\mbox{}\times\left(b\otimes{\color{blue}{1}}\otimes1-\sum_{i=1}^{r}(\gamma_i\otimes{\color{blue}{\Psi(\tilde{\mathsf s}^{\rm op}_i)}}\otimes1+\gamma_i^*\otimes{\color{blue}{\Psi(\tilde{\mathsf s}^{\rm op}_i)^{-1}}}\otimes1)\right.\nonumber\\
& & \mbox{}-\left.\sum_{i=1}^r(\beta_i\otimes{\color{blue}{1}}\otimes\Psi(t_i)+\beta_i^*\otimes{\color{blue}{1}}\otimes \Psi(t_i)^{-1})\right)^{-1}\nonumber\\
& & \mbox{}\times\left(\gamma_j\otimes{\color{blue}{\left(\Psi(\tilde{\mathsf s}_j^{\rm op})-1\right)^2}}-\gamma_j^*\!\otimes{\color{blue}{\left(\Psi(\tilde{\mathsf s}^{\rm op}_j)^{-1}-1\right)^2}}
\right)\!\otimes\!1\nonumber\\
&&\mbox{}\times\left(b\otimes{\color{blue}{1}}\otimes1-\sum_{i=1}^r(\gamma_i\otimes{\color{blue}{\Psi(\tilde{\mathsf s}^{\rm op}_i)}}\otimes1+\gamma_i^*\otimes{\color{blue}{\Psi(\tilde{\mathsf s}^{\rm op}_i)^{-1}}}\otimes1)\right.\nonumber\\
& & \mbox{}-\left.\sum_{i=1}^r(\beta_i\otimes{\color{blue}{1}}\otimes\Psi(t_i)+\beta_i^*\otimes{\color{blue}{1}}\otimes\Psi(t_i)^{-1})\right)^{-1}\nonumber\\
& &\!\!\!\!\!\! \mbox{}+\left(b\otimes1\otimes1-\sum_{i=1}^r(\gamma_i\otimes\Psi(s_i^{\rm op})\!\otimes\!1\!+\!\gamma_i^*\!\otimes\!\Psi(s_i^{\rm op})^{-1}\!\otimes\!1)\nonumber\right.\\
& & \mbox{}-\left.\sum_{i=1}^r(\beta_i\otimes1\otimes\Psi(t_i)+\beta_i^*\otimes1\otimes\Psi(t_i)^{-1})\right)^{-1}\nonumber\\
& & \mbox{}\times\left(\gamma_j\!\otimes\![(\Psi(s_j^{\rm op})\!-\!1)\, \cdot\, (\Psi(\tilde{s}_j^{\rm op})\!-\!1)]-\gamma_j^*\otimes[(\Psi(s_j^{\rm op})^{-1}\!-\!1)\,\cdot\,(\Psi(\tilde{s}_j^{\rm op})^{-1}\!-\!1)]\right)\!\otimes\!1\nonumber\\
& &\mbox{}\times\left(b\otimes1\otimes1-\sum_{i=1}^{r}(\gamma_i\otimes\Psi(\tilde{s}^{\rm op}_i))\otimes1+\gamma_i^*\otimes\Psi(\tilde{s}^{\rm op}_i)^{-1})\otimes1)\right.\nonumber\\
& & \mbox{}-\left.\sum_{i=1}^r(\beta_i\otimes1\otimes\Psi(t_i)+\beta_i^*\otimes1\otimes \Psi(t_i)^{-1})\right)^{-1}\nonumber\\
& &\mbox{}\times\frac{i}{2}\left(\gamma_k\!\otimes\![(\Psi(\tilde{s}_k^{\rm op})\!-\!1)\bullet{\color{blue}{(\Psi(\tilde{\sf s}_k^{\rm op})\!-\!1)}}]-\gamma_k^*\otimes[(\Psi(\tilde{s}_k^{\rm op})^{-1}\!-\!1)\bullet{\color{blue}{(\Psi(\tilde{\sf s}_k^{\rm op})^{-1}\!-\!1)}}]\right)\!\otimes\!1\nonumber\\
& &\mbox{}\times\left(b\otimes{\color{blue}{1}}\otimes1-\sum_{i=1}^{r}(\gamma_i\otimes{\color{blue}{\Psi(\tilde{\sf s}^{\rm op}_i)}}\otimes1+\gamma_i^*\otimes{\color{blue}{\Psi(\tilde{\sf s}^{\rm op}_i)^{-1}}}\otimes1)\right.\nonumber\\
& & \mbox{}-\left.\sum_{i=1}^r(\beta_i\otimes{\color{blue}{1}}\otimes\Psi(t_i)+\beta_i^*\otimes{\color{blue}{1}}\otimes \Psi(t_i)^{-1})\right)^{-1}\nonumber\\
& & \mbox{}\times\left(\gamma_j\otimes{\color{blue}{\left(\Psi(\tilde{\sf s}_j^{\rm op})-1\right)^2}}-\gamma_j^*\!\otimes{\color{blue}{\left(\Psi(\tilde{\sf s}^{\rm op}_j)^{-1}-1\right)^2}}
\right)\!\otimes\!1\nonumber\\
&&\mbox{}\times\left(b\otimes{\color{blue}{1}}\otimes1-\sum_{i=1}^r(\gamma_i\otimes{\color{blue}{\Psi(\tilde{\sf s}^{\rm op}_i)}}\otimes1+\gamma_i^*\otimes{\color{blue}{\Psi(\tilde{\sf s}^{\rm op}_i)^{-1}}}\otimes1)\right.\nonumber\\
& & \mbox{}-\left.\sum_{i=1}^r(\beta_i\otimes{\color{blue}{1}}\otimes\Psi(t_i)+\beta_i^*\otimes{\color{blue}{1}}\otimes\Psi(t_i)^{-1})\right)^{-1}\nonumber\\
& &\!\!\!\!\!\! \mbox{}+\left(b\otimes1\otimes1-\sum_{i=1}^r(\gamma_i\otimes\Psi(s_i^{\rm op})\!\otimes\!1\!+\!\gamma_i^*\!\otimes\!\Psi(s_i^{\rm op})^{-1}\!\otimes\!1)\nonumber\right.\\
& & \mbox{}-\left.\sum_{i=1}^r(\beta_i\otimes1\otimes\Psi(t_i)+\beta_i^*\otimes1\otimes\Psi(t_i)^{-1})\right)^{-1}\nonumber\\
& & \mbox{}\times\left(\gamma_j\!\otimes\![(\Psi(s_j^{\rm op})\!-\!1)\, \cdot\, (\Psi(\tilde{s}_j^{\rm op})\!-\!1)]-\gamma_j^*\otimes[(\Psi(s_j^{\rm op})^{-1}\!-\!1)\,\cdot\,(\Psi(\tilde{s}_j^{\rm op})^{-1}\!-\!1)]\right)\!\otimes\!1\nonumber\\
& &\mbox{}\times\left(b\otimes1\otimes1-\sum_{i=1}^{r}(\gamma_i\otimes\Psi(\tilde{s}^{\rm op}_i)\otimes1+\gamma_i^*\otimes\Psi(\tilde{s}^{\rm op}_i)^{-1}\otimes1)\right.\nonumber\\
& & \mbox{}-\left.\sum_{i=1}^r(\beta_i\otimes1\otimes\Psi(t_i)+\beta_i^*\otimes1\otimes \Psi(t_i)^{-1})\right)^{-1}\nonumber\\
& & \mbox{}\times\frac{i}{2}\delta_{j,k}\left(\gamma_j\otimes\left\{\left(\Psi(\tilde{s}_j^{\rm op})-1\right)^2\bullet{\color{blue}{\left(\Psi(\tilde{\sf s}_j^{\rm op})-1\right)}}+
\left(\Psi(\tilde{s}_j^{\rm op})-1\right)\bullet{\color{blue}{\left(\Psi(\tilde{\sf s}_j^{\rm op})-1\right)^2}}\right\}\right.\nonumber\\
& & \mbox{}+\left.\gamma_j^*\!\otimes\left\{\left(\Psi(\tilde{s}^{\rm op}_j)^{-1}-1\right)\bullet{\color{blue}{\left(\Psi(\tilde{\sf s}^{\rm op}_j)^{-1}-1\right)^2}}+\left(\Psi(\tilde{s}^{\rm op}_j)^{-1}-1\right)^2\!\bullet{\color{blue}{\left(\Psi(\tilde{\sf s}^{\rm op}_j)^{-1}-1\right)}}\right\}
\right)\!\otimes\!1\nonumber\\
&&\mbox{}\times\left(b\otimes{\color{blue}{1}}\otimes1-\sum_{i=1}^r(\gamma_i\otimes{\color{blue}{\Psi(\tilde{\sf s}^{\rm op}_i)}}\otimes1+\gamma_i^*\otimes{\color{blue}{\Psi(\tilde{\sf s}^{\rm op}_i)^{-1}}}\otimes1)\right.\nonumber\\
& & \mbox{}-\left.\sum_{i=1}^r(\beta_i\otimes{\color{blue}{1}}\otimes\Psi(t_i)+\beta_i^*\otimes{\color{blue}{1}}\otimes\Psi(t_i)^{-1})\right)^{-1}\nonumber\\
& &\!\!\!\!\!\! \mbox{}+\left(b\otimes1\otimes1-\sum_{i=1}^r(\gamma_i\otimes\Psi(s_i^{\rm op})\!\otimes\!1\!+\!\gamma_i^*\!\otimes\!\Psi(s_i^{\rm op})^{-1}\!\otimes\!1)\nonumber\right.\\
& & \mbox{}-\left.\sum_{i=1}^r(\beta_i\otimes1\otimes\Psi(t_i)+\beta_i^*\otimes1\otimes\Psi(t_i)^{-1})\right)^{-1}\nonumber\\
& & \mbox{}\times\left(\gamma_j\!\otimes\![(\Psi(s_j^{\rm op})\!-\!1)\, \cdot\, (\Psi(\tilde{s}_j^{\rm op})\!-\!1)]-\gamma_j^*\otimes[(\Psi(s_j^{\rm op})^{-1}\!-\!1)\,\cdot\,(\Psi(\tilde{s}_j^{\rm op})^{-1}\!-\!1)]\right)\!\otimes\!1\nonumber\\
& &\mbox{}\times\left(b\otimes1\otimes1-\sum_{i=1}^{r}(\gamma_i\otimes\Psi(\tilde{s}^{\rm op}_i)\otimes1+\gamma_i^*\otimes\Psi(\tilde{s}^{\rm op}_i)^{-1}\otimes1)\right.\nonumber\\
& & \mbox{}-\left.\sum_{i=1}^r(\beta_i\otimes1\otimes\Psi(t_i)+\beta_i^*\otimes1\otimes \Psi(t_i)^{-1})\right)^{-1}\nonumber\\
& & \mbox{}\times\left(\gamma_j\otimes\left(\Psi(\tilde{s}_j^{\rm op})-1\right)^2-\gamma_j^*\!\otimes\left(\Psi(\tilde{s}^{\rm op}_j)^{-1}-1\right)^2
\right)\!\otimes\!1\nonumber\\
&&\mbox{}\times\left(b\otimes1\otimes1-\sum_{i=1}^r(\gamma_i\otimes\Psi(\tilde{s}^{\rm op}_i)\otimes1+\gamma_i^*\otimes\Psi(\tilde{s}^{\rm op}_i)^{-1}\otimes1)\right.\nonumber\\
& & \mbox{}-\left.\sum_{i=1}^r(\beta_i\otimes1\otimes\Psi(t_i)+\beta_i^*\otimes1\otimes\Psi(t_i)^{-1})\right)^{-1}\nonumber\\
& & \mbox{}\times\frac{i}{2}\left(\gamma_k\!\otimes\![(\Psi(\tilde{s}_k^{\rm op})\!-\!1)\bullet{\color{blue}{(\Psi(\tilde{\sf s}_k^{\rm op})\!-\!1)}}]-\gamma_k^*\otimes[(\Psi(\tilde{s}_k^{\rm op})^{-1}\!-\!1)\bullet{\color{blue}{(\Psi(\tilde{\sf s}_k^{\rm op})^{-1}\!-\!1)}}]\right)\!\otimes\!1\nonumber\\
&&\mbox{}\times\left(b\otimes{\color{blue}{1}}\otimes1-\sum_{i=1}^r(\gamma_i\otimes{\color{blue}{\Psi(\tilde{\sf s}^{\rm op}_i)}}\otimes1+\gamma_i^*\otimes{\color{blue}{\Psi(\tilde{\sf s}^{\rm op}_i)^{-1}}}\otimes1)\right.\nonumber\\
& & \mbox{}-\left.\sum_{i=1}^r(\beta_i\otimes{\color{blue}{1}}\otimes\Psi(t_i)+\beta_i^*\otimes{\color{blue}{1}}\otimes\Psi(t_i)^{-1})\right)^{-1}\quad\quad\text{end 1st term}\nonumber\\
& & \!\!\!\!\!\!\mbox{}+\left(b\otimes1\otimes1-\sum_{i=1}^r(\gamma_i\otimes\Psi(s_i^{\rm op})\otimes1+\gamma_i^*\otimes\Psi(s_i^{\rm op})^{-1}\otimes1)\nonumber\right.\\
& & \mbox{}-\left.\sum_{i=1}^r(\beta_i\otimes1\otimes\Psi(t_i)+\beta_i^*\otimes1\otimes\Psi(t_i)^{-1})\right)^{-1}\nonumber\\
& & \mbox{}\times\frac{i}{2}\delta_{j,k}\left(\gamma_j\!\otimes\!\left[(\Psi(s^{\rm op}_j)\!-\!1)\, \cdot\,\left\{ (\Psi(\tilde{s}^{\rm op}_j)\!-\!1)^2\bullet{\color{blue}{(\Psi(\tilde{\sf s}^{\rm op}_j)-1)}}+(\Psi(\tilde{s}^{\rm op}_j)\!-\!1)\bullet{\color{blue}{(\Psi(\tilde{\sf s}^{\rm op}_j)-1)^2}}\right\}\right.\right.\nonumber\\
& & +(\Psi(s_j^{\rm op})\!-\!1)^2\, \cdot\, (\Psi(\tilde{s}^{\rm op}_j)\!-\!1)\bullet{\color{blue}{(\Psi(\tilde{\sf s}^{\rm op}_j)\!-\!1)}}]\nonumber\\
& & \mbox{}-\gamma_j^*\otimes\left[(\Psi(s_j^{\rm op})^{-1}\!-\!1)^2\,\cdot\,(\Psi(\tilde{s}^{\rm op}_j)^{-1}\!-\!1)\bullet{\color{blue}{(\Psi(\tilde{\sf s}^{\rm op}_j)^{-1}\!-\!1)}}\right.\nonumber\\
& & \mbox{}\left.+\left.(\Psi(s_j^{\rm op})^{-1}\!-\!1)\cdot\left\{(\Psi(\tilde{s}^{\rm op}_j)^{-1}\!-\!1)^2\!\bullet{\color{blue}{(\Psi(\tilde{\sf s}^{\rm op}_j)^{-1}\!-\!1)}}\!+\!
(\Psi(\tilde{s}^{\rm op}_j)^{-1}\!-\!1)\!\bullet\!{\color{blue}{(\Psi(\tilde{\sf s}^{\rm op}_j)^{-1}\!-\!1)^2}}\right\}\right]\right)\!\otimes\!1\nonumber\\
&&\mbox{}\times\left(b\otimes{\color{blue}{1}}\otimes1-\sum_{i=1}^r(\gamma_i\otimes{\color{blue}{\Psi(\tilde{\sf s}^{\rm op}_i)}}\otimes1+\gamma_i^*\otimes{\color{blue}{\Psi(\tilde{\sf s}^{\rm op}_i)^{-1}}}\otimes1)\right.\nonumber\\
& & \mbox{}-\left.\sum_{i=1}^r(\beta_i\otimes{\color{blue}{1}}\otimes\Psi(t_i)+\beta_i^*\otimes{\color{blue}{1}}\otimes\Psi(t_i)^{-1})\right)^{-1}\nonumber\\
& & \!\!\!\!\!\!\mbox{}+\left(b\otimes1\otimes1-\sum_{i=1}^r(\gamma_i\otimes\Psi(s_i^{\rm op})\otimes1+\gamma_i^*\otimes\Psi(s_i^{\rm op})^{-1}\otimes1)\nonumber\right.\\
& & \mbox{}-\left.\sum_{i=1}^r(\beta_i\otimes1\otimes\Psi(t_i)+\beta_i^*\otimes1\otimes\Psi(t_i)^{-1})\right)^{-1}\nonumber\\
& & \mbox{}\times\left(\gamma_j\!\otimes\![(\Psi(s^{\rm op}_j)\!-\!1)\, \cdot\, (\Psi(\tilde{s}^{\rm op}_j)\!-\!1)^2+(\Psi(s_j^{\rm op})\!-\!1)^2\, \cdot\, (\Psi(\tilde{s}^{\rm op}_j)\!-\!1)]\right.\nonumber\\
& & \mbox{}\left.+\gamma_j^*\otimes[(\Psi(s_j^{\rm op})^{-1}\!-\!1)^2\,\cdot\,(\Psi(\tilde{s}^{\rm op}_j)^{-1}\!-\!1)+(\Psi(s_j^{\rm op})^{-1}\!-\!1)\,\cdot\,(\Psi(\tilde{s}^{\rm op}_j)^{-1}\!-\!1)^2]\right)\!\otimes\!1\nonumber\\
&&\mbox{}\times\left(b\otimes1\otimes1-\sum_{i=1}^r(\gamma_i\otimes\Psi(\tilde{s}^{\rm op}_i)\otimes1+\gamma_i^*\otimes\Psi(\tilde{s}^{\rm op}_i)^{-1}\otimes1)\right.\nonumber\\
& & \mbox{}-\left.\sum_{i=1}^r(\beta_i\otimes1\otimes\Psi(t_i)+\beta_i^*\otimes1\otimes\Psi(t_i)^{-1})\right)^{-1}\nonumber\\
& &\mbox{}\times\frac{i}{2}\left(\gamma_k\!\otimes\![(\Psi(\tilde{s}_k^{\rm op})\!-\!1)\bullet{\color{blue}{(\Psi(\tilde{\sf s}_k^{\rm op})\!-\!1)}}]-\gamma_k^*\otimes[(\Psi(\tilde{s}_k^{\rm op})^{-1}\!-\!1)\bullet{\color{blue}{(\Psi(\tilde{\sf s}_k^{\rm op})^{-1}\!-\!1)}}]\right)\!\otimes\!1\nonumber\\
&&\mbox{}\times\left(b\otimes{\color{blue}{1}}\otimes1-\sum_{i=1}^r(\gamma_i\otimes{\color{blue}{\Psi(\tilde{\sf s}^{\rm op}_i)}}\otimes1+\gamma_i^*\otimes{\color{blue}{\Psi(\tilde{\sf s}^{\rm op}_i)^{-1}}}\otimes1)\right.\nonumber\\
& & \mbox{}-\left.\sum_{i=1}^r(\beta_i\otimes{\color{blue}{1}}\otimes\Psi(t_i)+\beta_i^*\otimes{\color{blue}{1}}\otimes\Psi(t_i)^{-1})\right)^{-1}\quad\quad\text{end 2nd term}\nonumber\\
& & \!\!\!\!\!\!+\left(b\otimes1\otimes1-\sum_{i=1}^r(\gamma_i\otimes\Psi(s_i^{\rm op})\!\otimes\!1\!+\!\gamma_i^*\!\otimes\!\Psi(s_i^{\rm op})^{-1}\!\otimes\!1)\nonumber\right.\\
& & \mbox{}-\left.\sum_{i=1}^r(\beta_i\otimes1\otimes\Psi(t_i)+\beta_i^*\otimes1\otimes\Psi(t_i)^{-1})\right)^{-1}\nonumber\\
& & \mbox{}\times\left(\gamma_j\!\otimes\!(\Psi(s_j^{\rm op})\!-\!1)(\Psi(s_j^{\rm op})\!-\!1)-\gamma_j^*\otimes(\Psi(s_j^{\rm op})^{-1}\!-\!1)(\Psi(s^{\rm op}_j)^{-1}\!-\!1)]\right)\!\otimes\!1\nonumber\\
& &\mbox{}\times\left(b\otimes1\otimes1-\sum_{i=1}^{r}(\gamma_i\otimes\Psi(s^{\rm op}_i))\otimes1+\gamma_i^*\otimes\Psi(s^{\rm op}_i)^{-1})\otimes1)\right.\nonumber\\
& & \mbox{}-\left.\sum_{i=1}^r(\beta_i\otimes1\otimes\Psi(t_i)+\beta_i^*\otimes1\otimes \Psi(t_i)^{-1})\right)^{-1}\nonumber\\
& & \mbox{}\times\delta_{j,k}\frac{i}{2}\left(\gamma_j\otimes\left[\left(\Psi(s^{\rm op}_j)-1\right)\,\cdot\,\left(\Psi(\tilde{s}^{\rm op}_j)-1\right)\bullet{\color{blue}{\left(\Psi(\tilde{\sf s}^{\rm op}_j)-1\right)}}\right]\right.\nonumber\\
& & \mbox{}+\left.\gamma_j^*\!\otimes\left[\left(\Psi(s^{\rm op}_j)^{-1}-1\right)\,\cdot\,\left(\Psi(\tilde{s}^{\rm op}_j)^{-1}-1\right)\bullet{\color{blue}{\left(\Psi(\tilde{\sf s}^{\rm op}_j)^{-1}-1\right)}}\right]\right)\!\otimes\!1\nonumber\\
&&\mbox{}\times\left(b\otimes{\color{blue}{1}}\otimes1-\sum_{i=1}^r(\gamma_i\otimes{\color{blue}{\Psi(\tilde{\sf s}^{\rm op}_i)}}\otimes1+\gamma_i^*\otimes{\color{blue}{\Psi(\tilde{\sf s}^{\rm op}_i)^{-1}}}\otimes1)\right.\nonumber\\
& & \mbox{}-\left.\sum_{i=1}^r(\beta_i\otimes{\color{blue}{1}}\otimes\Psi(t_i)+\beta_i^*\otimes{\color{blue}{1}}\otimes\Psi(t_i)^{-1})\right)^{-1}\nonumber\\
& & \!\!\!\!\!\!+\left(b\otimes1\otimes1-\sum_{i=1}^r(\gamma_i\otimes\Psi(s_i^{\rm op})\!\otimes\!1\!+\!\gamma_i^*\!\otimes\!\Psi(s_i^{\rm op})^{-1}\!\otimes\!1)\nonumber\right.\\
& & \mbox{}-\left.\sum_{i=1}^r(\beta_i\otimes1\otimes\Psi(t_i)+\beta_i^*\otimes1\otimes\Psi(t_i)^{-1})\right)^{-1}\nonumber\\
& & \mbox{}\times\left(\gamma_j\!\otimes\!(\Psi(s_j^{\rm op})\!-\!1)(\Psi(s_j^{\rm op})\!-\!1)-\gamma_j^*\otimes(\Psi(s_j^{\rm op})^{-1}\!-\!1)(\Psi(s^{\rm op}_j)^{-1}\!-\!1)]\right)\!\otimes\!1\nonumber\\
& &\mbox{}\times\left(b\otimes1\otimes1-\sum_{i=1}^{r}(\gamma_i\otimes\Psi(s^{\rm op}_i))\otimes1+\gamma_i^*\otimes\Psi(s^{\rm op}_i)^{-1})\otimes1)\right.\nonumber\\
& & \mbox{}-\left.\sum_{i=1}^r(\beta_i\otimes1\otimes\Psi(t_i)+\beta_i^*\otimes1\otimes \Psi(t_i)^{-1})\right)^{-1}\nonumber\\
& & \mbox{}\times\left(\gamma_j\otimes\left[\left(\Psi(s^{\rm op}_j)-1\right)\,\cdot\,\left(\Psi(\tilde{s}^{\rm op}_j)-1\right)\right]-\gamma_j^*\!\otimes\left[\left(\Psi(s^{\rm op}_j)^{-1}-1\right)\,\cdot\,\left(\Psi(\tilde{s}^{\rm op}_j)^{-1}-1\right)\right]\right)\!\otimes\!1\nonumber\\
&&\mbox{}\times\left(b\otimes1\otimes1-\sum_{i=1}^r(\gamma_i\otimes\Psi(\tilde{s}^{\rm op}_i)\otimes1+\gamma_i^*\otimes\Psi(\tilde{s}^{\rm op}_i)^{-1}\otimes1)\right.\nonumber\\
& & \mbox{}-\left.\sum_{i=1}^r(\beta_i\otimes1\otimes\Psi(t_i)+\beta_i^*\otimes1\otimes\Psi(t_i)^{-1})\right)^{-1}\nonumber\\
& &\mbox{}\times\frac{i}{2}\left(\gamma_k\!\otimes\![(\Psi(\tilde{s}_k^{\rm op})\!-\!1)\bullet{\color{blue}{(\Psi(\tilde{\sf s}_k^{\rm op})\!-\!1)}}]-\gamma_k^*\otimes[(\Psi(\tilde{s}_k^{\rm op})^{-1}\!-\!1)\bullet{\color{blue}{(\Psi(\tilde{\sf s}_k^{\rm op})^{-1}\!-\!1)}}]\right)\!\otimes\!1\nonumber\\
&&\mbox{}\times\left(b\otimes{\color{blue}{1}}\otimes1-\sum_{i=1}^r(\gamma_i\otimes{\color{blue}{\Psi(\tilde{\sf s}^{\rm op}_i)}}\otimes1+\gamma_i^*\otimes{\color{blue}{\Psi(\tilde{\sf s}^{\rm op}_i)^{-1}}}\otimes1)\right.\nonumber\\
& & \mbox{}-\left.\sum_{i=1}^r(\beta_i\otimes{\color{blue}{1}}\otimes\Psi(t_i)+\beta_i^*\otimes{\color{blue}{1}}\otimes\Psi(t_i)^{-1})\right)^{-1}\quad\quad\text{end of 3rd term.}
\label{43}\end{eqnarray}}}\noindent
We remind the reader that the above is a second order noncommutative function, and the variables are denoted by $s,\tilde{s},$ and $\tilde{\sf s}$. Next, we ``differentiate in $s_k$'' 
- meaning, we apply the partial difference-differential operator $\Delta_k$ with respect to $s_k$ - in the right-hand side of \eqref{43}. Now a new placeholder $\bullet$ will appear,
and left of it the formulas on the second tensor coordinate will be red (the evaluation is in $(\underline{\sf s}_r,\underline{s}_r)$). We will obviously not copy all of the right-hand 
side of $\eqref{43}$ with square brackets around it and a $\Delta_k$ in front of it, but we just write below the final result. As in the case of $\tilde{\Delta}_k$, we mark where the 
result of differentiating each of the eight terms ends, this time in capital letters (``END $X$th TERM'', $X=1,\dots,8$), while at the same time preserving the markings from the 
application of $\tilde{\Delta}_k$ above:
{\footnotesize{\begin{eqnarray}
&& \!\!\!\!\!\!\left(b\otimes{\color{red}1}\otimes1-\sum_{i=1}^r(\gamma_i\otimes{\color{red}\Psi(\mathsf s_i^{\rm op})}\!\otimes\!1\!+\!\gamma_i^*\!\otimes\!{\color{red}\Psi(\mathsf s_i^{\rm op})^{-1}}\!\otimes\!1)\nonumber\right.\\
& & \mbox{}-\left.\sum_{i=1}^r(\beta_i\otimes{\color{red}1}\otimes\Psi(t_i)+\beta_i^*\otimes{\color{red}1}\otimes\Psi(t_i)^{-1})\right)^{-1}\nonumber\\
& & \mbox{}\times\frac{i}{2}(\gamma_k\otimes[{\color{red}(\Psi(\mathsf s_k^{\rm op})-1)}\bullet(\Psi( s_k^{\rm op})-1)]-\gamma_k^*\otimes[{\color{red}(\Psi(\mathsf s_k^{\rm op})^{-1}-1)}\bullet(\Psi(s_k^{\rm op})^{-1}-1)])\otimes1\nonumber\\
& & \mbox{}\times\left(b\otimes1\otimes1-\sum_{i=1}^r(\gamma_i\otimes\Psi(s_i^{\rm op})\!\otimes\!1\!+\!\gamma_i^*\!\otimes\!\Psi(s_i^{\rm op})^{-1}\!\otimes\!1)\nonumber\right.\\
& & \mbox{}-\left.\sum_{i=1}^r(\beta_i\otimes1\otimes\Psi(t_i)+\beta_i^*\otimes1\otimes\Psi(t_i)^{-1})\right)^{-1}\nonumber\\
& & \mbox{}\times\delta_{j,k}\frac{i}{2}\left(\gamma_j\!\otimes\![(\Psi(s_j^{\rm op})\!-\!1)\, \cdot\, (\Psi(\tilde{s}_j^{\rm op})\!-\!1)\bullet{\color{blue}{(\Psi(\tilde{\mathsf s}_j^{\rm op})\!-\!1)}}]\right.\nonumber\\
& & \mbox{}+\left.\gamma_j^*\otimes[(\Psi(s_j^{\rm op})^{-1}\!-\!1)\,\cdot\,(\Psi(\tilde{s}_j^{\rm op})^{-1}\!-\!1)\bullet{\color{blue}{(\Psi(\tilde{\mathsf s}_j^{\rm op})^{-1}\!-\!1)}}]\right)\!\otimes\!1\nonumber\\
& &\mbox{}\times\left(b\otimes{\color{blue}{1}}\otimes1-\sum_{i=1}^{r}(\gamma_i\otimes{\color{blue}{\Psi(\tilde{\mathsf s}^{\rm op}_i)}}\otimes1+\gamma_i^*\otimes{\color{blue}{\Psi(\tilde{\mathsf s}^{\rm op}_i)^{-1}}}\otimes1)\right.\nonumber\\
& & \mbox{}-\left.\sum_{i=1}^r(\beta_i\otimes{\color{blue}{1}}\otimes\Psi(t_i)+\beta_i^*\otimes{\color{blue}{1}}\otimes \Psi(t_i)^{-1})\right)^{-1}\nonumber\\
& & \mbox{}\times\left(\gamma_j\otimes{\color{blue}{\left(\Psi(\tilde{\mathsf s}_j^{\rm op})-1\right)^2}}-\gamma_j^*\!\otimes{\color{blue}{\left(\Psi(\tilde{\mathsf s}^{\rm op}_j)^{-1}-1\right)^2}}
\right)\!\otimes\!1\nonumber\\
&&\mbox{}\times\left(b\otimes{\color{blue}{1}}\otimes1-\sum_{i=1}^r(\gamma_i\otimes{\color{blue}{\Psi(\tilde{\mathsf s}^{\rm op}_i)}}\otimes1+\gamma_i^*\otimes{\color{blue}{\Psi(\tilde{\mathsf s}^{\rm op}_i)^{-1}}}\otimes1)\right.\nonumber\\
& & \mbox{}-\left.\sum_{i=1}^r(\beta_i\otimes{\color{blue}{1}}\otimes\Psi(t_i)+\beta_i^*\otimes{\color{blue}{1}}\otimes\Psi(t_i)^{-1})\right)^{-1}\nonumber%
\\
&&\!\!\!\!\!\! \mbox{}+ \left(b\otimes{\color{red}1}\otimes1-\sum_{i=1}^r(\gamma_i\otimes{\color{red}\Psi(\mathsf s_i^{\rm op})}\!\otimes\!1\!+\!\gamma_i^*\!\otimes\!{\color{red}\Psi(\mathsf s_i^{\rm op})^{-1}}\!\otimes\!1)\nonumber\right.\\
& & \mbox{}-\left.\sum_{i=1}^r(\beta_i\otimes{\color{red}1}\otimes\Psi(t_i)+\beta_i^*\otimes{\color{red}1}\otimes\Psi(t_i)^{-1})\right)^{-1}\nonumber\\
& & \mbox{}\times\frac{\delta_{j,k}}{4}\left(\gamma_j^*\!\otimes\![{\color{red}(\Psi(\mathsf s_j^{\rm op})^{-1}\!-\!1)}\bullet(\Psi(s_j^{\rm op})^{-1}\!-\!1)\, \cdot\, (\Psi(\tilde{s}_j^{\rm op})^{-1}\!-\!1)\bullet{\color{blue}{(\Psi(\tilde{\mathsf s}_j^{\rm op})^{-1}\!-\!1)}}]\right.\nonumber\\
& & \mbox{}-\left.\gamma_j\otimes[{\color{red}(\Psi(\mathsf s_j^{\rm op})\!-\!1)}\bullet(\Psi(s_j^{\rm op})\!-\!1)\,\cdot\,(\Psi(\tilde{s}_j^{\rm op})\!-\!1)\bullet{\color{blue}{(\Psi(\tilde{\mathsf s}_j^{\rm op})\!-\!1)}}]\right)\!\otimes\!1\nonumber\\
& &\mbox{}\times\left(b\otimes{\color{blue}{1}}\otimes1-\sum_{i=1}^{r}(\gamma_i\otimes{\color{blue}{\Psi(\tilde{\mathsf s}^{\rm op}_i)}}\otimes1+\gamma_i^*\otimes{\color{blue}{\Psi(\tilde{\mathsf s}^{\rm op}_i)^{-1}}}\otimes1)\right.\nonumber\\
& & \mbox{}-\left.\sum_{i=1}^r(\beta_i\otimes{\color{blue}{1}}\otimes\Psi(t_i)+\beta_i^*\otimes{\color{blue}{1}}\otimes \Psi(t_i)^{-1})\right)^{-1}\nonumber\\
& & \mbox{}\times\left(\gamma_j\otimes{\color{blue}{\left(\Psi(\tilde{\mathsf s}_j^{\rm op})-1\right)^2}}-\gamma_j^*\!\otimes{\color{blue}{\left(\Psi(\tilde{\mathsf s}^{\rm op}_j)^{-1}-1\right)^2}}
\right)\!\otimes\!1\nonumber\\
&&\mbox{}\times\left(b\otimes{\color{blue}{1}}\otimes1-\sum_{i=1}^r(\gamma_i\otimes{\color{blue}{\Psi(\tilde{\mathsf s}^{\rm op}_i)}}\otimes1+\gamma_i^*\otimes{\color{blue}{\Psi(\tilde{\mathsf s}^{\rm op}_i)^{-1}}}\otimes1)\right.\nonumber\\
& & \mbox{}-\left.\sum_{i=1}^r(\beta_i\otimes{\color{blue}{1}}\otimes\Psi(t_i)+\beta_i^*\otimes{\color{blue}{1}}\otimes\Psi(t_i)^{-1})\right)^{-1}\nonumber\text{END 1st TERM}\\
& &\!\!\!\!\!\! \mbox{}+\left(b\otimes{\color{red}1}\otimes1-\sum_{i=1}^r(\gamma_i\otimes{\color{red}\Psi(\mathsf s_i^{\rm op})}\!\otimes\!1\!+\!\gamma_i^*\!\otimes\!{\color{red}\Psi(\mathsf s_i^{\rm op})^{-1}}\!\otimes\!1)\nonumber\right.\\
& & \mbox{}-\left.\sum_{i=1}^r(\beta_i\otimes{\color{red}1}\otimes\Psi(t_i)+\beta_i^*\otimes{\color{red}1}\otimes\Psi(t_i)^{-1})\right)^{-1}\nonumber\\
& & \mbox{}\times\frac{i}{2}(\gamma_k\otimes[{\color{red}(\Psi(\mathsf s_k^{\rm op})-1)}\bullet(\Psi( s_k^{\rm op})-1)]-\gamma_k^*\otimes[{\color{red}(\Psi(\mathsf s_k^{\rm op})^{-1}-1)}\bullet(\Psi(s_k^{\rm op})^{-1}-1)])\otimes1\nonumber\\
& & \mbox{}\times\left(b\otimes1\otimes1-\sum_{i=1}^r(\gamma_i\otimes\Psi(s_i^{\rm op})\!\otimes\!1\!+\!\gamma_i^*\!\otimes\!\Psi(s_i^{\rm op})^{-1}\!\otimes\!1)\nonumber\right.\\
& & \mbox{}-\left.\sum_{i=1}^r(\beta_i\otimes1\otimes\Psi(t_i)+\beta_i^*\otimes1\otimes\Psi(t_i)^{-1})\right)^{-1}\nonumber\\
& & \mbox{}\times\left(\gamma_j\!\otimes\![(\Psi(s_j^{\rm op})\!-\!1)\, \cdot\, (\Psi(\tilde{s}_j^{\rm op})\!-\!1)]-\gamma_j^*\otimes[(\Psi(s_j^{\rm op})^{-1}\!-\!1)\,\cdot\,(\Psi(\tilde{s}_j^{\rm op})^{-1}\!-\!1)]\right)\!\otimes\!1\nonumber\\
& &\mbox{}\times\left(b\otimes1\otimes1-\sum_{i=1}^{r}(\gamma_i\otimes\Psi(\tilde{s}^{\rm op}_i))\otimes1+\gamma_i^*\otimes\Psi(\tilde{s}^{\rm op}_i)^{-1})\otimes1)\right.\nonumber\\
& & \mbox{}-\left.\sum_{i=1}^r(\beta_i\otimes1\otimes\Psi(t_i)+\beta_i^*\otimes1\otimes \Psi(t_i)^{-1})\right)^{-1}\nonumber\\
& &\mbox{}\times\frac{i}{2}\left(\gamma_k\!\otimes\![(\Psi(\tilde{s}_k^{\rm op})\!-\!1)\bullet{\color{blue}{(\Psi(\tilde{\sf s}_k^{\rm op})\!-\!1)}}]-\gamma_k^*\otimes[(\Psi(\tilde{s}_k^{\rm op})^{-1}\!-\!1)\bullet{\color{blue}{(\Psi(\tilde{\sf s}_k^{\rm op})^{-1}\!-\!1)}}]\right)\!\otimes\!1\nonumber\\
& &\mbox{}\times\left(b\otimes{\color{blue}{1}}\otimes1-\sum_{i=1}^{r}(\gamma_i\otimes{\color{blue}{\Psi(\tilde{\sf s}^{\rm op}_i)}}\otimes1+\gamma_i^*\otimes{\color{blue}{\Psi(\tilde{\sf s}^{\rm op}_i)^{-1}}}\otimes1)\right.\nonumber\\
& & \mbox{}-\left.\sum_{i=1}^r(\beta_i\otimes{\color{blue}{1}}\otimes\Psi(t_i)+\beta_i^*\otimes{\color{blue}{1}}\otimes \Psi(t_i)^{-1})\right)^{-1}\nonumber\\
& & \mbox{}\times\left(\gamma_j\otimes{\color{blue}{\left(\Psi(\tilde{\sf s}_j^{\rm op})-1\right)^2}}-\gamma_j^*\!\otimes{\color{blue}{\left(\Psi(\tilde{\sf s}^{\rm op}_j)^{-1}-1\right)^2}}
\right)\!\otimes\!1\nonumber\\
&&\mbox{}\times\left(b\otimes{\color{blue}{1}}\otimes1-\sum_{i=1}^r(\gamma_i\otimes{\color{blue}{\Psi(\tilde{\sf s}^{\rm op}_i)}}\otimes1+\gamma_i^*\otimes{\color{blue}{\Psi(\tilde{\sf s}^{\rm op}_i)^{-1}}}\otimes1)\right.\nonumber\\
& & \mbox{}-\left.\sum_{i=1}^r(\beta_i\otimes{\color{blue}{1}}\otimes\Psi(t_i)+\beta_i^*\otimes{\color{blue}{1}}\otimes\Psi(t_i)^{-1})\right)^{-1}\nonumber\\
& &\!\!\!\!\!\! \mbox{}+\left(b\otimes{\color{red}1}\otimes1-\sum_{i=1}^r(\gamma_i\otimes{\color{red}\Psi(\mathsf s_i^{\rm op})}\!\otimes\!1\!+\!\gamma_i^*\!\otimes\!{\color{red}\Psi(\mathsf s_i^{\rm op})^{-1}}\!\otimes\!1)\nonumber\right.\\
& & \mbox{}-\left.\sum_{i=1}^r(\beta_i\otimes{\color{red}1}\otimes\Psi(t_i)+\beta_i^*\otimes{\color{red}1}\otimes\Psi(t_i)^{-1})\right)^{-1}\nonumber\\
& & \mbox{}\times\frac{\delta_{j,k}}{4}\left(\gamma_j^*\!\otimes\![{\color{red}(\Psi(\mathsf s_j^{\rm op})^{-1}\!-\!1)}\bullet(\Psi(s_j^{\rm op})^{-1}\!-\!1)\, \cdot\, (\Psi(\tilde{s}_j^{\rm op})^{-1}\!-\!1)]\right.\nonumber\\
& & \left.\mbox{}-\gamma_j\otimes[{\color{red}(\Psi(\mathsf s_j^{\rm op})\!-\!1)}\bullet(\Psi(s_j^{\rm op})\!-\!1)\,\cdot\,(\Psi(\tilde{s}_j^{\rm op})\!-\!1)]\right)\!\otimes\!1\nonumber\\
& &\mbox{}\times\left(b\otimes1\otimes1-\sum_{i=1}^{r}(\gamma_i\otimes\Psi(\tilde{s}^{\rm op}_i))\otimes1+\gamma_i^*\otimes\Psi(\tilde{s}^{\rm op}_i)^{-1})\otimes1)\right.\nonumber\\
& & \mbox{}-\left.\sum_{i=1}^r(\beta_i\otimes1\otimes\Psi(t_i)+\beta_i^*\otimes1\otimes \Psi(t_i)^{-1})\right)^{-1}\nonumber\\
& &\mbox{}\times\left(\gamma_k\!\otimes\![(\Psi(\tilde{s}_k^{\rm op})\!-\!1)\bullet{\color{blue}{(\Psi(\tilde{\sf s}_k^{\rm op})\!-\!1)}}]-\gamma_k^*\otimes[(\Psi(\tilde{s}_k^{\rm op})^{-1}\!-\!1)\bullet{\color{blue}{(\Psi(\tilde{\sf s}_k^{\rm op})^{-1}\!-\!1)}}]\right)\!\otimes\!1\nonumber\\
& &\mbox{}\times\left(b\otimes{\color{blue}{1}}\otimes1-\sum_{i=1}^{r}(\gamma_i\otimes{\color{blue}{\Psi(\tilde{\sf s}^{\rm op}_i)}}\otimes1+\gamma_i^*\otimes{\color{blue}{\Psi(\tilde{\sf s}^{\rm op}_i)^{-1}}}\otimes1)\right.\nonumber\\
& & \mbox{}-\left.\sum_{i=1}^r(\beta_i\otimes{\color{blue}{1}}\otimes\Psi(t_i)+\beta_i^*\otimes{\color{blue}{1}}\otimes \Psi(t_i)^{-1})\right)^{-1}\nonumber\\
& & \mbox{}\times\left(\gamma_j\otimes{\color{blue}{\left(\Psi(\tilde{\sf s}_j^{\rm op})-1\right)^2}}-\gamma_j^*\!\otimes{\color{blue}{\left(\Psi(\tilde{\sf s}^{\rm op}_j)^{-1}-1\right)^2}}
\right)\!\otimes\!1\nonumber\\
&&\mbox{}\times\left(b\otimes{\color{blue}{1}}\otimes1-\sum_{i=1}^r(\gamma_i\otimes{\color{blue}{\Psi(\tilde{\sf s}^{\rm op}_i)}}\otimes1+\gamma_i^*\otimes{\color{blue}{\Psi(\tilde{\sf s}^{\rm op}_i)^{-1}}}\otimes1)\right.\nonumber\\
& & \mbox{}-\left.\sum_{i=1}^r(\beta_i\otimes{\color{blue}{1}}\otimes\Psi(t_i)+\beta_i^*\otimes{\color{blue}{1}}\otimes\Psi(t_i)^{-1})\right)^{-1}\text{END 2nd TERM}\nonumber\\
& &\!\!\!\!\!\! \mbox{}+\left(b\otimes{\color{red}1}\otimes1-\sum_{i=1}^r(\gamma_i\otimes{\color{red}\Psi(\mathsf s_i^{\rm op})}\!\otimes\!1\!+\!\gamma_i^*\!\otimes\!{\color{red}\Psi(\mathsf s_i^{\rm op})^{-1}}\!\otimes\!1)\nonumber\right.\\
& & \mbox{}-\left.\sum_{i=1}^r(\beta_i\otimes{\color{red}1}\otimes\Psi(t_i)+\beta_i^*\otimes{\color{red}1}\otimes\Psi(t_i)^{-1})\right)^{-1}\nonumber\\
& & \mbox{}\times\frac{i}{2}\left(\gamma_k\!\otimes\![{\color{red}(\Psi(\mathsf s_k^{\rm op})\!-\!1)}\bullet(\Psi(s_k^{\rm op})\!-\!1)]\!-\!\gamma_k^*\!\otimes\![{\color{red}(\Psi(\mathsf s_k^{\rm op})^{-1}\!-\!1)}\bullet(\Psi(s_k^{\rm op})^{-1}\!-\!1)]\right)\!\otimes\!1 \nonumber\\
& & \mbox{}\times\left(b\otimes1\otimes1-\sum_{i=1}^r(\gamma_i\otimes\Psi(s_i^{\rm op})\!\otimes\!1\!+\!\gamma_i^*\!\otimes\!\Psi(s_i^{\rm op})^{-1}\!\otimes\!1)\nonumber\right.\\
& & \mbox{}-\left.\sum_{i=1}^r(\beta_i\otimes1\otimes\Psi(t_i)+\beta_i^*\otimes1\otimes\Psi(t_i)^{-1})\right)^{-1}\nonumber\\
& & \mbox{}\times\left(\gamma_j\!\otimes\![(\Psi(s_j^{\rm op})\!-\!1)\, \cdot\, (\Psi(\tilde{s}_j^{\rm op})\!-\!1)]-\gamma_j^*\otimes[(\Psi(s_j^{\rm op})^{-1}\!-\!1)\,\cdot\,(\Psi(\tilde{s}_j^{\rm op})^{-1}\!-\!1)]\right)\!\otimes\!1\nonumber\\
& &\mbox{}\times\left(b\otimes1\otimes1-\sum_{i=1}^{r}(\gamma_i\otimes\Psi(\tilde{s}^{\rm op}_i)\otimes1+\gamma_i^*\otimes\Psi(\tilde{s}^{\rm op}_i)^{-1}\otimes1)\right.\nonumber\\
& & \mbox{}-\left.\sum_{i=1}^r(\beta_i\otimes1\otimes\Psi(t_i)+\beta_i^*\otimes1\otimes \Psi(t_i)^{-1})\right)^{-1}\nonumber\\
& & \mbox{}\times\frac{i}{2}\delta_{j,k}\left(\gamma_j\otimes\left\{\left(\Psi(\tilde{s}_j^{\rm op})-1\right)^2\bullet{\color{blue}{\left(\Psi(\tilde{\sf s}_j^{\rm op})-1\right)}}+
\left(\Psi(\tilde{s}_j^{\rm op})-1\right)\bullet{\color{blue}{\left(\Psi(\tilde{\sf s}_j^{\rm op})-1\right)^2}}\right\}\right.\nonumber\\
& & \mbox{}+\left.\gamma_j^*\!\otimes\left\{\left(\Psi(\tilde{s}^{\rm op}_j)^{-1}-1\right)\bullet{\color{blue}{\left(\Psi(\tilde{\sf s}^{\rm op}_j)^{-1}-1\right)^2}}+\left(\Psi(\tilde{s}^{\rm op}_j)^{-1}-1\right)^2\!\bullet{\color{blue}{\left(\Psi(\tilde{\sf s}^{\rm op}_j)^{-1}-1\right)}}\right\}
\right)\!\otimes\!1\nonumber\\
&&\mbox{}\times\left(b\otimes{\color{blue}{1}}\otimes1-\sum_{i=1}^r(\gamma_i\otimes{\color{blue}{\Psi(\tilde{\sf s}^{\rm op}_i)}}\otimes1+\gamma_i^*\otimes{\color{blue}{\Psi(\tilde{\sf s}^{\rm op}_i)^{-1}}}\otimes1)\right.\nonumber\\
& & \mbox{}-\left.\sum_{i=1}^r(\beta_i\otimes{\color{blue}{1}}\otimes\Psi(t_i)+\beta_i^*\otimes{\color{blue}{1}}\otimes\Psi(t_i)^{-1})\right)^{-1}\nonumber\\
& &\!\!\!\!\!\! \mbox{}+\left(b\otimes{\color{red}1}\otimes1-\sum_{i=1}^r(\gamma_i\otimes{\color{red}\Psi(\mathsf s_i^{\rm op})}\!\otimes\!1\!+\!\gamma_i^*\!\otimes\!{\color{red}\Psi(\mathsf s_i^{\rm op})^{-1}}\!\otimes\!1)\nonumber\right.\\
& & \mbox{}-\left.\sum_{i=1}^r(\beta_i\otimes{\color{red}1}\otimes\Psi(t_i)+\beta_i^*\otimes{\color{red}1}\otimes\Psi(t_i)^{-1})\right)^{-1}\nonumber\\
& & \mbox{}\times\frac{i}{2}\delta_{j,k}\left(\gamma_j\!\otimes\![{\color{red}(\Psi(\mathsf s_j^{\rm op})\!-\!1)}\bullet(\Psi(s_j^{\rm op})\!-\!1)\,\cdot\,
(\Psi(\tilde{s}_j^{\rm op})-1)]\right.\nonumber\\
& & \mbox{}+\left.\gamma_j^*\!\otimes\![{\color{red}(\Psi(\mathsf s_j^{\rm op})^{-1}\!-\!1)}\bullet(\Psi(s_j^{\rm op})^{-1}\!-\!1)\,\cdot\,
(\Psi(\tilde{s}_j^{\rm op})^{-1}-1)]\right)\otimes1\nonumber\\
& &\mbox{}\times\left(b\otimes1\otimes1-\sum_{i=1}^{r}(\gamma_i\otimes\Psi(\tilde{s}^{\rm op}_i)\otimes1+\gamma_i^*\otimes\Psi(\tilde{s}^{\rm op}_i)^{-1}\otimes1)\right.\nonumber\\
& & \mbox{}-\left.\sum_{i=1}^r(\beta_i\otimes1\otimes\Psi(t_i)+\beta_i^*\otimes1\otimes \Psi(t_i)^{-1})\right)^{-1}\nonumber\\
& & \mbox{}\times\frac{i}{2}\delta_{j,k}\left(\gamma_j\otimes\left\{\left(\Psi(\tilde{s}_j^{\rm op})-1\right)^2\bullet{\color{blue}{\left(\Psi(\tilde{\sf s}_j^{\rm op})-1\right)}}+
\left(\Psi(\tilde{s}_j^{\rm op})-1\right)\bullet{\color{blue}{\left(\Psi(\tilde{\sf s}_j^{\rm op})-1\right)^2}}\right\}\right.\nonumber\\
& & \mbox{}+\left.\gamma_j^*\!\otimes\left\{\left(\Psi(\tilde{s}^{\rm op}_j)^{-1}-1\right)\bullet{\color{blue}{\left(\Psi(\tilde{\sf s}^{\rm op}_j)^{-1}-1\right)^2}}+\left(\Psi(\tilde{s}^{\rm op}_j)^{-1}-1\right)^2\!\bullet{\color{blue}{\left(\Psi(\tilde{\sf s}^{\rm op}_j)^{-1}-1\right)}}\right\}
\right)\!\otimes\!1\nonumber\\
&&\mbox{}\times\left(b\otimes{\color{blue}{1}}\otimes1-\sum_{i=1}^r(\gamma_i\otimes{\color{blue}{\Psi(\tilde{\sf s}^{\rm op}_i)}}\otimes1+\gamma_i^*\otimes{\color{blue}{\Psi(\tilde{\sf s}^{\rm op}_i)^{-1}}}\otimes1)\right.\nonumber\\
& & \mbox{}-\left.\sum_{i=1}^r(\beta_i\otimes{\color{blue}{1}}\otimes\Psi(t_i)+\beta_i^*\otimes{\color{blue}{1}}\otimes\Psi(t_i)^{-1})\right)^{-1}\nonumber\text{END 3rd TERM}\\
& &\!\!\!\!\!\! \mbox{}+\left(b\otimes{\color{red}1}\otimes1-\sum_{i=1}^r(\gamma_i\otimes{\color{red}\Psi(\mathsf s_i^{\rm op})}\!\otimes\!1\!+\!\gamma_i^*\!\otimes\!{\color{red}\Psi(\mathsf s_i^{\rm op})^{-1}}\!\otimes\!1)\nonumber\right.\\
& & \mbox{}-\left.\sum_{i=1}^r(\beta_i\otimes{\color{red}1}\otimes\Psi(t_i)+\beta_i^*\otimes{\color{red}1}\otimes\Psi(t_i)^{-1})\right)^{-1}\nonumber\\
& & \mbox{}\times\frac{i}{2}\left(\gamma_k\!\otimes\![{\color{red}(\Psi(\mathsf s_k^{\rm op})\!-\!1)}\bullet(\Psi(s_k^{\rm op})\!-\!1)]\!-\!\gamma_k^*\!\otimes\![{\color{red}(\Psi(\mathsf s_k^{\rm op})^{-1}\!-\!1)}\bullet(\Psi(s_k^{\rm op})^{-1}\!-\!1)]\right)\!\otimes\!1 \nonumber\\
& & \mbox{}\times\left(b\otimes1\otimes1-\sum_{i=1}^r(\gamma_i\otimes\Psi(s_i^{\rm op})\!\otimes\!1\!+\!\gamma_i^*\!\otimes\!\Psi(s_i^{\rm op})^{-1}\!\otimes\!1)\nonumber\right.\\
& & \mbox{}-\left.\sum_{i=1}^r(\beta_i\otimes1\otimes\Psi(t_i)+\beta_i^*\otimes1\otimes\Psi(t_i)^{-1})\right)^{-1}\nonumber\\%
& & \mbox{}\times\left(\gamma_j\!\otimes\![(\Psi(s_j^{\rm op})\!-\!1)\, \cdot\, (\Psi(\tilde{s}_j^{\rm op})\!-\!1)]-\gamma_j^*\otimes[(\Psi(s_j^{\rm op})^{-1}\!-\!1)\,\cdot\,(\Psi(\tilde{s}_j^{\rm op})^{-1}\!-\!1)]\right)\!\otimes\!1\nonumber\\
& &\mbox{}\times\left(b\otimes1\otimes1-\sum_{i=1}^{r}(\gamma_i\otimes\Psi(\tilde{s}^{\rm op}_i)\otimes1+\gamma_i^*\otimes\Psi(\tilde{s}^{\rm op}_i)^{-1}\otimes1)\right.\nonumber\\
& & \mbox{}-\left.\sum_{i=1}^r(\beta_i\otimes1\otimes\Psi(t_i)+\beta_i^*\otimes1\otimes \Psi(t_i)^{-1})\right)^{-1}\nonumber\\
& & \mbox{}\times\left(\gamma_j\otimes\left(\Psi(\tilde{s}_j^{\rm op})-1\right)^2-\gamma_j^*\!\otimes\left(\Psi(\tilde{s}^{\rm op}_j)^{-1}-1\right)^2
\right)\!\otimes\!1\nonumber\\
&&\mbox{}\times\left(b\otimes1\otimes1-\sum_{i=1}^r(\gamma_i\otimes\Psi(\tilde{s}^{\rm op}_i)\otimes1+\gamma_i^*\otimes\Psi(\tilde{s}^{\rm op}_i)^{-1}\otimes1)\right.\nonumber\\
& & \mbox{}-\left.\sum_{i=1}^r(\beta_i\otimes1\otimes\Psi(t_i)+\beta_i^*\otimes1\otimes\Psi(t_i)^{-1})\right)^{-1}\nonumber\\
& & \mbox{}\times\frac{i}{2}\left(\gamma_k\!\otimes\![(\Psi(\tilde{s}_k^{\rm op})\!-\!1)\bullet{\color{blue}{(\Psi(\tilde{\sf s}_k^{\rm op})\!-\!1)}}]-\gamma_k^*\otimes[(\Psi(\tilde{s}_k^{\rm op})^{-1}\!-\!1)\bullet{\color{blue}{(\Psi(\tilde{\sf s}_k^{\rm op})^{-1}\!-\!1)}}]\right)\!\otimes\!1\nonumber\\
&&\mbox{}\times\left(b\otimes{\color{blue}{1}}\otimes1-\sum_{i=1}^r(\gamma_i\otimes{\color{blue}{\Psi(\tilde{\sf s}^{\rm op}_i)}}\otimes1+\gamma_i^*\otimes{\color{blue}{\Psi(\tilde{\sf s}^{\rm op}_i)^{-1}}}\otimes1)\right.\nonumber\\
& & \mbox{}-\left.\sum_{i=1}^r(\beta_i\otimes{\color{blue}{1}}\otimes\Psi(t_i)+\beta_i^*\otimes{\color{blue}{1}}\otimes\Psi(t_i)^{-1})\right)^{-1}\nonumber\\%
& &\!\!\!\!\!\! \mbox{}+\left(b\otimes{\color{red}1}\otimes1-\sum_{i=1}^r(\gamma_i\otimes{\color{red}\Psi(\mathsf s_i^{\rm op})}\!\otimes\!1\!+\!\gamma_i^*\!\otimes\!{\color{red}\Psi(\mathsf s_i^{\rm op})^{-1}}\!\otimes\!1)\nonumber\right.\\
& & \mbox{}-\left.\sum_{i=1}^r(\beta_i\otimes{\color{red}1}\otimes\Psi(t_i)+\beta_i^*\otimes{\color{red}1}\otimes\Psi(t_i)^{-1})\right)^{-1} \nonumber\\
& & \mbox{}\times\delta_{j,k}\frac{i}{2}\left(\gamma_j\otimes[{\color{red}(\Psi(\mathsf s_j^{\rm op})-1)}\bullet(\Psi(s_j^{\rm op})-1)\,\cdot\,
(\Psi(\tilde{ s}_j^{\rm op})-1)]\right.\nonumber\\
& & \mbox{}+\left.\gamma_j\otimes[{\color{red}(\Psi(\mathsf s_j^{\rm op})^{-1}-1)}\bullet(\Psi(s_j^{\rm op})^{-1}-1)\,\cdot\,
(\Psi(\tilde{ s}_j^{\rm op})^{-1}-1)]\right)\otimes1\nonumber\\
& &\mbox{}\times\left(b\otimes1\otimes1-\sum_{i=1}^{r}(\gamma_i\otimes\Psi(\tilde{s}^{\rm op}_i)\otimes1+\gamma_i^*\otimes\Psi(\tilde{s}^{\rm op}_i)^{-1}\otimes1)\right.\nonumber\\
& & \mbox{}-\left.\sum_{i=1}^r(\beta_i\otimes1\otimes\Psi(t_i)+\beta_i^*\otimes1\otimes \Psi(t_i)^{-1})\right)^{-1}\nonumber\\
& & \mbox{}\times\left(\gamma_j\otimes\left(\Psi(\tilde{s}_j^{\rm op})-1\right)^2-\gamma_j^*\!\otimes\left(\Psi(\tilde{s}^{\rm op}_j)^{-1}-1\right)^2
\right)\!\otimes\!1\nonumber\\
&&\mbox{}\times\left(b\otimes1\otimes1-\sum_{i=1}^r(\gamma_i\otimes\Psi(\tilde{s}^{\rm op}_i)\otimes1+\gamma_i^*\otimes\Psi(\tilde{s}^{\rm op}_i)^{-1}\otimes1)\right.\nonumber\\
& & \mbox{}-\left.\sum_{i=1}^r(\beta_i\otimes1\otimes\Psi(t_i)+\beta_i^*\otimes1\otimes\Psi(t_i)^{-1})\right)^{-1}\nonumber\\
& & \mbox{}\times\frac{i}{2}\left(\gamma_k\!\otimes\![(\Psi(\tilde{s}_k^{\rm op})\!-\!1)\bullet{\color{blue}{(\Psi(\tilde{\sf s}_k^{\rm op})\!-\!1)}}]-\gamma_k^*\otimes[(\Psi(\tilde{s}_k^{\rm op})^{-1}\!-\!1)\bullet{\color{blue}{(\Psi(\tilde{\sf s}_k^{\rm op})^{-1}\!-\!1)}}]\right)\!\otimes\!1\nonumber\\
&&\mbox{}\times\left(b\otimes{\color{blue}{1}}\otimes1-\sum_{i=1}^r(\gamma_i\otimes{\color{blue}{\Psi(\tilde{\sf s}^{\rm op}_i)}}\otimes1+\gamma_i^*\otimes{\color{blue}{\Psi(\tilde{\sf s}^{\rm op}_i)^{-1}}}\otimes1)\right.\nonumber\\
& & \mbox{}-\left.\sum_{i=1}^r(\beta_i\otimes{\color{blue}{1}}\otimes\Psi(t_i)+\beta_i^*\otimes{\color{blue}{1}}\otimes\Psi(t_i)^{-1})\right)^{-1}\nonumber\quad\quad\text{[end 1st term] }\nonumber\text{END 4th TERM}\\   %
& & \!\!\!\!\!\!\mbox{}+\left(b\otimes{\color{red}1}\otimes1-\sum_{i=1}^r(\gamma_i\otimes{\color{red}\Psi(\mathsf s_i^{\rm op})}\!\otimes\!1\!+\!\gamma_i^*\!\otimes\!{\color{red}\Psi(\mathsf s_i^{\rm op})^{-1}}\!\otimes\!1)\nonumber\right.\\
& & \mbox{}-\left.\sum_{i=1}^r(\beta_i\otimes{\color{red}1}\otimes\Psi(t_i)+\beta_i^*\otimes{\color{red}1}\otimes\Psi(t_i)^{-1})\right)^{-1}\nonumber\\
& & \mbox{}\times\frac{i}{2}\left(\gamma_k\!\otimes\![{\color{red}(\Psi(\mathsf s_k^{\rm op})\!-\!1)}\bullet(\Psi(s_k^{\rm op})\!-\!1)]\!-\!\gamma_k^*\!\otimes\![{\color{red}(\Psi(\mathsf s_k^{\rm op})^{-1}\!-\!1)}\bullet(\Psi(s_k^{\rm op})^{-1}\!-\!1)]\right)\!\otimes\!1 \nonumber\\
& & \mbox{}\times\left(b\otimes1\otimes1-\sum_{i=1}^r(\gamma_i\otimes\Psi(s_i^{\rm op})\!\otimes\!1\!+\!\gamma_i^*\!\otimes\!\Psi(s_i^{\rm op})^{-1}\!\otimes\!1)\nonumber\right.\\
& & \mbox{}-\left.\sum_{i=1}^r(\beta_i\otimes1\otimes\Psi(t_i)+\beta_i^*\otimes1\otimes\Psi(t_i)^{-1})\right)^{-1}\nonumber\\%
& & \mbox{}\times\frac{i}{2}\delta_{j,k}\left(\gamma_j\!\otimes\!\left[(\Psi(s^{\rm op}_j)\!-\!1)\, \cdot\,\left\{ (\Psi(\tilde{s}^{\rm op}_j)\!-\!1)^2\bullet{\color{blue}{(\Psi(\tilde{\sf s}^{\rm op}_j)-1)}}+(\Psi(\tilde{s}^{\rm op}_j)\!-\!1)\bullet{\color{blue}{(\Psi(\tilde{\sf s}^{\rm op}_j)-1)^2}}\right\}\right.\right.\nonumber\\
& & +(\Psi(s_j^{\rm op})\!-\!1)^2\, \cdot\, (\Psi(\tilde{s}^{\rm op}_j)\!-\!1)\bullet{\color{blue}{(\Psi(\tilde{\sf s}^{\rm op}_j)\!-\!1)}}]\nonumber\\
& & \mbox{}-\gamma_j^*\otimes\left[(\Psi(s_j^{\rm op})^{-1}\!-\!1)^2\,\cdot\,(\Psi(\tilde{s}^{\rm op}_j)^{-1}\!-\!1)\bullet{\color{blue}{(\Psi(\tilde{\sf s}^{\rm op}_j)^{-1}\!-\!1)}}\right.\nonumber\\
& & \mbox{}\left.+\left.(\Psi(s_j^{\rm op})^{-1}\!\!-\!1)\cdot\left\{(\Psi(\tilde{s}^{\rm op}_j)^{-1}\!\!-\!1)^2\!\bullet{\color{blue}{(\Psi(\tilde{\sf s}^{\rm op}_j)^{-1}\!\!-\!1)}}\!+\!
(\Psi(\tilde{s}^{\rm op}_j)^{-1}\!\!-\!1)\!\bullet\!{\color{blue}{(\Psi(\tilde{\sf s}^{\rm op}_j)^{-1}\!\!-\!1)^2}}\right\}\!\right]\!\right)\!\otimes\!1\nonumber\\
&&\mbox{}\times\left(b\otimes{\color{blue}{1}}\otimes1-\sum_{i=1}^r(\gamma_i\otimes{\color{blue}{\Psi(\tilde{\sf s}^{\rm op}_i)}}\otimes1+\gamma_i^*\otimes{\color{blue}{\Psi(\tilde{\sf s}^{\rm op}_i)^{-1}}}\otimes1)\right.\nonumber\\
& & \mbox{}-\left.\sum_{i=1}^r(\beta_i\otimes{\color{blue}{1}}\otimes\Psi(t_i)+\beta_i^*\otimes{\color{blue}{1}}\otimes\Psi(t_i)^{-1})\right)^{-1}\nonumber\\ %
& & \!\!\!\!\!\!\mbox{}+\left(b\otimes{\color{red}1}\otimes1-\sum_{i=1}^r(\gamma_i\otimes{\color{red}\Psi(\mathsf s_i^{\rm op})}\!\otimes\!1\!+\!\gamma_i^*\!\otimes\!{\color{red}\Psi(\mathsf s_i^{\rm op})^{-1}}\!\otimes\!1)\nonumber\right.\\
& & \mbox{}-\left.\sum_{i=1}^r(\beta_i\otimes{\color{red}1}\otimes\Psi(t_i)+\beta_i^*\otimes{\color{red}1}\otimes\Psi(t_i)^{-1})\right)^{-1}\nonumber\\
& & \mbox{}\times\frac{-1}{4}\delta_{j,k}\left(\gamma_j\!\otimes\!\left[{\color{red}(\Psi(\mathsf s_k^{\rm op})\!-\!1)}\bullet(\Psi(s^{\rm op}_j)\!-\!1)\, \cdot\,\left\{ (\Psi(\tilde{s}^{\rm op}_j)\!-\!1)^2\bullet{\color{blue}{(\Psi(\tilde{\sf s}^{\rm op}_j)-1)}}\right.\right.\right.\nonumber\\
& & \mbox{}+\left.(\Psi(\tilde{s}^{\rm op}_j)\!-\!1)\bullet{\color{blue}{(\Psi(\tilde{\sf s}^{\rm op}_j)-1)^2}}\right\}\nonumber\\ %
& & +\left\{{\color{red}(\Psi(\mathsf s_k^{\rm op})\!-\!1)}\bullet(\Psi(s_j^{\rm op})\!-\!1)^2+{\color{red}(\Psi(\mathsf s_k^{\rm op})\!-\!1)^2}\bullet(\Psi(s_j^{\rm op})\!-\!1)\right\}\, \cdot\, (\Psi(\tilde{s}^{\rm op}_j)\!-\!1)\bullet{\color{blue}{(\Psi(\tilde{\sf s}^{\rm op}_j)\!-\!1)}}]\nonumber\\ %
& & \mbox{}+\gamma_j^*\otimes\left[\{{\color{red}(\Psi(\mathsf s_k^{\rm op})^{-1}\!-\!1)}\bullet(\Psi(s_j^{\rm op})^{-1}\!-\!1)^2\right.\nonumber\\
& & \mbox{}+{\color{red}(\Psi(\mathsf s_k^{\rm op})^{-1}\!-\!1)^2}\bullet(\Psi(s_j^{\rm op})^{-1}\!-\!1)\}\,\cdot\,(\Psi(\tilde{s}^{\rm op}_j)^{-1}\!-\!1)\bullet{\color{blue}{(\Psi(\tilde{\sf s}^{\rm op}_j)^{-1}\!-\!1)}}\nonumber\\
& & \mbox{}+{\color{red}(\Psi(\mathsf s_k^{\rm op})^{-1}\!-\!1)}\bullet(\Psi(s_j^{\rm op})^{-1}\!-\!1)\cdot\left\{(\Psi(\tilde{s}^{\rm op}_j)^{-1}\!-\!1)^2\!\bullet{\color{blue}{(\Psi(\tilde{\sf s}^{\rm op}_j)^{-1}\!-\!1)}}\right.\nonumber\\
& & \left.\left.\left.\mbox{}+(\Psi(\tilde{s}^{\rm op}_j)^{-1}\!-\!1)\!\bullet\!{\color{blue}{(\Psi(\tilde{\sf s}^{\rm op}_j)^{-1}\!-\!1)^2}}\right\}\right]\right)\!\otimes\!1\nonumber\\
&&\mbox{}\times\left(b\otimes{\color{blue}{1}}\otimes1-\sum_{i=1}^r(\gamma_i\otimes{\color{blue}{\Psi(\tilde{\sf s}^{\rm op}_i)}}\otimes1+\gamma_i^*\otimes{\color{blue}{\Psi(\tilde{\sf s}^{\rm op}_i)^{-1}}}\otimes1)\right.\nonumber\\
& & \mbox{}-\left.\sum_{i=1}^r(\beta_i\otimes{\color{blue}{1}}\otimes\Psi(t_i)+\beta_i^*\otimes{\color{blue}{1}}\otimes\Psi(t_i)^{-1})\right)^{-1}\text{END 5th TERM}\nonumber\\ %
& & \!\!\!\!\!\!\mbox{}+\left(b\otimes{\color{red}1}\otimes1-\sum_{i=1}^r(\gamma_i\otimes{\color{red}\Psi(\mathsf s_i^{\rm op})}\!\otimes\!1\!+\!\gamma_i^*\!\otimes\!{\color{red}\Psi(\mathsf s_i^{\rm op})^{-1}}\!\otimes\!1)\nonumber\right.\\
& & \mbox{}-\left.\sum_{i=1}^r(\beta_i\otimes{\color{red}1}\otimes\Psi(t_i)+\beta_i^*\otimes{\color{red}1}\otimes\Psi(t_i)^{-1})\right)^{-1}\nonumber\\
& & \mbox{}\times\frac{i}{2}\left(\gamma_k\!\otimes\![{\color{red}(\Psi(\mathsf s_k^{\rm op})\!-\!1)}\bullet(\Psi(s_k^{\rm op})\!-\!1)]\!-\!\gamma_k^*\!\otimes\![{\color{red}(\Psi(\mathsf s_k^{\rm op})^{-1}\!-\!1)}\bullet(\Psi(s_k^{\rm op})^{-1}\!-\!1)]\right)\!\otimes\!1 \nonumber\\
& & \mbox{}\times\left(b\otimes1\otimes1-\sum_{i=1}^r(\gamma_i\otimes\Psi(s_i^{\rm op})\!\otimes\!1\!+\!\gamma_i^*\!\otimes\!\Psi(s_i^{\rm op})^{-1}\!\otimes\!1)\nonumber\right.\\
& & \mbox{}-\left.\sum_{i=1}^r(\beta_i\otimes1\otimes\Psi(t_i)+\beta_i^*\otimes1\otimes\Psi(t_i)^{-1})\right)^{-1}\nonumber\\%
& & \mbox{}\times\left(\gamma_j\!\otimes\![(\Psi(s^{\rm op}_j)\!-\!1)\, \cdot\, (\Psi(\tilde{s}^{\rm op}_j)\!-\!1)^2+(\Psi(s_j^{\rm op})\!-\!1)^2\, \cdot\, (\Psi(\tilde{s}^{\rm op}_j)\!-\!1)]\right.\nonumber\\
& & \mbox{}\left.+\gamma_j^*\otimes[(\Psi(s_j^{\rm op})^{-1}\!-\!1)^2\,\cdot\,(\Psi(\tilde{s}^{\rm op}_j)^{-1}\!-\!1)+(\Psi(s_j^{\rm op})^{-1}\!-\!1)\,\cdot\,(\Psi(\tilde{s}^{\rm op}_j)^{-1}\!-\!1)^2]\right)\!\otimes\!1\nonumber\\
&&\mbox{}\times\left(b\otimes1\otimes1-\sum_{i=1}^r(\gamma_i\otimes\Psi(\tilde{s}^{\rm op}_i)\otimes1+\gamma_i^*\otimes\Psi(\tilde{s}^{\rm op}_i)^{-1}\otimes1)\right.\nonumber\\
& & \mbox{}-\left.\sum_{i=1}^r(\beta_i\otimes1\otimes\Psi(t_i)+\beta_i^*\otimes1\otimes\Psi(t_i)^{-1})\right)^{-1}\nonumber\\
& &\mbox{}\times\frac{i}{2}\left(\gamma_k\!\otimes\![(\Psi(\tilde{s}_k^{\rm op})\!-\!1)\bullet{\color{blue}{(\Psi(\tilde{\sf s}_k^{\rm op})\!-\!1)}}]-\gamma_k^*\otimes[(\Psi(\tilde{s}_k^{\rm op})^{-1}\!-\!1)\bullet{\color{blue}{(\Psi(\tilde{\sf s}_k^{\rm op})^{-1}\!-\!1)}}]\right)\!\otimes\!1\nonumber\\
&&\mbox{}\times\left(b\otimes{\color{blue}{1}}\otimes1-\sum_{i=1}^r(\gamma_i\otimes{\color{blue}{\Psi(\tilde{\sf s}^{\rm op}_i)}}\otimes1+\gamma_i^*\otimes{\color{blue}{\Psi(\tilde{\sf s}^{\rm op}_i)^{-1}}}\otimes1)\right.\nonumber\\
& & \mbox{}-\left.\sum_{i=1}^r(\beta_i\otimes{\color{blue}{1}}\otimes\Psi(t_i)+\beta_i^*\otimes{\color{blue}{1}}\otimes\Psi(t_i)^{-1})\right)^{-1}\ \star\star\star\nonumber\\
& & \!\!\!\!\!\!\mbox{}+\left(b\otimes{\color{red}1}\otimes1-\sum_{i=1}^r(\gamma_i\otimes{\color{red}\Psi(\mathsf s_i^{\rm op})}\!\otimes\!1\!+\!\gamma_i^*\!\otimes\!{\color{red}\Psi(\mathsf s_i^{\rm op})^{-1}}\!\otimes\!1)\nonumber\right.\\
& & \mbox{}-\left.\sum_{i=1}^r(\beta_i\otimes{\color{red}1}\otimes\Psi(t_i)+\beta_i^*\otimes{\color{red}1}\otimes\Psi(t_i)^{-1})\right)^{-1}\nonumber\\
& & \mbox{}\times\frac{i}{2}\delta_{j,k}\left(\gamma_j\!\otimes\![{\color{red}(\Psi(\mathsf s_k^{\rm op})\!-\!1)}\bullet(\Psi(s^{\rm op}_j)\!-\!1)\, \cdot\, (\Psi(\tilde{s}^{\rm op}_j)\!-\!1)^2\right.\nonumber\\
& & \mbox{}+\{{\color{red}(\Psi(\mathsf s_k^{\rm op})\!-\!1)}\bullet(\Psi(s_j^{\rm op})\!-\!1)^2+{\color{red}(\Psi(\mathsf s_k^{\rm op})\!-\!1)^2}\bullet(\Psi(s_j^{\rm op})\!-\!1)\}\, \cdot\, (\Psi(\tilde{s}^{\rm op}_j)\!-\!1)]\nonumber\\%
& & \mbox{}-\gamma_j^*\otimes[\{{\color{red}(\Psi(\mathsf s_k^{\rm op})^{-1}\!-\!1)}\bullet(\Psi(s_j^{\rm op})^{-1}\!-\!1)^2\nonumber\\
& & \mbox{}+{\color{red}(\Psi(\mathsf s_k^{\rm op})^{-1}\!-\!1)^2}\bullet(\Psi(s_j^{\rm op})^{-1}\!-\!1)\}\,\cdot\,(\Psi(\tilde{s}^{\rm op}_j)^{-1}\!-\!1)\nonumber\\
& & \left.\mbox{}+{\color{red}(\Psi(\mathsf s_k^{\rm op})^{-1}\!-\!1)}\bullet(\Psi(s_j^{\rm op})^{-1}\!-\!1)\,\cdot\,(\Psi(\tilde{s}^{\rm op}_j)^{-1}\!-\!1)^2]\right)\!\otimes\!1\nonumber\\
&&\mbox{}\times\left(b\otimes1\otimes1-\sum_{i=1}^r(\gamma_i\otimes\Psi(\tilde{s}^{\rm op}_i)\otimes1+\gamma_i^*\otimes\Psi(\tilde{s}^{\rm op}_i)^{-1}\otimes1)\right.\nonumber\\
& & \mbox{}-\left.\sum_{i=1}^r(\beta_i\otimes1\otimes\Psi(t_i)+\beta_i^*\otimes1\otimes\Psi(t_i)^{-1})\right)^{-1}\nonumber\\
& &\mbox{}\times\frac{i}{2}\left(\gamma_k\!\otimes\![(\Psi(\tilde{s}_k^{\rm op})\!-\!1)\bullet{\color{blue}{(\Psi(\tilde{\sf s}_k^{\rm op})\!-\!1)}}]-\gamma_k^*\otimes[(\Psi(\tilde{s}_k^{\rm op})^{-1}\!-\!1)\bullet{\color{blue}{(\Psi(\tilde{\sf s}_k^{\rm op})^{-1}\!-\!1)}}]\right)\!\otimes\!1\nonumber\\
&&\mbox{}\times\left(b\otimes{\color{blue}{1}}\otimes1-\sum_{i=1}^r(\gamma_i\otimes{\color{blue}{\Psi(\tilde{\sf s}^{\rm op}_i)}}\otimes1+\gamma_i^*\otimes{\color{blue}{\Psi(\tilde{\sf s}^{\rm op}_i)^{-1}}}\otimes1)\right.\nonumber\\
& & \mbox{}-\left.\sum_{i=1}^r(\beta_i\otimes{\color{blue}{1}}\otimes\Psi(t_i)+\beta_i^*\otimes{\color{blue}{1}}\otimes\Psi(t_i)^{-1})\right)^{-1}\quad\quad\text{[end 2nd term] }\text{END 6th TERM}\nonumber\\ %
& & \!\!\!\!\!\!+\left(b\otimes{\color{red}1}\otimes1-\sum_{i=1}^r(\gamma_i\otimes{\color{red}\Psi(\mathsf s_i^{\rm op})}\!\otimes\!1\!+\!\gamma_i^*\!\otimes\!{\color{red}\Psi(\mathsf s_i^{\rm op})^{-1}}\!\otimes\!1)\nonumber\right.\\
& & \mbox{}-\left.\sum_{i=1}^r(\beta_i\otimes{\color{red}1}\otimes\Psi(t_i)+\beta_i^*\otimes{\color{red}1}\otimes\Psi(t_i)^{-1})\right)^{-1}\nonumber\\
& & \mbox{}\times\frac{i}{2}\left(\gamma_k\!\otimes\![{\color{red}(\Psi(\mathsf s_k^{\rm op})\!-\!1)}\bullet(\Psi(s_k^{\rm op})\!-\!1)]\!-\!\gamma_k^*\!\otimes\![{\color{red}(\Psi(\mathsf s_k^{\rm op})^{-1}\!-\!1)}\bullet(\Psi(s_k^{\rm op})^{-1}\!-\!1)]\right)\!\otimes\!1 \nonumber\\
& & \mbox{}\times\left(b\otimes1\otimes1-\sum_{i=1}^r(\gamma_i\otimes\Psi(s_i^{\rm op})\!\otimes\!1\!+\!\gamma_i^*\!\otimes\!\Psi(s_i^{\rm op})^{-1}\!\otimes\!1)\nonumber\right.\\
& & \mbox{}-\left.\sum_{i=1}^r(\beta_i\otimes1\otimes\Psi(t_i)+\beta_i^*\otimes1\otimes\Psi(t_i)^{-1})\right)^{-1}\nonumber\\%
& & \mbox{}\times\left(\gamma_j\!\otimes\!(\Psi(s_j^{\rm op})\!-\!1)(\Psi(s_j^{\rm op})\!-\!1)-\gamma_j^*\otimes(\Psi(s_j^{\rm op})^{-1}\!-\!1)(\Psi(s^{\rm op}_j)^{-1}\!-\!1)\right)\!\otimes\!1\nonumber\\
& &\mbox{}\times\left(b\otimes1\otimes1-\sum_{i=1}^{r}(\gamma_i\otimes\Psi(s^{\rm op}_i))\otimes1+\gamma_i^*\otimes\Psi(s^{\rm op}_i)^{-1})\otimes1)\right.\nonumber\\
& & \mbox{}-\left.\sum_{i=1}^r(\beta_i\otimes1\otimes\Psi(t_i)+\beta_i^*\otimes1\otimes \Psi(t_i)^{-1})\right)^{-1}\nonumber\\
& & \mbox{}\times\delta_{j,k}\frac{i}{2}\left(\gamma_j\otimes\left[\left(\Psi(s^{\rm op}_j)-1\right)\,\cdot\,\left(\Psi(\tilde{s}^{\rm op}_j)-1\right)\bullet{\color{blue}{\left(\Psi(\tilde{\sf s}^{\rm op}_j)-1\right)}}\right]\right.\nonumber\\
& & \mbox{}+\left.\gamma_j^*\!\otimes\left[\left(\Psi(s^{\rm op}_j)^{-1}-1\right)\,\cdot\,\left(\Psi(\tilde{s}^{\rm op}_j)^{-1}-1\right)\bullet{\color{blue}{\left(\Psi(\tilde{\sf s}^{\rm op}_j)^{-1}-1\right)}}\right]\right)\!\otimes\!1\nonumber\\
&&\mbox{}\times\left(b\otimes{\color{blue}{1}}\otimes1-\sum_{i=1}^r(\gamma_i\otimes{\color{blue}{\Psi(\tilde{\sf s}^{\rm op}_i)}}\otimes1+\gamma_i^*\otimes{\color{blue}{\Psi(\tilde{\sf s}^{\rm op}_i)^{-1}}}\otimes1)\right.\nonumber\\
& & \mbox{}-\left.\sum_{i=1}^r(\beta_i\otimes{\color{blue}{1}}\otimes\Psi(t_i)+\beta_i^*\otimes{\color{blue}{1}}\otimes\Psi(t_i)^{-1})\right)^{-1}\nonumber\\ %
& & \!\!\!\!\!\!+\left(b\otimes{\color{red}1}\otimes1-\sum_{i=1}^r(\gamma_i\otimes{\color{red}\Psi(\mathsf s_i^{\rm op})}\!\otimes\!1\!+\!\gamma_i^*\!\otimes\!{\color{red}\Psi(\mathsf s_i^{\rm op})^{-1}}\!\otimes\!1)\nonumber\right.\\
& & \mbox{}-\left.\sum_{i=1}^r(\beta_i\otimes{\color{red}1}\otimes\Psi(t_i)+\beta_i^*\otimes{\color{red}1}\otimes\Psi(t_i)^{-1})\right)^{-1}\nonumber\\
& & \mbox{}\times\delta_{j,k}\frac{i}{2}\left(\gamma_j\!\otimes\![{\color{red}(\Psi(\mathsf s_k^{\rm op})\!-\!1)}\bullet(\Psi(s_k^{\rm op})\!-\!1)^2+{\color{red}(\Psi(\mathsf s_k^{\rm op})\!-\!1)^2}\bullet(\Psi(s_k^{\rm op})\!-\!1)]\right.\nonumber\\%
& & \mbox{}+\left.\gamma_j^*\otimes[{\color{red}(\Psi(\mathsf s_k^{\rm op})^{-1}\!-\!1)^2}\bullet(\Psi(s^{\rm op}_j)^{-1}\!-\!1)+{\color{red}(\Psi(\mathsf s_k^{\rm op})^{-1}\!-\!1)}\bullet(\Psi(s^{\rm op}_j)^{-1}\!-\!1)^2]\right)\!\otimes\!1\nonumber\\
& &\mbox{}\times\left(b\otimes1\otimes1-\sum_{i=1}^{r}(\gamma_i\otimes\Psi(s^{\rm op}_i))\otimes1+\gamma_i^*\otimes\Psi(s^{\rm op}_i)^{-1})\otimes1)\right.\nonumber\\
& & \mbox{}-\left.\sum_{i=1}^r(\beta_i\otimes1\otimes\Psi(t_i)+\beta_i^*\otimes1\otimes \Psi(t_i)^{-1})\right)^{-1}\nonumber\\
& & \mbox{}\times\delta_{j,k}\frac{i}{2}\left(\gamma_j\otimes\left[\left(\Psi(s^{\rm op}_j)-1\right)\,\cdot\,\left(\Psi(\tilde{s}^{\rm op}_j)-1\right)\bullet{\color{blue}{\left(\Psi(\tilde{\sf s}^{\rm op}_j)-1\right)}}\right]\right.\nonumber\\
& & \mbox{}+\left.\gamma_j^*\!\otimes\left[\left(\Psi(s^{\rm op}_j)^{-1}-1\right)\,\cdot\,\left(\Psi(\tilde{s}^{\rm op}_j)^{-1}-1\right)\bullet{\color{blue}{\left(\Psi(\tilde{\sf s}^{\rm op}_j)^{-1}-1\right)}}\right]\right)\!\otimes\!1\nonumber\\
&&\mbox{}\times\left(b\otimes{\color{blue}{1}}\otimes1-\sum_{i=1}^r(\gamma_i\otimes{\color{blue}{\Psi(\tilde{\sf s}^{\rm op}_i)}}\otimes1+\gamma_i^*\otimes{\color{blue}{\Psi(\tilde{\sf s}^{\rm op}_i)^{-1}}}\otimes1)\right.\nonumber\\
& & \mbox{}-\left.\sum_{i=1}^r(\beta_i\otimes{\color{blue}{1}}\otimes\Psi(t_i)+\beta_i^*\otimes{\color{blue}{1}}\otimes\Psi(t_i)^{-1})\right)^{-1}\nonumber\\%
& & \!\!\!\!\!\!+\left(b\otimes{\color{red}1}\otimes1-\sum_{i=1}^r(\gamma_i\otimes{\color{red}\Psi(\mathsf s_i^{\rm op})}\!\otimes\!1\!+\!\gamma_i^*\!\otimes\!{\color{red}\Psi(\mathsf s_i^{\rm op})^{-1}}\!\otimes\!1)\nonumber\right.\\
& & \mbox{}-\left.\sum_{i=1}^r(\beta_i\otimes{\color{red}1}\otimes\Psi(t_i)+\beta_i^*\otimes{\color{red}1}\otimes\Psi(t_i)^{-1})\right)^{-1}\nonumber\\%
& & \mbox{}\times\left(\gamma_j\!\otimes\!{\color{red}(\Psi(\mathsf s_j^{\rm op})\!-\!1)^2}-\gamma_j^*\otimes{\color{red}(\Psi(\mathsf s_j^{\rm op})^{-1}\!-\!1)^2}\right)\!\otimes\!1\nonumber\\%
& & \mbox{}\times\left(b\otimes{\color{red}1}\otimes1-\sum_{i=1}^r(\gamma_i\otimes{\color{red}\Psi(\mathsf s_i^{\rm op})}\!\otimes\!1\!+\!\gamma_i^*\!\otimes\!{\color{red}\Psi(\mathsf s_i^{\rm op})^{-1}}\!\otimes\!1)\nonumber\right.\\
& & \mbox{}-\left.\sum_{i=1}^r(\beta_i\otimes{\color{red}1}\otimes\Psi(t_i)+\beta_i^*\otimes{\color{red}1}\otimes\Psi(t_i)^{-1})\right)^{-1}\nonumber\\
& & \mbox{}\times\frac{i}{2}\left(\gamma_k\!\otimes\![{\color{red}(\Psi(\mathsf s_k^{\rm op})\!-\!1)}\bullet(\Psi(s_k^{\rm op})\!-\!1)]\!-\!\gamma_k^*\!\otimes\![{\color{red}(\Psi(\mathsf s_k^{\rm op})^{-1}\!-\!1)}\bullet(\Psi(s_k^{\rm op})^{-1}\!-\!1)]\right)\!\otimes\!1 \nonumber\\
& &\mbox{}\times\left(b\otimes1\otimes1-\sum_{i=1}^{r}(\gamma_i\otimes\Psi(s^{\rm op}_i))\otimes1+\gamma_i^*\otimes\Psi(s^{\rm op}_i)^{-1})\otimes1)\right.\nonumber\\
& & \mbox{}-\left.\sum_{i=1}^r(\beta_i\otimes1\otimes\Psi(t_i)+\beta_i^*\otimes1\otimes \Psi(t_i)^{-1})\right)^{-1}\nonumber\\
& & \mbox{}\times\delta_{j,k}\frac{i}{2}\left(\gamma_j\otimes\left[\left(\Psi(s^{\rm op}_j)-1\right)\,\cdot\,\left(\Psi(\tilde{s}^{\rm op}_j)-1\right)\bullet{\color{blue}{\left(\Psi(\tilde{\sf s}^{\rm op}_j)-1\right)}}\right]\right.\nonumber\\
& & \mbox{}+\left.\gamma_j^*\!\otimes\left[\left(\Psi(s^{\rm op}_j)^{-1}-1\right)\,\cdot\,\left(\Psi(\tilde{s}^{\rm op}_j)^{-1}-1\right)\bullet{\color{blue}{\left(\Psi(\tilde{\sf s}^{\rm op}_j)^{-1}-1\right)}}\right]\right)\!\otimes\!1\nonumber\\
&&\mbox{}\times\left(b\otimes{\color{blue}{1}}\otimes1-\sum_{i=1}^r(\gamma_i\otimes{\color{blue}{\Psi(\tilde{\sf s}^{\rm op}_i)}}\otimes1+\gamma_i^*\otimes{\color{blue}{\Psi(\tilde{\sf s}^{\rm op}_i)^{-1}}}\otimes1)\right.\nonumber\\
& & \mbox{}-\left.\sum_{i=1}^r(\beta_i\otimes{\color{blue}{1}}\otimes\Psi(t_i)+\beta_i^*\otimes{\color{blue}{1}}\otimes\Psi(t_i)^{-1})\right)^{-1}\nonumber\\%
& & \!\!\!\!\!\!+\left(b\otimes{\color{red}1}\otimes1-\sum_{i=1}^r(\gamma_i\otimes{\color{red}\Psi(\mathsf s_i^{\rm op})}\!\otimes\!1\!+\!\gamma_i^*\!\otimes\!{\color{red}\Psi(\mathsf s_i^{\rm op})^{-1}}\!\otimes\!1)\nonumber\right.\\
& & \mbox{}-\left.\sum_{i=1}^r(\beta_i\otimes{\color{red}1}\otimes\Psi(t_i)+\beta_i^*\otimes{\color{red}1}\otimes\Psi(t_i)^{-1})\right)^{-1}\nonumber\\%
& & \mbox{}\times\left(\gamma_j\!\otimes\!{\color{red}(\Psi(\mathsf s_j^{\rm op})\!-\!1)^2}-\gamma_j^*\otimes{\color{red}(\Psi(\mathsf s_j^{\rm op})^{-1}\!-\!1)^2}\right)\!\otimes\!1\nonumber\\%
& & \mbox{}\times\left(b\otimes{\color{red}1}\otimes1-\sum_{i=1}^r(\gamma_i\otimes{\color{red}\Psi(\mathsf s_i^{\rm op})}\!\otimes\!1\!+\!\gamma_i^*\!\otimes\!{\color{red}\Psi(\mathsf s_i^{\rm op})^{-1}}\!\otimes\!1)\nonumber\right.\\
& & \mbox{}-\left.\sum_{i=1}^r(\beta_i\otimes{\color{red}1}\otimes\Psi(t_i)+\beta_i^*\otimes{\color{red}1}\otimes\Psi(t_i)^{-1})\right)^{-1}\nonumber\\ %
& & \mbox{}\times\delta_{j,k}\frac{-1}{4}\left(\gamma_j\otimes\left[{\color{red}\left(\Psi(s^{\rm op}_j)-1\right)}\bullet\left(\Psi(s^{\rm op}_j)-1\right)\,\cdot\,\left(\Psi(\tilde{s}^{\rm op}_j)-1\right)\bullet{\color{blue}{\left(\Psi(\tilde{\sf s}^{\rm op}_j)-1\right)}}\right]\right.\nonumber\\
& & \mbox{}-\left.\gamma_j^*\!\otimes\left[{\color{red}\left(\Psi(s^{\rm op}_j)^{-1}-1\right)}\bullet\left(\Psi(s^{\rm op}_j)^{-1}-1\right)\,\cdot\,\left(\Psi(\tilde{s}^{\rm op}_j)^{-1}-1\right)\bullet{\color{blue}{\left(\Psi(\tilde{\sf s}^{\rm op}_j)^{-1}-1\right)}}\right]\right)\!\otimes\!1\nonumber\\
&&\mbox{}\times\left(b\otimes{\color{blue}{1}}\otimes1-\sum_{i=1}^r(\gamma_i\otimes{\color{blue}{\Psi(\tilde{\sf s}^{\rm op}_i)}}\otimes1+\gamma_i^*\otimes{\color{blue}{\Psi(\tilde{\sf s}^{\rm op}_i)^{-1}}}\otimes1)\right.\nonumber\\
& & \mbox{}-\left.\sum_{i=1}^r(\beta_i\otimes{\color{blue}{1}}\otimes\Psi(t_i)+\beta_i^*\otimes{\color{blue}{1}}\otimes\Psi(t_i)^{-1})\right)^{-1}\nonumber\text{END 7th TERM}\\%
& & \!\!\!\!\!\!+\left(b\otimes{\color{red}1}\otimes1-\sum_{i=1}^r(\gamma_i\otimes{\color{red}\Psi(\mathsf s_i^{\rm op})}\!\otimes\!1\!+\!\gamma_i^*\!\otimes\!{\color{red}\Psi(\mathsf s_i^{\rm op})^{-1}}\!\otimes\!1)\nonumber\right.\\
& & \mbox{}-\left.\sum_{i=1}^r(\beta_i\otimes{\color{red}1}\otimes\Psi(t_i)+\beta_i^*\otimes{\color{red}1}\otimes\Psi(t_i)^{-1})\right)^{-1}\nonumber\\
& & \mbox{}\times\frac{i}{2}\left(\gamma_k\!\otimes\![{\color{red}(\Psi(\mathsf s_k^{\rm op})\!-\!1)}\bullet(\Psi(s_k^{\rm op})\!-\!1)]\!-\!\gamma_k^*\!\otimes\![{\color{red}(\Psi(\mathsf s_k^{\rm op})^{-1}\!-\!1)}\bullet(\Psi(s_k^{\rm op})^{-1}\!-\!1)]\right)\!\otimes\!1 \nonumber\\
& & \mbox{}\times\left(b\otimes1\otimes1-\sum_{i=1}^r(\gamma_i\otimes\Psi(s_i^{\rm op})\!\otimes\!1\!+\!\gamma_i^*\!\otimes\!\Psi(s_i^{\rm op})^{-1}\!\otimes\!1)\nonumber\right.\\
& & \mbox{}-\left.\sum_{i=1}^r(\beta_i\otimes1\otimes\Psi(t_i)+\beta_i^*\otimes1\otimes\Psi(t_i)^{-1})\right)^{-1}\nonumber\\
& & \mbox{}\times\left(\gamma_j\!\otimes\!(\Psi(s_j^{\rm op})\!-\!1)(\Psi(s_j^{\rm op})\!-\!1)-\gamma_j^*\otimes(\Psi(s_j^{\rm op})^{-1}\!-\!1)(\Psi(s^{\rm op}_j)^{-1}\!-\!1)]\right)\!\otimes\!1\nonumber\\
& &\mbox{}\times\left(b\otimes1\otimes1-\sum_{i=1}^{r}(\gamma_i\otimes\Psi(s^{\rm op}_i))\otimes1+\gamma_i^*\otimes\Psi(s^{\rm op}_i)^{-1})\otimes1)\right.\nonumber\\
& & \mbox{}-\left.\sum_{i=1}^r(\beta_i\otimes1\otimes\Psi(t_i)+\beta_i^*\otimes1\otimes \Psi(t_i)^{-1})\right)^{-1}\nonumber\\
& & \mbox{}\times\left(\gamma_j\!\otimes\!\left[\left(\Psi(s^{\rm op}_j)-1\right)\,\cdot\,\left(\Psi(\tilde{s}^{\rm op}_j)-1\right)\right]-\!\gamma_j^*\!\!\otimes\!\left[\left(\Psi(s^{\rm op}_j)^{-1}\!-1\right)\,\cdot\,\left(\Psi(\tilde{s}^{\rm op}_j)^{-1}\!-1\right)\right]\right)\!\otimes\!1\nonumber\\
&&\mbox{}\times\left(b\otimes1\otimes1-\sum_{i=1}^r(\gamma_i\otimes\Psi(\tilde{s}^{\rm op}_i)\otimes1+\gamma_i^*\otimes\Psi(\tilde{s}^{\rm op}_i)^{-1}\otimes1)\right.\nonumber\\
& & \mbox{}-\left.\sum_{i=1}^r(\beta_i\otimes1\otimes\Psi(t_i)+\beta_i^*\otimes1\otimes\Psi(t_i)^{-1})\right)^{-1}\nonumber\\
& &\mbox{}\times\frac{i}{2}\left(\gamma_k\!\otimes\![(\Psi(\tilde{s}_k^{\rm op})\!-\!1)\bullet{\color{blue}{(\Psi(\tilde{\sf s}_k^{\rm op})\!-\!1)}}]-\gamma_k^*\otimes[(\Psi(\tilde{s}_k^{\rm op})^{-1}\!-\!1)\bullet{\color{blue}{(\Psi(\tilde{\sf s}_k^{\rm op})^{-1}\!-\!1)}}]\right)\!\otimes\!1\nonumber\\
&&\mbox{}\times\left(b\otimes{\color{blue}{1}}\otimes1-\sum_{i=1}^r(\gamma_i\otimes{\color{blue}{\Psi(\tilde{\sf s}^{\rm op}_i)}}\otimes1+\gamma_i^*\otimes{\color{blue}{\Psi(\tilde{\sf s}^{\rm op}_i)^{-1}}}\otimes1)\right.\nonumber\\
& & \mbox{}-\left.\sum_{i=1}^r(\beta_i\otimes{\color{blue}{1}}\otimes\Psi(t_i)+\beta_i^*\otimes{\color{blue}{1}}\otimes\Psi(t_i)^{-1})\right)^{-1}\nonumber\\%
& & \!\!\!\!\!\!+\left(b\otimes{\color{red}1}\otimes1-\sum_{i=1}^r(\gamma_i\otimes{\color{red}\Psi(\mathsf s_i^{\rm op})}\!\otimes\!1\!+\!\gamma_i^*\!\otimes\!{\color{red}\Psi(\mathsf s_i^{\rm op})^{-1}}\!\otimes\!1)\nonumber\right.\\
& & \mbox{}-\left.\sum_{i=1}^r(\beta_i\otimes{\color{red}1}\otimes\Psi(t_i)+\beta_i^*\otimes{\color{red}1}\otimes\Psi(t_i)^{-1})\right)^{-1}\nonumber\\%
& & \mbox{}\times\delta_{j,k}\frac{i}{2}\left(\gamma_j\!\otimes\![{\color{red}(\Psi(\mathsf s_k^{\rm op})\!-\!1)}\bullet(\Psi(s_k^{\rm op})\!-\!1)^2+{\color{red}(\Psi(\mathsf s_k^{\rm op})\!-\!1)^2}\bullet(\Psi(s_k^{\rm op})\!-\!1)]\right.\nonumber\\%
& & \mbox{}+\left.\gamma_j^*\otimes[{\color{red}(\Psi(\mathsf s_k^{\rm op})^{-1}\!-\!1)^2}\bullet(\Psi(s^{\rm op}_j)^{-1}\!-\!1)+{\color{red}(\Psi(\mathsf s_k^{\rm op})^{-1}\!-\!1)}\bullet(\Psi(s^{\rm op}_j)^{-1}\!-\!1)^2]\right)\!\otimes\!1\nonumber\\
& &\mbox{}\times\left(b\otimes1\otimes1-\sum_{i=1}^{r}(\gamma_i\otimes\Psi(s^{\rm op}_i))\otimes1+\gamma_i^*\otimes\Psi(s^{\rm op}_i)^{-1})\otimes1)\right.\nonumber\\
& & \mbox{}-\left.\sum_{i=1}^r(\beta_i\otimes1\otimes\Psi(t_i)+\beta_i^*\otimes1\otimes \Psi(t_i)^{-1})\right)^{-1}\nonumber\\
& & \mbox{}\times\left(\gamma_j\otimes\left[\left(\Psi(s^{\rm op}_j)-1\right)\,\cdot\,\left(\Psi(\tilde{s}^{\rm op}_j)-1\right)\right]\right.\nonumber\\
& & \mbox{}-\left.\gamma_j^*\!\otimes\left[\left(\Psi(s^{\rm op}_j)^{-1}-1\right)\,\cdot\,\left(\Psi(\tilde{s}^{\rm op}_j)^{-1}-1\right)\right]\right)\!\otimes\!1\nonumber\\
&&\mbox{}\times\left(b\otimes1\otimes1-\sum_{i=1}^r(\gamma_i\otimes\Psi(\tilde{s}^{\rm op}_i)\otimes1+\gamma_i^*\otimes\Psi(\tilde{s}^{\rm op}_i)^{-1}\otimes1)\right.\nonumber\\
& & \mbox{}-\left.\sum_{i=1}^r(\beta_i\otimes1\otimes\Psi(t_i)+\beta_i^*\otimes1\otimes\Psi(t_i)^{-1})\right)^{-1}\nonumber\\
& &\mbox{}\times\frac{i}{2}\left(\gamma_k\!\otimes\![(\Psi(\tilde{s}_k^{\rm op})\!-\!1)\bullet{\color{blue}{(\Psi(\tilde{\sf s}_k^{\rm op})\!-\!1)}}]-\gamma_k^*\otimes[(\Psi(\tilde{s}_k^{\rm op})^{-1}\!-\!1)\bullet{\color{blue}{(\Psi(\tilde{\sf s}_k^{\rm op})^{-1}\!-\!1)}}]\right)\!\otimes\!1\nonumber\\
&&\mbox{}\times\left(b\otimes{\color{blue}{1}}\otimes1-\sum_{i=1}^r(\gamma_i\otimes{\color{blue}{\Psi(\tilde{\sf s}^{\rm op}_i)}}\otimes1+\gamma_i^*\otimes{\color{blue}{\Psi(\tilde{\sf s}^{\rm op}_i)^{-1}}}\otimes1)\right.\nonumber\\
& & \mbox{}-\left.\sum_{i=1}^r(\beta_i\otimes{\color{blue}{1}}\otimes\Psi(t_i)+\beta_i^*\otimes{\color{blue}{1}}\otimes\Psi(t_i)^{-1})\right)^{-1}\nonumber\\%
& & \!\!\!\!\!\!+\left(b\otimes{\color{red}1}\otimes1-\sum_{i=1}^r(\gamma_i\otimes{\color{red}\Psi(\mathsf s_i^{\rm op})}\!\otimes\!1\!+\!\gamma_i^*\!\otimes\!{\color{red}\Psi(\mathsf s_i^{\rm op})^{-1}}\!\otimes\!1)\nonumber\right.\\
& & \mbox{}-\left.\sum_{i=1}^r(\beta_i\otimes{\color{red}1}\otimes\Psi(t_i)+\beta_i^*\otimes{\color{red}1}\otimes\Psi(t_i)^{-1})\right)^{-1}\nonumber\\%
& & \mbox{}\times\left(\gamma_j\!\otimes\!{\color{red}(\Psi(\mathsf s_j^{\rm op})\!-\!1)^2}-\gamma_j^*\otimes{\color{red}(\Psi(\mathsf s_j^{\rm op})^{-1}\!-\!1)^2}\right)\!\otimes\!1\nonumber\\%
& & \mbox{}\times\left(b\otimes{\color{red}1}\otimes1-\sum_{i=1}^r(\gamma_i\otimes{\color{red}\Psi(\mathsf s_i^{\rm op})}\!\otimes\!1\!+\!\gamma_i^*\!\otimes\!{\color{red}\Psi(\mathsf s_i^{\rm op})^{-1}}\!\otimes\!1)\nonumber\right.\\
& & \mbox{}-\left.\sum_{i=1}^r(\beta_i\otimes{\color{red}1}\otimes\Psi(t_i)+\beta_i^*\otimes{\color{red}1}\otimes\Psi(t_i)^{-1})\right)^{-1}\nonumber\\
& & \mbox{}\times\frac{i}{2}\left(\gamma_k\!\otimes\![{\color{red}(\Psi(\mathsf s_k^{\rm op})\!-\!1)}\bullet(\Psi(s_k^{\rm op})\!-\!1)]\!-\!\gamma_k^*\!\otimes\![{\color{red}(\Psi(\mathsf s_k^{\rm op})^{-1}\!-\!1)}\bullet(\Psi(s_k^{\rm op})^{-1}\!-\!1)]\right)\!\otimes\!1 \nonumber\\
& &\mbox{}\times\left(b\otimes1\otimes1-\sum_{i=1}^{r}(\gamma_i\otimes\Psi(s^{\rm op}_i))\otimes1+\gamma_i^*\otimes\Psi(s^{\rm op}_i)^{-1})\otimes1)\right.\nonumber\\
& & \mbox{}-\left.\sum_{i=1}^r(\beta_i\otimes1\otimes\Psi(t_i)+\beta_i^*\otimes1\otimes \Psi(t_i)^{-1})\right)^{-1}\nonumber\\
& & \mbox{}\times\left(\gamma_j\otimes\left[\left(\Psi(s^{\rm op}_j)-1\right)\,\cdot\,\left(\Psi(\tilde{s}^{\rm op}_j)-1\right)\right]-\gamma_j^*\!\otimes\left[\left(\Psi(s^{\rm op}_j)^{-1}-1\right)\,\cdot\,\left(\Psi(\tilde{s}^{\rm op}_j)^{-1}-1\right)\right]\right)\!\otimes\!1\nonumber\\
&&\mbox{}\times\left(b\otimes1\otimes1-\sum_{i=1}^r(\gamma_i\otimes\Psi(\tilde{s}^{\rm op}_i)\otimes1+\gamma_i^*\otimes\Psi(\tilde{s}^{\rm op}_i)^{-1}\otimes1)\right.\nonumber\\
& & \mbox{}-\left.\sum_{i=1}^r(\beta_i\otimes1\otimes\Psi(t_i)+\beta_i^*\otimes1\otimes\Psi(t_i)^{-1})\right)^{-1}\nonumber\\
& &\mbox{}\times\frac{i}{2}\left(\gamma_k\!\otimes\![(\Psi(\tilde{s}_k^{\rm op})\!-\!1)\bullet{\color{blue}{(\Psi(\tilde{\sf s}_k^{\rm op})\!-\!1)}}]-\gamma_k^*\otimes[(\Psi(\tilde{s}_k^{\rm op})^{-1}\!-\!1)\bullet{\color{blue}{(\Psi(\tilde{\sf s}_k^{\rm op})^{-1}\!-\!1)}}]\right)\!\otimes\!1\nonumber\\
&&\mbox{}\times\left(b\otimes{\color{blue}{1}}\otimes1-\sum_{i=1}^r(\gamma_i\otimes{\color{blue}{\Psi(\tilde{\sf s}^{\rm op}_i)}}\otimes1+\gamma_i^*\otimes{\color{blue}{\Psi(\tilde{\sf s}^{\rm op}_i)^{-1}}}\otimes1)\right.\nonumber\\
& & \mbox{}-\left.\sum_{i=1}^r(\beta_i\otimes{\color{blue}{1}}\otimes\Psi(t_i)+\beta_i^*\otimes{\color{blue}{1}}\otimes\Psi(t_i)^{-1})\right)^{-1}\nonumber\\%
& & \!\!\!\!\!\!+\left(b\otimes{\color{red}1}\otimes1-\sum_{i=1}^r(\gamma_i\otimes{\color{red}\Psi(\mathsf s_i^{\rm op})}\!\otimes\!1\!+\!\gamma_i^*\!\otimes\!{\color{red}\Psi(\mathsf s_i^{\rm op})^{-1}}\!\otimes\!1)\nonumber\right.\\
& & \mbox{}-\left.\sum_{i=1}^r(\beta_i\otimes{\color{red}1}\otimes\Psi(t_i)+\beta_i^*\otimes{\color{red}1}\otimes\Psi(t_i)^{-1})\right)^{-1}\nonumber\\
& & \mbox{}\times\left(\gamma_j\!\otimes\!{\color{red}(\Psi(\mathsf s_j^{\rm op})\!-\!1)^2}-\gamma_j^*\otimes{\color{red}(\Psi(\mathsf s_j^{\rm op})^{-1}\!-\!1)^2}\right)\!\otimes\!1\nonumber\\
& &\mbox{}\times\left(b\otimes{\color{red}1}\otimes1-\sum_{i=1}^r(\gamma_i\otimes{\color{red}\Psi(\mathsf s_i^{\rm op})}\!\otimes\!1\!+\!\gamma_i^*\!\otimes\!{\color{red}\Psi(\mathsf s_i^{\rm op})^{-1}}\!\otimes\!1)\nonumber\right.\\
& & \mbox{}-\left.\sum_{i=1}^r(\beta_i\otimes{\color{red}1}\otimes\Psi(t_i)+\beta_i^*\otimes{\color{red}1}\otimes\Psi(t_i)^{-1})\right)^{-1}\nonumber\\%
& & \mbox{}\times\frac{i}{2}\delta_{j,k}\left(\gamma_j\otimes\left[{\color{red}\left(\Psi({\sf s}^{\rm op}_j)-1\right)}\bullet\left(\Psi(s^{\rm op}_j)-1\right)\,\cdot\,\left(\Psi(\tilde{s}^{\rm op}_j)-1\right)\right]\right.\nonumber\\%
& & \mbox{}+\left.\gamma_j^*\!\otimes\left[{\color{red}\left(\Psi(\mathsf s^{\rm op}_j)^{-1}-1\right)}\bullet\left(\Psi(s^{\rm op}_j)^{-1}-1\right)\,\cdot\,\left(\Psi(\tilde{s}^{\rm op}_j)^{-1}-1\right)\right]\right)\!\otimes\!1\nonumber\\
&&\mbox{}\times\left(b\otimes1\otimes1-\sum_{i=1}^r(\gamma_i\otimes\Psi(\tilde{s}^{\rm op}_i)\otimes1+\gamma_i^*\otimes\Psi(\tilde{s}^{\rm op}_i)^{-1}\otimes1)\right.\nonumber\\
& & \mbox{}-\left.\sum_{i=1}^r(\beta_i\otimes1\otimes\Psi(t_i)+\beta_i^*\otimes1\otimes\Psi(t_i)^{-1})\right)^{-1}\nonumber\\
& &\mbox{}\times\frac{i}{2}\left(\gamma_k\!\otimes\![(\Psi(\tilde{s}_k^{\rm op})\!-\!1)\bullet{\color{blue}{(\Psi(\tilde{\sf s}_k^{\rm op})\!-\!1)}}]-\gamma_k^*\otimes[(\Psi(\tilde{s}_k^{\rm op})^{-1}\!-\!1)\bullet{\color{blue}{(\Psi(\tilde{\sf s}_k^{\rm op})^{-1}\!-\!1)}}]\right)\!\otimes\!1\nonumber\\
&&\mbox{}\times\left(b\otimes{\color{blue}{1}}\otimes1-\sum_{i=1}^r(\gamma_i\otimes{\color{blue}{\Psi(\tilde{\sf s}^{\rm op}_i)}}\otimes1+\gamma_i^*\otimes{\color{blue}{\Psi(\tilde{\sf s}^{\rm op}_i)^{-1}}}\otimes1)\right.\nonumber\\
& & \mbox{}-\left.\sum_{i=1}^r(\beta_i\otimes{\color{blue}{1}}\otimes\Psi(t_i)+\beta_i^*\otimes{\color{blue}{1}}\otimes\Psi(t_i)^{-1})\right)^{-1}\quad\text{[end of 3rd term] }\text{END 8th TERM }
\label{45}\end{eqnarray}}}\noindent
In Parraud's expression \eqref{defRP} for $\nu_1,\nu_1^{(N)}$, one sums after all $j,k=1,\dots,r$ (of course, we will not write that expression down). 
Already before integrating with respect to the real variables $t_1$ and $t_2$ (see \eqref{defRP} and before), the result does not change regardless of whether we 
consider the tuples $s,\mathsf s,\tilde{s},\tilde{\sf s}$  - each a free tuple with the same distribution - as belonging to $\mathcal A$ or to $\mathcal A^{\rm op}$. 
However, this observation is not needed for our purposes. Indeed, obtaining $R_1((z-\mathcal S)^{-1})$ (where $R_1$ is applied to $(z-\mathcal S)^{-1}$ viewed as a
noncommutative function in the variables on the second tensor coordinate - strictly speaking, $({\rm id}_{M_m(\mathbb C)}\otimes R_1\otimes{\rm id}_\mathcal A)((z-\mathcal S)^{-1})
$) from formula \eqref{45} above comes (up to a complex multiplicative constant) to performing the following steps: first, sum after $j,k\in\{1,\dots,r\}$; second, identify the four
noncommutative functions with values in $M_m(\mathbb C)\otimes\mathcal A^{\rm op}\otimes\mathcal A$ (spatial tensor product) that appear left and right of each of the 
two $\bullet$ (and, of course, left and right of $\cdot$); third, evaluate these functions on the desired $r$-tuples of variables to obtain an element in the complex Banach 
space $M_m(\mathbb C)\otimes\mathcal A^{\rm op}\otimes\mathcal A$ (spatial tensor product again);
fourth, insert $1\in\mathcal A$ in the place of each placeholder $\bullet$, $\cdot$, and perform the needed operations of addition and multiplication in 
the von Neumann algebra $M_m(\mathbb C)\otimes\mathcal A\otimes\mathcal A$ endowed with the canonical trace\footnote{To
reiterate, instead of the second step, one may also just use the observation that the formula obtained after the first step does not change regardless of whether it is 
viewed in the von Neumann algebra $M_m(\mathbb C)\otimes\mathcal A^{\rm op}\otimes\mathcal A$ or in $M_m(\mathbb C)\otimes\mathcal A\otimes\mathcal A$,
``drop the op'' in the formula obtained from the first step, and then replace directly the placeholders with algebra units.}. Then computing 
$({\rm tr}_m\otimes\nu_1\otimes\tau)((z-\mathcal S)^{-1})$ becomes trivial. Computing $({\rm tr}_m\otimes\nu_1\otimes\nu_1)((z-\mathcal S)^{-1})$ is almost 
as easy: after performing the four steps above (or the three steps described in the footnote) for the second tensor coordinate, one performs precisely the same operations 
for the third tensor coordinate, and only then one applies ${\rm tr}_m\otimes\tau\otimes\tau$ to the element of $M_m(\mathbb C)\otimes\mathcal A\otimes\mathcal A$
thus obtained. Observing that the $r$-tuples $z_{t_1}^1,z_{t_1}^2,\tilde{z}_{t_1}^1,\tilde{z}_{t_1}^2$ have distributions that do not depend on
$t_1,t_2$, one is spared the need to integrates with respect to the real variables $t_1,t_2$. As the reader will see in the next section, this suffices in order to 
find the analytic function $E(z)$ from \eqref{estimdiffeqno}. Moreover, \eqref{45} shows that the domain of analyticity of $g(z)$ and the domain of analyticity of
$E(z)$ as functions of the complex number $z$ coincide - see the following section for details.

\section{Expansion of the Cauchy transform}\label{Sec:pfHT}

It is clear that if $|z|>\|\xi\|+2\sum_{i=1}^{r}(\|\gamma_i\|+\|\beta_i\|)$, then
{\small\begin{align}
&\Big(\!(zI_m\!\!-\xi)\!\otimes\! I_N\!\otimes\! I_N\!-\!\sum_{i=1}^r(\{\gamma_i\!\otimes\! U_i+\gamma_i^*\!\otimes\! U_i^{-1}\}\!\otimes \!I_N\!+
\beta_i\!\otimes\!I_N\!\otimes\!V_i\!+\beta_i^*\!\otimes\!I_N\!\otimes\!V_i^{-1})\Big)^{-1}\nonumber\\
&=\sum_{n=0}^\infty\frac{\left[\xi\!\otimes\!I_N\!\otimes\!I_N\!+\!\sum_{i=1}^r(\{\gamma_i\!\otimes\!U_i+\gamma_i^*\!\otimes\!U_i^{-1}\}\!\otimes\!I_N\!+
\beta_i\!\otimes\!I_N\!\otimes\!V_i\!+\beta_i^*\!\otimes\!I_N\!\otimes\!V_i^{-1})\right]^n}{z^{n+1}}\label{infty},
\end{align}}\noindent
where the right hand side series converges in norm. It can be easily seen that $(T_0+T_1+\cdots+T_{r})^n=\sum_{0\le i_1,\dots,i_n\le r}T_{i_1}T_{i_2}\cdots T_{i_n}$, a sum
of $(r+1)^n$ terms. For each $k\in\{0,1,\dots,n\}$, there are $r^{n-k}\frac{n!}{k!(n-k)!}$ terms that contain exactly $k$ instances of $T_0$ in this sum (i.e. for which exactly $k$ 
of the $i_1,\dots,i_n$ are equal to zero). We isolate the Cayley transforms $V_j$ in the above-displayed numerator by applying this formula with $T_0=\xi\otimes I_N\otimes I_N
+\sum_{i=1}^r(\gamma_i\otimes U_i+\gamma_i^*\otimes U^{-1}_i)\otimes I_N$ and $T_j=\beta_j\otimes I_N\otimes V_j+\beta_j^*\otimes I_N\otimes V_j^{-1},1\le j\le r$. Pick $i_1,\dots,i_n$ with $k$ of the $i$'s being equal to $0$,
and the other $n-k$ being $i_{\iota_1},\dots,i_{\iota_{n-k}}$. Then 
$$
T_{i_1}\cdots T_{i_n}=\sum_{\epsilon_{\iota_1},\dots,\epsilon_{\iota_{n-k}}\in\{\pm1\}}\tilde{T}_{i_1}\cdots\tilde{T}_{i_n}\otimes V_{i_{\iota_1}}^{\epsilon_{\iota_1}}\cdots 
V^{\epsilon_{\iota_{n-k}}}_{i_{\iota_{n-k}}},
$$
where $\tilde{T}_{i_j}\in\{\beta_{i_j}\otimes I_N,\beta_{i_j}^*\otimes I_N\}$ if $i_j\neq0$ and $\tilde{T}_{i_j}=\xi\otimes I_N+
\sum_{i=1}^{r}(\gamma_i\otimes U_i+\gamma_i^*\otimes U_i^{-1})$ if $i_j=0$. 
Thus, 
\begin{eqnarray*}
\lefteqn{(\mathrm{id}_{M_m(\mathbb C)}\!\otimes\!\mathrm{id}_{M_N(\mathbb C)}\!\otimes\!\mathbb{E}(\tr_N))\Big(\!(zI_m-\xi)\otimes I_N\otimes I_N}\\
& &\mbox{}-\sum_{i=1}^r(\{\gamma_i\otimes U_i+\gamma_i^*\otimes U_i^{-1}\}\otimes I_N+\beta_i\otimes I_N\otimes V_i+\beta_i^*\otimes I_N\otimes V_i^{-1})\Big)^{-1}\\
&\!\!\!=&\!\!\sum_{n=0}^\infty \sum_{k=0}^n\!\!\!\!\!\sum_{\begin{array}{ccc}0\le i_1,\dots,i_n\le r\\\mbox{\tiny $k$ of the $i$'s being equal to $0$,}\\\mbox{\tiny
and the other $n-k$ being $i_{\iota_1},\dots,i_{\iota_{n-k}}$}\end{array}}\!\!\!\!\!\!\!\!
\sum_{\epsilon_{\iota_1},\dots,\epsilon_{\iota_{n-k}}\in\{\pm1\}}\!\!\frac{\tilde{T}_{i_1}\cdots\tilde{T}_{i_n}\mathbb{E}\!\left(\!\tr_N(V_{i_{\iota_1}}^{\epsilon_{\iota_1}}\cdots 
V^{\epsilon_{\iota_{n-k}}}_{i_{\iota_{n-k}}})\!\right)}{z^{n+1}}.
\end{eqnarray*}
Similarly,
 \begin{eqnarray*}
\lefteqn{({\rm id}_{M_m(\mathbb C)}\!\otimes\!{\rm id}_{M_N(\mathbb C)}\!\otimes\!\tau)\Big(\!(zI_m-\xi)\otimes I_N\otimes1}\\
& & \mbox{}-\sum_{i=1}^r(\{\gamma_i\otimes U_i+\gamma_i^*\otimes U_i^{-1}\}\otimes1+\beta_i\otimes I_N\otimes v_i+\beta_i^*\otimes I_N\otimes v_i^{-1})\!\Big)^{-1}\\
&=&\sum_{n=0}^\infty \sum_{k=0}^n\!\!\! \sum_{\begin{array}{ccc}0\le i_1,\dots,i_n\le r\\\mbox{\tiny $k$ of the $i$'s being equal to $0$,}\\\mbox{\tiny
and the other $n-k$ being $i_{\iota_1},\dots,i_{\iota_{n-k}}$}\end{array}}\!\!\!\sum_{\epsilon_{\iota_1},\dots,\epsilon_{\iota_{n-k}}\in\{\pm1\}}\!\!\frac{\tilde{T}_{i_1}\cdots\tilde{T}_{i_n}\tau\left(v_{i_{\iota_1}}^{\epsilon_{\iota_1}}\cdots 
v^{\epsilon_{\iota_{n-k}}}_{i_{\iota_{n-k}}}\right)}{z^{n+1}}.
\end{eqnarray*}

Now, according to Proposition \ref{dvptpourCayley}.
\begin{eqnarray*}
\mathbb{E}\left(\!\tr_N(V_{i_{\iota_1}}^{\epsilon_{\iota_1}}\cdots V^{\epsilon_{\iota_{n-k}}}_{i_{\iota_{n-k}}})\!\right)&=&\tau\left(\!v_{i_{\iota_1}}^{\epsilon_{\iota_1}}\cdots 
v^{\epsilon_{\iota_{n-k}}}_{i_{\iota_{n-k}}}\!\right) + \frac{\nu_1\left(\!\Psi( \mathbf x_{i_{\iota_1}})^{\epsilon_{\iota_1}}\cdots 
\Psi( \mathbf x_{i_{\iota_{n-k}}})^{\epsilon_{\iota_{n-k}}}\!\right)}{N^2}\\&&+ \frac{\nu_2^{(N)}\left(\!\Psi( \mathbf x_{i_{\iota_1}})^{\epsilon_{\iota_1}}\cdots 
\Psi( \mathbf x_{i_{\iota_{n-k}}})^{\epsilon_{\iota_{n-k}}}\!\right)}{N^4}.
\end{eqnarray*}
As noted in Section \ref{domains}, for any fixed $z\in \mathbb C\setminus\mathbb R$, any fixed $N\times N$ unitary matrices $U_i$'s, the function
\begin{align*}
&({\mathbf x}_1,\ldots,{\mathbf x}_r)\mapsto\Big(z-1\otimes\xi\otimes I_N\\
& -\sum_{i=1}^r(\Psi({\mathbf x}_i)\otimes\beta_i\otimes I_N\!+\Psi({\mathbf x}_i)^{-1}\!\otimes\beta_i^* \otimes I_N\!
+1\otimes \gamma_i \otimes U_i+1\otimes \gamma_i^* \otimes U_i^{-1})\Big)^{\!-1}
\end{align*}
is a rational noncommutative function on $((I_r({\mathcal A}))_n)_{n\ge1}$
 and {\footnotesize 
\begin{eqnarray*}
\lefteqn{ {}^1\mathsf{flip}^0 {}^0\mathsf{flip}^1 \Big(z-1\otimes \xi \otimes I_N}\\
& & \mbox{}-\sum_{i=1}^{r}(\Psi({\mathbf x}_i)\otimes \beta_i \otimes I_N+\Psi({\mathbf x}_i)^{-1}\otimes 
\beta_i^* \otimes I_N+1\otimes \gamma_i \otimes U_i+1\otimes \gamma_i^* \otimes U_i^{-1})\Big)^{-1}\\
&=&\sum_{n=0}^\infty \frac{1}{z^{n+1}}\sum_{k=0}^n\!\sum_{\begin{array}{ccc}\mbox{\scriptsize$0\le i_1,\dots,i_n\le r$}\\ \mbox{\tiny $k$ of the $i$'s being equal to $0$,}\\\mbox{\tiny
and the other $n-k$ being $i_{\iota_1},\dots,i_{\iota_{n-k}}$}\end{array}}\\
& & \sum_{\epsilon_{\iota_1},\dots,\epsilon_{\iota_{n-k}}\in\{\pm1\}}\!\!{}^1\mathsf{flip}^0{}^0\mathsf{flip}^1\left\{{\Psi( \mathbf x_{i_{\iota_1}})^{\epsilon_{\iota_1}}\cdots 
\Psi( \mathbf x_{i_{\iota_{n-k}}})^{\epsilon_{\iota_{n-k}}}\otimes \tilde{T}_{i_1}\cdots\tilde{T}_{i_n}}\right\}.
\end{eqnarray*}}\noindent
We have 
$$ R_1^{\mathbf x}\left[ {}^1\mathsf{flip}^0 {}^0\mathsf{flip}^1 \left\{{\Psi( \mathbf x_{i_{\iota_1}})^{\epsilon_{\iota_1}}\cdots 
\Psi( \mathbf x_{i_{\iota_{n-k}}})^{\epsilon_{\iota_{n-k}}}\otimes \tilde{T}_{i_1}\cdots\tilde{T}_{i_n}}\right\}\right]$$
$$={}^1\mathsf{flip}^0 {}^0\mathsf{flip}^1  \left\{R_1^{\mathbf x}\left[ \Psi( \mathbf x_{i_{\iota_1}})^{\epsilon_{\iota_1}}\cdots 
\Psi( \mathbf x_{i_{\iota_{n-k}}})^{\epsilon_{\iota_{n-k}}}\right]\otimes \tilde{T}_{i_1}\cdots\tilde{T}_{i_n}\right\}.$$
Then, for any tuple  $(x^1, \tilde x^1, \tilde x^2, x^2)$ of $r$-tuples of selfadjoint elements in $\mathcal A_N$, 
$$(R_1\otimes{\rm id}_{M_m(\mathbb C)}\otimes{\rm id}_\mathcal A)\!\left[\!{}^1\mathsf{flip}^0{}^0\mathsf{flip}^1\left\{{\Psi(\mathbf x_{i_{\iota_1}})^{\epsilon_{\iota_1}}\cdots 
\Psi(\mathbf x_{i_{\iota_{n-k}}})^{\epsilon_{\iota_{n-k}}}\otimes\tilde{T}_{i_1}\cdots\tilde{T}_{i_n}}\!\right\}\!\right]\!(x^1, \tilde x^1, \tilde x^2, x^2)$$
$$= \tilde{T}_{i_1}\cdots\tilde{T}_{i_n} \otimes  R_1\left[ \Psi( \mathbf x_{i_{\iota_1}})^{\epsilon_{\iota_1}}\cdots 
\Psi( \mathbf x_{i_{\iota_{n-k}}})^{\epsilon_{\iota_{n-k}}}\right](x^1, \tilde x^1, \tilde x^2, x^2).$$
Similarly, for any $R\in \{R_2^{(1)}, R_2^{(2)}, R_2^{(3)}\}$, any tuple $(x^1,\tilde x^1, \tilde x^2, x^2)$ of 4-tuples of $r$-tuples of selfadjoint elements in $\mathcal A_N$, 
$$R^{\mathbf x}R_1^{\mathbf x}\left[ {}^1\mathsf{flip}^0 {}^0\mathsf{flip}^1 \left\{{\Psi( \mathbf x_{i_{\iota_1}})^{\epsilon_{\iota_1}}\cdots 
\Psi( \mathbf x_{i_{\iota_{n-k}}})^{\epsilon_{\iota_{n-k}}}\otimes \tilde{T}_{i_1}\cdots\tilde{T}_{i_n}}\right\}\right](x^1, \tilde x^1, \tilde x^2, x^2)$$
$$= \tilde{T}_{i_1}\cdots\tilde{T}_{i_n} \otimes  R^{\mathbf x}R_1^{\mathbf x}\left[ \Psi( \mathbf x_{i_{\iota_1}})^{\epsilon_{\iota_1}}\cdots 
\Psi( \mathbf x_{i_{\iota_{n-k}}})^{\epsilon_{\iota_{n-k}}}\right](x^1, \tilde x^1, \tilde x^2, x^2).$$
Applying the trace $\tau$ in the last tensor coordinate of the above,
{\footnotesize
\begin{eqnarray*} 
\lefteqn{({\rm id}_{M_m(\mathbb C)}\otimes{\rm id}_{M_N(\mathbb C)}\otimes\tau)\Big\{{}^1{\sf flip}^0 {}^0\mathsf{flip}^1\Big[R_1^{\mathbf x}\big(z-1\otimes\xi\otimes I_N}\\
& & \mbox{}-\sum_{i=1}^r(\Psi({\mathbf x}_i)\otimes \beta_i \otimes I_N +\Psi({\mathbf x}_i)^{-1}\otimes \beta_i^* \otimes I_N+1\otimes \gamma_i \otimes U_i+1\otimes \gamma_i^* \otimes U_i^{-1})\big)^{-1}\Big]\Big\}\\
&=&\sum_{n=0}^\infty \frac{1}{z^{n+1}} \sum_{k=0}^n\sum_{\begin{array}{ccc}\mbox{\scriptsize$0\le i_1,\dots,i_n\le r$}\\\mbox{\tiny $k$ of the $i$'s being equal to $0$,}\\\mbox{\tiny and the other $n-k$ being $i_{\iota_1},\dots,i_{\iota_{n-k}}$}\end{array}}\\
& & \sum_{\epsilon_{\iota_1},\dots,\epsilon_{\iota_{n-k}}\in\{\pm1\}}\tau\left\{ R_1^{\mathbf x}\left[\Psi( \mathbf x_{i_{\iota_1}})^{\epsilon_{\iota_1}}\cdots 
\Psi( \mathbf x_{i_{\iota_{n-k}}})^{\epsilon_{\iota_{n-k}}}\right](z_{t_1}^1,\tilde z_{t_1}^1, \tilde z_{t_1}^2, z_{t_1}^2) \right\}\tilde{T}_{i_1}\cdots\tilde{T}_{i_n}.
\end{eqnarray*}}\noindent
Similarly, for any $R\in \{R_2^{(1)}, R_2^{(2)}, R_2^{(3)}\}$,
{\tiny
\begin{eqnarray*} 
\lefteqn{({\rm id}_{M_m(\mathbb C)}\otimes{\rm id}_{M_N(\mathbb C)}\otimes\tau_N)\left\{{}^1{\sf flip}^0{}^0{\sf flip}^1\left[R^{\bf x}R_1^{\bf x}(z-1\otimes\xi\otimes I_N -
\sum_{i=1}^r(\Psi({\mathbf x}_i)\otimes \beta_i \otimes I_N+\Psi({\mathbf x}_i)^{-1}\otimes \beta_i^* \otimes I_N
+1\otimes \gamma_i \otimes U_i+1\otimes \gamma_i^* \otimes U_i^{-1}))^{-1}\right]\right\}}\\
& = & \sum_{n=0}^\infty\frac{1}{z^{n+1}} \sum_{k=0}^n\sum_{\begin{array}{ccc}0\le i_1,\dots,i_n\le r\\\mbox{\tiny $k$ of the $i$'s being equal to $0$,}\\\mbox{\tiny
and the other $n-k$ being $i_{\iota_1},\dots,i_{\iota_{n-k}}$}\end{array}}\sum_{\epsilon_{\iota_1},\dots,\epsilon_{\iota_{n-k}}\in\{\pm1\}}\\
& & \tau_N\left\{ R^{\mathbf x} R_1^{\mathbf x}\left[\Psi( \mathbf x_{i_{\iota_1}})^{\epsilon_{\iota_1}}\cdots 
\Psi( \mathbf x_{i_{\iota_{n-k}}})^{\epsilon_{\iota_{n-k}}}\right]({X^{N,T_2}_{3,1}, \tilde X^{N,T_2}_{3,1}, \tilde X^{N,T_2}_{3,2},X^{N,T_2}_{3,2}}) \right\}\tilde{T}_{i_1}\cdots
\tilde{T}_{i_n}.\quad\quad\quad\quad\quad\quad\quad\quad\quad
\end{eqnarray*}}\noindent
It readily follows that 
{\tiny  \begin{eqnarray*}
\lefteqn{({\rm id}_{M_m(\mathbb C)}\otimes{\rm id}_{M_N(\mathbb C)}\otimes \mathbb{E}(\tr_N))\Big((zI_m-\xi)\otimes I_N\otimes I_N-2\Re\sum_{i=1}^r(\gamma_i\otimes U_i\otimes I_N+\beta_i\otimes I_N\otimes V_i)\Big)^{-1}}\\
& = & ({\rm id}_{M_m(\mathbb C)}\otimes{\rm id}_{M_N(\mathbb C)} \otimes \tau)\Big((zI_m-\xi)\otimes I_N\otimes I_{\mathcal A}-2\Re\sum_{i=1}^r(\gamma_i\otimes U_i\otimes I_{\mathcal A}+\beta_i\otimes I_N\otimes v_i)\Big)^{-1}\\
& & \mbox{}+\frac{1}{2N^2}\int_0^{+\infty}\int_0^{t_2}  e^{-t_2-t_1}({\rm id}_{M_m(\mathbb C)}\otimes{\rm id}_{M_N(\mathbb C)} \otimes \tau)\\
& &\left\{{}^1\mathsf{flip}^0 {}^0\mathsf{flip}^1\left[R_1^{\mathbf x}\left(z-1\otimes \xi \otimes I_N -\sum_{i=1}^r(\Psi(\mathbf x_i)\otimes \beta_i \otimes I_N+\Psi(\mathbf x_i)^{-1}\otimes \beta_i^* \otimes I_N+1\otimes \gamma_i \otimes U_i+1\otimes \gamma_i^* \otimes U_i^{-1})\right)^{-1}\right]\right.\\
& &\left.( z^1_{t_1}, \tilde z^1_{t_1}, \tilde z^2_{t_1}, z_{t_1}^2)\right\}dt_1 dt_2\\
\\
& &\mbox{}+\frac1{4N^4}\int_{A_2}e^{-t_4-t_3-t_2-t_1}\left\{\1_{[t_2,t_4]}(t_3)({\rm id}_{M_m(\mathbb C)}\otimes{\rm id}_{M_N(\mathbb C)}\otimes\mathbb{E}(\tau_N))\right.\\
& & \left({}^1\mathsf{flip}^0{}^0\mathsf{flip}^1\left[R_{2}^{(1){\mathbf x}}R_1^{\mathbf x}\left(z-1\otimes \xi \otimes I_N -\sum_{i=1}^r(\Psi(\mathbf x_i)\otimes \beta_i \otimes I_N +\Psi(\mathbf x_i)^{-1}\otimes \beta_i^* \otimes I_N+1\otimes \gamma_i \otimes U_i+1\otimes \gamma_i^* \otimes U_i^{-1})\right)^{-1}\right]\right.\\
& &\left.\frac{}{}\!\!({X^{N,T_2}_{3,1}, \tilde X^{N,T_2}_{3,1}, \tilde X^{N,T_2}_{3,2},X^{N,T_2}_{3,2}})\right)\\
& &\mbox{}+\1_{[0,t_1]}(t_3)({\rm id}_{M_m(\mathbb C)} \otimes{\rm id}_{M_N(\mathbb C)}\otimes\mathbb{E} (\tau_N)) \\
& &\left({}^1{\sf flip}^0 {}^0{\sf flip}^1\left[R_{2}^{(2),{\bf x}}R_1^{\bf x}\left(z-1\otimes \xi \otimes I_N -\sum_{i=1}^r(\Psi(t_i)\otimes \beta_i \otimes I_N+\Psi(t_i)^{-1}\otimes \beta_i^* \otimes I_N+1\otimes \gamma_i \otimes U_i+1\otimes \gamma_i^* \otimes U_i^{-1})\right)^{-1}\right]\right.\\
& &(X^{N,T_2}_{1,1}, \tilde X^{N,T_2}_{1,1}, \tilde X^{N,T_2}_{1,2},X^{N,T_2}_{1,2})\\
& & \mbox{}+\1_{[t_1,t_2]}(t_3)({\rm id}_{M_m(\mathbb C)}\otimes{\rm id}_{M_N(\mathbb C)}\otimes  \mathbb{E} (\tau_N))\\ 
& & \left({}^1\mathsf{flip}^0 {}^0\mathsf{flip}^1\left[R_{2}^{(3),{\mathbf x}}R_1^{\mathbf x}\left(z-1\otimes \xi \otimes I_N -\sum_{i=1}^r(\Psi(t_i)\otimes \beta_i \otimes I_N +
\Psi(t_i)^{-1}\otimes \beta_i^* \otimes I_N+1\otimes \gamma_i \otimes U_i+1\otimes \gamma_i^* \otimes U_i^{-1})\right)^{-1}\right] \right.\\
& & \left. \left.(X^{N,T_2}_{2,1}, \tilde X^{N,T_2}_{2,1}, \tilde X^{N,T_2}_{2,2},X^{N,T_2}_{2,2})\right)\right\}{\rm d}t_1{\rm d}t_2{\rm d}t_3{\rm d}t_4.
\end{eqnarray*}}\noindent
Let us iterate the process with respect to the $U_i$'s.
It is clear that if $|z|>\|\xi\|+2\sum_{i=1}^{r}(\|\gamma_i\|+\|\beta_i\|)$, then
{\small\begin{align}
&\Big(\!(zI_m\!\!-\xi)\!\otimes\! I_N\!\otimes\!1-\!\sum_{i=1}^r(\{\gamma_i\!\otimes\! U_i+\gamma_i^*\!\otimes\! U_i^{-1}\}\!\otimes1\!+
\beta_i\!\otimes\!I_N\!\otimes\!v_i\!+\beta_i^*\!\otimes\!I_N\!\otimes\!v_i^{-1})\Big)^{-1}\nonumber\\
&=\sum_{n=0}^\infty\frac{\left[\xi\!\otimes\!I_N\!\otimes1\!+\!\sum_{i=1}^r(\{\gamma_i\!\otimes\!U_i+\gamma_i^*\!\otimes\!U_i^{-1}\}\!\otimes1\!+
\beta_i\!\otimes\!I_N\!\otimes\!v_i\!+\beta_i^*\!\otimes\!I_N\!\otimes\!v_i^{-1})\right]^n}{z^{n+1}}\label{infty2},
\end{align}}\noindent
where the right hand side series converges in norm. It can be easily seen that $(T_0+T_1+\cdots+T_{r})^n=\sum_{0\le i_1,\dots,i_n\le r}T_{i_1}T_{i_2}\cdots T_{i_n}$, a sum
of $(r+1)^n$ terms. For each $k\in\{0,1,\dots,n\}$, there are $r^{n-k}\frac{n!}{k!(n-k)!}$ terms that contain exactly 
$k$ instances of $T_0$ in this sum (i.e. for which exactly $k$ 
of the $i_1,\dots,i_n$ are equal to zero). We isolate the Cayley transforms $U_j$ in the above-displayed numerator by applying this formula with $T_0=\xi\otimes I_N\otimes1
+\sum_{i=1}^r(\beta_i\otimes I_N\otimes v_i+ \beta_i^*\otimes I_N\otimes v_i^{-1}$ and $T_j=\{\gamma_j\otimes U_j+\gamma_j^*\otimes U_j^{-1}\}\otimes1,1\le j\le r$. Pick 
$i_1,\dots,i_n$ with $k$ of the $i$'s being equal to $0$, and the other $n-k$ being $i_{\iota_1},\dots,i_{\iota_{n-k}}$. Then 
$$
T_{i_1}\cdots T_{i_n}=\sum_{\epsilon_{\iota_1},\dots,\epsilon_{\iota_{n-k}}\in\{\pm1\}}\tilde{T}_{i_1}\cdots\tilde{T}_{i_n}(I_m\otimes U_{i_{\iota_1}}^{\epsilon_{\iota_1}}\otimes1)
\cdots (I_m\otimes U^{\epsilon_{\iota_{n-k}}}_{i_{\iota_{n-k}}}\otimes1),
$$
where $\tilde{T}_{i_j}\in\{\gamma_{i_j}\otimes I_N\otimes1,\gamma_{i_j}^*\otimes I_N\otimes1\}$ if $i_j\neq0$ and $\tilde{T}_{i_j}=\xi\otimes I_N\otimes1+
\sum_{i=1}^r(\beta_i\otimes I_N\otimes v_i+\beta_i^*\otimes I_N\otimes v_i^{-1})$ if $i_j=0$. 
Thus \begin{eqnarray*}
\lefteqn{({\rm id}_{M_m(\mathbb C)}\otimes\mathbb{E}(\tr_N)\otimes{\rm id}_{\mathcal A})\Big((zI_m-\xi)\otimes I_N\otimes1}\\
& & \mbox{}-\sum_{i=1}^r(\{\gamma_i\otimes U_i+\gamma_i^*\otimes U_i^{-1}\}\otimes1+\beta_i\otimes I_N\otimes v_i+\beta_i^*\otimes I_N\otimes v_i^{-1}\Big)^{-1}\\
&\!\!=&\!\sum_{n=0}^\infty \sum_{k=0}^n\!\!\!\!\!\sum_{\begin{array}{ccc}\mbox{\scriptsize$0\le i_1,\dots,i_n\le r$}\\\mbox{\tiny $k$ of the $i$'s being equal to $0$,}\\\mbox{\tiny
and the other $n-k$ being $i_{\iota_1},\dots,i_{\iota_{n-k}}$}\end{array}}\!\!\!\!\!\!\sum_{\epsilon_{\iota_1},\dots,\epsilon_{\iota_{n-k}}\in\{\pm1\}}\!\!\!
\frac{\hat{T}_{i_1}\cdots\hat{T}_{i_n}\mathbb{E}\left(\tr_N(U_{i_{\iota_1}}^{\epsilon_{\iota_1}}\cdots U^{\epsilon_{\iota_{n-k}}}_{i_{\iota_{n-k}}})\right)}{z^{n+1}},
\end{eqnarray*}
where $\hat{T}_{i_j}\in\{\gamma_{i_j}\otimes1,\gamma_{i_j}^*\otimes1\}$ if $i_j\neq0$ and $\hat{T}_{i_j}=\xi\otimes 1+
\sum_{i=1}^r(\beta_i\otimes v_i+\beta_i^*\otimes v_i^{-1})$ if $i_j=0$. 
Similarly,
{\small\begin{eqnarray*}
\lefteqn{({\rm id}_{M_m(\mathbb C)}\otimes\tau\otimes{\rm id}_{\mathcal A})\Big((zI_m-\xi)\otimes1\otimes1}\\
& & \mbox{}-\sum_{i=1}^r(\{\gamma_i\otimes u_i+\gamma_i^*\otimes u_i^{-1}\}\otimes1+\beta_i\otimes1\otimes \Psi(s_i)+\beta_i^*\otimes1\otimes \Psi(s_i)^{-1})\Big)^{-1}\\
&=&\sum_{n=0}^\infty \sum_{k=0}^n \!\!\!\!\!\sum_{\begin{array}{ccc}\mbox{\scriptsize$0\le i_1,\dots,i_n\le r$}\\\mbox{\tiny $k$ of the $i$'s being equal to $0$,}\\\mbox{\tiny
and the other $n-k$ being $i_{\iota_1},\dots,i_{\iota_{n-k}}$}\end{array}}\!\!\!\!\!\!\!\sum_{\epsilon_{\iota_1},\dots,\epsilon_{\iota_{n-k}}\in\{\pm1\}}\frac{\hat{T}_{i_1}\cdots\hat{T}_{i_n}\tau\left(u_{i_{\iota_1}}^{\epsilon_{\iota_1}}\cdots 
u^{\epsilon_{\iota_{n-k}}}_{i_{\iota_{n-k}}}\right)}{z^{n+1}}.
\end{eqnarray*}}\noindent
Now, according to Proposition \ref{dvptpourCayley},
\begin{eqnarray*}\mathbb{E}\!\left(\!\tr_N(U_{i_{\iota_1}}^{\epsilon_{\iota_1}}\cdots U^{\epsilon_{\iota_{n-k}}}_{i_{\iota_{n-k}}})\right)
& \!= &\! \tau\!\left(\!u_{i_{\iota_1}}^{\epsilon_{\iota_1}}\cdots u^{\epsilon_{\iota_{n-k}}}_{i_{\iota_{n-k}}}\right)
 + \frac{\nu_1^{(N)}\left(\Psi( \mathbf x_{i_{\iota_1}})^{\epsilon_{\iota_1}}\cdots \Psi( \mathbf x_{i_{\iota_{n-k}}})^{\epsilon_{\iota_{n-k}}}\right)}{N^2}\\
& \!= &\! \tau\!\left(\!u_{i_{\iota_1}}^{\epsilon_{\iota_1}}\cdots 
u^{\epsilon_{\iota_{n-k}}}_{i_{\iota_{n-k}}}\right)
 + \frac{\nu_1\left(\Psi( \mathbf x_{i_{\iota_1}})^{\epsilon_{\iota_1}}\cdots 
\Psi( \mathbf x_{i_{\iota_{n-k}}})^{\epsilon_{\iota_{n-k}}}\right)}{N^2}\\
&& \mbox{}+ \frac{\nu_2^{(N)}\left(\Psi( \mathbf x_{i_{\iota_1}})^{\epsilon_{\iota_1}}\cdots 
\Psi( \mathbf x_{i_{\iota_{n-k}}})^{\epsilon_{\iota_{n-k}}}\right)}{N^4}.
\end{eqnarray*} 
For any fixed $z\in \mathbb C\setminus \mathbb R$, any fixed  $v_i$'s, the function
\begin{align*}
&({\mathbf y}_1,\ldots,{\mathbf y}_r)\mapsto\Big(z-1\otimes \xi \otimes 1 \\
&-\sum_{i=1}^r(\Psi({\mathbf y}_i)\otimes \gamma_i \otimes1+\Psi({\mathbf y}_i)^{-1}\otimes \gamma_i^* \otimes1
+1\otimes \beta_i \otimes v_i+1\otimes \beta_i^* \otimes v_i^{-1})\Big)^{-1}
\end{align*}
 is a rational noncommutative function on $((I_r({\mathcal A}))_n)_{n\geq 1}$ and 
{\small 
 \begin{eqnarray*}
 \lefteqn{{}^0\mathsf{flip}^1\Big(z-1\otimes \xi \otimes 1}\\
& & \mbox{}-\sum_{i=1}^r(1\otimes \beta_i \otimes v_i +1\otimes \beta_i^* \otimes v_i^{-1}
+\Psi({\mathbf y}_i)\otimes \gamma_i \otimes1+\Psi({\mathbf y}_i)^{-1}\otimes \gamma_i^* \otimes 1\Big)^{-1}\\
&=&\sum_{n=0}^\infty \frac{1}{z^{n+1}}\sum_{k=0}^n\!\!\!\!\!\sum_{\begin{array}{ccc}\mbox{\scriptsize$0\le i_1,\dots,i_n\le r$}\\ 
\mbox{\tiny $k$ of the $i$'s being equal to $0$,}\\\mbox{\tiny and the other $n-k$ being $i_{\iota_1},\dots,i_{\iota_{n-k}}$}\end{array}}\\
& & \sum_{\epsilon_{\iota_1},\dots,\epsilon_{\iota_{n-k}}\in\{\pm1\}}{}^0\mathsf{flip}^1 \left\{{\Psi( \mathbf y_{i_{\iota_1}})^{\epsilon_{\iota_1}}\cdots 
\Psi( \mathbf y_{i_{\iota_{n-k}}})^{\epsilon_{\iota_{n-k}}}\otimes \hat{T}_{i_1}\cdots\hat{T}_{i_n}}\right\}.
\end{eqnarray*}}\noindent
Thus,  we can  deduce as below that 
{\tiny
\begin{eqnarray*}
\lefteqn{({\rm id}_{M_m(\mathbb C)}\otimes \mathbb{E}(\tr_N)\otimes{\rm id}_{\mathcal A})\Big((zI_m-\xi)\otimes I_{N}\otimes1-\sum_{i=1}^r(\{\gamma_i\otimes U_i+\gamma_i^*
\otimes U_i^{-1}\}\otimes1+\beta_i\otimes I_N\otimes v_i+\beta_i^*\otimes I_N\otimes v_i^{-1})\Big)^{-1}}\\
&= &({\rm id}_{M_m(\mathbb C)}\otimes\tau\otimes{\rm id}_{\mathcal A})\Big((zI_m-\xi)\otimes1\otimes1-\sum_{i=1}^r(\{\gamma_i\otimes u_i+\gamma_i^*\otimes u_i^{-1}\}\otimes1
+\beta_i\otimes1\otimes v_i+\beta_i^*\otimes1\otimes v_i^{-1})\Big)^{-1}\\
&  & \mbox{}+\frac{1}{2N^2}\int_0^{+\infty}\!\!\int_0^{t_2} e^{-t_2-t_1}({\rm id}_{M_m(\mathbb C)}\otimes\tau\otimes{\rm id}_{\mathcal A}) \\
&  &\left\{{}^0\mathsf{flip}^1\left[R_1^{\mathbf y}\left(z-1\otimes\xi\otimes 1-\sum_{i=1}^r(\Psi(\mathbf y_i)\otimes \gamma_i \otimes1-\Psi(\mathbf y_i)^{-1}\otimes\gamma_i^*
\otimes1+1\otimes\beta_i\otimes v_i+1\otimes\beta_i^* \otimes v_i^{-1})\right)^{-1}\right]\right.\\
&  & \left.( z^1_{t_1}, \tilde z^1_{t_1}, \tilde z^2_{t_1}, z_{t_1\frac{}{}}^{2}\!\!)\right\}\,{\rm d}t_1{\rm d}t_2\\
& &\mbox{}+\frac1{4N^4}\int_{A_2}e^{-t_4-t_3-t_2-t_1}\left\{\1_{[t_2,t_4]}(t_3)({\rm id}_{M_m(\mathbb C)}\otimes\mathbb{E}(\tau_N)\otimes{\rm id}_{\mathcal A})\right.\\
&  &\left({}^0\mathsf{flip}^1\left[R_{2}^{(1),{\mathbf y}}R_1^{\mathbf y}\left(z-1\otimes \xi \otimes1-\sum_{i=1}^r(\Psi(\mathbf y_i)\otimes\gamma_i\otimes1
+\Psi(\mathbf y_i)^{-1}\otimes \gamma_i^* \otimes1+1\otimes\beta_i\otimes v_i+1\otimes \gamma_i^* \otimes v_i^{-1})\right)^{-1}\right]\right.\\
&  & \left.({X^{N,T_2}_{3,1}, \tilde X^{N,T_2}_{3,1}, \tilde X^{N,T_2}_{3,2},X^{N,T_2}_{3,2}})\right)\\
& & \mbox{}+\1_{[0,t_1]}(t_3)({\rm id}_{M_m(\mathbb C)}\otimes \mathbb{E}(\tau_N) \otimes{\rm id}_{\mathcal A})\\
& & \left({}^0\mathsf{flip}^1\left[R_{2}^{(2),{\mathbf y}}R_1^{\mathbf y}\left(z-1\otimes \xi\otimes1-\sum_{i=1}^r(\Psi(\mathbf y_i)\otimes\gamma_i\otimes1
+\Psi(\mathbf y_i)^{-1}\otimes \gamma_i^* \otimes1+1\otimes \beta_i \otimes v_i+1\otimes \gamma_i^* \otimes v_i^{-1})\right)^{-1}\right]\right.\\
& & (X^{N,T_2}_{1,1}, \tilde X^{N,T_2}_{1,1}, \tilde X^{N,T_2}_{1,2},X^{N,T_2}_{1,2})\\
& & \mbox{}+\1_{[t_1,t_2]}(t_3)({\rm id}_{M_m(\mathbb C)}\otimes \mathbb{E}(\tau_N) \otimes{\rm id}_{\mathcal A}) \\
& & \left({}^0\mathsf{flip}^1\left[R_{2}^{(3),{\mathbf y}}R_1^{\mathbf y}\left(z-1\otimes\xi\otimes1-\sum_{i=1}^r(\Psi(\mathbf y_i)\otimes \gamma_i \otimes1
+\Psi(\mathbf y_i)^{-1}\otimes \gamma_i^* \otimes1+1\otimes \beta_i \otimes v_i+1\otimes \gamma_i^* \otimes v_i^{-1})\right)^{-1}\right]\right.\\
&  & \left. \left.(X^{N,T_2}_{2,1}, \tilde X^{N,T_2}_{2,1}, \tilde X^{N,T_2}_{2,2},X^{N,T_2}_{2,2})\right)\right\}\\
& & {\rm d}t_1{\rm d}t_2{\rm d}t_3{\rm d}t_4.
\end{eqnarray*}}\noindent
Now, 
{\tiny\begin{align*}({\rm id}_{M_m(\mathbb C)}\otimes \mathbb{E}(\tr_N) \otimes{\rm id}_{\mathcal A})&\hskip10truecm\quad\quad\quad\end{align*}}\vskip-.5truecm
{\tiny \begin{eqnarray*}
\lefteqn{\left[{}^1\mathsf{flip}^0{}^0\mathsf{flip}^1\left\{R_1^{\mathbf x}\left(z-1\otimes\xi\otimes I_N -\sum_{i=1}^r(\Psi(\mathbf x_i)\otimes\beta_i\otimes I_N
+\Psi(\mathbf x_i)^{-1}\otimes\beta_i^*\otimes I_N+1\otimes \gamma_i \otimes U_i+1\otimes \gamma_i^* \otimes U_i^{-1})\right)^{-1}\right]\right.}\\
& & \left.( z^1_{t_1\frac{}{}}\!, \tilde z^1_{t_1}, \tilde z^2_{t_1}, z_{t_1}^2)\right\}\\
&\!\!\! =&\!({\rm id}_{M_m(\mathbb C)}\otimes\tau\otimes{\rm id}_{\mathcal A})\\
&  & \left\{{}^1\mathsf{flip}^0 {}^0\mathsf{flip}^1\left[R_1^{\mathbf x}\left(z-1\otimes \xi \otimes1+\sum_{i=1}^r(\Psi(\mathbf x_i)\otimes \beta_i \otimes1
+\Psi(\mathbf x_i)^{-1}\otimes \beta_i^* \otimes1+1\otimes\gamma_i \otimes u_i+\otimes \gamma_i^* \otimes u_i^{-1})\right)^{-1}\right]\right.\\
&  & \left.( z^1_{t_1\frac{}{}}\!, \tilde z^1_{t_1}, \tilde z^2_{t_1}, z_{t_1}^2)\right\}\\
& &\mbox{}+\frac{1}{N^2}({\rm id}_{M_m(\mathbb C)}\otimes\mathbb E\left(\tau_N \right)\otimes{\rm id}_{\mathcal A})\\
&  & {}^0{\sf flip}^1R_1^{\mathbf y}{}^0{\sf flip}^1\left\{\left[{}^1\mathsf{flip}^0{}^0\mathsf{flip}^1R_1^{\mathbf x}\left(z-1\otimes \xi \otimes1+\sum_{i=1}^r(\Psi(\mathbf x_i)\otimes \beta_i \otimes1
+\Psi(\mathbf x_i)^{-1}\otimes \beta_i^* \otimes1+1\otimes \gamma_i\otimes\Psi(y_i)+1\otimes \gamma_i^* \otimes \Psi(y_i)^{-1})\right)^{-1}\right]\right.\\
&  & \left.( z^1_{t_1\frac{}{}}\!, \tilde z^1_{t_1}, \tilde z^2_{t_1}, z_{t_1}^2)\right\}(z^1_{t_1}(N), \tilde z^1_{t_1}(N), \tilde z^2_{t_1}(N), z_{t_1}^2(N))
\end{eqnarray*}}\noindent
Finally, for $|z|>\|\xi\|+2\sum_{i=1}^{r_1}\|\gamma_i\|+2\sum_{i=1}^{r_2}\|\beta_i\|$, with $g_N$ and $g$ defined by \eqref{defgn} and \eqref{defg}, we have 
$$
g_N(z)=g(z)+\frac{E(z)}{N^2} +\frac{\Delta_N(z)}{N^4},
$$
{\tiny\begin{eqnarray*}
E(z) & \!\!= & \!\!\frac{1}{2}\int_0^{+\infty}\!\!\int_0^{t_2}   e^{-t_2-t_1}(\tr_m\otimes \tau \otimes \tau) \\
& & \left\{{}^0\mathsf{flip}^1\left[R_1^{\mathbf y}\left(z-1\otimes\xi\otimes1-\sum_{i=1}^r(\Psi(\mathbf y_i)\otimes\gamma_i\otimes1+\Psi(\mathbf y_i)^{-1}\otimes\gamma_i^*\otimes1
+1\otimes \beta_i \otimes v_i+1\otimes \gamma_i^* \otimes v_i^{-1})\right)^{-1}\right]\right.\\
& & \left.( z^1_{t_1\frac{}{}}\!\!, \tilde z^1_{t_1}, \tilde z^2_{t_1}, z_{t_1}^2)\right\}\,{\rm d}t_1{\rm  d}t_2\\
& &\mbox{}+\frac12\int_0^{+\infty}\!\!\int_0^{t_2}   e^{-t_2-t_1}(\tr_m \otimes \tau \otimes \tau)\\
& & \left\{{}^1\mathsf{flip}^0{}^0\mathsf{flip}^1\left[R_1^{\mathbf x}\left(z-1\otimes \xi \otimes I_N -\sum_{i=1}^r(\Psi(\mathbf x_i)\otimes\beta_i \otimes1
+\Psi(\mathbf x_i)^{-1}\otimes \beta_i^* \otimes1+1\otimes \gamma_i \otimes u_i+1\otimes \gamma_i^* \otimes u_i^{-1})\right)^{-1}\right]\right.\\
& & \left.( z^1_{t_1\frac{}{}}\!\!, \tilde z^1_{t_1}, \tilde z^2_{t_1}, z_{t_1}^2)\right\}\,{\rm d}t_1{\rm d}t_2,
\end{eqnarray*}
\begin{eqnarray*}
\Delta_N(z)&\!\!=&\!\!\frac{1}{4}\int_{A_2}  e^{-t_4-t_3-t_2-t_1}\left\{\1_{[t_2,t_4]}(t_3)(\tr_m \otimes \mathbb{E} (\tr_N)\otimes  \mathbb{E}(\tau_N))\frac{}{}\right.\\
& & \left({}^1\mathsf{flip}^0{}^0\mathsf{flip}^1\left[R_{2}^{(1){\mathbf x}}R_1^{\mathbf x}\left(z-1\otimes\xi\otimes I_N-\sum_{i=1}^r(\Psi(\mathbf x_i)\otimes\beta_i\otimes I_N 
+\Psi(\mathbf x_i)^{-1}\otimes \beta_i^* \otimes I_N+1\otimes \gamma_i \otimes U_i+1\otimes \gamma_i^* \otimes U_i^{-1})\right)^{-1}\right]\right.\\
& & \left.\frac{}{}({X^{N,T_2}_{3,1}, \tilde X^{N,T_2}_{3,1}, \tilde X^{N,T_2}_{3,2},X^{N,T_2}_{3,2}})\right)\\
& &\mbox{}+\1_{[0,t_1]}(t_3)(\tr_m \otimes \mathbb{E} (\tr_N)\otimes\mathbb{E} (\tau_N) )\\
& & \left({}^1\mathsf{flip}^0 {}^0\mathsf{flip}^1\left[R_{2}^{(2),{\mathbf x}}R_1^{\mathbf x}\left(z-1\otimes\xi\otimes I_N-\sum_{i=1}^r(\Psi(t_i)\otimes \beta_i \otimes I_N 
+\Psi(t_i)^{-1}\otimes \beta_i^* \otimes I_N+1\otimes \gamma_i \otimes U_i+1\otimes \gamma_i^* \otimes U_i^{-1})\right)^{-1}\right]\right.\\
& & (X^{N,T_2}_{1,1}, \tilde X^{N,T_2}_{1,1}, \tilde X^{N,T_2}_{1,2},X^{N,T_2}_{1,2})\\
& & \mbox{}+\1_{[t_1,t_2]}(t_3)(\tr_m\otimes \mathbb{E}(\tr_N)\otimes  \mathbb{E} (\tau_N))\\
& & \left({}^1\mathsf{flip}^0 {}^0\mathsf{flip}^1\left[R_{2}^{(3),{\mathbf x}}R_1^{\mathbf x}\left(z-1\otimes\xi\otimes I_N-\sum_{i=1}^r(\Psi(t_i)\otimes\beta_i\otimes I_N
+\Psi(t_i)^{-1}\otimes \beta_i^* \otimes I_N+1\otimes \gamma_i \otimes U_i+1\otimes \gamma_i^* \otimes U_i^{-1})\right)^{-1}\right] \right.\\
& & \left. \left.\frac{}{}(X^{N,T_2}_{2,1}, \tilde X^{N,T_2}_{2,1}, \tilde X^{N,T_2}_{2,2},X^{N,T_2}_{2,2})\right)\right\}\\
& & {\rm d}t_1{\rm d}t_2{\rm d}t_3{\rm d}t_4\\
\\& &+\frac{1}{4}\int_{A_2}  e^{-t_4-t_3-t_2-t_1}\left\{\1_{[t_2,t_4]}(t_3) (\tr_m\otimes \mathbb{E}(\tau_N) \otimes \tau)\frac{}{} \right.\\
& & \left({}^0\mathsf{flip}^1\left[R_{2}^{(1),{\mathbf y}}R_1^{\mathbf y}\left(z-1\otimes\xi\otimes1-\sum_{i=1}^r(\Psi(\mathbf y_i)\otimes\gamma_i \otimes1
+\Psi(\mathbf y_i)^{-1}\otimes \gamma_i^* \otimes1+1\otimes \beta_i \otimes v_i+1\otimes \gamma_i^* \otimes v_i^{-1})\right)^{-1}\right]\right.\\
& & \left.\frac{}{}({X^{N,T_2}_{3,1}, \tilde X^{N,T_2}_{3,1}, \tilde X^{N,T_2}_{3,2},X^{N,T_2}_{3,2}}) \right)\\
& & \mbox{}+\1_{[0,t_1]}(t_3)(\tr_m\otimes \mathbb{E}(\tau_N) \otimes \tau) \\
& & \left({}^0\mathsf{flip}^1\left[R_{2}^{(2),{\mathbf y}}R_1^{\mathbf y}\left(z-1\otimes\xi\otimes1-\sum_{i=1}^r(\Psi(\mathbf y_i)\otimes \gamma_i \otimes1
+\Psi(\mathbf y_i)^{-1}\otimes \gamma_i^* \otimes1+1\otimes \beta_i \otimes v_i+1\otimes \gamma_i^* \otimes v_i^{-1})\right)^{-1}\right]\right.\\
& &(X^{N,T_2}_{1,1}, \tilde X^{N,T_2}_{1,1}, \tilde X^{N,T_2}_{1,2},X^{N,T_2}_{1,2})\\
& &\mbox{}+\1_{[t_1,t_2]}(t_3)(\tr_m\otimes \mathbb{E}(\tau_N) \otimes \tau) \\
& & \left({}^0\mathsf{flip}^1\left[R_{2}^{(3),{\mathbf y}}R_1^{\mathbf y}\left(z-1\otimes \xi\otimes1-\sum_{i=1}^r(\Psi(\mathbf y_i)\otimes \gamma_i \otimes1
+\Psi(\mathbf y_i)^{-1}\otimes \gamma_i^* \otimes1+1\otimes \beta_i \otimes v_i+1\otimes \gamma_i^* \otimes v_i^{-1})\right)^{-1}\right] \right.\\ 
& & \left. \left.\frac{}{}(X^{N,T_2}_{2,1}, \tilde X^{N,T_2}_{2,1}, \tilde X^{N,T_2}_{2,2},X^{N,T_2}_{2,2})\right)\right\}\\
& & {\rm d}t_1{\rm d}t_2{\rm d}t_3{\rm d}t_4\\
& &\mbox{}+\frac{1}{2}\int_0^{+\infty}\!\!\int_0^{t_2} e^{-t_2-t_1}(\tr_m \otimes E\left(\tau_N \right) \otimes \tau)\\
&&{}^0{\sf flip}^1R_1^{\bf y}{}^0{\sf flip}^1\left\{\left[{}^1{\sf flip}^0{}^0{\sf flip}^1R_1^{\bf x}\left(z-1\otimes\xi\otimes1-\sum_{i=1}^r(\Psi(\mathbf x_i)\otimes\beta_i\otimes1
+\Psi(\mathbf x_i)^{-1}\otimes\beta_i^* \otimes1+1\otimes\gamma_i\otimes \Psi(y_i)+1\otimes\gamma_i^*\otimes\Psi(y_i)^{-1})\right)^{-1}\right]\right.\\
&&\left.\left.\frac{}{}(z^1_{t_1},\tilde z^1_{t_1},\tilde z^2_{t_1},z_{t_1}^2)\right\}(z^1_{t_1}(N),\tilde z^1_{t_1}(N),\tilde z^2_{t_1}(N),z_{t_1}^2(N))\right\}{\rm d}t_1{\rm d}t_2.\\
\end{eqnarray*}}\noindent

It remains to use the two formulae above in order to prove \eqref{estimdiffeqno}. Indeed, given that
$$
g_N(z)-g(z)-\frac{E(z)}{N^2}=\frac{\Delta_N(z)}{N^4},
$$
this estimate is proved once we have argued that
$z\mapsto E(z)$ is analytic on the domain of $g$, that $z\mapsto\Delta_N(z)$ is analytic on $\mathbb C\setminus\mathbb R$, that $\lim_{z\to\infty}zE(z)=0$, and that there
exist $M,M'>0,k'',k'\in\mathbb N$ not depending on $N\in\mathbb N$ such that
\begin{equation}\label{mk}
|E(z)|<M+\frac{M}{|\Im z|^{k''}},\ \ |\Delta_N(z)|<M'+\frac{M'}{|\Im z|^{k'}},\quad z\in\mathbb C\setminus\mathbb R.
\end{equation}
We have seen that applying any of the operators $R$ to either of the tensor coordinates of $(z-\mathcal S)^{-1}$ still yields a rational noncommutative function, the only
caveat being that a tensor coordinate might change from $\mathcal A$ to $\mathcal A^{\rm op}$. A priori this could be fatal to our proof: one can find elements $x_n\in
\mathcal A\otimes\mathcal A\simeq\mathcal A\otimes\mathcal A^{\rm op}$ (algebraic tensor products and algebraic vector space identification) whose min norm stays bounded in 
the first tensor product but tends to infinity in the second. This phenomenon, however, is irrelevant for our purposes, because we use only two types of estimates in
all our computations, which hold in both completions: $\|a\otimes b\|\leq\|a\|\|b\|$ and $\|(z-\mathcal S)^{-1}\|\le\frac{1}{|\Im z|}$. The first estimate simply does not
`see' the presence or absence of the op structure in either of the two tensor coordinates, because $\mathcal A$ and $\mathcal A^{\rm op}$ are isometrically anti-isomorphic.
The second holds whenever $\mathcal S=\mathcal S^*$ in any $C^*$-algebra. In all our formulas, $\mathcal S$ is selfadjoint regardless of whether it is seen in 
$\mathcal A\otimes\mathcal A$ or in $\mathcal A\otimes\mathcal A^{\rm op}$. 

Let us consider first $E(z)$. We make use of the explicit formula \eqref{45} in order to obtain the required estimate. By counting the number of terms in it, and the number of factors in
each term, and applying the two estimates mentioned above,
\begin{eqnarray*}
\|\eqref{45}\| & \le & \frac{2\|\Psi(s)-1\|^5}{|\Im z|^2}\|\gamma_j\|+\frac{6\|\Psi(s)-1\|^6}{|\Im z|^3}\left(\|\gamma_j\|^2+\|\gamma_j\|\|\gamma_k\|\right)\\
& & \mbox{}+\frac{4\|\Psi(s)-1\|^7}{|\Im z|^4}\left(5\|\gamma_j\|^2\|\gamma_k\|+\|\gamma_j\|\|\gamma_k\|^2\right)\\
& & \mbox{}+\frac{16\|\Psi(s)-1\|^8}{|\Im z|^5}\|\gamma_j\|^2\|\gamma_k\|^2<\frac{6144(1-|\Im z|^4)(1+\max_j\|\gamma_j\|)^4}{|\Im z|^5(1-|\Im z|)},
\end{eqnarray*}
where $s$ stands for a standard semicircular variable (one is mostly interested in the case when $|\Im z|>0$ is small, so the possibility that the denominator is zero does not bother us;
however, the meaning of the above when $|\Im z|=1$ is obvious). We have also used $\|\Psi(a)^{\pm1}-1\|\le2$ for any $a=a^*$.
Quite obviously, when one acts on the other tensor coordinate, the estimate is
$$
\frac{6144(1-|\Im z|^4)(1+\max_j\|\beta_j\|)^4}{|\Im z|^5(1-|\Im z|)},
$$
which is a polynomial in $\frac{1}{|\Im z|}$. Now let us inspect the expressions under the two integrals in the formula for $E(z)$. As described immediately following \eqref{45}, applying
the $R_1^{\mathbf y}$ from the formula for $E(z)$ above comes down to first taking $\sum_{j,k=1}^r\eqref{45}$, second evaluating $v_i=\Psi({\bf t}_i),1\le i\le r,$ with ${\bf t}_i$
being standard free semicirculars and evaluating ${\sf s}=z^1_{t_1},s=\tilde z^1_{t_1},\tilde{s}=\tilde z^2_{t_1},\tilde{\sf s}=z_{t_1}^2$ (recall the notations in 
\eqref{45} and before), and performing two more steps which we discuss in a moment. Before that, let us state that the spectrum of $\mathcal S$ remains the same
regardless of whether either of the last two tensor coordinates lay in $\mathcal A$ or in $\mathcal A^{\rm op}$. Indeed, this follows from the fact that the distribution of
an $r$-tuple of free selfadjoint elements $(a_1,\dots,a_r)$ does not change regardless of whether they are viewed in $\mathcal A$ or in $\mathcal A^{\rm op}$ (the reader 
can easily see this by recalling that $\tau(a_{i_1}\cdots a_{i_p})=\overline{\tau(a_{i_p}\cdots a_{i_1})}$ for any word $i_1\cdots i_p,i_j\in\{1,\dots r\}$ and at the same time
$\tau(a_{i_1}\cdots a_{i_p})\in\mathbb R$ whenever $a_1,\dots,a_r$ are free wrt $\tau$) and, as $\tau$ is faithful, the spectrum of $\mathcal S$ is determined by its distribution.
Also, by their definition, all of $z^1_{t_1},\tilde z^1_{t_1},\tilde z^2_{t_1},z_{t_1}^2$ have a distribution that is constant (i.e. not depending on $t_1,t_2$)
and equal to the standard semicircular distribution; while they are nor free from each other, precisely one of them appears in each of the resolvents in formula \eqref{45},
as the reader can verify by a direct inspection. Thus it follows that \eqref{45} is defined for all $z$ in the domain of analyticity of $z\mapsto g(z)$, $1\le j,k\le r$. We pursue
with the description of the third step, namely performing the algebraic operations left, right, and between the placeholders present in \eqref{45}, for each pair
$(j,k)\in\{1,\dots,r\}^2,$ with the evaluations ${\bf t} $ and $z,\tilde z$ mentioned in the previous (second) step. At this step, we observe that the results, that is,
the elements that one obtains between each such two placeholders, or left of the leftmost, or right of the rightmost placeholder, are majorized each as described above,
regardless of whether the operators $z^1_{t_1},\tilde z^1_{t_1},\tilde z^2_{t_1},z_{t_1}^2$ are viewed as belonging to $\mathcal A$ or to $\mathcal A^{\rm op}$.
Then one views each of the resulting formulas as being in $\mathcal A$ and replaces the placeholders with copies of theunit $1\in\mathcal A$. The element thus obtained
is majorized by a multiple of $\frac{6144(1-|\Im z|^4)}{|\Im z|^5(1-|\Im z|)}[(1+\max_j\|\beta_j\|)^4+(1+\max_j\|\gamma_j\|)^4]$ in norm. Since $\mathrm{tr}_m\otimes
\tau\otimes\tau$ is a state, hence of norm equal to one, it follows that 
\begin{eqnarray*}
\lefteqn{\Big|(\mathrm{tr}_m\otimes\tau\otimes\tau)\!\Big\{\!
{}^0\mathsf{flip}^1\!\Big[R_1^{\mathbf y}\Big(z-1\otimes\xi\otimes1-\!\sum_{i=1}^r\!\big(\{\Psi({\bf y}_i)\otimes\gamma_i+\Psi({\bf y}_i)^{-1}\otimes\gamma_i^*\}\!\otimes\!1}\\
& & \mbox{}+1\otimes \beta_i \otimes v_i+1\otimes \gamma_i^* \otimes v_i^{-1}\big)\Big)^{-1}\Big]( z^1_{t_1\frac{}{}}\!\!, \tilde z^1_{t_1}, \tilde z^2_{t_1}, z_{t_1}^2)\\
& &\mbox{}+{}^1\mathsf{flip}^0{}^0\mathsf{flip}^1\!\Big[R_1^{\mathbf x}\Big(z-1\otimes\xi\otimes I_N-\!\sum_{i=1}^r\!\big(\{\Psi(\mathbf x_i)\otimes\beta_i
+\Psi(\mathbf x_i)^{-1}\otimes \beta_i^*\} \otimes I_N\\
& & \mbox{}+1\otimes \gamma_i \otimes u_i+1\otimes \gamma_i^* \otimes u_i^{-1})\Big)^{-1}\Big]( z^1_{t_1\frac{}{}}\!\!, \tilde z^1_{t_1}, \tilde z^2_{t_1}, z_{t_1}^2)\Big\}\Big|\\
&<&K\frac{6144(1-|\Im z|^4)}{|\Im z|^5(1-|\Im z|)}[(1+\max_{j=1}^r\|\beta_j\|)^4+(1+\max_{j=1}^r\|\gamma_j\|)^4]
\end{eqnarray*}
for some $K>0$ not depending on $N,m,z,$ but only on $r$ (this is again a polynomial in $\frac{1}{|\Im z|}$). Finally, again by inspecting \eqref{45}, one notices that 
$\lim_{z\to\infty}z\eqref{45}=0$ in norm, so by the above one has $\lim_{z\to\infty}zE(z)=0$ (in fact $E$ is easily seen to be $o(|z|^3)$ as $|z|\to\infty$). By \cite[Satz 9 and 10]{Till} 
(see \cite[Theorem 5.4]{S}), it follows that $E(z)$ is the Cauchy transform of a compactly supported Schwartz distribution which sends the constant functions to zero and is supported
on the set of singularities of $E$, which has been seen to coincide with the set of singularities of $g$, i.e. the spectrum of $\mathcal S$ evaluated in free semicircular systems.

It remains to discuss $\Delta_N$. Fortunately here one only needs to investigate the behavior of $z\mapsto\Delta_N(z)$ close to $\mathbb R$. As described in
the definition \eqref{rrr} of $R_{2}^{(j)},j=1,2,3,$ they are as well compositions of difference-differential operators, flips and evaluations. As seen in Section \ref{diffev}
(see \eqref{frezzy}), each such application (to $(z-\mathcal S)^{-1}$ and derivatives of it) results in a new rational function, possibly in a larger number of variables, possibly in 
a rational function of higher order, possibly with some of the tensor coordinates belonging to the opposite algebra. However, the inverses that occur in any such expression
are exclusively of the form $(z-\mathcal S)^{-1}$, evaluated in various selfadjoint tuples, all other factors in each term being polynomials in Cayley transforms of selfadjoints.
Since any finite succession of applications of difference-differential operators to a rational function like $(z-\mathcal S)^{-1}$ yields a finite sum of finite products
of such resolvents of selfadjoint polynomials, and selfadjoint polynomials, in Cayley transforms of indeterminates, and all evaluations (see the expresion of $\Delta_N$ and of the 
$R$s) are in tuples of selfadjoints, it follows that the expressions under the traces in the definition of $\Delta_N$ are bounded in norm by products of $|\Im z|^{-1}$, one for each
resolvent, and multiples of norms of coefficients $\gamma$ and $\beta$, neither of them depending on $N$. The fact that all of $\tau,{\rm tr}_m,\mathbb E({\rm tr}_N)$
are states, hence of norm one, implies that all the elements under the integrals in the expression of $\Delta_N$ are bounded by the same quantities.
This guarantees the existence of the $M',k'$ required in \eqref{mk}.

This completes the proof of \eqref{estimdiffeqno} and thus, as explained in Section \ref{HTSapproach}, the proof of Theorem \ref{scunitary} and finally, as explained in Section 
\ref{Sec:WU}, the proof of Theorem \ref{mainresult}.


\end{document}